\documentclass[a4paper,12pt]{amsart}

\usepackage{amsmath,amssymb,amsthm,graphicx,pstricks,url}
\usepackage[all]{xy}
\usepackage{lscape}
\usepackage{hyperref}
\usepackage{color}
\usepackage{tikz}
\usetikzlibrary{arrows}
 \usetikzlibrary{snakes}
 \usetikzlibrary{shapes}
 
\oddsidemargin = 0cm
\evensidemargin = 0cm
\topmargin = 0cm
\headsep = 1cm
\textwidth = 16cm
\textheight = 23cm
\numberwithin{equation}{subsection}

\title[Algebras of acyclic cluster type]{Algebras of acyclic cluster type:\\ tree type and type $\widetilde{A}$}

\subjclass[2010]{primary: 16E35, 16G20, secondary: 16G70, 16E10, 16W50}

\author{Claire Amiot}
\address{Institut de Recherche Math\'ematique Avanc\'ee, 7 rue Ren\'e Descartes, 67084 Strasbourg Cedex, France}
\email{amiot@math.unistra.fr}

\author{Steffen Oppermann}
\address{Institutt for matematiske fag,
NTNU,
7491 Trondheim,
Norway}
\email{steffen.oppermann@math.ntnu.no}

\thanks{Work on this project has been done while both authors were working at NTNU, Trondheim, supported by NFR Storforsk grant no.\ 167130. The first author is partially supported by the ANR project ANR-09-BLAN-0039-02.}

\newcommand{\Hom}{{\sf Hom }}
\newcommand{\End}{{\sf End }}
\newcommand{\Ext}{{\sf Ext }}

\newcommand{\Ker}{{\sf Ker }}
\newcommand{\Coker}{{\sf Coker }}

\renewcommand{\mod}{{\sf mod \hspace{.02in}  }}

\newcommand{\gr}{{\sf gr \hspace{.02in} }}
\newcommand{\add}{{\sf add \hspace{.02in} }}

\newcommand{\ten}{\otimes}
\newcommand{\lten}{\overset{\boldmath{L}}{\ten}}

\newcommand{\Jac}{{\sf Jac}}

\newcommand{\Cc}{\mathcal{C}}
\newcommand{\Dd}{\mathcal{D}}

\newcommand{\Hh}{\mathcal{H}}
\newcommand{\Ii}{\mathcal{I}}

\newcommand{\Tt}{\mathcal{T}}
\newcommand{\Mm}{\mathcal{M}}

\newcommand{\Pp}{\mathcal{P}}\newcommand{\Rr}{\mathcal{R}}

\newcommand{\SSS}{\mathbb{S}_2}

\newcommand{\A}{\widetilde{A}}
\newcommand{\bsm}{\begin{smallmatrix}}
\newcommand{\esm}{\end{smallmatrix}}

\newcommand{\ZZ}{\mathbb{Z}}

\newtheorem{thma}{Theorem}[section]
\newtheorem*{thm*}{Th{\'e}or{\`e}me}
\newtheorem{lema}[thma]{Lemma}
\newtheorem*{lem*}{Lemme}

\newtheorem{cora}[thma]{Corollary}
\newtheorem*{prop*}{Proposition}
\newtheorem{prop}[thma]{Proposition}
\theoremstyle{remark}

\newtheorem{rema}[thma]{Remark}

\theoremstyle{definition}

\newtheorem{dfa}[thma]{Definition}
\newtheorem{Question}[thma]{Question}
\numberwithin{figure}{section}

\newcommand{\cov}[2]{{\rm Cov}(#1 , #2)}

\newcommand{\iso}{\cong}
\newcommand{\rL}{{\rm L}}
\newcommand{\rR}{{\rm R}}
\begin{document}
\begin{abstract}
In this paper, we study algebras of global dimension at most 2 whose generalized cluster category is equivalent to the cluster category of an acyclic quiver which is either a tree or of type $\widetilde{A}$. We are particularly interested in their derived equivalence classification. We prove that each algebra which is cluster equivalent to a tree quiver is derived equivalent to the path algebra of this tree. Then we describe explicitly the algebras of cluster type $\A_n$ for each possible orientation of $\A_n$. We give an explicit way to read off in which derived equivalence class such an algebra lies, and describe the Auslander-Reiten quiver of its derived category. Together, these results in particular provide a complete classification of algebras which are cluster equivalent to tame acyclic quivers.
\end{abstract}
\maketitle
\tableofcontents

\section{Introduction}
The classification of finite dimensional algebras over an algebraically closed field $k$ up to derived equivalence is a crucial problem in representation theory. It has a complete answer for algebras of global dimension 1 (see \cite[Corollary~4.8]{Happel87}): Two finite dimensional $k$-algebras $\Lambda=kQ$ and $\Lambda'=kQ'$ are derived equivalent if and only if one can pass from the quiver $Q$ to the quiver $Q'$ by a sequence of reflections (as introduced in \cite{BGP}). Therefore it is possible to decide when two hereditary algebras are derived equivalent by simple combinatorial means. The aim of this paper is to apply results of \cite{AO10} in order to generalize this result to certain algebras of global dimension~2.

The notion of reflection of an acyclic quiver has been generalized by Fomin and Zelevinsky \cite{FZ1} to the notion of mutation in their definition of cluster algebras. Since then,  categorical interpretations of the mutation have been discovered via 2-Calabi-Yau triangulated categories. These created a link between cluster algebras and representation theory. First, in~\cite{BMRRT},  cluster categories $\Cc_Q$ associated to  acyclic quivers $Q$ were defined as the orbit categories $\Dd^b(kQ)/\SSS$, where $\SSS=\mathbb{S}[-2]$ is the second desuspension of the Serre functor $\mathbb{S}$ of the bounded derived category $\Dd^b(kQ)$. This notion has been generalized in \cite{Ami09} to algebras of global dimension two. In this case the generalized cluster category is defined to be the triangulated hull in the sense of \cite{Kel05} of the orbit category $\Dd^b\Lambda/\SSS$. 

\medskip

In this paper, we study more explicitly  the derived equivalence classes of algebras of global dimension 2 which are of acyclic cluster type, that is algebras whose generalized cluster category is equivalent to some cluster category $\Cc_Q$ where $Q$ is an acyclic quiver.  We strongly use the results and the techniques of \cite{AO10}. In particular we use the notion of graded mutation of a graded quiver with potential which is a refinement of  the notion of mutation of a quiver with potential introduced in~\cite{DWZ}: Associated to an algebra $\Lambda$ there is a graded Jacobian algebra $\overline{\Lambda}$~\cite{Kel09} whose degree zero subalgebra is $\Lambda$. Graded mutation explains how to mutate such graded Jacobian algebras. Then from \cite{AO10} we deduce an analogue of the result for algebras of global dimension 1.

\begin{thma}[see Theorem~\ref{thm_gradingQ_derivedeq}]
Let $\Lambda_1$ and $\Lambda_2$ be two finite dimensional algebras of global dimension $2$. Assume that $\Lambda_1$ is of acyclic cluster type.  Then the algebras $\Lambda_1$ and $\Lambda_2$ are derived equivalent if and only if one can pass from $\overline{\Lambda}_1$ to $\overline{\Lambda}_2$ using a sequence of graded mutations.
\end{thma}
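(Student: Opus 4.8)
The plan is to prove the two implications separately. The backward implication reduces, by induction, to the effect of a single graded mutation and will not use the acyclic cluster type hypothesis; the forward implication is where that hypothesis is essential. In both directions the technical engine is the dictionary of \cite{AO10} between graded mutations of the graded Jacobian algebra $\overline{\Lambda}$ and mutations of graded silting objects over the associated Ginzburg dg algebra, the degree-zero part of whose endomorphism algebra recovers $\Lambda$.

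To prove that a sequence of graded mutations yields a derived equivalence, I would induct on its length and treat a single step $\overline{\Lambda}_1\rightsquigarrow\overline{\Lambda}_2$. By \cite{AO10} a graded mutation is an ordinary silting mutation over the Ginzburg dg algebra performed compatibly with the grading, and it is exactly this compatibility that forces the degree-zero parts of the two graded Jacobian algebras---namely $\Lambda_1$ and $\Lambda_2$---to be derived equivalent. (An ungraded silting mutation alone would only guarantee cluster equivalence.) Composing the derived equivalences obtained at each step gives $\Dd^b(\Lambda_1)\simeq\Dd^b(\Lambda_2)$, and acyclicity plays no role here.

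For the converse, assume $\Lambda_1$ and $\Lambda_2$ are derived equivalent. The first step is to pass to the graded level. Since $\overline{\Lambda}$ is built from the graded $3$-Calabi--Yau completion of $\Lambda$ in the sense of \cite{Kel09}, and Calabi--Yau completions are invariant under derived equivalence, a derived equivalence $\Dd^b(\Lambda_1)\simeq\Dd^b(\Lambda_2)$ lifts to a triangle equivalence of the perfect derived categories of the graded algebras $\overline{\Lambda}_1$ and $\overline{\Lambda}_2$. In particular the generalized cluster categories $\Cc_{\Lambda_1}$ and $\Cc_{\Lambda_2}$ are equivalent; as $\Lambda_1$ is of acyclic cluster type, so is $\Lambda_2$, and the two canonical cluster-tilting objects now live in a single category $\Cc\simeq\Cc_Q$ with $Q$ acyclic. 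Transporting $\overline{\Lambda}_2$ through the graded equivalence identifies it with a graded silting object over $\overline{\Lambda}_1$.

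It remains to connect this graded silting object to the canonical one by graded mutations, and this is the crux. Acyclicity enters here: for $Q$ acyclic the cluster-tilting objects of $\Cc_Q$ form a connected exchange graph, and the corresponding graded statement---that every graded silting object arising from an algebra of global dimension $2$ is reachable from $\overline{\Lambda}_1$ by graded silting mutations---is the connectivity result supplied by \cite{AO10}. Applying it produces the desired sequence of graded mutations from $\overline{\Lambda}_1$ to $\overline{\Lambda}_2$. The main obstacle is precisely this graded connectivity: the ungraded version is classical but far too weak, since cluster-equivalent algebras are generally not derived equivalent, so the real content is that the grading recording the derived equivalence propagates through each exchange and that no graded silting object in the relevant class escapes graded mutation. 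Controlling the grading so that the sequence terminates at exactly $\overline{\Lambda}_2$, rather than at some other graded Jacobian algebra sharing its underlying ungraded cluster structure, is what makes acyclicity indispensable.
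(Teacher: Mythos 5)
Your proposal has a genuine gap in each direction. In the backward direction, the induction on the length of the mutation sequence does not typecheck: if $\overline{\Lambda}_1 \to A_1 \to \cdots \to \overline{\Lambda}_2$ is a chain of graded mutations, the intermediate graded Jacobian algebras $A_k$ need not be of the form $\overline{\Lambda}'$ for any algebra $\Lambda'$ of global dimension $\leq 2$: by Theorem~\ref{Zgradedbirs} their degree zero parts are the endomorphism algebras $\End_{\Dd^b(\Lambda_1)}(\mu^{\rL}_{i_k}\cdots\mu^{\rL}_{i_1}(\Lambda_1))$ of objects which are generally \emph{not} tilting complexes, so there are no ``derived equivalences obtained at each step'' to compose. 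Moreover your single-step assertion --- that grading-compatibility of the mutation ``forces'' the degree zero parts to be derived equivalent --- is exactly the hard statement that needs proof; it is Theorem~\ref{AO_derivedeq_gradedeq_thm} (Theorem 5.8 of \cite{AO10}), whose content is far from formal. The paper avoids any induction: it uses Lemma~\ref{leftrightmutation} and Proposition~\ref{prop_gradedeq_mutation} to rewrite the hypothesis, then applies Theorem~\ref{Zgradedbirs} along the whole sequence to produce a \emph{single} object $T=\mu_{s_2^-}^{\rR}\mu_{s_1}^{\rL}(\Lambda_1)\in\Dd^b(\Lambda_1)$ with $\pi_1(T)$ cluster-tilting and $\bigoplus_{p}\Hom_{\Dd^b(\Lambda_1)}(T,\SSS^{-p}T)\underset{\gr}{\sim}\overline{\Lambda}_2$, and only then invokes Theorem~\ref{AO_derivedeq_gradedeq_thm} once.

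In the forward direction the gap is more serious: the ``graded connectivity result supplied by \cite{AO10}'' that you identify as the crux does not exist in that paper, and the argument does not need it. What the paper actually uses is only the \emph{ungraded} connectivity of the exchange graph of the acyclic cluster category (Theorem~\ref{connectivity}), which yields a sequence $s$ with $\mu_s(\pi_2\Lambda_2)\iso\pi_2(T)=f(\pi_1\Lambda_1)$, where $T$ is the tilting complex realizing the derived equivalence and $f$ the induced cluster equivalence. The graded refinement then comes for free, not from a connectivity theorem: since $\pi_2(T)\iso\pi_2(T')$ for $T':=\mu^{\rL}_s(\Lambda_2)$, the indecomposable summands of $T$ and $T'$ agree up to powers of $\SSS$, so $\add\{\SSS^pT\}\iso\add\{\SSS^pT'\}$ as categories with $\SSS$-action, i.e.\ their graded endomorphism algebras are graded equivalent. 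Computing the two sides --- the first is $\overline{\Lambda}_1$ via the derived equivalence, the second is $\Jac(\mu^{\rL}_s(Q_2,W_2,\partial_2))$ via Theorem~\ref{Zgradedbirs} --- gives precisely the statement you were missing, namely that the sequence of graded mutations starting at $\overline{\Lambda}_2$ terminates at $\overline{\Lambda}_1$ up to graded equivalence. So the ``control of the grading'' you worried about is resolved by the uniqueness of lifts along $\pi_2$ up to $\SSS$-shifts of summands; acyclicity is needed only for the ungraded connectivity, and asserting a graded connectivity theorem in its place leaves the key step unproved.
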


The setup is especially nice when the algebras are of tree cluster type. 
\begin{thma}[Corollary~\ref{cor_tree}]
Let $Q$ be an acyclic quiver whose underlying graph is a tree. If $\Lambda$ is an algebra of global dimension 2 of cluster type $Q$, then it is derived equivalent to $kQ$.
\end{thma}

To get a complete understanding of algebras of tame acyclic cluster type, in the rest of the paper, we focus on the algebras of cluster type $\widetilde{A}_{p,q}$. To such an algebra, using graded mutation, we associate an integer that we call weight, which is zero when $\Lambda$ is hereditary. We prove that two algebras of cluster type $\widetilde{A}_{p,q}$ are derived equivalent if and only if they have the same weight (Theorem~\ref{derivedeqiffwegal}). Then a result of \cite{AO10} which shows that two cluster equivalent algebras are graded derived equivalent permits us to compute explicitly the shape of the Auslander-Reiten quiver of the derived category.

\begin{thma}[Corollary~\ref{shapeARquiver}]
 Let $\Lambda$ be an algebra of cluster type $\A_{p,q}$ and of weight  $w\neq 0$. Then the algebra $\Lambda$ is representation finite and not piecewise hereditary. The Auslander-Reiten quiver of $\Dd^b(\Lambda)$ has exactly $3|w|$ connected components:
\begin{itemize}
\item $|w|$ components of type $\ZZ A_\infty^\infty$;
\item $2|w|$ components of type $\ZZ A_\infty$.
\end{itemize}
\end{thma}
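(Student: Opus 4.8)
The plan is to use Theorem~\ref{derivedeqiffwegal} to reduce to a single well-chosen algebra in each weight class, and then to read the Auslander--Reiten quiver of its derived category off that of the cluster category $\Cc_{\A_{p,q}}$ through the graded covering of \cite{AO10}. First I would observe that the number of connected components of the Auslander--Reiten quiver of $\Dd^b(\Lambda)$, together with the isomorphism type of each component as a stable translation quiver, is invariant under derived equivalence. Hence, by Theorem~\ref{derivedeqiffwegal}, it suffices to prove the statement for one representative $\Lambda_w$ of each weight $w\neq 0$. I would take $\Lambda_w$ to be the algebra whose graded Jacobian algebra $\overline{\Lambda}_w$ is obtained from $k\A_{p,q}$ by the sequence of graded mutations realizing the weight $w$, so that its structure is completely explicit.

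Next I would set up the covering. Recall that the Auslander--Reiten quiver of $\Cc_{\A_{p,q}}$ consists of one transjective component of type $\ZZ\A_{p,q}$, two exceptional tubes of ranks $p$ and $q$, and a one--parameter family of homogeneous tubes. Since $\Lambda_w$ is cluster equivalent to $k\A_{p,q}$, the generalized cluster category of $\Lambda_w$ is $\Cc_{\A_{p,q}}$ and $\Dd^b(\Lambda_w)$ covers it with deck group $\langle\SSS\rangle$. The graded derived equivalence of \cite{AO10} identifies this acting copy of $\ZZ$ with the grading shift coming from the weight-$w$ algebra, so that $\Dd^b(\Lambda_w)$ is the $\ZZ$-covering of $\Cc_{\A_{p,q}}$ whose deck transformation is $\SSS$. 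I would then compute its Auslander--Reiten quiver by lifting the components above one at a time, the weight $w$ entering precisely through the way $\SSS$ acts on each of them.

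The component analysis then runs as follows. The objects of the homogeneous tubes lift to \emph{decomposable} objects of $\Dd^b(\Lambda_w)$, whose indecomposable summands already lie in the lifts of the remaining components; hence the homogeneous tubes contribute no new Auslander--Reiten component, and this is what forces $\Dd^b(\Lambda_w)$ to have only finitely many components. For $w\neq 0$ the autoequivalence $\SSS$ winds nontrivially around the cyclic direction of the transjective component, so the preimage of $\ZZ\A_{p,q}$ is $|w|$ copies of its universal cover $\ZZ A_\infty^\infty$; similarly each of the exceptional tubes $\ZZ A_\infty/\tau^{p}$ and $\ZZ A_\infty/\tau^{q}$ unwinds into $|w|$ copies of $\ZZ A_\infty$, giving $2|w|$ components of this type, and summing yields the claimed $3|w|$ components. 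Finally, none of the components $\ZZ A_\infty^\infty$ and $\ZZ A_\infty$ is $\tau$-periodic, so $\Dd^b(\Lambda_w)$ has no $\tau$-periodic indecomposable; as $\Lambda_w$ is of tame cluster type this rules out tubes in $\mod\Lambda_w$ and shows $\Lambda_w$ is representation finite, while a piecewise hereditary algebra of cluster type $\A_{p,q}$ would be derived equivalent to $k\A_{p,q}$, hence of weight $0$ by Theorem~\ref{derivedeqiffwegal}, so $\Lambda_w$ cannot be piecewise hereditary.

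The hard part will be the control of the $\SSS$-action: proving that the winding number is exactly $|w|$ both around the transjective cylinder and around each exceptional tube, and that the homogeneous tubes genuinely collapse. Here I expect the graded covering of \cite{AO10} to do the real work, since it identifies $\langle\SSS\rangle$ with the grading shift in the common graded derived category; the winding number can then be read off from the degrees of the arrows of the graded quiver with potential defining $\overline{\Lambda}_w$, which by construction sum to $w$ around the cycle. Verifying that the lifted components have exactly the shapes $\ZZ A_\infty^\infty$ and $\ZZ A_\infty$, rather than further quotients, is the remaining technical point, and I would settle it by checking that $\SSS$ acts freely on the lifted translation quivers.
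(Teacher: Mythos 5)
Your overall target structure is right and you correctly point to the graded machinery of \cite{AO10}, but the covering picture at the heart of your argument points in the wrong direction, and this is a genuine gap rather than a cosmetic one. You assert that $\Dd^b(\Lambda_w)$ is a $\ZZ$-covering of $\Cc_{\A_{p,q}}$ with deck group $\langle\SSS\rangle$, i.e.\ that every object of the cluster category lifts along $\pi_{\Lambda_w}$. This fails precisely when $w\neq 0$: the cluster category is the \emph{triangulated hull} of the orbit category $\Dd^b(\Lambda_w)/\SSS$, and for $w\neq 0$ the functor $\pi_{\Lambda_w}$ is not dense. Your fix for the homogeneous tubes --- that their objects ``lift to decomposable objects of $\Dd^b(\Lambda_w)$'' --- cannot be repaired: $\pi_{\Lambda_w}$ is additive and kills no nonzero object, so the image of a decomposable object is decomposable, whereas the objects of the homogeneous tubes are indecomposable. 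In truth these objects do not lift at all; the homogeneous tubes lie outside the image of the derived category in the cluster category (this is exactly the content of the last corollary of Section~\ref{section ARquiver} and of \cite{AO}), and establishing this is part of what must be proved, not a fact one may assume. This makes your component analysis circular: to know which AR-components of $\Cc_{\A_{p,q}}$ are hit by $\Dd^b(\Lambda_w)$, and with which winding number $\SSS$ acts on their preimages, you already need the structure of $\Dd^b(\Lambda_w)$ that you are trying to compute.

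The paper breaks this circle by running the covering the other way. From Proposition~\ref{propWgrading} and the graded derived equivalence (Theorem~\ref{AO_gradedderivedeq}) one obtains a triangle equivalence $\Dd^b(\cov{\Lambda}{\delta})\iso\Dd^b(\cov{H}{\partial})$ intertwining the degree shifts with $\SSS$, and hence (Corollary~\ref{k-equivalence}, via Keller's theorem on orbit categories and \cite[Cor.~7.14]{AO10}) a $k$-linear equivalence $\Dd^b(\Lambda)\iso\Dd^b(\cov{H}{\partial})/\SSS\langle 1\rangle_\partial$: the derived category of $\Lambda$ is a \emph{quotient} of the derived category of the honest hereditary locally bounded category $\cov{H}{\partial}$, not a cover of the cluster category. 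One then computes $\mod\cov{H}{\partial}$ directly: its AR-quiver has $4|w|$ components $\Pp_i$, $\Ii_i$, $\Rr^p_i$, $\Rr^q_i$ ($i\in\ZZ/w\ZZ$), and the homogeneous tubes disappear because the corresponding $H$-modules are not gradable when $w\neq 0$ --- this, not a lifting argument, is the true mechanism. Passing to $\Dd^b(\cov{H}{\partial})$ and taking $\SSS\langle 1\rangle$-orbits (the action on components is $(i,n)\mapsto(i+1,n-1)$, so it is free and has $3|w|$ orbits) gives the claimed components. Your ``winding number read off from the degrees'' is indeed the grading $\partial$ of weight $w$, but it must be applied to the covering $\cov{H}{\partial}$ of $H$, not to a purported covering of $\Cc_{\A_{p,q}}$. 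Finally, your representation-finiteness argument is also incomplete: tameness of $\Lambda_w$ is not established, and $\tau$-periodicity in $\mod\Lambda_w$ is not the restriction of $\tau$-periodicity in $\Dd^b(\Lambda_w)$ for a non-hereditary algebra; the paper instead deduces from the computed AR-quiver that $\Lambda$ is derived discrete in the sense of \cite{Vos01}, hence representation finite.
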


Finally we use the explicit description of the cluster-tilted algebras of type $\widetilde{A}_{p,q}$ of \cite{Bas09} to deduce a description of all algebras of cluster type $\widetilde{A}_{p,q}$ in term of quivers with relations.

\medskip

The paper is organized as follows: Section \ref{section background} is devoted to recalling results on generalized cluster categories, on Jacobian algebras, and on 2-Calabi-Yau categories. In Section \ref{section tree case}, we apply the results of \cite{AO10} to algebras of acyclic cluster type. The special case of algebras of cluster type $\widetilde{A}_{p,q}$ is treated in the last three sections. We introduce the notion of weight and prove that it is an invariant of the derived equivalence class of the algebra in Section \ref{section derived atilde}. We compute the shape of the AR-quiver of the derived category in Section~\ref{section ARquiver}, and we finally describe these algebras explicitly in Section \ref{section explicit}. We end the paper by giving (as an example) the complete classification for $\widetilde{A}_{2,2}$.

\subsection*{Acknowledgment}
The authors would like to thank an anonymous referee for his valuable comments and for pointing us out a mistake in a previous version of this paper.

\section{Background}\label{section background}

Throughout this paper, $k$ denotes an algebraically closed field. All categories appearing are $k$-categories, and all functors are $k$-linear. By an algebra we mean an associative unitary basic $k$-algebra. For an algebra $\Lambda$ we denote by $\mod \Lambda$ the category of finitely generated right modules over $\Lambda$. We denote by $D$ the standard duality $\Hom_k(-, k) \colon (\mod k)^{\rm op} \to \mod k$.

\subsection{Cluster-tilting objects and mutation in $2$-Calabi-Yau categories}

\begin{dfa}[Iyama]
Let $\Tt$ be a Krull-Schmidt triangulated category, with finite dimensional $\Hom$-spaces ($\Hom$-finite for short) and $2$-Calabi-Yau, that is there is a functorial isomorphism $\Hom_\Tt(X,Y[2])\iso D\Hom_\Tt(Y,X)$ for all objects $X$ and $Y$ in $\Tt$. An object $T$ is called \emph{cluster-tilting} if 
$$\add(T) =\{X\in \Tt \mid \Hom_\Tt(X,T[1])=0\},$$
where $\add(T)\subset \Tt$ is the additive closure of $T$.
\end{dfa} 

The endomorphism algebra of a cluster-tilting object $T\in \Tt$ is called a \emph{$2$-Calabi-Yau-tilted algebra}.

\begin{thma}[{\cite[Theorem 5.3]{IY}}]\label{IyamaYoshino}
 Let $T$ be a basic cluster-tilting object in a $2$-Calabi-Yau triangulated category $\Tt$. Let $T_i$ be an indecomposable summand of $T\iso T_i\oplus T'$. Then there exists a unique (up to isomorphism) object $T_i^*$ not isomorphic to $T_i$, such that $\mu_{T_i}(T):=T'\oplus T^*_i$ is a basic cluster-tilting object in $\Tt$.  Moreover $T^*_i$ is indecomposable, and there exist triangles in $\Tt$ \[\xymatrix{T_i\ar[r] ^u & U\ar[r] & T^*_i\ar[r] & T_i[1]}\textrm{ and }\xymatrix{T^*_i\ar[r] & U'\ar[r]^{u'} & T_i\ar[r] & T^*_i[1]}\]  where $u$ (resp.\ $u'$) is a minimal left  (resp.\ right) $\add(T')$-approximation. 
\end{thma}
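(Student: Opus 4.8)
The plan is to realize $T_i^*$ as the third term of an \emph{exchange triangle}, verify that $\overline{T} := T' \oplus T_i^*$ is rigid by a few $\Hom$-computations, then prove the harder ``maximality'' half of the cluster-tilting condition, and finally deal with uniqueness and the second triangle. Since $\Tt$ is $\Hom$-finite and Krull--Schmidt, $\add(T')$ is functorially finite, so a minimal left $\add(T')$-approximation $u\colon T_i \to U$ exists; completing it to a triangle $T_i \xrightarrow{u} U \xrightarrow{v} T_i^* \xrightarrow{w} T_i[1]$ defines $T_i^*$. If $w=0$ the triangle splits and $T_i$ becomes a summand of $U \in \add(T')$, contradicting that $T$ is basic; hence $w \neq 0$, so $\Hom_\Tt(T_i^*,T_i[1]) \neq 0$ while $\Hom_\Tt(T_i,T_i[1])=0$ by rigidity of $T$, giving $T_i^* \not\iso T_i$. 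Left-minimality of $u$ forbids split summands in the triangle, and with $T_i$ indecomposable this forces $T_i^*$ indecomposable.

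For rigidity of $\overline{T}$, note $\Hom_\Tt(T',T'[1])=0$ as $T'$ is a summand of $T$. Applying $\Hom_\Tt(-,T')$ to the triangle and using that $u$ is an $\add(T')$-approximation (so $u^*$ is onto) shows the connecting map into $\Hom_\Tt(T_i^*,T'[1])$ vanishes; since $\Hom_\Tt(U,T'[1])=0$ this yields $\Hom_\Tt(T_i^*,T'[1])=0$. The $2$-Calabi--Yau isomorphism then gives $\Hom_\Tt(T',T_i^*[1]) \iso D\Hom_\Tt(T_i^*,T'[1])=0$. For the self-extension, $\Hom_\Tt(U,T_i^*[1])=0$ makes $w^*\colon \Hom_\Tt(T_i,T_i^*) \to \Hom_\Tt(T_i^*,T_i^*[1])$ surjective; any $h\colon T_i \to T_i^*$ factors as $h=vh'$ (because $\Hom_\Tt(T_i,T_i[1])=0$), and then $h[1]\circ w$ factors through $\Hom_\Tt(T_i^*,U[1])=0$, so $\Hom_\Tt(T_i^*,T_i^*[1])=0$. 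Hence $\overline{T}$ is rigid.

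The main obstacle is the maximality $\{X \mid \Hom_\Tt(X,\overline{T}[1])=0\} \subseteq \add(\overline{T})$, which I reduce to a generation statement. Writing $\Aa * \Bb$ for the class of objects $E$ admitting a triangle $A \to E \to B \to A[1]$ with $A\in\Aa$, $B\in\Bb$, I first observe that any cluster-tilting object $S$ satisfies $\Tt = \add(S) * \add(S)[1]$: taking a right $\add(S)$-approximation $S_0 \to X$ with cocone $Y$, rigidity gives $\Hom_\Tt(S,Y[1])=0$, whence $\Hom_\Tt(Y,S[1]) \iso D\Hom_\Tt(S,Y[1])=0$ by $2$-Calabi--Yau, so $Y \in \add(S)$. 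Granting the analogue $\Tt = \add(\overline{T}) * \add(\overline{T})[1]$, maximality is immediate: for $X$ with $\Hom_\Tt(X,\overline{T}[1])=0$ the defining triangle $\overline{T}_0 \to X \xrightarrow{b} \overline{T}_1[1] \to \overline{T}_0[1]$ has $b=0$, so it splits and $X \iso \overline{T}_0 \oplus \overline{T}_1 \in \add(\overline{T})$. The real work is proving $\Tt = \add(\overline{T}) * \add(\overline{T})[1]$. Rotating the two exchange triangles gives $T_i[1] \in \add(T_i^*) * \add(T')[1]$ and $T_i \in \add(T') * \add(T_i^*)[1]$, so both $\add(T)$ and $\add(T)[1]$ lie in $\add(\overline{T}) * \add(\overline{T})[1]$; combined with $\Tt = \add(T)*\add(T)[1]$ this puts every object in a fourfold $*$-product of shifts of $\add(\overline{T})$. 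Collapsing this back to the twofold product $\add(\overline{T})*\add(\overline{T})[1]$ — absorbing the middle factor $\add(\overline{T})[1] * \add(\overline{T})$ by octahedral manipulations — is the delicate point; it is essentially equivalent to Iyama--Yoshino's construction of the $2$-Calabi--Yau subfactor of $\Tt$ by the rigid object $T'$, in which the indecomposable complements of $T'$ become the cluster-tilting objects of a ``rank-one'' $2$-Calabi--Yau category.

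Finally, once $\overline{T}$ is known to be cluster-tilting with indecomposable complement $T_i^*$, I apply the construction again, mutating $\overline{T}$ at $T_i^*$: this produces an exchange triangle $T_i^* \to U' \to (T_i^*)^* \to T_i^*[1]$, and checking $(T_i^*)^* \iso T_i$ identifies its rotation with the asserted triangle $T_i^* \to U' \xrightarrow{u'} T_i \to T_i^*[1]$, where $u'$ is a minimal right $\add(T')$-approximation. For uniqueness, any indecomposable complement $Y \not\iso T_i$ of $T'$ must have $\Hom_\Tt(Y,T_i[1]) \neq 0$ (otherwise $\Hom_\Tt(Y,T[1])=0$ forces $Y \in \add(T)$, hence $Y \iso T_i$ or $Y \in \add(T')$); the rank-one reduction then shows there are exactly two indecomposable complements, so $Y \iso T_i^*$.
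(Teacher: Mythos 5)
First, a point of reference: the paper does not prove this statement at all --- it is quoted as background, with an explicit citation to Iyama--Yoshino [IY, Theorem 5.3] --- so your proposal can only be compared with the original proof, whose strategy ($2$-Calabi--Yau reduction by the rigid summand $T'$) you correctly name but never carry out. The parts you do execute are sound: constructing $T_i^*$ as the cone of the minimal left $\add(T')$-approximation, the observation that $w\neq 0$ (hence $T_i^*\not\iso T_i$), the two rigidity computations showing $\Hom_\Tt(\overline{T},\overline{T}[1])=0$ for $\overline{T}=T'\oplus T_i^*$, and the reduction of the cluster-tilting property to the generation statement $\Tt=\add(\overline{T})*\add(\overline{T})[1]$ are all correct and standard.

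The genuine gaps are exactly at the three hard assertions of the theorem. \emph{(a) Generation/maximality:} you place every object in a fourfold product and propose to ``absorb the middle factor by octahedral manipulations,'' but this absorption is not a formal manipulation: to move $\add(\overline{T})[1]$ past $\add(\overline{T})$ you would need the relevant connecting maps in $\Hom_\Tt(\overline{T},\overline{T}[2])\iso D\End_\Tt(\overline{T})$ to vanish, and they do not. This step is precisely where Iyama--Yoshino's subfactor category $\bigl({}^{\perp}(T'[1])\bigr)/[T']$, its triangulated structure, and its $2$-Calabi--Yau property are needed; you invoke that machinery (``essentially equivalent to Iyama--Yoshino's construction'') instead of supplying it, which makes the argument circular with respect to the theorem being proved. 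Worse, your fourfold decomposition already uses the \emph{second} exchange triangle, which at that point has not been constructed --- its existence and the identification of its end term with $T_i$ are themselves part of the statement. \emph{(b) Indecomposability of $T_i^*$:} left-minimality of $u$ only shows that $w$ is nonzero on every nonzero direct summand of $T_i^*$ (equivalently, that $T_i^*$ has no summand in $\add(T')$, since $\Hom_\Tt(T',T_i[1])=0$); it does not rule out a decomposition $T_i^*\iso A\oplus B$ with both $A$ and $B$ mapping nontrivially to $T_i[1]$. In [IY] this is settled by the reduction: $T_i^*$ is the shift of the indecomposable $T_i$ under the equivalence given by the shift functor of the subfactor category. \emph{(c) Uniqueness:} your final appeal to ``the rank-one reduction shows there are exactly two indecomposable complements'' is again exactly the rank-one case of [IY, Theorem 5.3], not an established fact in your write-up. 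So the scaffolding matches the known proof, but the core content is deferred to machinery that is named and never built.
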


\subsection{Generalized cluster categories}

Let $\Lambda$ be a finite dimensional $k$-algebra of global dimension
at most $2$. We denote by $\Dd^b(\Lambda)$ the bounded derived category
of finitely generated $\Lambda$-modules. It has a Serre functor that
we denote $\mathbb{S}$. We denote by $\SSS$ the composition
$\mathbb{S}[-2]$.

The generalized cluster category $\Cc_\Lambda$ of $\Lambda$ has been defined in \cite{Ami09} as
the triangulated hull of the orbit category $\Dd^b(\Lambda)/\SSS$ (see \cite{Kel05} or \cite[Section~7]{AO10} for more details on triangulated hulls). We
will denote by $\pi_\Lambda$ the triangle functor

$$\xymatrix{\pi_\Lambda\colon \Dd^b(\Lambda)\ar@{->>}[r] &
  \Dd^b(\Lambda)/\SSS\ar@{^(->}[r] & \Cc_\Lambda}$$

We set $\overline{\Lambda}:=\End_\Cc(\pi \Lambda)=\bigoplus_{p\in \mathbb{Z}}\Hom_\Dd(\Lambda,\SSS^{-p}\Lambda).$ By definition this algebra is naturally endowed with a $\mathbb{Z}$-grading.

If $\Lambda=kQ$ is the path algebra of an acyclic quiver, then the cluster category $\Cc_\Lambda=\Cc_Q$ has been introduced in \cite{BMRRT}. We call it  \emph{acyclic cluster category} in this paper.
\begin{dfa}
 A finite dimensional $k$-algebra $\Lambda$ of global dimension $\leq 2$ is said to be \emph{$\tau_2$-finite} if the algebra $\End_\Cc(\pi \Lambda)=\bigoplus_{p\in \mathbb{Z}}\Hom_\Dd(\Lambda,\SSS^{-p}\Lambda)$ is finite dimensional.
\end{dfa}

\begin{thma}[{\cite[Theorem~4.10]{Ami09}}]\label{clustertilting}
Let $\Lambda$ be a finite dimensional algebra of global dimension
$\leq 2$ which is $\tau_2$-finite. Then $\Cc_\Lambda$ is a $\Hom$-finite,
$2$-Calabi-Yau category and the object $\pi(\Lambda)\in \Cc_\Lambda$ is a cluster-tilting object.
\end{thma}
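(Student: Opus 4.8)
The plan is to deduce all three assertions from the general homological machinery for dg algebras, by realizing $\Cc_\Lambda$ as the stable category of a suitable dg algebra and verifying the required properties there. Since $\Lambda$ has finite global dimension it is homologically smooth, so I may form Keller's $3$-Calabi--Yau completion $\Gamma := \Pi_3(\Lambda) = T_\Lambda(\Theta[2])$, the derived tensor algebra of the shifted inverse dualizing complex $\Theta := \mathbb{R}\Hom_{\Lambda^e}(\Lambda,\Lambda^e)$. By Keller's theorem $\Gamma$ is homologically smooth and bimodule $3$-Calabi--Yau. Because $\gldim\Lambda\leq 2$, the complex $\Theta$ is concentrated in cohomological degrees $0,1,2$, so $\Theta[2]$ lives in degrees $-2,\dots,0$ and $\Gamma$ has cohomology only in non-positive degrees; using $\SSS^{-1}\iso{-}\lten_\Lambda\Theta[2]$ one identifies $H^0(\Gamma)\iso\bigoplus_{p\geq 0}\Hom_\Dd(\Lambda,\SSS^{-p}\Lambda)=\overline{\Lambda}$ (the summands with $p<0$ vanishing for degree reasons). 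Keller's description of triangulated hulls of orbit categories then identifies $\Cc_\Lambda$ with the Verdier quotient $\per(\Gamma)/\Dd^b_{fd}(\Gamma)$, and the $\tau_2$-finiteness hypothesis becomes exactly the condition that $H^0(\Gamma)=\overline{\Lambda}$ be finite dimensional.

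It thus suffices to prove the general claim: for $\Gamma$ homologically smooth, bimodule $3$-Calabi--Yau, with cohomology in non-positive degrees and $H^0(\Gamma)$ finite dimensional, the category $\Cc:=\per(\Gamma)/\Dd^b_{fd}(\Gamma)$ is Hom-finite and $2$-Calabi--Yau with cluster-tilting object the image of $\Gamma$. Smoothness guarantees $\Dd^b_{fd}(\Gamma)\subseteq\per(\Gamma)$, so the quotient is well-posed. For Hom-finiteness I would introduce the standard t-structure on $\Dd(\Gamma)$ with aisle $\Dd^{\leq 0}(\Gamma)$ --- which exists precisely because $H^{>0}(\Gamma)=0$ and $H^0(\Gamma)$ is finite dimensional --- and exhibit a \emph{fundamental domain} $\Ff$, consisting of perfect objects with cohomology in a fixed bounded range, on which the projection $\per(\Gamma)\to\Cc$ is bijective on objects and on which morphisms in $\Cc$ are genuinely computed inside $\Dd(\Gamma)$. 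Such objects have finite dimensional cohomology in finitely many degrees, whence $\Hom_\Cc$ is finite dimensional; in particular $\End_\Cc(\pi\Lambda)=H^0(\Gamma)=\overline{\Lambda}$.

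For the $2$-Calabi--Yau property I would use that smoothness together with the bimodule $3$-Calabi--Yau structure makes $\Dd^b_{fd}(\Gamma)$ itself $3$-Calabi--Yau and puts it in Serre duality with $\per(\Gamma)$, giving a bifunctorial isomorphism $D\Hom_\Dd(Y,X)\iso\Hom_\Dd(X,Y[3])$ whenever one of $X,Y$ lies in $\Dd^b_{fd}(\Gamma)$. The task is to transport this through the localization with a shift from $3$ to $2$: for $X,Y\in\Ff$ one expresses $\Hom_\Cc(X,Y[2])$ and $\Hom_\Cc(Y,X)$ via the localization triangles and runs a dévissage along the t-structure, the single degree drop arising from the way the finite dimensional correction terms are paired by the $3$-Calabi--Yau duality on $\Dd^b_{fd}(\Gamma)$. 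Cluster-tiltingness then amounts to $\add(\pi\Gamma)=\{X\mid\Hom_\Cc(X,\pi\Gamma[1])=0\}$: non-positivity of the cohomology of $\Gamma$ yields $\Hom_\Cc(\pi\Gamma,\pi\Gamma[1])=0$ directly, while for the converse one takes $X$ with $\Hom_\Cc(X,\pi\Gamma[1])=0$, uses the t-structure to force its cohomology to be concentrated in degree $0$ as a projective $H^0(\Gamma)$-module, and reconstructs $X\in\add(\pi\Gamma)$.

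The main obstacle I anticipate is the $2$-Calabi--Yau step: arranging the fundamental domain so that quotient morphisms are truly computed in $\Dd(\Gamma)$, and then carrying out the dévissage that converts the bimodule $3$-Calabi--Yau duality into a $2$-Calabi--Yau duality on $\Cc$ with the correct functoriality and shift. Keller's properties of $\Pi_3(\Lambda)$ and the identification of the triangulated hull with $\per(\Gamma)/\Dd^b_{fd}(\Gamma)$ are nontrivial but may be quoted as inputs.
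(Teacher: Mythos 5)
Your proposal is correct, and it takes essentially the same approach as the proof this paper relies on: the theorem is quoted from \cite[Theorem~4.10]{Ami09} without proof, and Amiot's argument there is exactly your plan --- form the $3$-Calabi--Yau completion $\Gamma=T_\Lambda(\Theta[2])$, identify the triangulated hull $\Cc_\Lambda$ with $\per(\Gamma)/\Dd^b_{fd}(\Gamma)$ via Keller's dg orbit category machinery, and note that $\tau_2$-finiteness is exactly finite-dimensionality of $H^0(\Gamma)\iso\overline{\Lambda}$. The general statement you reduce to (Hom-finiteness, the $2$-Calabi--Yau property, and cluster-tiltingness of $\pi\Gamma$ for a homologically smooth, bimodule $3$-Calabi--Yau dg algebra with cohomology in non-positive degrees and finite dimensional $H^0$) is precisely the first main theorem of \cite{Ami09}, proved there by the same fundamental-domain/t-structure d\'evissage and the relative Serre duality between $\per(\Gamma)$ and $\Dd^b_{fd}(\Gamma)$ that you describe.
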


\subsection{Jacobian algebras and 2-Calabi-Yau-tilted algebras}

Quivers with potentials and the associated Jacobian algebras have been studied in \cite{DWZ}.
Let $Q$ be a finite quiver. For each arrow $a$ in $Q$, the \emph{cyclic derivative} $\partial_a$ with
respect to $a$ is the unique linear map
$$\partial_a \colon kQ\rightarrow kQ$$
which sends a path $p$ to the sum $\sum_{p=uav}vu$ taken over all decompositions of the 
path $p$ (where $u$ and $v$ are possibly idempotent elements $e_i$ associated to  a vertex $i$).
A \emph{potential} on $Q$ is any (possibly infinite) linear combination $W$ of cycles 
in $Q$. The associated Jacobian algebra is
$$\Jac(Q,W):=k\hat{Q}/\langle \partial_aW\mid a\in Q_1\rangle,$$
where $k\hat{Q}$ is the completed path algebra (that is the completion of the path algebra $kQ$ at the ideal generated by the arrows of $Q$) and $\langle\partial_aW\mid a\in Q_1\rangle$ is the closure of the ideal generated by $\partial_a W$ for $a\in Q_1$.

Associated with any quiver with potential $(Q,W)$, a cluster category $\Cc(Q,W)$ is constructed in \cite{Ami09}. This construction uses the notion of Ginzburg dg algebras. We refer the reader to \cite{Ami09} for explicit details. When the associated Jacobian algebra is finite dimensional, the category $\Cc(Q,W)$ is $2$-Calabi-Yau and endowed with a canonical cluster-tilting object $T_{(Q,W)}$ whose endomorphism algebra is isomorphic to $\Jac(Q,W)$. The next result gives a link between cluster categories associated with algebra of global dimension $\leq 2$ and cluster categories associated with quiver with potential.

\begin{thma}[{\cite[Theorem 6.12 \textit{a)}]{Kel09}}] \label{keller}
Let $\Lambda=kQ/I$ be a $\tau_2$-finite algebra of global dimension $\leq 2$, such that $I$
is generated by a finite minimal set of relations $\{ r_i \}$. (By this we mean that the set $\{r_i\}$ is the disjoint union of sets representing a basis of the $\Ext_{\Lambda}^2$-space between any two simple $\Lambda$-modules.) The relation
$r_i$ starts at the vertex $s(r_i)$ and ends at the vertex
$t(r_i)$. 
Then there is a triangle equivalence:
$$\Cc_{\Lambda}\iso \Cc(\overline{Q},W),$$ where the quiver $\overline{Q}$ is the quiver $Q$ with additional
arrows $a_i \colon t(r_i)\rightarrow s(r_i)$, and the potential $W$ is
$\sum_i a_ir_i$. This equivalence sends the cluster-tilting object $\pi(\Lambda)$ on the cluster-tilting object $T_{(\overline{Q},W)}$.

As a consequence we have an isomorphism of algebras:
$$\End_\Cc(\pi \Lambda) \iso \Jac(\overline{Q},W).$$ \end{thma}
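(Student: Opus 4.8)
The plan is to prove this statement by comparing the two differential graded (dg) algebras whose derived categories define the two cluster categories. Recall that, for a $\tau_2$-finite algebra $\Lambda$ of global dimension $\leq 2$, the generalized cluster category $\Cc_\Lambda$ is realized in \cite{Ami09} as the Verdier quotient $\per\Pi/\Dd^{fd}(\Pi)$ of the perfect derived category by the subcategory of objects with finite total homology, where $\Pi=\Pi_3(\Lambda)$ is the $3$-Calabi-Yau completion of $\Lambda$, that is the tensor dg algebra $T_\Lambda(\Theta)$ with $\Theta=\mathbf{R}\Hom_{\Lambda^e}(\Lambda,\Lambda^e)[2]$. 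On the other side, $\Cc(\overline{Q},W)$ is defined as $\per\Gamma/\Dd^{fd}(\Gamma)$ for the Ginzburg dg algebra $\Gamma=\Gamma(\overline{Q},W)$. The whole statement follows once one produces a quasi-isomorphism of dg algebras $\Pi_3(\Lambda)\simeq\Gamma(\overline{Q},W)$: quasi-isomorphic dg algebras have triangle-equivalent perfect and finite-dimensional derived categories, hence equivalent quotients, and the image of the free rank-one module, which is $\pi(\Lambda)$ on one side and $T_{(\overline{Q},W)}$ on the other, is preserved.

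First I would write down the minimal projective bimodule resolution of $\Lambda=kQ/I$. Because $\gldim\Lambda\leq 2$ and $\{r_i\}$ is a minimal set of relations, $\Lambda$ admits a length-two resolution
\[
0 \to \bigoplus_i \Lambda e_{t(r_i)}\ten e_{s(r_i)}\Lambda \to \bigoplus_{a\in Q_1}\Lambda e_{t(a)}\ten e_{s(a)}\Lambda \to \bigoplus_{j\in Q_0}\Lambda e_j\ten e_j\Lambda \to \Lambda \to 0,
\]
whose three nonzero projective terms are indexed by the relations, the arrows, and the vertices, the differentials being given by the relations $r_i$ and by the arrows respectively. Applying $\mathbf{R}\Hom_{\Lambda^e}(-,\Lambda^e)$ and shifting by $[2]$ computes $\Theta$: since dualizing a projective bimodule $\Lambda e_x\ten e_y\Lambda$ returns $\Lambda e_y\ten e_x\Lambda$, the relation, arrow and vertex terms contribute, after the shift, generators in the Ginzburg degrees $0$, $-1$ and $-2$, namely a new arrow $a_i\colon t(r_i)\to s(r_i)$ in degree $0$ for each relation, a reversed arrow $a^*\colon t(a)\to s(a)$ in degree $-1$ for each $a\in Q_1$, and a loop $t_j$ in degree $-2$ at each vertex $j$. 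Thus the graded quiver underlying $T_\Lambda(\Theta)$ is exactly $\overline{Q}$ with its Ginzburg grading.

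The heart of the proof is to match the \emph{differentials}, i.e.\ to show that the deformation of the Calabi-Yau completion dictated by the (possibly higher) multiplicative structure of $\Lambda$ agrees with the Ginzburg differential attached to $W=\sum_i a_i r_i$. This is precisely the content of the explicit computation of the inverse dualizing complex and its $A_\infty$-structure carried out by Van den Bergh in the appendix to \cite{Kel09}. The key point is that the minimal relations $r_i$, regarded as elements of $e_{s(r_i)}\Lambda e_{t(r_i)}$, are exactly the degree-$2$ structure constants encoded by the last differential of the bimodule resolution; pairing each $r_i$ with its dual arrow $a_i$ produces the potential $W$, and one verifies $\partial_{a_i}W=r_i$ while $\partial_a W$ recovers the remaining relations. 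This produces the desired quasi-isomorphism $\Pi_3(\Lambda)\xrightarrow{\ \sim\ }\Gamma(\overline{Q},W)$, the two models differing only in whether the relations of $I$ are imposed directly in degree $0$ (in $T_\Lambda\Theta$) or realized through the contractible pairs $(a_i^*,\,d a_i^*=r_i)$ present in the larger, freely generated $\Gamma$.

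I expect this differential-matching step to be the main obstacle, as it requires controlling the full $A_\infty$-structure on $\mathbf{R}\Hom_{\Lambda^e}(\Lambda,\Lambda^e)$ rather than merely its cohomology; the degree-count above is purely formal, but identifying the potential $W=\sum_i a_i r_i$ on the nose is where the real work lies. Granting the quasi-isomorphism $\Pi_3(\Lambda)\simeq\Gamma(\overline{Q},W)$, the equivalence $\Cc_\Lambda\iso\Cc(\overline{Q},W)$ and the correspondence $\pi(\Lambda)\mapsto T_{(\overline{Q},W)}$ follow formally, since both cluster-tilting objects are the images of the respective free rank-one dg modules. Finally, taking $H^0$ of $\Gamma$ and using that the equivalence restricts to an isomorphism on endomorphism algebras of these objects yields $\End_\Cc(\pi\Lambda)\iso H^0\Gamma=\Jac(\overline{Q},W)$, which is the asserted consequence.
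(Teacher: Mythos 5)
This statement is not proved in the paper at all: it is quoted verbatim as Theorem~6.12~\textit{a)} of [Kel09] and used as a black box, so the only meaningful comparison is with Keller's original argument. Your sketch is essentially a faithful reconstruction of that argument: both cluster categories are realized as quotients $\per \Gamma/\Dd^{fd}(\Gamma)$ of dg algebras, namely the $3$-Calabi--Yau completion $\Pi_3(\Lambda)$ and the Ginzburg dg algebra $\Gamma(\overline{Q},W)$, and the theorem reduces to a quasi-isomorphism between these, obtained by dualizing the length-two bimodule resolution of $\Lambda$ and matching the resulting generators and differentials against the Ginzburg data, with the delicate $A_\infty$/bimodule bookkeeping carried out in [Kel09] and Van den Bergh's appendix. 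The one technical refinement in Keller's actual proof, worth making explicit, is that instead of tensoring over $\Lambda$ itself (where the relations are imposed in degree $0$ and the resulting algebra is not free), one first replaces $\Lambda$ by the quasi-isomorphic dg algebra obtained from $kQ$ by adjoining generators $\rho_i$ in degree $-1$ with $d\rho_i=r_i$ --- this is exactly where $\gldim\Lambda\leq 2$ and the minimality of $\{r_i\}$ are used --- so that the Calabi--Yau completion becomes free as a graded algebra on generators matching those of $\Gamma(\overline{Q},W)$ (with $\rho_i$ corresponding to $a_i^*$); this is precisely the ``contractible pairs'' discrepancy you flag at the end of your proposal, resolved on the dg-algebra side rather than after the fact.
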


\begin{dfa}
A potential $W$ on a quiver $Q$ is said to be \emph{rigid} if any cycle $p$ of $Q$ is cyclically equivalent to an element in the Jacobian ideal $\langle \partial_aW\mid a\in Q_1\rangle$.
\end{dfa}
By \cite{DWZ} rigidity is stable under mutation, and it implies that the Gabriel quiver of the Jacobian algebra has no loops nor 2-cycles. 

We end this section with two results linking the mutation of quivers with potential and mutation of cluster-tilting objects in cluster categories.

The first result links the mutation of cluster-tilting objects in the acyclic cluster category and the mutation of rigid quivers with potential defined in \cite{DWZ}.
Acyclic cluster categories are equivalent to stable categories of some Frobenius categories associated to a certain reduced expression in the corresponding Coxeter group \cite[Theorem II.3.4]{BIRSc}. By \cite[Corollary 6.7]{BIRSm} these categories are all \emph{liftable} (see \cite[Section 5]{BIRSm} for definition). Therefore  \cite[Corollary 5.4(b)]{BIRSm} implies the following.
\begin{thma}[Buan-Iyama-Reiten-Smith] \label{birs}
Let $\Delta$ be an acyclic quiver, and $T$ be a basic cluster-tilting object in the cluster category $\Cc_{\Delta}$.  Assume that there exists a quiver with rigid potential $(Q,W)$ with an isomorphism
\[ \End_{\Cc_{\Delta}}(T)\iso \Jac(Q,W). \]
Let $i$ be a vertex of $Q$ and denote by $T_i$ the indecomposable summand of $T\iso T_i\oplus T'$ corresponding to $i$. Then there is an isomorphism
\[ \End_{\Cc_{\Delta}}(\mu_{T_i}(T))\iso \Jac(\mu_i(Q,W)), \]
where $\mu_{T_i}(T)$ is defined in Theorem~\ref{IyamaYoshino} and where $\mu_i(Q,W)$ is the mutation at $i$ of $(Q,W)$ as defined in \cite{DWZ} (see also Subsection~\ref{subsection graded mutation} for definition).
\end{thma}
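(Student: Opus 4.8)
The plan is to obtain the statement as an assembly of three results from the literature, exactly as the discussion preceding it suggests; no genuinely new argument is needed inside the present paper. The only conceptual difficulty is that two a priori unrelated notions of mutation must be matched under the passage $T\mapsto \End_{\Cc_\Delta}(T)$ and $(Q,W)\mapsto \Jac(Q,W)$: on one side the \emph{Iyama--Yoshino} mutation $\mu_{T_i}(T)$, defined through the exchange triangles of Theorem~\ref{IyamaYoshino}; on the other side the purely combinatorial \emph{Derksen--Weyman--Zelevinsky} mutation $\mu_i(Q,W)$ of \cite{DWZ}. The whole point is that, under the stated hypotheses, these two operations correspond.

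First I would realize $\Cc_\Delta$ in a framework where cluster-tilting objects, together with the data of a potential, can be lifted and tracked. By \cite[Theorem~II.3.4]{BIRSc}, for an acyclic quiver $\Delta$ the cluster category $\Cc_\Delta$ is triangle equivalent to the stable category $\underline{\mathcal{E}}$ of a Frobenius category $\mathcal{E}$ attached to a suitable reduced expression in the Coxeter group of $\Delta$. Next I would record that these Frobenius categories satisfy the key technical condition: by \cite[Corollary~6.7]{BIRSm} each such $\mathcal{E}$ is \emph{liftable} in the sense of \cite[Section~5]{BIRSm}. Liftability is precisely what gives enough control over the minimal approximations occurring in the exchange triangles to read off a mutated potential.

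It then remains to apply the compatibility statement directly. Given the hypothesis $\End_{\Cc_\Delta}(T)\iso \Jac(Q,W)$ with $W$ rigid, \cite[Corollary~5.4(b)]{BIRSm} asserts that after Iyama--Yoshino mutation at the summand $T_i$ the endomorphism algebra is again a Jacobian algebra, and that the associated quiver with potential is the DWZ mutation $\mu_i(Q,W)$. Chaining the equivalence $\Cc_\Delta\iso\underline{\mathcal{E}}$ with this result yields the desired isomorphism
\[ \End_{\Cc_\Delta}(\mu_{T_i}(T))\iso \Jac(\mu_i(Q,W)), \]
with no further computation.

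The main obstacle, such as it is, lies entirely in checking that the hypotheses of the cited results hold rather than in any calculation here. Concretely, one must confirm that acyclicity of $\Delta$ places $\Cc_\Delta$ among the stable categories covered by \cite{BIRSc}, and that the \emph{rigidity} of $W$ is what guarantees the absence of loops and $2$-cycles, so that $\mu_i(Q,W)$ is genuinely defined and matches the exchange-triangle description supplied by liftability. Once these points are verified, the three cited statements combine to give the theorem.
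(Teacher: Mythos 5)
Your proposal is correct and follows exactly the paper's own justification: the paper derives this theorem by the same chain of citations, namely realizing $\Cc_\Delta$ as the stable category of a Frobenius category via \cite[Theorem II.3.4]{BIRSc}, invoking liftability from \cite[Corollary 6.7]{BIRSm}, and then applying \cite[Corollary 5.4(b)]{BIRSm}. Nothing further is needed.
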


The second result \cite[Theorem 3.2]{KY11} gives, for two quivers with potential linked by a mutation, an equivalence between the associated cluster categories.

\begin{thma}[Keller-Yang]\label{kelleryang}
Let $(Q,W)$ be a quiver with rigid potential whose Jacobian algebra is finite dimensional, and $i\in Q_0$ a vertex. Then there exists a triangle equivalence $$\Cc(\mu_i(Q,W))\iso \Cc (Q,W)$$ sending the cluster-tilting object $T_{\mu_i(Q,W)}\in \Cc(\mu_i(Q,W))$ onto the cluster-tilting object $\mu_{T_i}(T_{(Q,W)})\in \Cc(Q,W)$, where $T_i$ is the indecomposable summand of $T_{(Q,W)}$ associated with the vertex $i$ of $Q$.

As a consequence we get an isomorphism of algebras 
$$\End_{\Cc(Q,W)}(\mu_{T_i}(T_{(Q,W)})\iso \Jac (\mu_i(Q,W)).$$
\end{thma}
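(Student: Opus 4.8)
This is a theorem of Keller and Yang, and the plan is to reconstruct its proof through the dg-algebraic model of the cluster category. To the quiver with potential $(Q,W)$ one attaches its complete Ginzburg dg algebra $\Gamma=\Gamma(Q,W)$: the completed path algebra of the graded quiver obtained from $Q$ by adjoining, for each arrow $a$, a reversed arrow $a^*$ of degree $-1$, and, for each vertex $i$, a loop $t_i$ of degree $-2$, with differential determined by sending $a^*$ to $\partial_aW$ and $t_i$ to $e_i\bigl(\sum_a[a,a^*]\bigr)e_i$. One has $H^0(\Gamma)\iso\Jac(Q,W)$, and by Amiot's construction $\Cc(Q,W)=\per\Gamma/\Dd^b_{fd}\Gamma$, the canonical cluster-tilting object $T_{(Q,W)}$ being the image of the free module $\Gamma$. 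Writing $\Gamma'=\Gamma(\mu_i(Q,W))$, it therefore suffices to produce a triangle equivalence $\per\Gamma'\to\per\Gamma$ restricting to an equivalence $\Dd^b_{fd}\Gamma'\to\Dd^b_{fd}\Gamma$; such a functor descends to the Verdier quotients and gives the desired equivalence $\Cc(\mu_i(Q,W))\iso\Cc(Q,W)$.

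First I would realize this equivalence by dg Morita theory. I build a tilting object $T=\bigoplus_jT_j$ in $\per\Gamma$ adapted to $i$: put $T_j=e_j\Gamma$ for $j\ne i$, and let $T_i$ be the cone of the minimal left $\add(\bigoplus_{j\ne i}e_j\Gamma)$-approximation of $e_i\Gamma$ whose components are the arrows of $Q$ incident with $i$. This is precisely the Iyama--Yoshino exchange triangle of Theorem~\ref{IyamaYoshino} lifted to $\per\Gamma$, so $T$ is a compact generator of $\per\Gamma$ and its image in $\Cc(Q,W)$ is $\mu_{T_i}(T_{(Q,W)})$. By Keller's dg Morita theorem, once one knows that the dg endomorphism algebra $\End_\Gamma(T)$ is quasi-isomorphic to $\Gamma'$, the bimodule $T$ induces a triangle equivalence $-\lten_{\Gamma'}T\colon\per\Gamma'\to\per\Gamma$ sending the free module $\Gamma'$, hence $T_{\mu_i(Q,W)}$, to $T$; since $T$ has finite-dimensional total homology over $\Gamma$ exactly when the source does, the equivalence restricts to $\Dd^b_{fd}$ as required.

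The technical heart, and the main obstacle, is the computation of $\End_\Gamma(T)$ and its identification with $\Gamma'$, which naturally factors through the two-step mutation of Derksen--Weyman--Zelevinsky. The direct computation yields the Ginzburg dg algebra of the \emph{premutation} $\tilde\mu_i(Q,W)$, in which every arrow at $i$ is reversed, a composite arrow $[ba]$ is added for each path $ba$ through $i$, and the potential becomes $[W]+\sum[ba]\,\bar b\,\bar a$ (up to cyclic equivalence); this is not yet $\Gamma'$ because the premutation carries a trivial summand of $2$-cycles. To close the gap I would invoke the splitting of the premutation into its trivial and reduced parts and show that passing to the reduced part removes only a contractible dg ideal, so that $\Gamma(\tilde\mu_i(Q,W))$ and $\Gamma(\mu_i(Q,W))$ are quasi-isomorphic; the careful matching of the differentials $d(a^*)=\partial_aW$ across this reduction is where the bookkeeping is delicate, and the completeness of the path algebras is essential for the reduction to be well defined. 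Once this quasi-isomorphism is in place, the stated consequence is immediate: the equivalence identifies $\mu_{T_i}(T_{(Q,W)})$ with the canonical cluster-tilting object $T_{\mu_i(Q,W)}$, whose endomorphism algebra in $\Cc(\mu_i(Q,W))$ is $\Jac(\mu_i(Q,W))$ by construction, whence $\End_{\Cc(Q,W)}(\mu_{T_i}(T_{(Q,W)}))\iso\Jac(\mu_i(Q,W))$.
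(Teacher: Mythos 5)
There is nothing in the paper to compare your argument against: the paper does not prove this statement, but quotes it verbatim as Theorem~3.2 of Keller--Yang [KY11] and only uses it as a black box. So the relevant benchmark is the original proof in [KY11], and your sketch is, in outline, a faithful reconstruction of it: realize $\Cc(Q,W)$ as the Verdier quotient $\per\Gamma/\Dd_{fd}(\Gamma)$ of the complete Ginzburg dg algebra (Amiot's construction), build a dg $\Gamma'$-$\Gamma$-bimodule $T$ with $T_j=e_j\Gamma$ for $j\neq i$ and $T_i$ a cone over $e_i\Gamma$, prove that $\End_\Gamma(T)$ is quasi-isomorphic to $\Gamma'=\Gamma(\mu_i(Q,W))$ by first computing it as the Ginzburg algebra of the premutation and then invoking invariance of the Ginzburg algebra under reduction up to quasi-isomorphism, and let dg Morita theory produce equivalences of $\per$ and $\Dd_{fd}$ that descend to the quotients and match the canonical cluster-tilting objects via the Iyama--Yoshino exchange triangles. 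Two caveats. First, a small imprecision: the components of the map whose cone defines $T_i$ are not the arrows of $Q$ ``incident with'' $i$ but only those in one direction (the arrows starting at $i$ lift the minimal left $\add$-approximation of Theorem~\ref{IyamaYoshino}; the arrows ending at $i$ give the second, twist-related functor of [KY11]), and similarly your justification that the equivalence restricts to $\Dd_{fd}$ should be phrased intrinsically ($\Dd_{fd}$ consists of the objects whose total graded Hom from any compact generator is finite dimensional). Second, and more importantly, the two steps you yourself label the technical heart --- the computation of the dg endomorphism algebra of $T$ and the quasi-isomorphism invariance under reduction/right-equivalence of QPs --- are exactly where the substance of [KY11] lies; you identify them correctly but do not carry them out, so what you have is a sound proof outline of the cited theorem rather than a complete proof.
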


For a sequence $s=(i_1,\ldots, i_r)$ of vertices of $Q$, we denote by $\mu_s$ the composition $\mu_{i_r}\circ\mu_{i_{r-1}}\circ\cdots\circ \mu_{i_1}$ and by $\mu_{s^-}$ the composition $\mu_{i_1}\circ\mu_{i_{2}}\circ\cdots\circ \mu_{i_r}$.

For a bijection  between $\{1,\ldots, n\}$ and the indecomposable summands of a basic cluster-tilting object $T$, we obtain a natural bijection  between $\{1,\ldots, n\}$ and the indecomposable summands  of $\mu_i(T)$. Therefore, once such bijection is fixed,   we will also use the notations  $\mu_s$ and $\mu_{s^-}$ for mutating cluster-tilting objects.

\subsection{Classical results on acyclic cluster categories}

In this paper we are interested in algebras of global dimension $\leq 2$ whose generalized cluster category is equivalent to the cluster category associated to an acyclic quiver. Since we will strongly make use of them, we now recall some results for acyclic cluster categories.

The following theorem  follows from a result by Happel and Unger \cite{HU03}.
\begin{thma}[{\cite[Prop. 3.5]{BMRRT}} -- see also \cite{Hub08}] \label{connectivity}
Let $Q$ be an acyclic quiver, and let $T$ be a cluster-tilting object of $\Cc_Q$. Then there exists a sequence of mutations linking the cluster-tilting object $T$ to the canonical cluster-tilting object $\pi_Q(kQ)$. In other words, the cluster-tilting graph of the acyclic cluster category is connected.
\end{thma}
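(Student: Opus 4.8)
The plan is to connect an arbitrary basic cluster-tilting object $T$ to the canonical object $\pi_Q(kQ)$ by combining the explicit description of $\Cc_Q$ with the known connectivity of the tilting modules of $kQ$. Recall that the indecomposable objects of $\Cc_Q=\Dd^b(kQ)/\SSS$ are the indecomposable $kQ$-modules together with the shifted projectives $P_i[1]$, $i\in Q_0$, and that every basic cluster-tilting object has exactly $n=|Q_0|$ pairwise non-isomorphic indecomposable summands. The canonical object $\pi_Q(kQ)$ is the image of the free module, which is the maximum in the poset of tilting modules ordered by $\mathrm{Gen}$. A key structural feature I would use throughout is Theorem~\ref{IyamaYoshino}: every summand of a cluster-tilting object can be mutated, so the cluster-tilting graph is $n$-regular and every mutation is reversible. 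This is what makes \emph{connectivity}, rather than mere one-directional reachability, accessible.

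First I would reduce to tilting modules. Call a cluster-tilting object \emph{separated} if none of its summands is a shifted projective; these are exactly the (images of) basic tilting $kQ$-modules. I would show that any $T$ can be linked by mutations to a separated object, arguing by induction on the number of shifted-projective summands. If $T=P_i[1]\oplus T'$, I would examine the exchange triangles of Theorem~\ref{IyamaYoshino} attached to the summand $P_i[1]$: because the shifted projectives are the ``extreme'' objects of $\Cc_Q$ (the complement $T'$ lies, up to the $\SSS$-action, in $\mod kQ$, and the relevant $\Hom$- and $\Ext$-spaces are then governed by the module structure), the new summand $P_i^\ast$ produced by $\mu_{P_i[1]}(T)$ is a genuine $kQ$-module. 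This strictly decreases the number of shifted-projective summands while leaving $T'$ untouched, so iterating reaches a separated object.

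It then remains to connect any two tilting $kQ$-modules, and here I would invoke the result of Happel and Unger \cite{HU03}. A classical tilting exchange between two tilting modules is a special case of a cluster mutation, so it suffices to show that from any tilting module one can reach the maximum $kQ=\pi_Q(kQ)$ through tilting modules. The delicate point is precisely where classical tilting theory stalls: an almost complete tilting module with a single complement admits no ``upward'' classical exchange. But in this situation the second Iyama-Yoshino complement is a shifted projective, so the cluster mutation is still defined and allows the induction to continue inside $\Cc_Q$, possibly detouring through non-separated objects before returning to the module region. The main obstacle I anticipate is making this second step rigorous: controlling exactly which mutations convert a shifted projective into a module, and verifying that the Happel-Unger connectivity of tilting modules is preserved under the passage between classical tilting exchange and cluster mutation at these boundary cases. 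Once that interplay is pinned down, combining it with the reduction of the previous paragraph connects every cluster-tilting object to $\pi_Q(kQ)$, which is the assertion.
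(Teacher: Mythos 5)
The paper does not prove Theorem~\ref{connectivity} at all: it imports the statement from \cite{BMRRT} (Prop.~3.5) and \cite{Hub08}, remarking that it rests on a theorem of Happel and Unger \cite{HU03}. So your proposal has to be judged against those proofs, and as it stands it is not a proof. Your first step is correct, and is indeed the standard reduction, although your justification (shifted projectives being ``extreme'') is not the actual reason. The precise argument is: write $T=P_i[1]\oplus T'$ with $T'=M\oplus P_J[1]$, $M$ a module, $i\notin J$. Since a basic cluster-tilting object has $n=|Q_0|$ summands, $M$ is a rigid module with $|Q_0\setminus(J\cup\{i\})|$ summands over the path algebra of the full subquiver on $Q_0\setminus(J\cup\{i\})$, hence a tilting module over that algebra, hence sincere on it. As $\Ext_{\Cc_Q}^1(M,P_l[1])\neq 0$ precisely when $M$ is supported at the vertex $l$ (a standard computation in \cite{BMRRT}), no shifted projective other than $P_i[1]$ is compatible with $T'$, so the second Iyama--Yoshino complement (Theorem~\ref{IyamaYoshino}) of $T'$ is forced to be a module, and the number of shifted summands drops.

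The genuine gap is your second step, which is where the whole content of the theorem lies. The statement you reduce to --- ``from any tilting module one can reach the maximum $kQ$ through tilting modules'', which you attribute to Happel--Unger --- is \emph{false}. Take $Q$ of type $\widetilde{A}_{1,1}$ (the Kronecker quiver), write $P(0)=P_2,\,P(1)=P_1,\,P(2),\dots$ for the indecomposable preprojectives and $I(0)=S_1,\,I(1),\dots$ for the indecomposable preinjectives. The basic tilting modules are exactly the adjacent pairs $P(j)\oplus P(j+1)$ and $I(j)\oplus I(j+1)$, and the exchange graph of $\Cc_Q$ is a single line: the preprojective ray ending at $kQ=P(0)\oplus P(1)$, then the three objects $P(0)\oplus P_1[1]$, $kQ[1]=P_1[1]\oplus P_2[1]$, $I(0)\oplus P_2[1]$ involving shifted projectives, then the preinjective ray $I(0)\oplus I(1),\,I(1)\oplus I(2),\dots$. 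Deleting the three non-module vertices disconnects the classical tilting graph into two rays, because a non-sincere almost complete tilting module has a unique module complement; so no sequence of classical exchanges links a preinjective tilting module to $kQ$. Worse for your plan: the unique mutation path in $\Cc_Q$ from $I(j)\oplus I(j+1)$ to $kQ$ first travels \emph{down} the preinjective chain, away from the maximum of the tilting order, before it can leave the module region, so no monotone ``upward induction'' toward $kQ$ can exist, and the detour through non-separated objects is not a boundary correction to a Happel--Unger argument --- it \emph{is} the theorem. You acknowledge the obstacle, but you supply no induction measure, no termination argument, and no criterion for when a path must exit and re-enter the module region; that missing argument is exactly what \cite{HU03}, \cite{BMRRT} and \cite{Hub08} provide, and it is substantially harder than the reduction in your first step.
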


A consequence of this theorem together with Theorem~\ref{birs} is that the endomorphism algebra of a cluster-tilting object in an acyclic cluster category is always a Jacobian algebra of a rigid QP.

Another special feature of acyclic cluster categories is that they can be recognized by the quivers of their cluster-tilting objects.

\begin{thma}[\cite{KR08}] \label{recognition}
Let $\Cc$ be an algebraic triangulated category, which is $\Hom$-finite and $2$-Calabi-Yau. If there exists a cluster-tilting object $T\in\Cc$ such that $\End_\Cc(T)\iso kQ$, where $Q$ is an acyclic quiver, then there exists a triangle equivalence $\Cc\iso \Cc_Q$ sending $T$ onto $\pi_Q(kQ)$. 

As a consequence there is a triangle equivalence $\Cc(Q,0)\to \Cc_Q$ sending $T_{(Q,0)}$ onto $\pi_Q(kQ)$.
\end{thma}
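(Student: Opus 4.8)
The plan is to reconstruct $\Cc$ from the single object $T$ by means of a dg model, and then to exhibit it as the orbit category defining $\Cc_Q$. Since $\Cc$ is algebraic it carries a dg enhancement, and since $T$ is cluster-tilting every object $X$ sits in a triangle $T_1\to T_0\to X\to T_1[1]$ with $T_0,T_1\in\add(T)$ (the standard cluster-tilting approximation in a $2$-Calabi-Yau category, cf.\ \cite{IY}); hence $\Cc=\operatorname{thick}(T)$, i.e.\ $T$ is a classical generator. By Keller's dg Morita theorem this yields a triangle equivalence $\Cc\iso\per B$, where $B=\operatorname{REnd}_\Cc(T)$ is the dg endomorphism algebra of a lift of $T$, with $H^0(B)=\End_\Cc(T)\iso kQ$ and, by the $2$-Calabi-Yau property, $H^1(B)=\Hom_\Cc(T,T[1])=0$ together with $H^i(B)\iso DH^{2-i}(B)$.

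The next step is to build a triangle functor $F\colon\Dd^b(kQ)=\per(kQ)\to\Cc$ sending $kQ$ to $T$. I would obtain it from a morphism of dg algebras $kQ\to B$ lifting the canonical identification $kQ=H^0(B)$. The obstructions to such a lift (an $A_\infty$-morphism out of the formal algebra $kQ$) live in Hochschild cohomology groups $HH^{\geq 2}(kQ,-)$, and these vanish because $kQ$ is hereditary; thus $kQ$ is intrinsically formal and the lift exists. Extension of scalars along $kQ\to B$ then produces a triangle functor $F\colon\per(kQ)\to\per(B)\iso\Cc$ with $F(kQ)\iso T$.

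Then I would descend $F$ to the orbit category. The crucial point is that $F$ intertwines Serre functors: the derived category $\Dd^b(kQ)$ has Serre functor $\mathbb{S}$, while $\Cc$ is $2$-Calabi-Yau, so $\mathbb{S}_\Cc\iso[2]$; the compatibility $F\mathbb{S}\iso[2]F$ (which must be extracted from the construction of $F$ and the Calabi-Yau structure of $B$) gives $F\SSS\iso F$ for $\SSS=\mathbb{S}[-2]$. Consequently $F$ factors through the orbit category $\Dd^b(kQ)/\SSS$, which for acyclic $Q$ is already triangulated and coincides with $\Cc_Q$ \cite{Kel05,BMRRT}. This produces a triangle functor $\bar F\colon\Cc_Q\to\Cc$ with $\bar F(\pi_Q(kQ))\iso T$. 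It remains to check that $\bar F$ is fully faithful on the generator, i.e.\ that the induced maps $\Hom_{\Cc_Q}(\pi_Q kQ,\pi_Q kQ[i])\to\Hom_\Cc(T,T[i])$ are isomorphisms for all $i$, both sides being computed as the same sum $\bigoplus_{p}\Hom_{\Dd^b(kQ)}(kQ,\SSS^{p}kQ[i])$ over the $\SSS$-orbit, and that $\bar F$ is dense; since both categories are generated by the respective cluster-tilting objects, a dévissage along the defining triangles upgrades full faithfulness on $T$ to a triangle equivalence $\Cc\iso\Cc_Q$ carrying $T$ to $\pi_Q(kQ)$.

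The main obstacle is this descent step: verifying the Serre-functor compatibility $F\mathbb{S}\iso[2]F$ at the level of the dg model, and the ensuing identification of graded Hom-spaces that makes $\bar F$ fully faithful. This is precisely where acyclicity and hereditariness are essential, since they simultaneously guarantee intrinsic formality (so that $F$ exists) and force the orbit category to be triangulated and $\Hom$-finite. Finally, the stated consequence is the special case $\Cc=\Cc(Q,0)$: this category is algebraic, $\Hom$-finite and $2$-Calabi-Yau with cluster-tilting object $T_{(Q,0)}$ whose endomorphism algebra is $\Jac(Q,0)\iso kQ$, so the theorem applies and yields a triangle equivalence $\Cc(Q,0)\iso\Cc_Q$ sending $T_{(Q,0)}$ to $\pi_Q(kQ)$.
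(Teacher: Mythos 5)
The paper does not actually prove this statement: it is imported verbatim from Keller--Reiten \cite{KR08} (whose proof relies on an appendix by Van den Bergh), so the only comparison available is with the argument in \cite{KR08}. Your sketch is, in its skeleton, a faithful reconstruction of that argument: $\Cc\iso\per B$ for $B$ the dg endomorphism algebra of $T$ (using that the approximation triangles of \cite{IY} make $T$ a classical generator), construction of a morphism $kQ\to B$ in the homotopy category of dg algebras by obstruction theory, descent to the orbit category, and d\'evissage. One terminological slip: what is used is not ``intrinsic formality'' of $kQ$ (a trivial statement, since $H^*(B)$ is not concentrated in degree $0$ here), but the fact that $kQ$ has a projective bimodule resolution of length $1$, so that $HH^{n}(kQ,M)=0$ for all $n\geq 2$ and \emph{every} bimodule $M$; this kills the obstructions to an $A_\infty$-morphism $kQ\to B$ lifting the identification $H^0(B)\iso kQ$. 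That part of your argument is correct in substance.

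However, the two points you yourself flag as ``the main obstacle'' are not routine verifications to be ``extracted from the construction''; they are the mathematical core of \cite{KR08}, and as written your proposal has genuine gaps there. First, the compatibility $F\mathbb{S}\iso [2]F$ amounts to an isomorphism $D(kQ)\lten_{kQ}B\iso B[2]$ \emph{as objects of the derived category of $kQ$-$B$-bimodules}; the categorical $2$-Calabi--Yau property only provides functorial isomorphisms of $\Hom$-spaces, and lifting it to the bimodule/dg level is exactly what Van den Bergh's appendix to \cite{KR08} proves --- a theorem in its own right, not a formal consequence. Second, even granting $F\SSS\iso F$ as triangle functors, this does not suffice to factor $F$ through the orbit category as a triangle functor; one needs the coherence supplied by the universal property of the \emph{dg} orbit category \cite{Kel05}, which is how \cite{KR08} proceeds. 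Third, your statement that both $\Hom_{\Cc_Q}(\pi_QkQ,\pi_QkQ[i])$ and $\Hom_\Cc(T,T[i])$ are ``computed as the same sum'' $\bigoplus_p\Hom_{\Dd^b(kQ)}(kQ,\SSS^{p}kQ[i])$ begs the question: the left-hand side is this sum by definition of the orbit category, but the right-hand side is $H^i(B)$, and showing that it decomposes in this way (equivalently, that $B$ restricted along $kQ\to B$ is $\bigoplus_p\SSS^{-p}kQ$ in $\Dd(kQ)$) \emph{is} the full-faithfulness assertion; in \cite{KR08} it is again deduced from the bimodule-level Calabi--Yau property. Your final paragraph deducing the consequence for $\Cc(Q,0)$ (which is algebraic, $\Hom$-finite, $2$-Calabi--Yau with cluster-tilting object $T_{(Q,0)}$ of endomorphism algebra $\Jac(Q,0)\iso kQ$) is correct.
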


Note that an analogue of these results is not known for generalized cluster categories. 

An algebra of the form $\End_{\Cc_Q}(T')$, where $T'$ is a cluster-tilting object in $\Cc_Q$, is called a \emph{cluster-tilted algebra of type Q}.

\section{Derived equivalent algebras of tree cluster type}\label{section tree case}
In this section we investigate algebras of global dimension at most 2 whose generalized cluster category is a cluster category associated to a tree. We will strongly use results from~\cite{AO10}.

We start with a definition.
\begin{dfa}(\cite[Def. 5.1]{AO10})
Two algebras $\Lambda_1$ and $\Lambda_2$  of global dimension $\leq 2$ which are $\tau_2$-finite are said to be \emph{cluster equivalent} if there exists a triangle equivalence $\Cc_{\Lambda_1}\rightarrow \Cc_{\Lambda_2}$ between their generalized cluster categories.

If $\Lambda$ is cluster equivalent to $kQ$ where $Q$ is an acyclic quiver, we will say that $\Lambda$ is of \emph{cluster type $Q$}.
\end{dfa}

Two derived equivalent algebras of global dimension $\leq 2$ are cluster equivalent (\cite[Cor.~7.16]{AO10}). Hence, if  the underlying graph of $Q$ is a tree, then the class of algebras of cluster type $Q$ does not depend on the orientation of $Q$.

 From the results of Section~\ref{section background} we deduce the following characterization of algebras of acyclic cluster type.
 
 \begin{cora}\label{recognitioncor}
 Let $\Lambda=kQ/I$ be a $\tau_2$-finite algebra of global dimension $\leq 2$. Let $(\overline{Q},W)$ the associated quiver with potential defined in Theorem~\ref{keller}. Then $\Lambda$ is of acyclic cluster type $\Delta$ if and only if there exists a sequence of mutation $s=i_1,\ldots, i_l$ such that $(\Delta,0)=\mu_s(\overline{Q},W)$. 
 
In this case, $(\overline{Q},W)$ is a rigid quiver with potential, and there exists a triangle equivalence $f:\Cc_{\Lambda}\to\Cc_\Delta$ sending $\pi_\Lambda(\Lambda)$ to $\mu_{s^-}(\pi_\Delta(k\Delta))$.
\end{cora}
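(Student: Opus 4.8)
The plan is to prove Corollary~\ref{recognitioncor} by combining the recognition theorem for acyclic cluster categories (Theorem~\ref{recognition}) with the compatibility between mutation of quivers with potential and mutation of cluster-tilting objects provided by Theorems~\ref{birs} and~\ref{kelleryang}. Throughout, I identify $\Cc_\Lambda$ with $\Cc(\overline{Q},W)$ via Theorem~\ref{keller}, so that the canonical cluster-tilting object $\pi_\Lambda(\Lambda)$ corresponds to $T_{(\overline{Q},W)}$ and $\End_{\Cc_\Lambda}(\pi_\Lambda(\Lambda))\iso\Jac(\overline{Q},W)$.

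\medskip

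For the ``if'' direction, suppose there is a mutation sequence $s=i_1,\ldots,i_l$ with $\mu_s(\overline{Q},W)=(\Delta,0)$. The key point is that $(\Delta,0)$ has an acyclic Gabriel quiver and trivial potential, so its Jacobian algebra is simply the path algebra $k\Delta$. Applying Theorem~\ref{kelleryang} iteratively along the sequence $s$, I obtain a triangle equivalence $\Cc(\overline{Q},W)\iso\Cc(\mu_s(\overline{Q},W))=\Cc(\Delta,0)$, which under the identification above sends $\mu_{s^-}(T_{(\overline{Q},W)})$ to $T_{(\Delta,0)}$. Since $\End_{\Cc(\Delta,0)}(T_{(\Delta,0)})\iso\Jac(\Delta,0)\iso k\Delta$ is the path algebra of an acyclic quiver, Theorem~\ref{recognition} yields a triangle equivalence $\Cc(\Delta,0)\iso\Cc_\Delta$ sending $T_{(\Delta,0)}$ to $\pi_\Delta(k\Delta)$. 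Composing the two equivalences gives the desired $f\colon\Cc_\Lambda\to\Cc_\Delta$, and tracing the cluster-tilting objects through the composite shows that $f$ sends $\pi_\Lambda(\Lambda)$ to $\mu_{s^-}(\pi_\Delta(k\Delta))$; this proves $\Lambda$ is of cluster type $\Delta$ and identifies the image of the canonical cluster-tilting object. Here I must be careful about the bookkeeping of which composition ($\mu_s$ versus $\mu_{s^-}$) acts on the quiver with potential and which on the cluster-tilting object, since Theorem~\ref{kelleryang} reverses the roles: mutating the QP at $i_1$ first corresponds to the last mutation applied to the cluster-tilting object, which is exactly the source of the notational distinction recorded just before this subsection.

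\medskip

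For the ``only if'' direction, assume $\Lambda$ is of cluster type $\Delta$, i.e.\ there is a triangle equivalence $g\colon\Cc_\Lambda\iso\Cc_\Delta$. Then $g(\pi_\Lambda(\Lambda))$ is a cluster-tilting object in the acyclic cluster category $\Cc_\Delta$, so by the connectivity Theorem~\ref{connectivity} there is a mutation sequence linking it to the canonical cluster-tilting object $\pi_\Delta(k\Delta)$. Reversing this sequence gives $\sigma=i_1,\ldots,i_l$ with $g(\pi_\Lambda(\Lambda))\iso\mu_{\sigma^-}(\pi_\Delta(k\Delta))$. Since $\End_{\Cc_\Delta}(\pi_\Delta(k\Delta))\iso k\Delta=\Jac(\Delta,0)$ is a rigid Jacobian algebra (the potential is zero), I apply Theorem~\ref{birs} repeatedly along $\sigma$ to conclude that $\End_{\Cc_\Delta}(\mu_{\sigma^-}(\pi_\Delta(k\Delta)))\iso\Jac(\mu_\sigma(\Delta,0))$, with $\mu_\sigma(\Delta,0)$ rigid. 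Transporting this isomorphism back through $g$ and Theorem~\ref{keller} shows $\Jac(\overline{Q},W)\iso\Jac(\mu_\sigma(\Delta,0))$, so that $(\overline{Q},W)$ and $\mu_\sigma(\Delta,0)$ are right-equivalent quivers with potential; in particular $(\overline{Q},W)$ is rigid, since rigidity is preserved under mutation and $(\Delta,0)$ is rigid. Setting $s=\sigma^{-1}$ (the reverse sequence) and using that mutation is an involution, $\mu_s(\overline{Q},W)=(\Delta,0)$, giving the required sequence.

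\medskip

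The main obstacle I anticipate is not any single deep step—each ingredient (Theorems~\ref{recognition}, \ref{birs}, \ref{kelleryang}, \ref{connectivity}) is quoted from the literature—but rather the careful tracking of cluster-tilting objects and of the two competing mutation conventions $\mu_s$ and $\mu_{s^-}$ across several equivalences, together with the fact that Theorem~\ref{birs} and Theorem~\ref{kelleryang} pair QP-mutation with cluster-tilting-object mutation in opposite orders. A secondary subtlety is justifying that right-equivalence of quivers with potential at the level of Jacobian algebras suffices to transport rigidity and the equality with $(\Delta,0)$; this relies on the fact that $\End$-algebra isomorphisms together with the recognition result pin down the quiver with potential up to right-equivalence, and that rigidity is an invariant of right-equivalence and of mutation as recalled after the definition of rigid potential.
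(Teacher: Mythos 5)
Your ``if'' direction is essentially the paper's proof: identify $\Cc_\Lambda$ with $\Cc(\overline{Q},W)$ by Theorem~\ref{keller}, iterate Theorem~\ref{kelleryang} along $s$, finish with Theorem~\ref{recognition}, and track the cluster-tilting objects. (One bookkeeping remark: the composite equivalence $\Cc(\overline{Q},W)\to\Cc(\Delta,0)$ sends $\mu_{s}(T_{(\overline{Q},W)})$ to $T_{(\Delta,0)}$, equivalently $T_{(\overline{Q},W)}$ to $\mu_{s^-}(T_{(\Delta,0)})$; your intermediate formula has $s$ and $s^-$ interchanged, although your final conclusion is the correct one.) The genuine gap is in the ``only if'' direction, at the step where you pass from the algebra isomorphism $\Jac(\overline{Q},W)\iso\Jac(\mu_{\sigma}(\Delta,0))$ to the claim that $(\overline{Q},W)$ and $\mu_{\sigma}(\Delta,0)$ are right-equivalent quivers with potential. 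No result quoted in the paper gives this: a right-equivalence is an isomorphism of completed path algebras fixing the vertices and carrying one potential to a cyclic-equivalent of the other, which is strictly stronger than an abstract isomorphism of the Jacobian quotients. That this implication is not freely available is visible in the paper itself: Lemma~\ref{uniqueness_rigid_potential_A} and Lemma~\ref{lemma_compatibility} prove statements of exactly this ``uniqueness of the potential'' type, by hand, and only for the special classes $\Mm^A_n$ and $\Mm^{\A}_{p,q}$. Moreover, your rigidity claim for $(\overline{Q},W)$ rests on the same unproved right-equivalence, so both conclusions of the corollary are left hanging.

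The paper's converse avoids this problem by running Theorem~\ref{birs} in the opposite direction. Take $T=f(\pi_\Lambda(\Lambda))\in\Cc_\Delta$, whose endomorphism algebra is $\Jac(\overline{Q},W)$ by Theorem~\ref{keller}, and mutate along the sequence $s$ from Theorem~\ref{connectivity}; Theorem~\ref{birs} then gives $\Jac(\mu_s(\overline{Q},W))\iso\End_{\Cc_\Delta}(\mu_s(T))\iso\End_{\Cc_\Delta}(\pi_\Delta(k\Delta))\iso k\Delta$. The identification of quivers with potential now happens at the \emph{acyclic} end of the sequence, where no uniqueness statement is needed: $\mu_s(\overline{Q},W)$ is reduced (mutation outputs reduced QPs), so its quiver is the Gabriel quiver of its Jacobian algebra, namely $\Delta$, and acyclicity of $\Delta$ forces the potential to vanish; hence the equality $\mu_s(\overline{Q},W)=(\Delta,0)$ on the nose, and rigidity of $(\overline{Q},W)$ follows a posteriori since rigidity is preserved by mutation and right-equivalence. (Your choice of starting Theorem~\ref{birs} from $(\Delta,0)$ does have the merit that its rigidity hypothesis is automatically satisfied, but the price is precisely the identification gap above.) Finally, a second bookkeeping slip: Theorem~\ref{birs} pairs the \emph{same} mutation sequence on both sides, so mutating the cluster-tilting object by $\mu_{\sigma^-}$ yields $\Jac(\mu_{\sigma^-}(\Delta,0))$, not $\Jac(\mu_{\sigma}(\Delta,0))$; only Theorem~\ref{kelleryang} involves the order reversal you describe.
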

  
  \begin{proof}
  By Theorems \ref{keller}, \ref{kelleryang}, and \ref{recognition}, if $(\Delta,0)=\mu_s(\overline{Q},W)$, then  we have equivalences
  $$\Cc_{\Lambda}\iso \Cc(\overline{Q},W)\iso \Cc(\Delta,0)\iso \Cc_\Delta$$ sending $\pi_\Lambda(\Lambda)$ to $\mu_{s^-}(\pi_\Delta(k\Delta))$. 
    
Conversely assume that there exists an equivalence $f:\Cc_{\Lambda}\iso \Cc_\Delta$. Then by Theorem~\ref{connectivity} there exists a sequence of mutations $s$ such that $\pi_\Delta(k\Delta)\iso \mu_s f(\pi_\Lambda(\Lambda))$. So by Theorem~\ref{birs} we have $\Jac(\mu_{s}(\overline{Q},W))\iso \Jac(\Delta,0)=k\Delta$. Since the quiver with potential $\mu_s(\overline{Q},W)$ is reduced, we necessarily have $\mu_s(\overline{Q},W)=(\Delta,0)$. 
  
  \end{proof}

\subsection{Graded equivalence and derived equivalence}
Cluster equivalence is strongly related with graded equivalence. In this subsection, we will recall some results shown in \cite{AO10}.

Let $A=\bigoplus_{p\in \ZZ}A^p$ be $\ZZ$-graded algebra. We denote by $d$ the degree map sending any homogeneous element of $A$ to its degree, and by $\gr A$ the category of finite generated graded modules over $A$.  For a graded module $M=\bigoplus_{p\in\ZZ}M^p$, we denote by $M\langle q\rangle$ the graded module $\bigoplus_{p\in\ZZ}M^{p+q}$ (that is, the degree $p$ part of $M\langle q \rangle$ is $M^{p+q}$). The locally bounded subcategory 
\[\cov{A}{d}:=\add\{A\langle p\rangle  \mid p\in\ZZ\} \subseteq \gr A\]
is called the \emph{$\ZZ$-covering} of $A$.

\begin{dfa}
Let $A_1$ and $A_2$ be two $\ZZ$-graded algebras. Assume that $A_1$ and $A_2$ are isomorphic as algebras. We will say that $A_1$ and $A_2$ are \emph{graded equivalent} (and write $A_1\underset{\gr}{\sim} A_2$) if there exist $r_i\in \ZZ$ and an isomorphism of $\ZZ$-graded algebras \[ \xymatrix{A_2\ar[r]^-\sim_-\ZZ & \bigoplus_{p\in \ZZ}\Hom_{\cov{A_1}{\ZZ}}(\bigoplus_{i=1}^nP_i\langle r_i\rangle, \bigoplus_{i=1}^nP_i\langle r_i+p\rangle)}\]
where $A_1=\bigoplus_{i=1}^nP_i$ is a decomposition of $A_1$ into indecomposable graded projective modules. This is equivalent to the fact that the coverings $\cov{A_1}{d_1}$ and $\cov{A_2}{d_2}$ are equivalent as categories with $\mathbb{Z}$-action.
\end{dfa}

The link between cluster equivalent algebras and graded equivalent algebras is given by the following result.

\begin{thma}[{\cite[Thm 5.8]{AO10}}]\label{AO_derivedeq_gradedeq_thm}
Let $\Lambda_1$ and $\Lambda_2$ be two $\tau_2$-finite algebras of global dimension $\leq 2$. For $i=1,2$, denote by $\Dd_i$ the bounded derived category of $\Lambda_i$, by $\Cc_i$ its cluster category, and by $\pi_i$ the natural functor $\Dd_i \to \Cc_i$. Assume there is $T \in \Dd_1$ such that $\pi_1(T)$ is basic cluster-tilting in $\Cc_1$, and
\begin{enumerate}
\item there is an isomorphism $\xymatrix{\End_{\Cc_1}(\pi_1 T)\ar[r]^-\sim & \End_{\Cc_2}(\pi_2\Lambda_2)}$
\item this isomorphism can be chosen in such a way that the two $\ZZ$-gradings defined on $\overline{\Lambda}_2$, given respectively by
$$\bigoplus_{q\in \ZZ}\Hom_{\Dd_2}(\Lambda_2,\SSS^{-q}\Lambda_2)\textrm{ and } \bigoplus_{p\in \ZZ}\Hom_{\Dd_1}(T, \SSS^{-p}T),$$
are equivalent.
\end{enumerate}
Then the algebras $\Lambda_1$ and $\Lambda_2$ are derived equivalent, and hence cluster equivalent.
\end{thma}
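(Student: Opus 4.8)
The plan is to reduce the statement to a question about $\ZZ$-coverings, settle it at the additive level, and then promote the resulting additive equivalence to a triangle equivalence of the two bounded derived categories by passing to differential graded enhancements. First I would unwind the covering picture. For a $\tau_2$-finite algebra $\Lambda$ of global dimension $\le 2$ one has $\overline\Lambda=\End_\Cc(\pi\Lambda)=\bigoplus_p\Hom_\Dd(\Lambda,\SSS^{-p}\Lambda)$, and a direct computation of graded morphism spaces shows that the covering $\cov{\overline\Lambda}{d}$ is equivalent, as a category with $\ZZ$-action, to the additive subcategory $\add\{\SSS^p\Lambda\mid p\in\ZZ\}\subseteq\Dd^b(\Lambda)$, the action being translation by $\SSS$; indeed $\Hom_{\cov{\overline\Lambda}{d}}(\overline\Lambda\langle a\rangle,\overline\Lambda\langle b\rangle)=(\overline\Lambda)^{b-a}=\Hom_\Dd(\SSS^{-a}\Lambda,\SSS^{-b}\Lambda)$. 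Applying this to $\Lambda_2$ identifies $\cov{\overline\Lambda_2}{d_2}$ with $\mathcal U_2:=\add\{\SSS^p\Lambda_2\}\subseteq\Dd^b(\Lambda_2)$, and the same computation for the cluster-tilting object $\pi_1 T$ identifies $\cov{\End_{\Cc_1}(\pi_1 T)}{d_1}$ with $\mathcal U_1:=\add\{\SSS^p T\}\subseteq\Dd^b(\Lambda_1)$. Hypotheses (1) and (2) say precisely that the two gradings $d_1,d_2$ on the single algebra $\overline\Lambda_2\iso\End_{\Cc_1}(\pi_1 T)$ are graded equivalent, i.e.\ that the two coverings agree as categories with $\ZZ$-action. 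Composing, I obtain an equivalence $\Phi\colon\mathcal U_2\to\mathcal U_1$ commuting with $\SSS$, sending $\Lambda_2$ to $T$, and restricting on degree-zero endomorphisms to the algebra isomorphism $\Lambda_2=\End_\Dd(\Lambda_2)\iso\End_\Dd(T)$ coming from (1).

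Next I would record that $\mathcal U_i$ generates $\Dd^b(\Lambda_i)$ as a thick subcategory: this is immediate for $\mathcal U_2\ni\Lambda_2$, and for $\mathcal U_1$ it follows from $\pi_1 T$ being a cluster-tilting object, hence a generator of $\Cc_1$, so that its preimages $\{\SSS^p T\}$ generate $\Dd^b(\Lambda_1)$. Thus $\Dd^b(\Lambda_i)=\per\mathcal E_i$, where $\mathcal E_i$ is a differential graded enhancement of $\mathcal U_i$ inside a fixed enhancement of $\Dd^b(\Lambda_i)$. If $\Phi$ can be lifted to a quasi-equivalence $\mathcal E_2\to\mathcal E_1$ of dg categories, then Keller's dg Morita theory yields a triangle equivalence $\per\mathcal E_2\to\per\mathcal E_1$, that is $\Dd^b(\Lambda_2)\to\Dd^b(\Lambda_1)$, so that $\Lambda_1$ and $\Lambda_2$ are derived equivalent; the asserted cluster equivalence then follows, since derived equivalent algebras of global dimension $\le 2$ are cluster equivalent. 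Equivalently, one may phrase the goal as showing that $T$ is a tilting complex over $\Lambda_1$ with $\End_\Dd(T)\iso\Lambda_2$ and invoke Rickard's theorem; the content is the same.

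The hard part will be exactly this promotion of $\Phi$ to the triangulated level. The equivalence $\Phi$ is produced on homotopy categories and controls only the $\SSS$-graded morphisms $\Hom_\Dd(\SSS^aX,\SSS^bY)$; it says nothing a priori about morphisms involving the triangulated shift $[n]$ with $n\neq 0$, because $[1]$ is not a power of $\SSS$ (one has $\SSS=\mathbb S[-2]$). In the tilting reformulation this is the statement that the $\SSS$-graded data do not directly see the groups $\Hom_\Dd(T,T[n])$, $n\neq0$, whose vanishing is what must be established. To carry out the lift I would exploit that the grading on $\overline\Lambda$ is non-negative and bounded, a consequence of $\tau_2$-finiteness and $\gldim\Lambda\le 2$ (which gives $\Hom_\Dd(\Lambda,\SSS^{-p}\Lambda)=0$ for $p<0$), together with the functoriality of Amiot's construction of $\Cc_\Lambda$ out of $\Dd^b(\Lambda)$, in order to rigidify the choices and identify the higher ($A_\infty$) structure on $\mathcal E_1$ and $\mathcal E_2$. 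This reduces the lifting problem to a formality-type statement for the dg enhancement of $\add\{\SSS^p\Lambda\}$, after which the dg Morita argument closes the proof.
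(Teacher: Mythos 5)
Note first that the paper you were given never proves this statement: it is imported verbatim from [AO10, Theorem~5.8], and the only thing the paper adds is the remark following it. So your attempt has to stand on its own, and on its own it has a genuine gap.

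What you do correctly: your identification of the coverings with $\mathcal{U}_2=\add\{\SSS^p\Lambda_2\}\subseteq\Dd_2$ and $\mathcal{U}_1=\add\{\SSS^pT\}\subseteq\Dd_1$, and the observation that hypotheses (1) and (2) amount exactly to an equivalence between these subcategories commuting with $\SSS$, is precisely the paper's own remark after the theorem (with the small correction that the graded equivalence sends $\Lambda_2$ to $\bigoplus_i\SSS^{-r_i}T_i$ rather than to $T$ itself, so one should first replace $T$ by this adjusted object). The generation claim and the reformulation ``show that $T$ is a tilting complex with endomorphism ring $\Lambda_2$'' are also the right target.

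The gap is your final paragraph, which is where the entire content of the theorem sits. You propose to lift the $H^0$-level equivalence $\Phi$ to a quasi-equivalence of dg enhancements $\Ee_2\to\Ee_1$ and assert that this ``reduces to a formality-type statement''; no such statement is formulated or proved, and the natural candidate is false or insufficient. Hypotheses (1) and (2) control only the groups $\Hom_{\Dd_1}(T,\SSS^{-p}T)$, i.e.\ the degree-zero cohomology of $\Ee_1$; a quasi-equivalence would in particular force isomorphisms $\Hom_{\Dd_1}(T,\SSS^{-p}T[n])\iso\Hom_{\Dd_2}(\Lambda_2,\SSS^{-p}\Lambda_2[n])$ for \emph{all} $n$, compatibly with composition and with all higher ($A_\infty$/Massey) operations. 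These groups for $n\neq 0$ are invisible to your hypotheses and are genuinely nonzero: already for $\Lambda$ hereditary one has $\Hom_{\Dd}(\Lambda,\SSS^{-1}\Lambda[-1])\iso\Hom_\Lambda(\Lambda,\tau^{-1}\Lambda)\neq 0$, so the enhancement of $\add\{\SSS^p\Lambda\}$ always has cohomology outside degree zero. Consequently there is no ``concentration'' argument, and even if each $\Ee_i$ were formal you would still need to match the full bigraded cohomology categories, not merely their $H^0$'s; neither the non-negativity of the grading on $\overline{\Lambda}_2$ nor ``functoriality of Amiot's construction'' produces such a matching. Put in the tilting reformulation: cluster-tilting of $\pi_1T$ gives $\Hom_{\Dd_1}(T,T[1])=0$, and Serre duality plus non-negativity of the adjusted grading gives $\Hom_{\Dd_1}(T,T[2])\iso D\Hom_{\Dd_1}(T,\SSS T)=0$, but the vanishing of $\Hom_{\Dd_1}(T,T[n])$ for $n<0$ and $n\geq 3$ --- which is exactly what must be established, and what [AO10] invests real work in --- does not follow from anything you write. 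Your proposal correctly locates the hard step and then assumes it away.
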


Note that the functor $\SSS^{-1}$ acts on the subcategory $\pi_1^{-1}\pi_1(T)=\add\{\SSS^{-q}T, q\in \ZZ\}\subset \Dd^b(\Lambda_1)$. As a category with $\ZZ$-action, it is equivalent to the $\ZZ$-covering of the graded algebra $\End_{\Cc_1}(\pi_1 T)$. Therefore, by the above remark, the graded algebras $\End_{\Cc_1}(\pi_1 T)$ and $\End_{\Cc_2}(\pi_2 \Lambda_2)$ are graded equivalent if and only if there is an equivalence between the additive categories $\pi_1^{-1}\pi_1(T)$ and $\pi_2^{-1}\pi_2(\Lambda_2)$ as categories with $\SSS$-action.

\subsection{Mutation of a graded quiver with potential}\label{subsection graded mutation}

In order to apply Theorem~\ref{AO_derivedeq_gradedeq_thm} we make use of a tool: the left (or right) mutation of a graded quiver with potential which extends the Derksen-Weyman-Zelevinsky mutation of a quiver with potential \cite{DWZ}. All definitions and results described in this subsection are proved in \cite[Section 6]{AO10}. 

\begin{dfa}[{\cite{AO10}}]
Let $(Q,W,d)$ be a $\ZZ$-graded quiver with potential homogeneous
  of degree 1 (graded QP for short). Let $i\in Q_0$ be a vertex, such that there are neither loops
  nor 2-cycles incident to $i$. We define $\mu_i^{\rL}(Q,W,d)$, the \emph{left graded mutation of $(Q,W,d)$ at vertex $i$}, as the reduction of the graded QP $(Q',W',d')$. The quiver $Q'$ is defined as in
  \cite{DWZ} as follows:
\begin{itemize}
\item for any subquiver $\xymatrix{u\ar[r]^a &i\ar[r]^b & v}$, with
  $i$, $u$, and $v$ pairwise distinct vertices, we add an arrow
  $[ba]\colon u\rightarrow v$;
\item we replace all arrows $a$ incident with $i$ by an arrow $a^*$ in
  the opposite direction.
\end{itemize}

The potential $W'$ is also defined as in \cite{DWZ} by the sum $[W]+W^*$ where $[W]$ is formed
from the
potential $W$ replacing all compositions $ba$ through the vertex $i$
by $[ba]$, where $W^*$ is the sum $\sum a^* b^* [ba]$.

The new degree $d'$ is defined as follows:

\begin{itemize}
\item $d'(a)=d(a)$ if $a$ is an arrow of $Q$ and $Q'$;
\item $d'([ba])=d(b)+d(a)$ if $ba$ is a composition passing through
  $i$;
\item $d'(a^*)=-d(a)+1$ if the target of $a$ is $i$;
\item $d'(b^*)=-d(b)$ if the source of $b$ is $i$.
\end{itemize} 
\end{dfa}
One can check that this operation is compatible with the reduction of a QP (see \cite{AO10}). It is possible to define the right graded mutation $\mu_i^{\rR}$ by interchanging target and source in the last two items in the definition. We have the following.
\begin{lema}[\cite{AO10}] \label{leftrightmutation}
Let $(Q,W,d)$ be a graded quiver with potential. Then we have an isomorphism of $\ZZ$-graded algebras \[\Jac(Q,W,d)\underset{\ZZ}{\iso}\Jac(\mu_i^{\rR}\mu_i^{\rL}(Q,W,d)).\]
\end{lema}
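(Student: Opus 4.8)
The plan is to prove Lemma~\ref{leftrightmutation} by showing that the composition $\mu_i^{\rR}\mu_i^{\rL}$ returns to the original graded QP up to the appropriate notion of equivalence, so that the Jacobian algebras coincide as $\ZZ$-graded algebras. First I would observe that the underlying ungraded statement is already known: by \cite{DWZ}, applying the underlying (ungraded) mutation twice at the same vertex $i$ returns to the original quiver with potential up to right-equivalence, and in particular $\Jac(Q,W)\iso\Jac(\mu_i^2(Q,W))$ as ungraded algebras. Since both $\mu_i^{\rL}$ and $\mu_i^{\rR}$ have the same underlying quiver-and-potential operation as the DWZ mutation $\mu_i$ (they differ only in the bookkeeping of the degree function $d$), the only genuinely new content is to verify that the grading is preserved through the double mutation. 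Thus the whole statement reduces to a careful tracking of the degree function $d$ through the two steps.

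Next I would carry out the degree computation explicitly on each type of arrow. The arrows of $\mu_i^{\rR}\mu_i^{\rL}(Q,W,d)$ fall into a few classes, and I would check the degree of each against its value in $(Q,W,d)$. For an arrow $a$ not incident to $i$, neither mutation changes it, so its degree is untouched. For an arrow $a$ with target $i$, the left mutation produces $a^*$ with $d^{\rL}(a^*)=-d(a)+1$, and the right mutation then reverses it back, sending $a^*\mapsto (a^*)^*$ with degree $-d^{\rL}(a^*)=d(a)-1$ under the source-rule, but one must track which of the target/source rules applies after the orientation has flipped; the two offsets of $+1$ (from $\mu^{\rL}$) and the source-convention of $\mu^{\rR}$ must cancel so that the recovered arrow has degree exactly $d(a)$. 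The analogous computation for an arrow $b$ with source $i$, using $d^{\rL}(b^*)=-d(b)$ and the target-rule of the right mutation, should likewise return $d(b)$. The composed arrows $[ba]$ created by $\mu_i^{\rL}$ are absorbed during the reduction of $\mu_i^{\rR}$ (they get cancelled against the $a^*b^*[ba]$ terms in the potential), and I would check that the degrees assigned to these auxiliary arrows are consistent with this cancellation being degree-homogeneous.

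The main obstacle I anticipate is handling the reduction step rather than the bare quiver mutation. Each graded mutation $\mu_i^{\rL}$ and $\mu_i^{\rR}$ is defined as the \emph{reduction} of the pre-mutation graded QP $(Q',W',d')$, and reduction splits off the trivial (quadratic, $2$-cycle) part of the potential via a formal change of variables. I would need to confirm that this reduction can be performed compatibly with the grading, i.e.\ that the trivial arrows being removed pair up into homogeneous $2$-cycles of total degree $1$ (matching the degree-$1$ homogeneity of $W$), and that the induced automorphism of the completed path algebra is graded. This is precisely the compatibility asserted in the remark preceding the lemma (``this operation is compatible with the reduction of a QP''), which I would invoke from \cite{AO10}; granting it, the reduction introduces no grading ambiguity, and the degree bookkeeping above then shows the recovered graded QP is graded right-equivalent to $(Q,W,d)$. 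I would conclude by noting that graded right-equivalence of QPs induces an isomorphism of the associated Jacobian algebras as $\ZZ$-graded algebras, which gives the claimed $\Jac(Q,W,d)\underset{\ZZ}{\iso}\Jac(\mu_i^{\rR}\mu_i^{\rL}(Q,W,d))$.
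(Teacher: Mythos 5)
Your proposal is correct and follows what is essentially the paper's own route: this paper never proves Lemma~\ref{leftrightmutation} at all (it is imported verbatim from \cite{AO10}), and the proof there is exactly your plan --- rerun the Derksen--Weyman--Zelevinsky argument that mutating twice at $i$ returns the original QP up to right-equivalence, while verifying that the degrees return to their original values, that the cancelling $2$-cycles are homogeneous of total degree $1$ (indeed $d([ba])+d([a^*b^*])=(d(a)+d(b))+(1-d(a)-d(b))=1$, so the trivial part splits off gradedly), and that the resulting right-equivalence of completed path algebras is graded, whence a graded isomorphism of Jacobian algebras. The only slip is your intermediate formula for the source-rule of $\mu_i^{\rR}$: it is $d\mapsto -d+1$ (not $d\mapsto -d$), so an arrow $a$ with target $i$ comes back with degree $-(-d(a)+1)+1=d(a)$ on the nose, exactly the cancellation you anticipated; with that fixed, the bookkeeping closes with no hedging needed.
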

Moreover, this mutation preserves graded equivalence. More precisely, we have the following.
\begin{prop}[\cite{AO10}]\label{prop_gradedeq_mutation}
Let $(Q,W,d_1)$ and $(Q,W,d_2)$ be two $\ZZ$-graded QP such that the graded Jacobian algebras $\Jac(Q,W,d_1)$ and $\Jac(Q,W,d_2)$ are graded equivalent. Then the graded Jacobian algebras $\Jac(\mu_i^{\rL}(Q,W,d_1))$ and $\Jac(\mu_i^{\rL}(Q,W,d_2))$ are graded equivalent.
\end{prop}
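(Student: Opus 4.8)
```latex
The plan is to prove Proposition~\ref{prop_gradedeq_mutation} by reducing it, via Lemma~\ref{leftrightmutation}, to a statement about the mutation operation itself being well-defined on graded equivalence classes. The key observation is that graded equivalence of $\Jac(Q,W,d_1)$ and $\Jac(Q,W,d_2)$ is, by definition, an equivalence between their $\ZZ$-coverings $\cov{\Jac(Q,W,d_1)}{d_1}$ and $\cov{\Jac(Q,W,d_2)}{d_2}$ as categories with $\ZZ$-action. Since both graded Jacobian algebras share the same underlying quiver $Q$ and potential $W$ (differing only in the grading $d_i$), an equivalence of $\ZZ$-coverings amounts to a relabeling of the degrees of the arrows that is compatible with the relations $\partial_a W$. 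So I would first make precise what data such a graded equivalence records: concretely, a function that reassigns degrees $d_1 \mapsto d_2$ up to the shifts $\langle r_i\rangle$ appearing in the definition of graded equivalence, such that the homogeneity of $W$ (degree $1$) is respected under both gradings.

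Next I would track how the left mutation $\mu_i^{\rL}$ transforms this data. The mutation $\mu_i^{\rL}(Q,W,d)$ produces a new graded QP $(Q',W',d')$ whose quiver $Q'$ and potential $W'$ depend \emph{only} on $(Q,W)$ and not on the grading $d$ — this is exactly the Derksen--Weyman--Zelevinsky construction. Only the new degree function $d'$ depends on $d$, and it does so through the explicit, purely local formulas in the definition of graded mutation: $d'(a)=d(a)$ on unchanged arrows, $d'([ba])=d(b)+d(a)$, $d'(a^*)=-d(a)+1$ or $-d(b)$ on reversed arrows. The crucial point is that each of these formulas is an affine-linear expression in the original degrees. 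Therefore, if the two gradings $d_1$ and $d_2$ differ by data that is witnessed by an equivalence of coverings, the transformed degrees $d_1'$ and $d_2'$ differ by the \emph{image} of that same data under these affine formulas. I would verify that the shifts $\langle r_i\rangle$ on the vertices transport coherently: a vertex-degree shift $r_j$ on $(Q,W,d)$ induces a corresponding shift on $\mu_i^{\rL}(Q,W,d)$, and the reversed arrows $a^*$ at vertex $i$ pick up the shift in the correct way because of the sign and the $+1$ appearing in their degree formula.

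The remaining technical step is the compatibility with reduction. The graded mutation $\mu_i^{\rL}$ is defined as the \emph{reduction} of the intermediate graded QP $(Q',W',d')$, so I must confirm that reduction also preserves graded equivalence. Here I would invoke the fact, stated in the excerpt as being established in \cite[Section~6]{AO10}, that the graded mutation operation is compatible with reduction of a QP; combined with the observation that reduction is itself a canonical operation determined by $(Q',W')$ together with its grading, the equivalence of coverings survives the passage to the reduced part. Concretely, the trivial part that is split off during reduction corresponds to degree-preserving $2$-cycles, and these are matched identically under $d_1'$ and $d_2'$ since the two gradings agree on $W'$ in degree $1$.

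I expect the main obstacle to be the bookkeeping of the vertex shifts $r_i$ through the mutation. The subtlety is that graded equivalence allows each indecomposable projective $P_i$ to be shifted independently by $\langle r_i\rangle$, and after mutation at vertex $i$ the role of that vertex changes (its incident arrows reverse), so one must check that the shift data $(r_j)_{j\in Q_0}$ on the source graded QP and a possibly adjusted shift on the target graded QP genuinely induce a functor of coverings that is an equivalence with $\ZZ$-action, not merely an isomorphism of graded algebras. Once the correct transformation rule for the shifts is pinned down and seen to match the affine degree formulas of $\mu_i^{\rL}$, the rest is a routine but careful verification that the induced functor between $\cov{\Jac(\mu_i^{\rL}(Q,W,d_1))}{d_1'}$ and $\cov{\Jac(\mu_i^{\rL}(Q,W,d_2))}{d_2'}$ is well-defined, fully faithful, and dense, hence an equivalence of categories with $\ZZ$-action.
```
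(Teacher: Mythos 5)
The paper itself contains no proof of Proposition~\ref{prop_gradedeq_mutation}: it is imported verbatim from \cite[Section~6]{AO10}, so your outline can only be measured against the argument given there, which it essentially reproduces --- identify the graded-equivalence data with degree shifts $r_j$ at the vertices, push that data through the explicit degree formulas of $\mu_i^{\rL}$, and check compatibility with reduction. The computation you defer (``I would verify that the shifts transport coherently'') does in fact go through, and with the \emph{same} tuple $(r_j)_{j\in Q_0}$: writing $d_2(a)-d_1(a)=r_{t(a)}-r_{s(a)}$ for all $a\in Q_1$, one finds for a composite arrow $[ba]$ through $i$ that the two occurrences of $r_i$ cancel, and for a reversed arrow (where $d'(a^*)=-d(a)+1$, resp.\ $d'(b^*)=-d(b)$) the minus sign matches exactly the interchange of source and target, the constant $+1$ disappearing in the difference; hence $d_2'-d_1'$ is again of the form $r_{t(\cdot)}-r_{s(\cdot)}$ on the mutated quiver. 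Reduction then causes no trouble, since a right equivalence of the intermediate QP $(Q',W')$ that is homogeneous for $d_1'$ is automatically homogeneous for $d_2'$ when the two gradings differ by vertex shifts, so both graded QPs reduce by splitting off the same trivial part.

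The one genuine gap is at the very first step. Graded equivalence of $\Jac(Q,W,d_1)$ and $\Jac(Q,W,d_2)$ in the sense of this paper is not merely ``the degrees of the arrows differ by shifts $\langle r_j\rangle$'': the definition allows an arbitrary isomorphism of graded algebras, that is, a possibly nontrivial algebra automorphism of $\Jac(Q,W)$ (which may permute vertices, mix parallel arrows, or add terms of higher length) composed with the shifts. Your phrase ``an equivalence of $\ZZ$-coverings amounts to a relabeling of the degrees of the arrows'' silently discards this automorphism, and your transport argument only applies after it has been removed. To close this you must first absorb the automorphism, e.g.\ by lifting it to a graded right equivalence of graded QPs and invoking the fact (also established in \cite{AO10}, and needed anyway for your reduction step) that graded mutation is well defined up to graded right equivalence; only then is the hypothesis in the pure vertex-shift form on which your computation operates. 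With that ingredient made explicit, your plan becomes a complete proof along the same lines as the cited one.
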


\begin{dfa}[\cite{AO10}]
Let $\Lambda$ be a $\tau_2$-finite algebra of global dimension $\leq2$. Let $T=T_1 \oplus\cdots  \oplus T_n$ be an object in $\Dd^b(\Lambda)$ such that $\pi(T)$ is a (basic) cluster-tilting object in $\Cc_\Lambda$. Let $T_i$ be an indecomposable summand of $T\iso T'\oplus T_i$. Define $T_i^{\rL}$ as the cone in $\Dd^b(\Lambda)$ of the minimal left $\add\{\SSS^p T',p\in\ZZ\}$-approximation $u\colon T_i\rightarrow B$ of $T_i$. We denote by $\mu_i^{\rL}(T)$ the object $T_i^{\rL}\oplus T'$ and call it the \emph{left mutation of $T$ at $T_i$}.
\end{dfa}
It is immediate to check that we have $\pi(\mu_i^{\rL}(T))=\mu_i(\pi(T))$, thus $\pi(\mu_i^{\rL}(T))$ is a cluster-tilting object in $\Cc_\Lambda$.

This (left) mutation in the derived category is reflected by the graded (left) mutation of graded QP in the following sense.
\begin{thma} \label{Zgradedbirs}
Let $\Lambda$ be an algebra of acyclic cluster type and $T\in\Dd^b(\Lambda)$ as above. Assume that there
exist a $\ZZ$-graded QP $(Q,W,d)$ with rigid potential homogeneous of degree 1 such that we have an
isomorphism of graded algebras 
$$\xymatrix{\bigoplus_{p\in\ZZ}\Hom_{\Dd}(T,\SSS^{-p} T)\ar[r]^-\sim_-{\ZZ} & \Jac(Q,W,d).}$$
Let $i\in Q_0$ and $T_i$ be the associated indecomposable summand of $T\iso T_i\oplus T'$ . 
Then there is an isomorphism of $\ZZ$-graded algebras
$$\xymatrix{\bigoplus_{p\in
  \mathbb{Z}}\Hom_\Dd(T'\oplus T^{\rL}_i,\SSS^{-p}(T'\oplus T^{\rL}_i))\ar[r]^-\sim_-{\ZZ}& \Jac(\mu_i^{\rL}(Q,W,d)).}$$  
\end{thma}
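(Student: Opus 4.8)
The plan is to reduce the statement to a computation of degrees, and then to pin those degrees down using the mutation triangle together with the homogeneity of the mutated potential.

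\textbf{Reduction to degrees.} Since $\Lambda$ is of acyclic cluster type, there is a triangle equivalence $\Cc_\Lambda\iso\Cc_\Delta$ with $\Delta$ acyclic, so Theorem~\ref{birs} applies to the rigid QP $(Q,W)$: the Derksen--Weyman--Zelevinsky mutation $\mu_i(Q,W)$ is again rigid, and its Jacobian algebra is $\End_{\Cc_\Lambda}(\mu_{T_i}(\pi T))$. Using the identity $\pi(\mu_i^{\rL}(T))=\mu_i(\pi T)$ noted just before the theorem, together with $\End_{\Cc_\Lambda}(\pi X)=\bigoplus_{p}\Hom_\Dd(X,\SSS^{-p}X)$, this yields an isomorphism of \emph{ungraded} algebras $\bigoplus_{p}\Hom_\Dd(\mu_i^{\rL}T,\SSS^{-p}\mu_i^{\rL}T)\iso\Jac(\mu_i(Q,W))$. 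As the ungraded QP underlying $\mu_i^{\rL}(Q,W,d)$ is exactly $\mu_i(Q,W)$, it remains only to check that this isomorphism is graded, i.e.\ that each arrow of $Q'=\mu_i(Q)$ is carried to a morphism of the degree prescribed by $d'$; since a graded Jacobian algebra is generated by its arrows, it suffices to treat the arrows.

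\textbf{The easy arrows.} Fix the convention that an element of $\Hom_\Dd(-,\SSS^{-p}-)$ has degree $p$, so an arrow $\alpha\colon x\to y$ of degree $d(\alpha)$ is represented by a morphism $T_x\to\SSS^{-d(\alpha)}T_y$. The summands $T_j$ with $j\neq i$ are untouched, so every arrow not incident to $i$ keeps its degree $d(\alpha)$. For a path $u\xrightarrow{a}i\xrightarrow{b}v$, the new arrow $[ba]$ is the composite $T_u\to\SSS^{-d(a)}T_i\to\SSS^{-d(a)-d(b)}T_v$, of degree $d(a)+d(b)=d'([ba])$, since composition adds degrees. In particular $[W]$, obtained from the degree-$1$ potential $W$ by replacing each $ba$ through $i$ by $[ba]$, is again homogeneous of degree $1$ for $d'$.

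\textbf{The reversed arrows, and the main obstacle.} I would read the reversed arrows off the defining triangle $T_i\xrightarrow{u}B\to T_i^{\rL}\xrightarrow{w}T_i[1]$, where $u$ is the minimal left $\add\{\SSS^qT'\}$-approximation, so that $B\iso\bigoplus_{b\colon i\to j}\SSS^{-d(b)}T_j$ records precisely the arrows out of $i$. For such a $b$, the component $\SSS^{-d(b)}T_j\to T_i^{\rL}$ of $B\to T_i^{\rL}$ rewrites as a morphism $T_j\to\SSS^{d(b)}T_i^{\rL}$, i.e.\ an arrow $b^*\colon j\to i$ of degree $-d(b)=d'(b^*)$, as required. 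The reversal $a^*\colon i\to j$ of an arrow $a\colon j\to i$ into $i$ is the delicate case: the relevant morphism out of $T_i^{\rL}$ must be extracted from the connecting map $w\colon T_i^{\rL}\to T_i[1]$, and to read it inside some $\Hom_\Dd(T_i^{\rL},\SSS^{-p}T_j)$ one has to trade the homological shift $[1]$ for a power of $\SSS$ using the Serre functor, via $\mathbb{S}=\SSS[2]$ and the $2$-Calabi-Yau duality of $\Cc_\Lambda$. Carrying out this exchange carefully, and verifying that the minimality of $u$ prevents any spurious or higher-degree arrows from appearing, is the technical heart of the argument; it is exactly the passage from a $[1]$-shift to an $\SSS$-degree that is responsible for the extra $+1$ in $d'(a^*)=-d(a)+1$.

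\textbf{Pinning down $d'(a^*)$ and conclusion.} A clean way to fix $d'(a^*)$, bypassing the sign bookkeeping above, is to invoke homogeneity of the mutated potential: under the ungraded isomorphism the relation space between graded simples is graded, so $W'=[W]+\sum a^*b^*[ba]$ may be taken homogeneous, and it must have degree $1$ because $[W]$ does. Since $d'(b^*)=-d(b)$ and $d'([ba])=d(a)+d(b)$ are already known, each cyclic term satisfies
\[ d'(a^*)+d'(b^*)+d'([ba]) = d'(a^*)-d(b)+(d(a)+d(b)) = d'(a^*)+d(a), \]
and setting this equal to $1$ forces $d'(a^*)=1-d(a)=-d(a)+1$, in agreement both with the graded-mutation rule and with the intrinsic Serre-duality computation. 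Collecting the degree computations for all four families of arrows, the ungraded isomorphism is seen to be graded, and it identifies $\bigoplus_{p}\Hom_\Dd(\mu_i^{\rL}T,\SSS^{-p}\mu_i^{\rL}T)$ with $\Jac(\mu_i^{\rL}(Q,W,d))$ as $\ZZ$-graded algebras, which is the assertion.
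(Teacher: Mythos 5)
Your proof follows the same skeleton as the paper's: both first invoke Theorem~\ref{birs} (via $\pi(\mu_i^{\rL}T)=\mu_i(\pi T)$) to get the \emph{ungraded} isomorphism with $\Jac(\mu_i(Q,W))$, and both reduce the theorem to checking that the arrows of the mutated quiver are represented by homogeneous morphisms of the degrees prescribed by $d'$. The paper settles that check by the explicit computation of \cite[Theorem 6.10]{AO10} (approximation triangles plus the exchange of the shift $[1]$ against a power of $\SSS$ via $\mathbb{S}=\SSS[2]$). You attempt to bypass exactly that computation, and the bypass does not work.

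The gap is the paragraph pinning down $d'(a^*)$. Your argument rests on the claim that the mutated potential $W'=[W]+\sum a^*b^*[ba]$ is homogeneous of degree $1$ for the intrinsic $\SSS$-grading of $\bigoplus_p\Hom_\Dd(\mu_i^{\rL}T,\SSS^{-p}\mu_i^{\rL}T)$. But once the degrees of the unstarred arrows, of $[ba]$, and of $b^*$ are fixed, homogeneity of $W'$ of degree $1$ is \emph{equivalent} to $d'(a^*)=-d(a)+1$; so this claim is essentially the statement to be proved, not a tool for proving it. Your justification --- that relation spaces between graded simples are graded --- only yields that minimal relations $\partial_cW'$ may be chosen homogeneous; it produces neither a homogeneous potential nor any control on its degree (compare Lemmas~\ref{uniqueness_rigid_potential_A} and~\ref{lemma_compatibility}, where upgrading a graded Jacobian presentation to a homogeneous potential of degree $1$ costs genuine case-by-case work). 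Moreover your identification of the degree as $1$ is ``because $[W]$ does,'' which is vacuous whenever $[W]=0$; by rigidity, $W=0$ forces $Q$ acyclic, and this is precisely the central case for every application in the paper (mutating to and from $(\Delta,0)$, as in Corollary~\ref{recognitioncor} and Theorem~\ref{thm_gradingQ_derivedeq}). In that case homogeneity of $W'=\sum a^*b^*[ba]$ only gives $d'(a^*)=\delta-d(a)$ for a single unknown $\delta$, and the ambiguity is fatal: changing $\delta$ alters the degrees of the $a^*$ but not of the $b^*$, so it is not a degree shift at the vertex $i$, and the resulting gradings are genuinely inequivalent (in type $\A_{p,q}$ they would even have different weights, hence different derived equivalence classes). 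So the ``$+1$'' --- which you yourself identify as coming from trading $[1]$ for $\SSS$ via Serre duality, ``the technical heart of the argument'' --- is never established; that triangle/Serre-functor computation, together with the minimality of the approximation $T_i\to B$, is the actual content of the theorem and cannot be skipped.
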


\begin{proof} By Theorem \ref{birs}, we already have an isomorphism between the algebras 
$$\bigoplus_{p\in
  \mathbb{Z}}\Hom_\Dd(T'\oplus T^{\rL}_i,\SSS^{-p}(T'\oplus T^{\rL}_i))\iso \End_{\Cc_\Lambda}(\mu_{\pi(T_i)}(\pi T))$$ and $\Jac(\mu_i^{\rL}(Q,W))$ forgetting the grading. The only thing to check is that it is compatible with the grading. The proof of this fact is the same as the one of Theorem 6.10 in \cite{AO10}.\end{proof}

\begin{dfa}\label{defWgrading}
Let $(Q,W)$ be a reduced quiver with potential. A grading $d$ on $Q$ will be said to be a \emph{$W$-grading} if
\begin{itemize}
\item for all arrows $a$ in $Q_1$, $d(a)\in \{0,1\}$;
\item the potential $W$ is homogeneous of degree $1$;
\item the set of relations $\{\partial_aW \mid d(a)=1\}$ is linearly independent (in particular for all $a\in Q_1$ such that $d(a)=1$, we have $\partial_aW\neq 0$).
\end{itemize}
\end{dfa}

Theorem~\ref{Zgradedbirs} is particularly useful with the following result.

\begin{prop}[\cite{AO10}] \label{propWgrading}
Let $\Lambda=kQ_{\Lambda}/I$ be an algebra of global dimension $\leq 2$ which is $\tau_2$-finite and of acyclic cluster type. Then there exists a rigid quiver with potential $(\overline{Q},W)$ and a $W$-grading $d$ such that we have an isomorphism of graded algebras \[\overline{\Lambda}\underset{\ZZ}{\iso} \Jac(\overline{Q},W,d).\]
\end{prop}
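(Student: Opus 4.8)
The plan is to build everything on Keller's description of $\overline{\Lambda}$ (Theorem~\ref{keller}) and then to equip the resulting quiver with potential with the only grading that can match the natural $\ZZ$-grading of $\overline{\Lambda}$. Concretely, I would fix a minimal set of relations $\{r_i\}$ generating $I$ as in Theorem~\ref{keller}, so that $\overline{Q}=Q_\Lambda\cup\{a_i\colon t(r_i)\to s(r_i)\}$, the potential is $W=\sum_i a_ir_i$, and there is an algebra isomorphism $\overline{\Lambda}\iso\Jac(\overline{Q},W)$. Rigidity comes for free: since $\Lambda$ is of acyclic cluster type, Corollary~\ref{recognitioncor} asserts precisely that $(\overline{Q},W)$ is a rigid quiver with potential.

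Next I would define the grading $d$ on $\overline{Q}$ by setting $d(a)=0$ for every arrow $a$ of the original quiver $Q_\Lambda$ and $d(a_i)=1$ for each of the new arrows. The three conditions of Definition~\ref{defWgrading} are then checked directly. The values lie in $\{0,1\}$ by construction. Each relation $r_i$ is a combination of paths in $Q_\Lambda$, hence homogeneous of degree $0$, so every term $a_ir_i$ of $W$ has degree $1$ and $W$ is homogeneous of degree $1$; consequently the Jacobian ideal is homogeneous and $\Jac(\overline{Q},W,d)$ is a $\ZZ$-graded algebra. Finally, since $\partial_{a_i}W=r_i$, the set $\{\partial_aW\mid d(a)=1\}=\{r_i\}$ is exactly the chosen minimal set of relations, which is linearly independent because it represents a basis of the $\Ext_\Lambda^2$-spaces between simple modules.

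The real content, and the step I expect to be the main obstacle, is to verify that the isomorphism $\overline{\Lambda}\iso\Jac(\overline{Q},W)$ of Theorem~\ref{keller} is in fact an isomorphism of $\ZZ$-graded algebras for this $d$, where $\overline{\Lambda}$ carries its natural grading $\overline{\Lambda}^{\,p}=\Hom_\Dd(\Lambda,\SSS^{-p}\Lambda)$. Here one has to trace through the construction of Keller's equivalence: the degree-$0$ component $\overline{\Lambda}^{\,0}=\End_\Dd(\Lambda)=\Lambda$ is generated by the images of the arrows of $Q_\Lambda$, which forces $d=0$ on those arrows; and the new arrows $a_i$, being dual to the relations $r_i\in\Ext_\Lambda^2$, are exactly the generators arising from the degree-$1$ component $\Hom_\Dd(\Lambda,\SSS^{-1}\Lambda)$, which forces $d(a_i)=1$. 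Since the arrows of $\overline{Q}$ are homogeneous, their degrees determine the grading on all of $\Jac(\overline{Q},W)$, and one must confirm that it coincides with the natural grading transported from $\overline{\Lambda}$. This compatibility is the part that genuinely uses the graded machinery of \cite{AO10}; once it is in place, the combinatorial checks above make $d$ a $W$-grading with $\overline{\Lambda}\underset{\ZZ}{\iso}\Jac(\overline{Q},W,d)$.
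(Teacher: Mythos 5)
Your proposal is correct and takes essentially the same route as the paper: the paper's own justification is precisely the two-line remark that the quiver with potential comes from Theorem~\ref{keller}, with the arrows of $Q_{\Lambda}$ placed in degree $0$ and the relation-arrows $a_i$ in degree $1$, and with rigidity supplied by Corollary~\ref{recognitioncor}. Like you, the paper defers the verification that Keller's isomorphism is compatible with these gradings to the machinery of \cite{AO10} (the proposition is attributed to that reference), so your identification of that compatibility as the genuine remaining content is accurate.
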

The quiver with potential is given by Theorem~\ref{keller}; the arrows of $Q_{\Lambda}$ have
degree zero and the arrows $a_i$ corresponding to the relations have
degree 1.  The rigidity comes from Corollary \ref{recognitioncor}. 

\subsection{Application to acyclic cluster type}

Let $\Lambda$ be an algebra of cluster type $Q$, where $Q$ is an acyclic quiver. By Corollary \ref{recognitioncor} there exists a sequence of mutations $s$ such that $\mu_s(Q_{\overline{\Lambda}}, W)=(Q,0)$, where $(Q_{\overline{\Lambda}},W)$ is the quiver with potential associated with $\Lambda$ in Theorem \ref{keller}.

\begin{dfa} The map $d_s\colon Q\rightarrow \ZZ$ is called \emph{a grading induced by $\Lambda$} if it satisfies $$\mu^{\rL}_s(Q_{\overline{\Lambda}},W,d)=(Q,0,d_s),$$
where $s$ is a sequence of mutations as in Corollary \ref{recognitioncor}. 
\end{dfa} 

\begin{prop}\label{uniqueness_induced_grading}
Let $d_s$ and $d_t$ be two gradings induced by $\Lambda$ on $Q$. If $\mu_s(\pi_{\Lambda} (\Lambda)) \iso \mu_t(\pi_{\Lambda} (\Lambda))$ then $d_s$ and $d_t$ are equivalent up to automorphism of $Q$.
\end{prop}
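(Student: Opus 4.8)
The plan is to transport the statement into the derived category and reduce it to a comparison of $\ZZ$-coverings. Write $T^{(s)}:=\mu^{\rL}_s(\Lambda)$ and $T^{(t)}:=\mu^{\rL}_t(\Lambda)$ for the two lifts in $\Dd^b(\Lambda)$, so that $\pi_\Lambda(T^{(s)})=\mu_s(\pi_\Lambda\Lambda)$ and $\pi_\Lambda(T^{(t)})=\mu_t(\pi_\Lambda\Lambda)$. By Proposition~\ref{propWgrading} the graded quiver with potential $(\overline{Q},W,d)$ attached to $\Lambda$ is rigid with potential homogeneous of degree $1$, and $\overline{\Lambda}\underset{\ZZ}{\iso}\Jac(\overline{Q},W,d)$; hence Theorem~\ref{Zgradedbirs}, applied step by step along each of the sequences $s$ and $t$, yields isomorphisms of $\ZZ$-graded algebras
\[
\bigoplus_{p\in\ZZ}\Hom_\Dd(T^{(s)},\SSS^{-p}T^{(s)})\underset{\ZZ}{\iso}\Jac(Q,0,d_s)=(kQ,d_s),
\]
and likewise $\bigoplus_{p\in\ZZ}\Hom_\Dd(T^{(t)},\SSS^{-p}T^{(t)})\underset{\ZZ}{\iso}(kQ,d_t)$. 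Thus it suffices to show that $(kQ,d_s)$ and $(kQ,d_t)$ are graded equivalent and then to read off from such an equivalence that $d_s$ and $d_t$ agree up to an automorphism of $Q$.

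For the graded equivalence I would use the covering description recorded after Theorem~\ref{AO_derivedeq_gradedeq_thm}: the $\ZZ$-covering of $\bigoplus_p\Hom_\Dd(T^{(s)},\SSS^{-p}T^{(s)})$ is, as a category with $\ZZ$-action, equivalent to the subcategory $\pi_\Lambda^{-1}\pi_\Lambda(T^{(s)})=\add\{\SSS^{-q}T^{(s)}\mid q\in\ZZ\}\subseteq\Dd^b(\Lambda)$ endowed with the $\SSS$-action, and similarly for $T^{(t)}$. The hypothesis $\mu_s(\pi_\Lambda\Lambda)\iso\mu_t(\pi_\Lambda\Lambda)$ is exactly an isomorphism $\pi_\Lambda(T^{(s)})\iso\pi_\Lambda(T^{(t)})$ in $\Cc_\Lambda$. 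Since $\Dd^b(\Lambda)/\SSS$ is a full subcategory of its triangulated hull $\Cc_\Lambda$ (see \cite{Ami09}), this isomorphism already holds in the orbit category, and for indecomposable objects an isomorphism there means an $\SSS$-shift in $\Dd^b(\Lambda)$. Decomposing into indecomposable summands therefore gives a permutation $\sigma$ of $\{1,\dots,n\}$ and integers $m_i$ with $T^{(t)}_{\sigma(i)}\iso\SSS^{m_i}T^{(s)}_i$ in $\Dd^b(\Lambda)$. Consequently $\add\{\SSS^{-q}T^{(s)}\}$ and $\add\{\SSS^{-q}T^{(t)}\}$ are one and the same subcategory with $\ZZ$-action, so the two $\ZZ$-coverings are equivalent and $(kQ,d_s)$ and $(kQ,d_t)$ are graded equivalent.

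It then remains to unwind what graded equivalence means for the fixed path algebra $kQ$. The permutation $\sigma$ matches the vertices of the Gabriel quiver of $(kQ,d_s)$ with those of $(kQ,d_t)$, and, being induced by an algebra isomorphism $kQ\to kQ$ permuting the primitive idempotents, it is an automorphism of $Q$; the integers $m_i$ record the grading shifts $\langle m_i\rangle$ in the covering. Tracking an arrow $a\colon i\to j$ of $Q$ through the identifications $T^{(t)}_{\sigma(i)}\iso\SSS^{m_i}T^{(s)}_i$, and using that $\Hom_\Dd(\SSS^{m_i}T^{(s)}_i,\SSS^{-p}\SSS^{m_j}T^{(s)}_j)\iso\Hom_\Dd(T^{(s)}_i,\SSS^{-(p+m_i-m_j)}T^{(s)}_j)$, shows that the two degrees of $a$ are related by $d_s(a)=d_t(\sigma(a))+m_i-m_j$. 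Thus $d_s$ and $d_t\circ\sigma$ differ by the coboundary of the vertex function $i\mapsto m_i$, which is precisely the assertion that $d_s$ and $d_t$ are equivalent up to the automorphism $\sigma$ of $Q$.

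The step I expect to be most delicate is this last bookkeeping: one must fix the convention under which a generator of $\Hom_\Dd(T_i,\SSS^{-p}T_j)$ corresponds to an arrow of degree $p$ in the isomorphism of Theorem~\ref{Zgradedbirs}, and then verify that replacing each $T_i$ by $\SSS^{m_i}T_i$ perturbs the degrees exactly by the coboundary $m_i-m_j$ rather than by some other combination. A secondary point to justify carefully is that an isomorphism of cluster-tilting objects in $\Cc_\Lambda$ lifts to $\SSS$-shifts of the summands in $\Dd^b(\Lambda)$; this rests on the orbit category being a full subcategory of the triangulated hull, which is part of the construction of the generalized cluster category.
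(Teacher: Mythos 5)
Your proof is correct and follows essentially the same route as the paper: lift the isomorphism $\mu_s(\pi_\Lambda\Lambda)\iso\mu_t(\pi_\Lambda\Lambda)$ to $\SSS$-shifts of the indecomposable summands of $\mu_s^{\rL}(\Lambda)$ in $\Dd^b(\Lambda)$, deduce that the two $\SSS$-graded endomorphism algebras are graded equivalent, and identify them with $\Jac(Q,0,d_s)$ and $\Jac(Q,0,d_t)$ via Theorem~\ref{Zgradedbirs}. The paper states these steps more tersely; your additional bookkeeping (the coboundary $m_i-m_j$ and the fullness of the orbit category inside the triangulated hull) only makes explicit what the paper leaves implicit.
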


\begin{proof}
Let
$\mu^{\rL}_s \Lambda \iso T_1 \oplus \cdots \oplus T_n$
be a decomposition of $\mu^{\rL}_s \Lambda$ into indecomposable summands. Then we have
\[ \mu^{\rL}_t \Lambda \iso \SSS^{r_1} T_1 \oplus \cdots \oplus \SSS^{r_n} T_n \]
for certain $r_i \in \mathbb{Z}$, since $\pi_{\Lambda} \mu^{\rL}_s \Lambda = \mu_s \pi_{\Lambda} \Lambda \iso  \mu_t \pi_{\Lambda} \Lambda = \pi_{\Lambda} \mu^{\rL}_t \Lambda$. It follows that the algebras
\[ \bigoplus_{p \in \ZZ} \Hom_{\Dd^b(\Lambda)}(\mu^{\rL}_s \Lambda, \SSS^{-p} \mu^{\rL}_s \Lambda) \qquad \text{and} \qquad \bigoplus_{p \in \ZZ} \Hom_{\Dd^b(\Lambda)}(\mu^{\rL}_t \Lambda, \SSS^{-p} \mu^{\rL}_t \Lambda) \]
are graded equivalent. Since, by Theorem~\ref{Zgradedbirs}, these graded algebras are isomorphic to $\Jac(Q, 0, d_s)$ and $\Jac(Q, 0, d_t)$, respectively, it follows that the gradings $d_s$ and $d_t$ are equivalent up to automorphisms of $Q$.
\end{proof}

\begin{rema}\label{counter example}
The induced grading depends in general on the choice of a cluster-tilting object in $\Cc_{\Lambda}$ with hereditary endomorphism ring, as shown in the following example. Let $\Lambda$ be the algebra presented by the quiver
\[ Q =
\begin{tikzpicture}[baseline=13pt]
 \node (A) at (0,0) {$1$}; 
 \node (B) at (1,1) {$2$}; 
 \node (C) at (2,0) {$3$}; 
 \node (D) at (3,1) {$4$}; 
 \node (E) at (4,0) {$5$};
 \draw [->] (A) -- node [above left=-2pt] {$\scriptstyle \alpha$} (B);
 \draw [->] (B) -- node [above right=-2pt] {$\scriptstyle \beta$} (C);
 \draw [->] (C) -- (A);
 \draw [->] (D) -- (C);
 \draw [->] (E) -- (C);
 \draw [->] (E) -- (D);
\end{tikzpicture} 
\text{ with relation } \beta \alpha = 0. \]
Adding an arrow of degree $1$ for the relation, one obtains a graded quiver with potential $(\overline{Q}, W, d)$. Then it is easy to check that $\mu^{\rL}_2 (\overline{Q}, W, d)$ is given by the graded quiver
\[ \begin{tikzpicture}
 \node (A) at (0,0) {$1$}; 
 \node (B) at (1,1) {$2$}; 
 \node (C) at (2,0) {$3$}; 
 \node (D) at (3,1) {$4$}; 
 \node (E) at (4,0) {$5$};
 \draw [->] (B) -- node [fill=white, inner sep=0pt] {$\scriptstyle 1$} (A);
 \draw [->] (C) -- node [fill=white, inner sep=0pt] {$\scriptstyle 0$} (B);
 \draw [->] (C) -- node [fill=white, inner sep=0pt] {$\scriptstyle 0$}  (A);
 \draw [->] (D) -- node [fill=white, inner sep=0pt] {$\scriptstyle 0$}  (C);
 \draw [->] (E) -- node [fill=white, inner sep=0pt] {$\scriptstyle 0$}  (C);
 \draw [->] (E) -- node [fill=white, inner sep=0pt] {$\scriptstyle 0$}  (D);
\end{tikzpicture} \]
On the other hand  $\mu^{\rL}_2 \mu^{\rL}_1 \mu^{\rL}_4 \mu^{\rL}_5 \mu^{\rL}_2 (\overline{Q}, W, d)$ is given by the graded quiver
\[ \begin{tikzpicture}[baseline]
 \node (A) at (0,0) {$4$}; 
 \node (B) at (1,1) {$5$}; 
 \node (C) at (2,0) {$3$}; 
 \node (D) at (3,1) {$1$}; 
 \node (E) at (4,0) {$2$};
 \draw [->] (B) -- node [fill=white, inner sep=0pt] {$\scriptstyle 0$} (A);
 \draw [->] (C) -- node [fill=white, inner sep=0pt] {$\scriptstyle 0$} (B);
 \draw [->] (C) -- node [fill=white, inner sep=0pt] {$\scriptstyle 0$}  (A);
 \draw [->] (D) -- node [fill=white, inner sep=0pt] {$\scriptstyle 1$}  (C);
 \draw [->] (E) -- node [fill=white, inner sep=0pt] {$\scriptstyle 1$}  (C);
 \draw [->] (E) -- node [fill=white, inner sep=0pt] {$\scriptstyle 1$}  (D);
\end{tikzpicture} \]
These gradings are not equivalent.
\end{rema}

Using the previous results we can prove the following.
\begin{thma}\label{thm_gradingQ_derivedeq}
Let $\Lambda_1$ and $\Lambda_2$ be two algebras of cluster type $Q$, where $Q$ is an acyclic quiver. Then the following are equivalent.
\begin{enumerate}
\item  There exist equivalent gradings $d_{s_i}$ induced by $\Lambda_i$ on $Q$ for $i \in \{1,2\}$.
\item There exists a derived equivalence $\Dd^b(\Lambda_1)\rightarrow \Dd^b(\Lambda_2)$.
\end{enumerate}
\end{thma}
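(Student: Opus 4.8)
The plan is to prove the two implications separately, in both cases using Theorem~\ref{Zgradedbirs} to translate between mutation of cluster-tilting objects in the derived category and graded mutation of quivers with potential, together with the reformulation recorded after Theorem~\ref{AO_derivedeq_gradedeq_thm}: the graded endomorphism algebras $\bigoplus_p\Hom_\Dd(T,\SSS^{-p}T)$ and $\bigoplus_p\Hom_\Dd(T',\SSS^{-p}T')$ are graded equivalent precisely when $\add\{\SSS^q T\}$ and $\add\{\SSS^q T'\}$ are equivalent as categories with $\SSS$-action. The recurring elementary observation is that two lifts of the same cluster-tilting object of $\Cc_\Lambda$ differ only by $\SSS$-twists on their indecomposable summands, hence span the same such subcategory, so their graded endomorphism algebras are graded equivalent.

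For the implication $(1)\Rightarrow(2)$, write $\mu^{\rL}_{s_i}(\overline{Q}_i,W_i,d_i)=(Q,0,d_{s_i})$ for the data defining the induced gradings. I would form the object $T:=\mu^{\rL}_{s_2^-}\mu^{\rL}_{s_1}\Lambda_1\in\Dd^b(\Lambda_1)$; then $\pi_1(T)$ is cluster-tilting, and iterating Theorem~\ref{Zgradedbirs} gives a graded isomorphism $\bigoplus_p\Hom_{\Dd^b(\Lambda_1)}(T,\SSS^{-p}T)\iso\mu^{\rL}_{s_2^-}(Q,0,d_{s_1})$. Condition~(1) says $(Q,0,d_{s_1})$ and $(Q,0,d_{s_2})$ are graded equivalent, so Proposition~\ref{prop_gradedeq_mutation}, applied along $s_2^-$, shows $\mu^{\rL}_{s_2^-}(Q,0,d_{s_1})$ is graded equivalent to $\mu^{\rL}_{s_2^-}(Q,0,d_{s_2})=\mu^{\rL}_{s_2^-}\mu^{\rL}_{s_2}(\overline{Q}_2,W_2,d_2)$. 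Since $\mu^{\rL}_{s_2^-}\mu^{\rL}_{s_2}\Lambda_2$ is another lift of $\pi_2(\Lambda_2)$, the latter is graded equivalent to $\overline{\Lambda}_2$ by the observation above. Chaining these, $\bigoplus_p\Hom_{\Dd^b(\Lambda_1)}(T,\SSS^{-p}T)=\End_{\Cc_1}(\pi_1 T)$ is graded equivalent to $\overline{\Lambda}_2=\End_{\Cc_2}(\pi_2\Lambda_2)$, which is exactly hypotheses~(1) and~(2) of Theorem~\ref{AO_derivedeq_gradedeq_thm}; that theorem then produces a derived equivalence $\Dd^b(\Lambda_1)\to\Dd^b(\Lambda_2)$.

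For $(2)\Rightarrow(1)$, a derived equivalence $F\colon\Dd^b(\Lambda_1)\to\Dd^b(\Lambda_2)$ commutes with the Serre functor, hence with $\SSS$, and so descends to a triangle equivalence $\Cc_1\to\Cc_2$ (this is \cite[Cor.~7.16]{AO10}). I would fix any induced grading $d_{s_1}$ of $\Lambda_1$. The object $F(\mu^{\rL}_{s_1}\Lambda_1)$ projects to a cluster-tilting object of $\Cc_2$ whose endomorphism algebra is the hereditary algebra $kQ$; by connectivity (Theorem~\ref{connectivity}) and Corollary~\ref{recognitioncor} there is a mutation sequence $t$ with $\mu_t\pi_2(\Lambda_2)=\pi_2 F(\mu^{\rL}_{s_1}\Lambda_1)$ and $\mu_t(\overline{Q}_2,W_2)=(Q,0)$, so $d_t$ is an induced grading of $\Lambda_2$. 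Since $F$ commutes with $\SSS$, it restricts to an equivalence of categories with $\SSS$-action from $\add\{\SSS^q\mu^{\rL}_{s_1}\Lambda_1\}$ to $\add\{\SSS^q F(\mu^{\rL}_{s_1}\Lambda_1)\}$; as $F(\mu^{\rL}_{s_1}\Lambda_1)$ and $\mu^{\rL}_t\Lambda_2$ are two lifts of the same cluster-tilting object, the target equals $\add\{\SSS^q\mu^{\rL}_t\Lambda_2\}=\pi_2^{-1}\pi_2(\mu^{\rL}_t\Lambda_2)$. By the reformulation above and Theorem~\ref{Zgradedbirs}, the graded algebras $(kQ,d_{s_1})$ and $(kQ,d_t)$ are graded equivalent, i.e.\ $d_{s_1}$ and $d_t$ are equivalent. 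This is the same bookkeeping as in Proposition~\ref{uniqueness_induced_grading}, run across the equivalence $F$.

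I expect the main obstacle to be not the overall strategy but the grading bookkeeping: checking that at each stage the identifications are isomorphisms of $\ZZ$-graded algebras (up to the shifts allowed in graded equivalence) rather than merely ungraded isomorphisms with abstractly matching gradings, so that the hypotheses of Theorem~\ref{AO_derivedeq_gradedeq_thm} are literally satisfied. The one genuinely load-bearing point is that passing between two lifts of a single cluster-tilting object twists the grading only by $\SSS$-powers and therefore preserves graded equivalence; this is precisely what legitimizes both the application of Proposition~\ref{prop_gradedeq_mutation} and the transport of the $\SSS$-action along $F$.
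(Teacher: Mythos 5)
Your proof is correct and follows essentially the same route as the paper's: both implications rest on Theorem~\ref{Zgradedbirs}, Proposition~\ref{prop_gradedeq_mutation}, Theorem~\ref{AO_derivedeq_gradedeq_thm} together with its reformulation in terms of categories with $\SSS$-action, and the connectivity/recognition results. The only deviations are bookkeeping: in $(1)\Rightarrow(2)$ the paper travels back along $s_2^-$ using \emph{right} graded mutations and Lemma~\ref{leftrightmutation}, where you stay with left mutations and instead invoke the fact that two lifts of one cluster-tilting object differ by $\SSS$-twists on summands (the same observation underlying Proposition~\ref{uniqueness_induced_grading}), and in $(2)\Rightarrow(1)$ the paper first compares the graded algebras $\overline{\Lambda}_i$ and then mutates down to $Q$ via Proposition~\ref{prop_gradedeq_mutation}, where you transport the already-hereditary lift $\mu^{\rL}_{s_1}\Lambda_1$ across the derived equivalence and compare at the level of $(kQ,d)$ directly --- both variants are sound.
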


\begin{proof}
For $i=1,2$ denote by $(Q_i,W_i,\partial_i)$ the graded QP associated to $\overline{\Lambda}_i$. 

$(1)\Rightarrow (2)$ By assumption, we have a graded equivalence \[\Jac(\mu_{s_1}^{\rL}(Q_1,W_1,\partial_1))\underset{\gr}{\sim}\Jac(\mu_{s_2}^{\rL}(Q_2,W_2,\partial_2)).\] Then by Proposition~\ref{prop_gradedeq_mutation} and Lemma~\ref{leftrightmutation}, we immediately get that \[\Jac(\mu_{s_2^-}^{\rR}\mu_{s_1}^{\rL}(Q_1,W_1,\partial_1))\underset{\gr}{\sim} \Jac(Q_2,W_2,\partial_2).\]
Now denote by $T\in \Dd^b(\Lambda_1)$ the object $\mu_{s_2^-}^{\rR}\mu_{s_1}^{\rL}(\Lambda_1)$. By Theorem~\ref{Zgradedbirs} we have an isomorphism of $\ZZ$-graded algebras
\[ \bigoplus_{p\in\ZZ}\Hom_{\Dd^b(\Lambda_1)}(T,\SSS^{-p}T)\underset{\ZZ}{\iso} \Jac(\mu_{s_2^-}^{\rR}\mu_{s_1}^{\rL}(Q_1,W_1,\partial_1))\]
Therefore, we get the result by Theorem~\ref{AO_derivedeq_gradedeq_thm}.

$(2)\Rightarrow (1)$ Assume that $\Lambda_1$ and $\Lambda_2$ are derived equivalent. Then there exists a tilting complex $T$ in $\Dd^b(\Lambda_2)$ with $\End_{\Dd^b(\Lambda_2)}(T)\iso \Lambda_1$. The derived equivalence induced by $T$ gives rise to a commutative diagram \[\xymatrix{\Dd^b(\Lambda_1)\ar[d]_{\pi_1}\ar[rr]^{-\lten_{\Lambda_1}T}_\sim && \Dd^b(\Lambda_2)\ar[d]^{\pi_2}\\ \Cc_{\Lambda_1}\ar[rr]_f^{\sim}&& \Cc_{\Lambda_2}}\]
Since the cluster category $\Cc_{\Lambda_2}$ is acyclic, there exists a sequence of mutations $s$ such that $\mu_s(\pi_2\Lambda_2)=\pi_2(T)=f(\pi_1\Lambda_1).$ Denote by $T'\in \Dd^b(\Lambda_2)$ the object $T':=\mu_s^{\rL}(\Lambda_2).$ Then we have $\pi_2(T)=\pi_2(T')$, thus we have an equivalence of categories commuting with $\SSS$
\[\add\{\SSS^pT \mid p\in\ZZ\}\iso \add\{\SSS^p T'\mid p\in\ZZ\}.\] This exactly means that there is a graded equivalence
\[\bigoplus_{p\in\ZZ}\Hom_{\Dd^b(\Lambda_2)}(T,\SSS^{-p}T)\underset{\gr}{\sim}\bigoplus_{p\in\ZZ}\Hom_{\Dd^b(\Lambda_2)}(T',\SSS^{-p}T').\] 
Since $-\lten_{\Lambda_1}T$ is an equivalence, the left term is isomorphic as $\ZZ$-graded algebra to $\Jac(Q_1,W_1,\partial_1)$ and by Theorem~\ref{Zgradedbirs} the right term is isomorphic to the $\ZZ$-graded algebra $\Jac(\mu_s^{\rL}(Q_2,W_2, \partial_2))$. Therefore we have \begin{equation}\label{eqiso} \Jac(\mu_s^{\rL}(Q_2,W_2,\partial_2))\underset{\gr}{\sim} \Jac(Q_1,W_1,\partial_1).\end{equation}
Now let $s'$ be a sequence such that the cluster-tilting object $T'':=\mu_{s'}(f\pi_1\Lambda_1)$ has endomorphism algebra isomorphic to $kQ$. Then we have \[\mu_{s'}(Q_1,W_1)=(Q,0)\textrm{ and } \mu_{s'}\mu_{s}(Q_2,W_2)=(Q,0).\]
Now let $d_1$ and $d_2$ be the gradings on $Q$ such that we have 
\[\mu_{s'}^{\rL}(Q_1,W_1,\partial_1)=(Q,0,d_1)\textrm{ and } \mu^{\rL}_{s'}\mu^{\rL}_{s}(Q_2,W_2,\partial_2)=(Q,0,d_2).\] By \eqref{eqiso} and Proposition~\ref{prop_gradedeq_mutation}, we get 
\[ \Jac(Q,0,d_1)\underset{\gr}{\sim} \Jac(Q, 0,d_2), \]
that is the gradings $d_1$ and $d_2$ are equivalent.
\end{proof}

\begin{cora}\label{cor_tree}
If $Q$ is a tree, and if $\Lambda$ is of cluster type $Q$, then $\Lambda$ is derived equivalent to $kQ$.
\end{cora}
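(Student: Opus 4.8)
The plan is to invoke Theorem~\ref{thm_gradingQ_derivedeq} with $\Lambda_1 = \Lambda$ and $\Lambda_2 = kQ$. First I would observe that $kQ$ is itself of cluster type $Q$, being hereditary, and that the graded QP attached to it by Proposition~\ref{propWgrading} is simply $(Q, 0, 0)$: there are no relations, so the potential vanishes and every arrow carries degree zero. Hence the empty mutation sequence exhibits the \emph{zero} grading as a grading induced by $kQ$ on $Q$. By the equivalence $(1)\Leftrightarrow(2)$ of Theorem~\ref{thm_gradingQ_derivedeq}, it then suffices to produce a grading induced by $\Lambda$ on $Q$ that is equivalent to this zero grading.

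So the heart of the argument is the claim that, when the underlying graph of $Q$ is a tree, \emph{every} $\ZZ$-grading $d$ of $kQ$ is graded equivalent to the trivial one. I would prove this by unwinding the definition of graded equivalence: an isomorphism of graded algebras arising from shifting each indecomposable projective $P_i$ by an integer $r_i$ has the effect of replacing the degree $d(a)$ of an arrow $a\colon i\to j$ by $d(a) + r_i - r_j$. Thus two gradings related by such a shift are graded equivalent, and I only need to choose the $r_i$ so that the shifted grading vanishes identically, i.e.\ to solve the linear system $r_{s(a)} - r_{t(a)} = -d(a)$ for all $a\in Q_1$.

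The key point is that this system is always solvable on a tree: fixing a base vertex with $r=0$ and propagating along the unique reduced walk to each remaining vertex determines all the $r_i$, and the absence of cycles in the underlying graph means there is no consistency obstruction. (Phrased cohomologically, gradings modulo these shifts are classified by $H^1$ of the underlying graph with integer coefficients, which vanishes for a tree.) With the $r_i$ so chosen, the grading becomes zero, so any grading $d_s$ induced by $\Lambda$ is equivalent to the zero grading induced by $kQ$, and Theorem~\ref{thm_gradingQ_derivedeq} yields the derived equivalence $\Dd^b(\Lambda)\iso\Dd^b(kQ)$.

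The main obstacle I anticipate is purely bookkeeping: correctly matching the abstract notion of graded equivalence (via the $\ZZ$-covering $\cov{kQ}{\ZZ}$ appearing in its definition) with the concrete gauge-transformation picture on arrows, and pinning down the sign conventions so that the system $r_{s(a)} - r_{t(a)} = -d(a)$ comes out right. Once that translation is in place, the tree hypothesis makes solvability immediate, and there is no serious homological difficulty beyond the vanishing of $H^1$ for a tree.
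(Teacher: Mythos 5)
Your proposal is correct and follows essentially the same route as the paper: both reduce via Theorem~\ref{thm_gradingQ_derivedeq} to showing that every $\ZZ$-grading on $kQ$ is equivalent to the trivial one, and both establish this by observing that the gradings equivalent to the trivial grading form the image of the map $\ZZ^{Q_0}\rightarrow\ZZ^{Q_1}$, $(r_i)\mapsto(r_{t(a)}-r_{s(a)})$, which is surjective precisely because $Q$ is a tree. Your explicit remarks that the zero grading is induced by $kQ$ itself and that solvability follows by propagating values along the unique walks from a base vertex merely spell out what the paper leaves implicit.
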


\begin{proof}
By Theorem~\ref{thm_gradingQ_derivedeq}, it is enough to observe that all gradings on $kQ$ are equivalent. We consider the map
\[ \begin{array}{rcl}\ZZ^{Q_0} & \rightarrow & \ZZ^{Q_1} \\ (r_i)_{i\in Q_0} & \mapsto & (r_{t(a)}-r_{s(a)})_{a\in Q_1}\end{array}. \]
Then the grading on $Q$ which are equivalent to the trivial grading are the image of this map. Moreover, using the fact that $Q$ is a tree, one easily sees that the map is surjective.
\end{proof}

\section{Derived equivalence classes of algebras of cluster type $\A_{p,q}$}\label{section derived atilde}

In this section we study the algebras of cluster type $\widetilde{A}_{p,q}$, i.e.\ all algebras which are cluster equivalent to the path algebra $H$ of the acyclic quiver $Q_H$: 
\[\scalebox{.7}{
\begin{tikzpicture}[>=stealth,scale=1.6]
\node (P1) at (0,0) {$1$};
\node (P2) at (1,1) {$2$};
\node (P3) at (2,1) {$3$};
\node (P4) at (3,1){};
\node (P5) at (5.5,1){};
\node (P6) at (7,1) {$p$};
\node (P7) at (8,0) {$p+1$};
\node (P8) at (7,-1) {$p+2$};
\node (P9) at (5.5,-1) {};
\node (P10) at (2.5,-1) {};
\node (P11) at (1,-1) {$p+q$};
\draw [->] (P1)  -- node [swap,yshift=3mm] {$a_1$}(P2);
\draw [->] (P2) -- node [swap,yshift=2mm] {$a_2$}(P3);
\draw [->] (P3) -- node [swap,yshift=2mm] {$a_3$}(P4);
\draw [->] (P5) --node [swap,yshift=2mm] {$a_{p-1}$} (P6);
\draw [->] (P6) -- node [swap,yshift=3mm] {$a_p$}(P7);
\draw [->] (P8) -- node [swap,yshift=-3mm,xshift=2mm] {$b_1$}(P7);
\draw [->] (P9) -- node [swap,yshift=-2.5mm] {$b_2$}(P8);
\draw [->] (P11) -- node [swap,yshift=-2.5mm] {$b_{q-1}$}(P10);
\draw [->] (P1) -- node [swap,yshift=-3mm] {$b_q$}(P11);
\draw [loosely dotted, thick] (P4) -- (P5);
\draw [loosely dotted, thick] (P9) --(P10);
\end{tikzpicture}}
\]

 Since $Q_H$ is not a tree, these algebras need not be derived equivalent. In this section we introduce an invariant of an algebra of cluster type $Q_H$ which determines its class of derived equivalence.

\subsection{The weight of an algebra of cluster type $\A_{p,q}$} 

\begin{dfa}
Let $d$ be a $\ZZ$-grading on $Q_H$. We define the weight of the grading $d$ by
\[w(H,d):=\sum_{i=1}^pd(a_i)-\sum_{i=1}^qd(b_i)\]
\end{dfa}

\begin{lema}\label{gradedeqandw}
Let $d_1$ and $d_2$ be two $\mathbb{Z}$-gradings on $H$. Then the following are equivalent:
\begin{enumerate}
\item $(H,d_1)$ and $(H,d_2)$ are graded equivalent;
\item $w(H,d_1)=w(H,d_2)$.
\end{enumerate}
\end{lema}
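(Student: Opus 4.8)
The plan is to reduce the statement to the combinatorics of gradings on the fixed hereditary algebra $H=kQ_H$. Since $H$ is the path algebra of the acyclic quiver $Q_H$, a $\ZZ$-grading on $H$ is simply a function $d\colon (Q_H)_1\to\ZZ$, and, just as in the proof of Corollary~\ref{cor_tree}, I would first establish that $(H,d_1)$ and $(H,d_2)$ are graded equivalent if and only if $d_2=\sigma^*d_1+\delta r$ for some $r\in\ZZ^{(Q_H)_0}$ and some quiver automorphism $\sigma$ of $Q_H$. Here $\delta r$ is the coboundary $a\mapsto r_{t(a)}-r_{s(a)}$ arising from shifting the indecomposable projective $P_i$ by $r_i$, and $\sigma^*$ records the possible relabeling of vertices. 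The justification is that the $\ZZ$-covering $\cov{H}{d}$ has the shifted indecomposable projectives $P_i\langle n\rangle$ as its indecomposable objects, so any equivalence commuting with $\langle 1\rangle$ is given by a permutation of vertices together with shifts; compatibility with the graded $\Hom$-spaces then forces the permutation to be a quiver automorphism and the degrees to match up to the coboundary.

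Granting this reduction, the implication $(2)\Rightarrow(1)$ becomes an integration argument. Given $w(H,d_1)=w(H,d_2)$, set $c:=d_1-d_2$, so that $w(H,c)=0$, i.e.\ $\sum_{i=1}^p c(a_i)=\sum_{i=1}^q c(b_i)$. I would define $r$ on the vertices by fixing $r_1=0$ and summing $c$ along a directed path from vertex $1$: every vertex other than $p+1$ is reached by a unique such path, while $p+1$ is reached along both arms, forcing $r_{p+1}=\sum_{i=1}^p c(a_i)$ from the top arm and $r_{p+1}=\sum_{i=1}^q c(b_i)$ from the bottom arm. These two prescriptions agree precisely because $w(H,c)=0$, so $r$ is well defined and $c=\delta r$. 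Hence $d_1$ and $d_2$ differ by a coboundary, which exhibits them as graded equivalent by shifting projectives.

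For $(1)\Rightarrow(2)$ I would first record that coboundaries preserve the weight: both arms are directed paths from $1$ to $p+1$, so $\sum_i\delta r(a_i)$ and $\sum_i\delta r(b_i)$ each telescope to $r_{p+1}-r_1$, whence $w(H,\delta r)=0$ and $w$ is unchanged when $d$ is replaced by $d+\delta r$. It then remains to analyze the automorphisms $\sigma\in\Aut(Q_H)$, and this is the step I expect to be the main obstacle. Any such $\sigma$ must fix the unique source $1$ and the unique sink $p+1$ and send each oriented arm to an oriented arm of the same length. When $p\neq q$ the arms have different lengths, so $\Aut(Q_H)$ is trivial, $w$ is an exact invariant, and the Lemma follows as stated. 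When $p=q$ there is in addition the arm-swapping automorphism, and since it interchanges the families $(a_i)$ and $(b_i)$ it sends $w$ to $-w$; thus in the symmetric case the weight is only invariant up to sign, which is exactly why $|w|$ rather than $w$ is the quantity surviving in later statements such as Corollary~\ref{shapeARquiver}. I would therefore either restrict to $p\neq q$ at this point or record the sign ambiguity explicitly in the conclusion.
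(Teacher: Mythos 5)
Your proof is correct, and its core is the same as the paper's: the paper introduces the map $\phi\colon\ZZ^{(Q_H)_0}\to\ZZ^{(Q_H)_1}$, $(r_i)\mapsto(r_{t(a)}-r_{s(a)})$, notes that two gradings are equivalent precisely when their difference lies in $\Imm\phi$, and then asserts that the sequence $\ZZ^{(Q_H)_0}\xrightarrow{\phi}\ZZ^{(Q_H)_1}\xrightarrow{w}\ZZ\to 0$ is exact, calling this ``straight-forward to check''. Your integration argument is exactly the verification that $\Ker w\subseteq\Imm\phi$, and your telescoping computation is the verification that $\Imm\phi\subseteq\Ker w$, so you have simply written out the exactness check the paper omits.

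The one point where you genuinely go beyond the paper's proof is the role of quiver automorphisms, and your extra care here is warranted. The paper's proof identifies graded equivalence of $(H,d_1)$ and $(H,d_2)$ with the coboundary condition without comment; since the definition of graded equivalence permits an arbitrary isomorphism of graded algebras --- in particular one induced by a quiver automorphism --- this identification is only accurate when $\Aut(Q_H)$ is trivial, that is when $p\neq q$. Your observation that for $p=q$ the arm-swapping automorphism reverses the sign of $w$, so that only $|w|$ survives as an invariant, is precisely the phenomenon the paper handles elsewhere rather than inside this lemma: the weight of an algebra is declared well defined ``since there is no automorphism of $Q_H$'' only when $p>q$, Theorem~\ref{derivedeqiffwegal} is stated under the hypothesis $p>q$, and the case $p=q$ is treated separately in Theorem~\ref{theorem case p=q} with $|w(\Lambda_1)|=|w(\Lambda_2)|$ in place of equality of weights. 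So your proposed fix --- restricting the lemma to $p\neq q$ or recording the sign ambiguity in the conclusion --- is the correct reading, and it matches how the lemma is actually used later in the paper.
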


\begin{proof}
 We consider the map
$$\begin{array}{rrcl}\phi\colon &\ZZ^{(Q_H)_0} & \rightarrow & \ZZ^{(Q_H)_1} \\& (r_i)_{i\in (Q_H)_0} & \mapsto & (r_{t(a)}-r_{s(a)})_{a\in (Q_H)_1}\end{array}.$$
The gradings $d_1$ and $d_2$ are equivalent if and only if $d_1-d_2$ is in the image of $\phi$.

It is straight-forward to check that the sequence of $\ZZ$-modules
\[ \ZZ^{(Q_H)_0} \overset{\phi}{\rightarrow} \ZZ^{(Q_H)_1} \overset{w}{\rightarrow} \ZZ \rightarrow 0 \]
is exact (using the fact that $Q$ is of type $\widetilde{A}$). Now the claim of the lemma follows.
\end{proof}

This result allows us to define the following.
\begin{dfa}
Let $\Lambda$ be an algebra of global dimension at most 2 and of cluster type $\A_{p,q}$. Let $s$ be a sequence of mutations such that the endomorphism ring of $\mu_s(\pi_{\Lambda}(\Lambda))$ is $kQ_H$ (such a sequence exists by Corollary \ref{recognitioncor}), and let $d_s$ be the corresponding grading on $Q_H$ induced by $\Lambda$. Then define the \emph{weight} of $\Lambda$ by
\[w(\Lambda,s):=w(d_s)=\sum_{i=1}^pd_s(a_i)-\sum_{i=1}^qd_s(b_i).\]
\end{dfa}
If $p>q$, the weight $w(\Lambda,s)$ is well defined since there is no automorphism of $Q_H$.

\subsection{The case $p>q$}
Before proving the main theorem, we prove the following technical result.

\begin{lema} \label{lem.w_well-def}
The weight $w(\Lambda,s)$ does not depend on the choice of the sequence of mutations $s$.
\end{lema}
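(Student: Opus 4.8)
The plan is to show that any two admissible sequences $s,t$ produce cluster-tilting objects of $\Cc_\Lambda$ that differ only by a shift, and then to exploit that the shift functor commutes with $\SSS$ and hence preserves the induced grading. Throughout I fix the triangle equivalence $f\colon \Cc_\Lambda \to \Cc_{Q_H}$ of Corollary~\ref{recognitioncor}, and I write $T'_s = \mu^{\rL}_s(\Lambda)$ and $T'_t = \mu^{\rL}_t(\Lambda)$ in $\Dd^b(\Lambda)$, so that $\pi_\Lambda(T'_s)=\mu_s\pi_\Lambda(\Lambda)$ and $\pi_\Lambda(T'_t)=\mu_t\pi_\Lambda(\Lambda)$ are the two cluster-tilting objects, each with endomorphism algebra $kQ_H$, and $d_s,d_t$ are the corresponding induced gradings.

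First I would dispose of the easy case: if $\pi_\Lambda(T'_s)\iso\pi_\Lambda(T'_t)$, then Proposition~\ref{uniqueness_induced_grading} shows that $d_s$ and $d_t$ are equivalent up to an automorphism of $Q_H$. Since $p>q$, the quiver $Q_H$ has vertex $1$ as unique source, vertex $p+1$ as unique sink, and two directed paths of different lengths $p$ and $q$ joining them; hence $Q_H$ admits no nontrivial automorphism, so $d_s\sim d_t$ and $w(\Lambda,s)=w(\Lambda,t)$ by Lemma~\ref{gradedeqandw}.

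The general case is the core. The key structural input is the claim that the cluster-tilting objects of $\Cc_{Q_H}$ with endomorphism algebra isomorphic to $kQ_H$ form a single orbit under the shift functor, i.e.\ each is isomorphic to $\pi_{Q_H}(kQ_H)[n]$ for some $n\in\ZZ$. Granting this, $f(\pi_\Lambda T'_s)\iso\pi_{Q_H}(kQ_H)[a]$ and $f(\pi_\Lambda T'_t)\iso\pi_{Q_H}(kQ_H)[b]$, so $f(\pi_\Lambda T'_t)\iso f(\pi_\Lambda T'_s)[\,b-a\,]$; applying $f^{-1}$, which is a triangle functor and so commutes with the shift, gives $\pi_\Lambda(T'_t)\iso\pi_\Lambda(T'_s)[\,b-a\,]=\pi_\Lambda(T'_s[\,b-a\,])$ in $\Cc_\Lambda$. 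Taking preimages under $\pi_\Lambda$ — recall $\pi_\Lambda^{-1}\pi_\Lambda(X)=\add\{\SSS^q X\mid q\in\ZZ\}$ — yields an equality of subcategories $\add\{\SSS^q T'_t\}=\add\{\SSS^q T'_s\}[\,b-a\,]$ of $\Dd^b(\Lambda)$. As $[\,b-a\,]$ commutes with $\SSS$, it restricts to an equivalence $\add\{\SSS^q T'_s\}\to\add\{\SSS^q T'_t\}$ of additive categories with $\SSS$-action; by the remark following Theorem~\ref{AO_derivedeq_gradedeq_thm} this means exactly that the graded algebras $(kQ_H,d_s)$ and $(kQ_H,d_t)$ are graded equivalent, and Lemma~\ref{gradedeqandw} then gives $w(\Lambda,s)=w(\Lambda,t)$.

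It remains to justify the claim, which I expect to be the main obstacle. A cluster-tilting object $T$ of $\Cc_{Q_H}$ with $\End_{\Cc_{Q_H}}(T)\iso kQ_H$ lifts, via Theorem~\ref{recognition} (equivalently, through the standard fundamental domain of $\Cc_{Q_H}$), to a tilting object of $\Dd^b(kQ_H)$ whose endomorphism algebra is the hereditary algebra $kQ_H$. A tilting object of the derived category of a hereditary algebra with hereditary endomorphism algebra is, up to shift, a complete slice, and complete slices lie in the transjective component $\ZZ Q_H$; those of orientation exactly $Q_H$ form a single orbit under the Auslander–Reiten translation $\tau$ — this is where $p>q$ is used again, to rule out the extra freedom a nontrivial automorphism of $Q_H$ would otherwise create. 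Finally, in the $2$-Calabi–Yau category $\Cc_{Q_H}$ one has $\tau=[1]$, so this $\tau$-orbit is precisely the shift-orbit of $\pi_{Q_H}(kQ_H)$, establishing the claim. The delicate points to pin down are that $\End_{\Cc_{Q_H}}(T)\iso kQ_H$ forces the positive-degree part of the graded endomorphism algebra to vanish, so that $T$ genuinely lifts to a slice rather than a more complicated tilting complex, and that for $p>q$ the slices of orientation $Q_H$ are exactly the $\tau$-translates of the standard one; both rely on the explicit structure of the transjective component of the tame hereditary algebra $kQ_H$. (Alternatively, one could argue entirely within the paper's framework by interpreting the weight as the degree accumulated around the fundamental cycle of a graded quiver with potential, and verifying via Lemma~\ref{leftrightmutation} that this flux is invariant under graded mutation across the $\widetilde A_{p,q}$ mutation class; I expect the shift argument above to be shorter.)
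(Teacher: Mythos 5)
Your reduction to the ``key claim'' is where the proof breaks down: the claim that the cluster-tilting objects of $\Cc_{Q_H}$ with endomorphism algebra $kQ_H$ form a single orbit under the shift functor is false. Equivalently, the complete slices of orientation $Q_H$ in the transjective component $\ZZ Q_H$ do \emph{not} form a single $\tau$-orbit. Concretely, take $T_1=\tau^{-1}(e_1H\oplus\cdots\oplus e_pH)\oplus e_{p+1}H\oplus\cdots\oplus e_{p+q}H$ (one of the two slices appearing in the paper's own proof, reached by the mutation sequence $\mu^{\rL}_p\cdots\mu^{\rL}_1$). Its endomorphism algebra is $kQ_H$, but it is not isomorphic to any $H[n]=\tau^n H$ in $\Cc_{Q_H}$: the summand $\tau^{-1}e_1H$ would force $n=-1$ while the summand $e_{p+1}H$ would force $n=0$, and distinct vertices of the standard component $\ZZ Q_H$ are pairwise non-isomorphic objects. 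The underlying error is your assertion that slices of orientation exactly $Q_H$ ``form a single orbit under the Auslander--Reiten translation $\tau$ --- this is where $p>q$ is used.'' That assertion is true for a tree $\Delta$ with trivial automorphism group, where $\Aut(\ZZ\Delta)\iso\Aut(\Delta)\times\langle\tau\rangle$, but it fails for $\widetilde{A}_{p,q}$ precisely because the underlying graph is a cycle: the two arms of the cycle can be ``wound'' independently, so $\Aut(\ZZ Q_H)$ is strictly larger than $\langle\tau\rangle$; it is generated by the two automorphisms attached to the slices $T_1$ and $T_2$ of the paper's proof, and $\tau$ is a nontrivial word in these generators.

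This is not a patchable detail: the existence of slices lying in different shift-orbits is exactly the reason the weight is a nontrivial invariant in type $\widetilde{A}$, so any proof of Lemma~\ref{lem.w_well-def} must address the case where $\mu_s(\pi_\Lambda\Lambda)$ and $\mu_t(\pi_\Lambda\Lambda)$ are not shifts of one another --- the case your argument silently excludes. The paper does this by reducing to the two generators of $\Aut(\ZZ Q_H)$ and computing directly, via the explicit source-mutation sequences realizing $T_1$ and $T_2$, that the \emph{weight} of the induced grading is unchanged; the gradings themselves do change under these automorphisms, and only afterwards does Lemma~\ref{gradedeqandw} convert equal weights into graded equivalence. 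Your deprioritized parenthetical alternative --- proving that the ``flux'' around the cycle is invariant under graded mutation --- is in fact the viable route (it is essentially Lemma~\ref{winvariantmutation} of the paper), whereas the shift-orbit argument you chose as the main line cannot be made to work.
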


\begin{proof}
Let $s$ and $t$ be two sequences of mutations such that the endomorphism rings of $\mu_s( \pi_{\Lambda}(\Lambda))$ and of $\mu_t(\pi_{\Lambda} (\Lambda))$ are both isomorphic to $kQ_H$. Then there is an automorphism of the translation quiver $\ZZ Q_H$, which forms a component of the Auslander-Reiten quiver of the common cluster category, mapping $\mu_s( \pi_{\Lambda}(\Lambda))$ to $\mu_t(\pi_{\Lambda} (\Lambda))$.
Since $p\neq q$, one can check that the automorphism group of the translation quiver $\ZZ Q_H$ is generated by 2 elements given by the slices
\begin{align*}
T_1 & =\tau^{-1}(e_1H\oplus\cdots\oplus e_pH)\oplus e_{p+q}H\oplus\cdots \oplus e_{p+1}H\text{, and} \\
T_2 & =e_2H\oplus\cdots\oplus e_{p+1}H\oplus \tau^{-1}(e_1H\oplus e_{p+q}H\oplus e_{p+q-1}H\oplus\cdots\oplus e_{p+2}H)\text{, respectively,}
\end{align*}
where $e_i$ is the primitive idempotent of $H=kQ$ associated to the vertex $i$ for any $i=1,\ldots, p+q$.
Then we have
$$T_1=\mu^{\rL}_p\mu^{\rL}_{p-1}\ldots\mu_1^{\rL}(H) \text{ and } T_2=\mu_{p+2}^{\rL}\mu_{p+3}^{\rL}\ldots\mu_{p+q}^{\rL}\mu_{1}^{\rL}(H).$$
It is enough to check that if $d$ is a grading on $Q_H$, then the weights of the graded quivers $\mu^{\rL}_p\mu^{\rL}_{p-1}\ldots\mu_1^{\rL}(Q_H,d)$ and $\mu_{p+2}^{\rL}\mu_{p+3}^{\rL}\ldots\mu_{p+q}^{\rL}\mu_{1}^{\rL}(Q_H,d)$ are equal to $w(d)$. We do this for  $\mu^{\rL}_p\mu^{\rL}_{p-1}\ldots\mu_1^{\rL}(Q_H,d)$, the other one is similar.

First note that in the sequence of mutations $\mu^{\rL}_p\mu^{\rL}_{p-1}\ldots\mu_1^{\rL}$, we mutate at a source at each step. Therefore the left graded mutation consists in reversing the arrows and assigning the opposite of the degree. After the sequence of mutations, arrows $a_1,\ldots,a_{p-1}$ have been reversed twice, arrows $a_p$ and $b_q$ have been reversed once, and $b_1,\ldots, b_q$ have not been reversed. Hence the graded quiver $\mu^{\rL}_p\mu^{\rL}_{p-1}\ldots\mu_1^{\rL}(Q_H,d)$ is the following:

\[\scalebox{.7}{
\begin{tikzpicture}[>=stealth,scale=1.6]
\node (P1) at (0,0) {$1$};
\node (P2) at (1,1) {$2$};
\node (P3) at (2,1) {$3$};
\node (P4) at (3,1){};
\node (P5) at (5.5,1){};
\node (P6) at (7,1) {$p$};
\node (P7) at (8,0) {$p+1$};
\node (P8) at (7,-1) {$p+2$};
\node (P9) at (5.5,-1) {};
\node (P10) at (2.5,-1) {};
\node (P11) at (1,-1) {$p+q$};
\draw [->] (P1)  -- node [swap,xshift=-3mm,yshift=3mm] {$d(a_1)$}(P2);
\draw [->] (P2) -- node [swap,yshift=3mm] {$d(a_2)$}(P3);
\draw [->] (P3) -- node [swap,yshift=3mm] {$d(a_3)$}(P4);
\draw [->] (P5) --node [swap,yshift=3mm] {$d(a_{p-1})$} (P6);
\draw [<-] (P6) -- node [swap,xshift=5mm,yshift=3mm] {$-d(a_p)$}(P7);
\draw [->] (P8) -- node [swap,yshift=-3mm,xshift=3mm] {$d(b_1)$}(P7);
\draw [->] (P9) -- node [swap,yshift=-3mm] {$d(b_2)$}(P8);
\draw [->] (P11) -- node [swap,yshift=-3mm] {$d(b_{q-1})$}(P10);
\draw [<-] (P1) -- node [swap,yshift=-3mm,xshift=-5mm] {$-d(b_q)$}(P11);
\draw [loosely dotted, thick] (P4) -- (P5);
\draw [loosely dotted, thick] (P9) --(P10);
\end{tikzpicture}}
\]
Hence the weight of this grading is $(-d(b_q)+\sum_{i=1}^{p-1}d(a_i))-(\sum_{j=1}^{q-1}d(b_j)-d(a_p))=w(d).$
\end{proof}

 This lemma together with Lemma \ref{gradedeqandw} shows that situation in Remark \ref{counter example} does not occur for a quiver of type $\widetilde{A}_{p,q}$.
Hence we may, in the sequel, refer to the weight $w(\Lambda)$ without specifying a sequence of mutations.

\begin{thma}\label{derivedeqiffwegal}
Let $\Lambda_1$ and $\Lambda_2$ be two algebras of cluster type $\A_{p,q}$ with $p>q$. Then there is a derived equivalence between $\Lambda_1$ and $\Lambda_2$ if and only if $w(\Lambda_1)=w(\Lambda_2)$.
\end{thma}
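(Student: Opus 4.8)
The plan is to reduce the statement to Theorem~\ref{thm_gradingQ_derivedeq}, which already establishes that $\Lambda_1$ and $\Lambda_2$ are derived equivalent if and only if there exist induced gradings $d_{s_1}$ and $d_{s_2}$ on $Q_H$ that are equivalent (up to automorphism of $Q_H$). The task is then purely to translate this graded-equivalence criterion into the numerical weight invariant. The key technical inputs are already in place: Lemma~\ref{gradedeqandw} asserts that two gradings on $Q_H$ are graded equivalent precisely when they have the same weight, and Lemma~\ref{lem.w_well-def} guarantees that the weight $w(\Lambda,s)$ is independent of the chosen mutation sequence $s$, so that $w(\Lambda)$ is well defined.

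First I would unwind the definitions. By Corollary~\ref{recognitioncor} each $\Lambda_i$ admits a sequence $s_i$ of mutations producing $\mu_{s_i}(\pi_{\Lambda_i}(\Lambda_i))$ with endomorphism ring $kQ_H$, and the corresponding induced grading $d_{s_i}$ on $Q_H$ has weight $w(\Lambda_i,s_i) = w(\Lambda_i)$ by Lemma~\ref{lem.w_well-def}. For the forward direction, suppose $\Lambda_1$ and $\Lambda_2$ are derived equivalent. Then Theorem~\ref{thm_gradingQ_derivedeq} $(2)\Rightarrow(1)$ yields induced gradings $d_{s_1}$ and $d_{s_2}$ on $Q_H$ that are equivalent up to an automorphism of $Q_H$. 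Since $p>q$, the quiver $Q_H$ has no nontrivial automorphism, so the two gradings are genuinely equivalent; by Lemma~\ref{gradedeqandw} this forces $w(d_{s_1})=w(d_{s_2})$, i.e.\ $w(\Lambda_1)=w(\Lambda_2)$.

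For the converse, assume $w(\Lambda_1)=w(\Lambda_2)$. Choose any induced gradings $d_{s_1}$ and $d_{s_2}$ on $Q_H$; by Lemma~\ref{lem.w_well-def} their weights equal $w(\Lambda_1)$ and $w(\Lambda_2)$ respectively, hence they coincide. By Lemma~\ref{gradedeqandw} $(2)\Rightarrow(1)$ the two gradings are graded equivalent, which is exactly condition $(1)$ of Theorem~\ref{thm_gradingQ_derivedeq}. Applying $(1)\Rightarrow(2)$ of that theorem produces the desired derived equivalence $\Dd^b(\Lambda_1)\to\Dd^b(\Lambda_2)$.

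The only genuinely delicate point is the use of $p>q$ to kill automorphisms of $Q_H$ in the forward direction: this is precisely where the hypothesis enters, since for $p=q$ the cyclic symmetry of $Q_H$ would allow a grading and its ``reflected'' counterpart (with weight of opposite sign) to be identified, breaking the clean correspondence. I expect no real computational obstacle here, as all the hard work—the exactness of the sequence $\ZZ^{(Q_H)_0}\to\ZZ^{(Q_H)_1}\to\ZZ\to0$ underlying Lemma~\ref{gradedeqandw}, and the invariance of the weight under the slice mutations in Lemma~\ref{lem.w_well-def}—has been discharged in the preceding lemmas. The proof is therefore essentially a formal assembly of Theorem~\ref{thm_gradingQ_derivedeq} with Lemmas~\ref{gradedeqandw} and~\ref{lem.w_well-def}, together with the observation that $Q_H$ is rigid under automorphisms when $p\neq q$.
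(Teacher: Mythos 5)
Your proposal is correct and follows essentially the same route as the paper: both proofs are a formal assembly of Theorem~\ref{thm_gradingQ_derivedeq} with Lemma~\ref{gradedeqandw} (graded equivalence of gradings on $Q_H$ $\Leftrightarrow$ equal weight) and Lemma~\ref{lem.w_well-def} (well-definedness of the weight), with the hypothesis $p>q$ entering only to exclude nontrivial automorphisms of $Q_H$. Your write-up merely spells out the two directions more explicitly than the paper's terse version does.
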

 \begin{proof}
Let $\delta_1$ (resp.\ $\delta_2$) be a grading on $Q_H$ induced by $\Lambda_1$ (resp.\ $\Lambda_2$).
By Lemma~\ref{gradedeqandw}, $\delta_1$ and $\delta_2$ are equivalent if and only if the corresponding weights $w(\Lambda_1)$ and $w(\Lambda_2)$ coincide. Since, by Lemma~\ref{lem.w_well-def}, the weights are independent of the choice of a sequence of mutation, it follows that also the gradings $\delta_1$ and $\delta_2$ are independent of this choice up to graded equivalence. Now the claim follows from Theorem~\ref{thm_gradingQ_derivedeq}. 
\end{proof}

\begin{cora}
Let $\Lambda$ be an algebra of cluster type $\A_{p,q}$. Then $\Lambda$ is piecewise hereditary if and only if $w(\Lambda)=0$.
\end{cora}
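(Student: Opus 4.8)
The plan is to establish the two implications separately, translating ``piecewise hereditary'' into ``derived equivalent to the hereditary algebra $H=kQ_H$'' whenever possible, and to treat the nontrivial direction by invoking the Auslander--Reiten description obtained later in the paper.

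First I would treat the implication $w(\Lambda)=0 \Rightarrow \Lambda$ piecewise hereditary. Let $d_s$ be a grading on $Q_H$ induced by $\Lambda$. The algebra $H$ is itself of cluster type $\A_{p,q}$ (its cluster category is $\Cc_{\A_{p,q}}$ by definition), it is $\tau_2$-finite, and since it has no relations the grading it induces on $Q_H$ is the trivial one, of weight $0$. From $w(H,d_s)=w(\Lambda)=0=w(H,0)$ and Lemma~\ref{gradedeqandw} I conclude that $d_s$ and the trivial grading are graded equivalent. Applying Theorem~\ref{thm_gradingQ_derivedeq} to the pair $(\Lambda,H)$ then produces a derived equivalence $\Dd^b(\Lambda)\iso\Dd^b(H)$. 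As $H$ is hereditary, $\Lambda$ is piecewise hereditary. I stress that this step uses only Theorem~\ref{thm_gradingQ_derivedeq} and Lemma~\ref{gradedeqandw}, both of which are valid for arbitrary $p,q$; hence it also covers the case $p=q$, where the weight is only defined up to sign but its vanishing is unambiguous.

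For the converse I would argue by contraposition: if $w(\Lambda)\neq 0$, then $\Lambda$ is not piecewise hereditary. This is exactly one of the conclusions of Corollary~\ref{shapeARquiver}, which computes the Auslander--Reiten quiver of $\Dd^b(\Lambda)$ for $w\neq 0$ and shows in particular that $\Lambda$ is representation finite but not piecewise hereditary. The underlying reason is that a connected, representation-finite, piecewise hereditary algebra has a single connected derived Auslander--Reiten component of the form $\ZZ\Delta$ for some Dynkin diagram $\Delta$, whereas for $w\neq 0$ the quiver breaks into $3|w|\geq 3$ components of types $\ZZ A_\infty^\infty$ and $\ZZ A_\infty$; these are incompatible. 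I expect this converse to be the real obstacle: it cannot be read off from the graded-mutation bookkeeping and genuinely relies on the explicit analysis of the transjective and regular components carried out in Section~\ref{section ARquiver}. Since that analysis is independent of the present statement, invoking Corollary~\ref{shapeARquiver} introduces no circularity.
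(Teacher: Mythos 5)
Your proof is correct, but for the harder direction it takes a genuinely different route than the paper. The direction $w(\Lambda)=0\Rightarrow\Lambda$ piecewise hereditary is essentially identical to the paper's: the paper simply applies Theorem~\ref{derivedeqiffwegal} with $\Lambda_1=\Lambda$ and $\Lambda_2=H$, and your unpacking via Lemma~\ref{gradedeqandw} and Theorem~\ref{thm_gradingQ_derivedeq} is precisely the proof of that theorem; your remark that this argument works verbatim for $p=q$ is a nice bonus (the paper instead defers $p=q$ to Theorem~\ref{theorem case p=q}). For the converse, the paper stays entirely inside Section~\ref{section derived atilde}: if $\Lambda$ is piecewise hereditary then $\Dd^b(\Lambda)\iso\Dd^b(\Hh)$ for a hereditary category $\Hh$, hence $\Cc_\Hh\iso\Cc_\Lambda\iso\Cc_{\A_{p,q}}$, and since hereditary categories with equivalent cluster categories are derived equivalent, $\Dd^b(\Lambda)\iso\Dd^b(H)$ and $w(\Lambda)=w(H)=0$ by Theorem~\ref{derivedeqiffwegal}. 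That argument is two lines and needs no Auslander--Reiten theory, at the price of invoking a recognition-type fact for hereditary categories. You instead argue by contraposition through Corollary~\ref{shapeARquiver}; this is legitimate --- Section~\ref{section ARquiver} nowhere uses the present corollary, so there is indeed no circularity, as you correctly check --- and it is very concrete, but it makes the statement depend on the full graded-derived-equivalence machinery and the AR computation, which is much heavier than what the paper uses.

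Two small caveats on your converse. First, the body statement of Corollary~\ref{shapeARquiver} records only the shape of the AR quiver; the clause ``representation finite and not piecewise hereditary'' appears only in the introduction's restatement, and representation-finiteness is proved separately in the subsection following that corollary (via derived discreteness), so the bridging argument you sketch is genuinely needed and not ``one of the conclusions'' of the cited corollary. Second, your bridge invokes ``representation-finite piecewise hereditary $\Rightarrow$ single component $\ZZ\Delta$ with $\Delta$ Dynkin'', which is true but itself rests on the nontrivial classical fact that a representation-finite piecewise hereditary algebra has Dynkin type; it is cleaner, and avoids representation-finiteness entirely, to observe that for any hereditary category $\Hh$ the AR quiver of $\Dd^b(\Hh)$ has either exactly one component (Dynkin case, of the form $\ZZ\Delta$ with $\Delta$ finite) or infinitely many components, whereas your $\Dd^b(\Lambda)$ has $3|w|\geq 3$ --- finitely many, and of infinite types $\ZZ A_\infty^\infty$ and $\ZZ A_\infty$ --- which is incompatible with both alternatives.
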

\begin{proof}
Let us treat the case where $p>q$.
The `if'-part of the assertion is a direct consequence of Theorem~\ref{derivedeqiffwegal} applied for $\Lambda_1=\Lambda$ and $\Lambda_2=H$.

The algebra $\Lambda$ is piecewise hereditary if and only if the derived category $\Dd^b(\Lambda)$ is equivalent to $\Dd^b(\Hh)$ for some hereditary category $\Hh$. Therefore it implies that the generalized cluster category $\Cc_\Lambda$ is equivalent to the cluster category $\Cc_\Hh$. Since $\Lambda$ is of cluster type $\A_{p,q}$, we have $\Cc_\Hh\iso \Cc_{\A_{p,q}}$. Hence we get $\Dd^b(\Hh)\iso \Dd^b(\A_{p,q})$. Therefore, by Theorem~\ref{derivedeqiffwegal}, we have $w(\Lambda)=0$. 

The case $p=q$ is a direct consequence of the result below.
\end{proof}

\subsection{The case $p=q$}
In the case $p=q$ there is a unique non-trivial automorphism of $Q_H$ fixing the vertices $1$ and $p+1$ and interchanging the vertices $i$ and $2p+2-i$ for $i=2,\ldots,p$.
This automorphism induces a derived equivalence between algebras $\Lambda_1$ and $\Lambda_2$ of cluster type $\A_{p,p}$ such that $w(\Lambda_1)=-w(\Lambda_2)$. Therefore we obtain the following result, whose proof is the same as Theorem~\ref{derivedeqiffwegal}.

\begin{thma}\label{theorem case p=q}
Let $\Lambda_1$ and $\Lambda_2$ be two algebras of cluster type $\A_{p,p}$. Then there is a derived equivalence between $\Lambda_1$ and $\Lambda_2$ if and only if $|w(\Lambda_1)|=|w(\Lambda_2)|$.
\end{thma}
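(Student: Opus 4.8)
The plan is to transcribe the proof of Theorem~\ref{derivedeqiffwegal}, the only new ingredient being the nontrivial automorphism $\sigma$ of $Q_H$ that appears when $p=q$ (fixing $1$ and $p+1$ and interchanging $i$ with $2p+2-i$). The effect of $\sigma$ is clean: it exchanges the top branch $a_1,\dots,a_p$ with the bottom branch $b_1,\dots,b_q$, so for every grading $d$ on $Q_H$ one has $w(H,\sigma\cdot d)=-w(H,d)$. Combined with Lemma~\ref{gradedeqandw}, this shows that two gradings $d_1,d_2$ on $Q_H$ become graded equivalent after applying a (possibly trivial) automorphism of $Q_H$ if and only if $|w(H,d_1)|=|w(H,d_2)|$.

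The first real step is to prove the analogue of Lemma~\ref{lem.w_well-def}: the weight of an algebra $\Lambda$ of cluster type $\A_{p,p}$ is well defined up to sign. I would argue exactly as in that lemma. Two sequences $s,t$ of mutations both producing $kQ_H$ yield cluster-tilting objects $\mu_s(\pi_\Lambda\Lambda)$ and $\mu_t(\pi_\Lambda\Lambda)$ that are related by an automorphism of the translation quiver $\ZZ Q_H$. When $p=q$ this automorphism group is generated by the two weight-preserving elements corresponding to the slices $T_1,T_2$ of Lemma~\ref{lem.w_well-def} together with the extra symmetry induced by $\sigma$. The computation already carried out for $p>q$ shows that the slices $T_1$ and $T_2$ preserve the weight (that computation never used $p\neq q$), while $\sigma$ negates it by the observation above. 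Hence the set of weights obtained from $\Lambda$ over all admissible sequences $s$ is exactly $\{w,-w\}$ for a single $w\in\ZZ$, so that $|w(\Lambda)|$ is a well-defined invariant.

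With these facts the theorem follows as in Theorem~\ref{derivedeqiffwegal}. Let $\delta_i$ be a grading on $Q_H$ induced by $\Lambda_i$. By Theorem~\ref{thm_gradingQ_derivedeq}, $\Lambda_1$ and $\Lambda_2$ are derived equivalent if and only if, for suitable choices of mutation sequences, the induced gradings can be chosen graded equivalent; since the achievable weights of $\Lambda_i$ form the set $\{w(\Lambda_i),-w(\Lambda_i)\}$, Lemma~\ref{gradedeqandw} shows this happens exactly when these two sets intersect, i.e.\ when $|w(\Lambda_1)|=|w(\Lambda_2)|$. Concretely, for the `if' direction, if $w(\Lambda_1)=w(\Lambda_2)$ one argues verbatim as in Theorem~\ref{derivedeqiffwegal}, while if $w(\Lambda_1)=-w(\Lambda_2)$ one first replaces the chosen sequence for $\Lambda_2$ by its composite with the relabeling induced by $\sigma$, reducing to the equal-weight case; the `only if' direction uses $(2)\Rightarrow(1)$ of Theorem~\ref{thm_gradingQ_derivedeq} together with the fact that $|w(\delta_i)|=|w(\Lambda_i)|$ for any induced grading.

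The step I expect to be the main obstacle is pinning down the automorphism group of the translation quiver $\ZZ Q_H$ when $p=q$: one must verify that, beyond the two weight-preserving generators $T_1,T_2$ already understood in the $p>q$ case, the only additional symmetry is $\sigma$, and that $\sigma$ acts on weights simply by negation. Once this is established, every remaining step is a direct transcription of the $p>q$ argument with the single integer $w(\Lambda)$ replaced throughout by the unordered pair $\{w(\Lambda),-w(\Lambda)\}$.
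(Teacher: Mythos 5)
Your proposal is correct and takes essentially the same approach as the paper: the paper's own proof of Theorem~\ref{theorem case p=q} consists precisely of observing that the unique non-trivial automorphism $\sigma$ of $Q_H$ (fixing $1$ and $p+1$) negates the weight and then declaring the rest of the argument to be identical to that of Theorem~\ref{derivedeqiffwegal}. You merely make explicit the points the paper leaves implicit --- that $|w(\Lambda)|$ is well defined because $\Aut(\ZZ Q_H)$ for $p=q$ is generated by the two weight-preserving slice automorphisms $T_1,T_2$ (whose weight computation indeed never used $p\neq q$) together with $\sigma$, and that the sign discrepancy is absorbed by composing a mutation sequence with the relabeling induced by $\sigma$ --- which is a faithful expansion rather than a different route.
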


\section{The Auslander-Reiten quiver of the derived category of an algebra of cluster type $\A_{p,q}$}\label{section ARquiver}

Let $\Lambda$ be an algebra of cluster type $\widetilde{A}_{p,q}$ and of weight $w\neq 0$. In this section we compute the Auslander-Reiten quiver of the derived category $\Dd^b(\Lambda)$. In order to do that, we use some results of \cite[Section 8]{AO10}.
\subsection{Graded derived equivalence}

Let $\Lambda$ be an algebra of cluster type $\A_{p,q}$. Then, by Proposition~\ref{propWgrading}, there exists a graded quiver with reduced potential $(\overline{Q},W,d)$ such that we have an isomorphism of $\ZZ$-graded algebras $\overline{\Lambda}\underset{\ZZ}{\iso}\Jac(\overline{Q},W,d)$. Let $\partial$ be a grading induced by $\Lambda$ on $H$ where $Q_H$ is the following quiver
\[\scalebox{1}{
\begin{tikzpicture}[scale=1,>=stealth]
\node (P1) at (0,0) {$1$};
\node (P2) at (1,1) {$2$};
\node (P3) at (2,1) {$3$};
\node (P4) at (3,1){};
\node (P5) at (5.5,1){};
\node (P6) at (7,1) {$p$};
\node (P7) at (8,0) {$p+1$};
\node (P8) at (7,-1) {$p+2$};
\node (P9) at (5.5,-1) {};
\node (P10) at (2.5,-1) {};
\node (P11) at (1,-1) {$p+q$};
\draw [->] (P1) -- (P2);
\draw [->] (P2) -- (P3);
\draw [->] (P3) -- (P4);
\draw [->] (P5) -- (P6);
\draw [->] (P6) -- (P7);
\draw [->] (P8) -- (P7);
\draw [->] (P9) -- (P8);
\draw [->] (P11) -- (P10);
\draw [->] (P1) -- (P11);
\draw [loosely dotted, thick] (P4) -- (P5);
\draw [loosely dotted, thick] (P9) --(P10);
\end{tikzpicture}}.\]
That is, there exists a sequence of mutations such that  $\mu_s(\overline{Q},W,d)=(Q_H,0,\partial)$. Now define the $\ZZ^2$-graded quiver with reduced potential $(Q',W',(d',\delta))$ by the following
\[ (Q',W',(d',\delta)):=\mu_{s^-}^{\rR}(Q_H,0,(\partial,0)).\]
By Lemma~\ref{leftrightmutation}, we have an isomorphism of $\ZZ$-graded algebras $\Jac(Q',W',d')\underset{\ZZ}{\iso}\Jac(\overline{Q},W,d)$. Since the potentials $W$ and $W'$ are reduced, the quivers $\overline{Q}$ and $Q'$ are isomorphic.  By definition of $\overline{Q}$ and $d$, the quiver $Q_\Lambda$ of the algebra $\Lambda$ is the subquiver of $\overline{Q}$ satisfying  $(Q_\Lambda)_0=\overline{Q}_0$ and $(Q_{\Lambda})_1=\{a\in\overline{Q}_1\mid d(a)=0\}$.  Moreover we have an isomorphism $\Lambda\iso kQ_\Lambda/\langle \partial_aW, a\in\overline{Q}_1, d(a)=1\rangle$. Since we have an isomorphism of $\ZZ$-graded algebras $\Jac(Q',W',d')\underset{\ZZ}{\iso}\Jac(\overline{Q},W,d)$, we get an isomorphism $\Lambda\iso kQ_\Lambda/\langle \partial_aW', a\in Q'_1, d'(a)=1\rangle$. The grading $\delta$, which is a grading on $Q'\iso \overline{Q}$, restricts on a grading on $Q_\Lambda$. The grading $\delta$ makes $W'$ homogeneous of degree $1$. Hence the relations $\partial_aW', a\in Q'_1$ with $d'(a)=1$ are homogeneous with respect to the degree $\delta$. Consequently $\delta$ yields a grading on the algebra $\Lambda$ that we still denote by $\delta$.

\medskip
Then we have the following direct consequence of \cite[Theorem~8.7]{AO10}.
\begin{thma}[\cite{AO10}] \label{AO_gradedderivedeq}
In the setup above, there exists a triangle equivalence \[\xymatrix{\Dd^b(\cov{\Lambda}{\delta})\ar[r]^-F_\sim & \Dd^b(\cov{H}{\partial})}\] Moreover we have an isomorphism of triangle functors $F\circ\langle 1\rangle_{\delta}\iso \SSS^{-1}\circ\langle -1\rangle_{\partial}\circ F$.
\end{thma}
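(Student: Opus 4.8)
My plan is to deduce the statement directly from \cite[Theorem~8.7]{AO10}; the work lies entirely in checking that the data assembled above is of the form that result requires. First I would record the key observation that, by its very construction, the $\ZZ^2$-graded quiver with potential $(Q',W',(d',\delta))$ is obtained from the $\ZZ^2$-graded quiver with potential $(Q_H,0,(\partial,0))$ by the sequence of right graded mutations $\mu^{\rR}_{s^-}$. Thus $(Q',W',(d',\delta))$ is linked, through a chain of elementary right graded mutations, to a starting point whose underlying quiver $Q_H$ is acyclic, whose potential vanishes, and whose second grading is trivial; this is precisely the configuration to which \cite[Theorem~8.7]{AO10} applies.

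Next I would identify the two derived categories appearing in the statement. By Lemma~\ref{leftrightmutation} we have $\Jac(Q',W',d')\underset{\ZZ}{\iso}\Jac(\overline{Q},W,d)\underset{\ZZ}{\iso}\overline{\Lambda}$, so the first grading $d'$ recovers the natural grading of $\overline{\Lambda}$; passing to the $d$-degree-zero subquiver $Q_\Lambda$ exhibits $\Lambda$ as the associated Jacobian quotient, and the restriction of the second grading endows $\Lambda$ with the grading $\delta$, hence the covering $\cov{\Lambda}{\delta}$. At the acyclic end the algebra is the hereditary algebra $H=kQ_H$, and the relevant covering is $\cov{H}{\partial}$, formed with respect to the first grading $\partial$. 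I would stress that the two coverings are built from different components of the $\ZZ^2$-grading: on the $\Lambda$-side from the second component $\delta$, and on the $H$-side from the first component $\partial$. Feeding the mutation data into \cite[Theorem~8.7]{AO10} then produces the triangle equivalence $F\colon \Dd^b(\cov{\Lambda}{\delta})\to\Dd^b(\cov{H}{\partial})$ together with the functorial isomorphism $F\circ\langle 1\rangle_{\delta}\iso \SSS^{-1}\circ\langle -1\rangle_{\partial}\circ F$.

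The one point that genuinely requires care, and which I expect to be the main obstacle, is the bookkeeping of how the two independent $\ZZ$-gradings are transported by the right graded mutations, and in particular why the $\delta$-shift on the covering of $\Lambda$ is intertwined with the \emph{combination} $\SSS^{-1}\circ\langle -1\rangle_{\partial}$ rather than with a plain grading shift. The appearance of the Serre-type twist $\SSS^{-1}$ is exactly the reflection, at the level of coverings, of the passage from the acyclic end---where the two gradings are unentangled---to the mutated end, where the mutation has mixed the cluster direction (the first grading) with the covering direction (the second grading), thereby producing the exchange of roles noted above. Once this intertwining of shifts is established at each elementary right graded mutation---which is the content of \cite[Section~8]{AO10}---it propagates formally along the sequence $\mu^{\rR}_{s^-}$, and the theorem follows.
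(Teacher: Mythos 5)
Your proposal matches the paper's treatment: the paper offers no independent argument, but presents the theorem as a direct consequence of \cite[Theorem~8.7]{AO10} applied to the $\ZZ^2$-graded QP $(Q',W',(d',\delta))=\mu_{s^-}^{\rR}(Q_H,0,(\partial,0))$ constructed in the preceding setup, which is exactly your reduction, including the identification of the two coverings via the two components of the $\ZZ^2$-grading. The only point the paper makes explicit that you leave implicit is that the compatibility condition of \cite[Definition~8.5]{AO10} holds automatically here (because $\overline{Q}$ is mutation acyclic and $W$ is rigid, the two $\ZZ$-gradings genuinely assemble into a $\ZZ^2$-grading), but your first paragraph asserts precisely this configuration, so the content is the same.
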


\begin{rema}
Note that in this situation, the compatibility condition defined in \cite[Definition~8.5]{AO10} is automatically satisfied since $\overline{Q}$ is mutation acyclic and since $W$ is rigid. Indeed, two $\ZZ$-gradings on a quiver induce a $\ZZ^2$-grading on it. But in general two $\ZZ$-gradings on an algebra do not give rise to a $\ZZ^2$-grading on it. 
\end{rema}

Theorem~\ref{AO_gradedderivedeq} implies the following result which will be very useful to compute explicitly the Auslander-Reiten quiver of $\Dd^b(\Lambda)$.

\begin{cora}\label{k-equivalence}
There exists a $k$-linear equivalence \[\Dd^b( \Lambda)\iso  \Dd^b(\cov{H}{\partial}))/\SSS\langle1\rangle_\partial\]
\end{cora}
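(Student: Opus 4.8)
The plan is to deduce Corollary~\ref{k-equivalence} directly from Theorem~\ref{AO_gradedderivedeq} by passing from the graded covering to the orbit category, using the compatibility of the equivalence $F$ with the two grading shift functors. First I would recall the general principle that relates a graded algebra $A$ to a $\ZZ$-action: the covering $\cov{A}{d}$ carries a free $\ZZ$-action given by the shift $\langle 1\rangle$, and the orbit category $\cov{A}{d}/\langle 1\rangle$ recovers $A$ itself (more precisely, $\Dd^b(\cov{A}{d})/\langle 1\rangle_d$ recovers $\Dd^b(A)$ as a $k$-linear, though not a priori triangulated, category). Applying this to $\Lambda$ with its grading $\delta$, we obtain a $k$-linear equivalence $\Dd^b(\Lambda)\iso \Dd^b(\cov{\Lambda}{\delta})/\langle 1\rangle_\delta$.

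Next I would transport the $\ZZ$-action across the equivalence $F$. The key input from Theorem~\ref{AO_gradedderivedeq} is the isomorphism of triangle functors $F\circ\langle 1\rangle_\delta\iso \SSS^{-1}\circ\langle -1\rangle_\partial\circ F$. This says precisely that $F$ intertwines the shift functor $\langle 1\rangle_\delta$ on $\Dd^b(\cov{\Lambda}{\delta})$ with the composite autoequivalence $\SSS^{-1}\circ\langle -1\rangle_\partial$ on $\Dd^b(\cov{H}{\partial})$. Consequently $F$ descends to an equivalence of orbit categories
\[\Dd^b(\cov{\Lambda}{\delta})/\langle 1\rangle_\delta \iso \Dd^b(\cov{H}{\partial})/(\SSS^{-1}\circ\langle -1\rangle_\partial).\]
Since the autoequivalence generating the $\ZZ$-action on the right is $\SSS^{-1}\langle -1\rangle_\partial$, its inverse $\SSS\langle 1\rangle_\partial$ generates the same $\ZZ$-action (a $\ZZ$-action is determined by a single generating autoequivalence up to replacing it by its inverse), so the orbit category on the right is exactly $\Dd^b(\cov{H}{\partial})/\SSS\langle 1\rangle_\partial$. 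Combining the two $k$-linear equivalences then yields the asserted equivalence $\Dd^b(\Lambda)\iso \Dd^b(\cov{H}{\partial})/\SSS\langle 1\rangle_\partial$.

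The main obstacle I anticipate is the bookkeeping needed to justify that $F$ genuinely descends to the orbit categories as a $k$-linear functor, and that it is an equivalence there. One has to check that the functor isomorphism $F\circ\langle 1\rangle_\delta\iso \SSS^{-1}\langle -1\rangle_\partial\circ F$ is coherent with the higher powers of the shifts (i.e.\ compatible with the full $\ZZ$-action, not merely with a single generator), so that the induced functor on orbit categories is well defined on morphisms; this is a standard but slightly delicate verification about orbit categories under an autoequivalence, and it is exactly where the freeness of the action and the functoriality of the given isomorphism are used. I would emphasize that we only claim a $k$-linear equivalence, not a triangle equivalence, since the orbit category of a triangulated category need not inherit a triangulated structure, and indeed this is consistent with the fact that $\Dd^b(\Lambda)$ here is the derived category of a genuine (non-hereditary) algebra while the right-hand side is built from the hereditary covering. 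Once the descent to orbit categories is in place, the identification of $\SSS\langle 1\rangle_\partial$ as the generator of the $\ZZ$-action is immediate, and the corollary follows formally.
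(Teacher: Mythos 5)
Your second step --- descending $F$ to orbit categories via the intertwining relation $F\circ\langle 1\rangle_\delta\iso \SSS^{-1}\circ\langle -1\rangle_\partial\circ F$ and then replacing the generator of the $\ZZ$-action by its inverse --- is sound, and it is exactly the first step of the paper's proof. The genuine gap is your first step: the claim that, for a graded algebra $A$, the orbit category $\Dd^b(\cov{A}{d})/\langle 1\rangle_d$ recovers $\Dd^b(A)$ as a $k$-linear category is not a general principle; it is false in general. What is true in general (and is what the paper invokes, namely \cite[Corollary~7.14]{AO10}) is that $\Dd^b(A)$ is the \emph{triangulated hull} of this orbit category: the orbit category embeds fully faithfully into $\Dd^b(A)$, but the embedding need not be essentially surjective, because objects of $\Dd^b(A)$ need not be gradable. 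The paper itself contains a counterexample to your principle: for the hereditary algebra $H$ with the grading $\partial$, the indecomposable modules lying in the homogeneous tubes are not gradable when $w\neq 0$, so $\Dd^b(\cov{H}{\partial})/\langle 1\rangle_\partial$ is strictly smaller than $\Dd^b(H)$. So the equivalence $\Dd^b(\Lambda)\iso \Dd^b(\cov{\Lambda}{\delta})/\langle 1\rangle_\delta$ cannot be quoted; it is essentially the statement to be proved.

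This also forces the logical order to be the reverse of yours. The paper first establishes the $k$-linear equivalence of orbit categories $\Dd^b(\cov{\Lambda}{\delta})/\langle 1\rangle_\delta\iso \Dd^b(\cov{H}{\partial})/\SSS\langle1\rangle_\partial$ (your step two), precisely so that it can then apply Keller's theorem on triangulated orbit categories \cite{Kel05} on the \emph{hereditary} side: the functor $\SSS\langle 1\rangle_\partial$ satisfies Keller's conditions (1) and (2) because $\SSS$ does by \cite{BMRRT} and $\langle 1\rangle_\partial$ is an autoequivalence of $\mod\cov{H}{\partial}$. This gives a triangulated structure on $\Dd^b(\cov{H}{\partial})/\SSS\langle1\rangle_\partial$, hence on $\Dd^b(\cov{\Lambda}{\delta})/\langle 1\rangle_\delta$, making the projection a triangle functor. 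Only then, comparing with the triangulated hull $\Dd^b(\Lambda)$, does one conclude that the fully faithful embedding $B\colon \Dd^b(\cov{\Lambda}{\delta})/\langle 1\rangle_\delta\hookrightarrow \Dd^b(\Lambda)$ commutes with the shift, sends triangles to triangles, and is therefore essentially surjective (the hull being generated as a triangulated category by the image of $B$), i.e.\ an equivalence. Keller's conditions cannot be verified directly on the $\Lambda$-side, which is why the orbit-category equivalence must come first rather than last, as it does in your proposal.
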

 
\begin{proof}
By Theorem~\ref{AO_gradedderivedeq} we deduce that there is a $k$-linear equivalence between the orbit categories \[ \Dd^b(\cov{\Lambda}{\delta})/\langle 1\rangle_\delta\iso \Dd^b(\cov{H}{\partial})/\SSS\langle1\rangle_\partial.\]
Now we can use the following result due to Keller

\begin{thma}[\cite{Kel05}]
Let $Q_H$ be an acyclic quiver  and $\partial$ be a $\ZZ$-grading on $Q_H$.  Let $F:=-\lten_{\cov{H}{\partial}} X$ be an auto-equivalence of $\Dd^b(\cov{H}{\partial})$ for some object $X\in\Dd(\cov{H}{\partial})$. Assume that \begin{enumerate}
\item For each indecomposable $U\in\mod\cov{H}{\partial}$, the set $\{i\in\ZZ\mid F^iU\in\mod\cov{H}{\partial}\}$ is finite.
\item There exists $N\geq 0$ such that for each indecomposable $X\in\Dd^b(\cov{H}{\partial})$, there exists $0\leq n\leq N$ and $i\in\ZZ$ with $F^iX[-n]\in\mod\cov{H}{\partial}$.
\end{enumerate}
Then the orbit category $\Dd^b(\cov{H}{\partial})/F$ admits a natural triangulated structure such that the projection functor $\Dd^b(\cov{H}{\partial})\rightarrow\Dd^b(\cov{H}{\partial})/F$ is triangulated.
\end{thma}

It is already shown in \cite{BMRRT} that the functor $\SSS$ satisfies  conditions (1) and (2). Since the functor $\langle 1\rangle_\partial$ is an auto-equivalence of $\mod\cov{H}{\partial}$, then the functor $\SSS\langle 1\rangle_{\partial}$ clearly satisfies again conditions (1) and (2). Thus the orbit category $(\Dd^b(\cov {\Lambda}{\delta})/\langle 1\rangle_\delta)\iso \Dd^b(\cov{H}{\partial})/\SSS\langle1\rangle_\partial$ is triangulated and the natural functor \[ \xymatrix{\Dd^b(\cov{\Lambda}{\delta})\ar[r] & \Dd^b(\cov{\Lambda}{\delta})/\langle 1\rangle_\delta}\] is a triangle functor. 

By \cite[Corollary~7.14]{AO10}, the derived category $\Dd^b(\Lambda)$ is equivalent to the triangulated hull $(\Dd^b(\cov{\Lambda}{\delta})/\langle 1\rangle_\delta)_\Delta $. Hence we have the following commutative diagram \[\xymatrix{\Dd^b(\cov{\Lambda}{\delta})\ar[r]^-A\ar@/_.5cm/[rr]_-C & \Dd^b(\cov{\Lambda}{\delta})/\langle 1 \rangle\ar@{^(->}[r]^-B & \Dd^b(\Lambda)}\] where $A$ and $C$ are triangle functors. Therefore $B$ commutes with the shift and sends triangles to triangles. Hence the fully faithful functor $B$ is a $k$-equivalence \[\Dd^b(\cov{\Lambda}{\delta}/\langle 1\rangle_\delta)\iso \Dd^b(\cov{H}{\partial})/\SSS\langle1\rangle_\partial. \qedhere\]
\end{proof}

\subsection{The shape of the Auslander-Reiten quiver of $\Dd^b(\Lambda)$}

In this subsection we use Corollary~\ref{k-equivalence} to compute explicitly the shape of the Auslander-Reiten quiver of the derived category of an algebra of cluster type $\A_{p,q}$. 

Throughout this subsection $\Lambda$ is an algebra of cluster type $\A_{p,q}$ and of weight $w\neq 0$. Let $\partial$ be a grading induced by $\Lambda$ on $Q_H$. By Lemma~\ref{gradedeqandw} we can assume that $\partial$ is the following grading.
 \[\scalebox{.7}{
\begin{tikzpicture}[scale=2,>=stealth]
\node (P1) at (0,0) {$1$};
\node (P2) at (1,1) {$2$};
\node (P3) at (2,1) {$3$};
\node (P4) at (3,1){};
\node (P5) at (5.5,1){};
\node (P6) at (7,1) {$p$};
\node (P7) at (8,0) {$p+1$};
\node (P8) at (7,-1) {$p+2$};
\node (P9) at (5.5,-1) {};
\node (P10) at (2.5,-1) {};
\node (P11) at (1,-1) {$p+q$};
\draw [->] (P1) -- node [fill=white,inner sep=.5mm]{\small{0}} (P2);
\draw [->] (P2) -- node [fill=white,inner sep=.5mm]{\small{0}} (P3);
\draw [->] (P3) -- node [fill=white,inner sep=.5mm]{\small{0}} (P4);
\draw [->] (P5) --node [fill=white,inner sep=.5mm]{\small{0}}  (P6);
\draw [->] (P6) --node [fill=white,inner sep=.5mm]{\small{$w$}}  (P7);
\draw [->] (P8) --node [fill=white,inner sep=.5mm]{\small{0}}  (P7);
\draw [->] (P9) --node [fill=white,inner sep=.5mm]{\small{0}}  (P8);
\draw [->] (P11) -- node [fill=white,inner sep=.5mm]{\small{0}} (P10);
\draw [->] (P1) -- node [fill=white,inner sep=.5mm]{\small{0}} (P11);
\draw [loosely dotted, thick] (P4) -- (P5);
\draw [loosely dotted, thick] (P9) --(P10);
\end{tikzpicture}}
\]
By Corollary~\ref{k-equivalence} we have a $k$-linear equivalence \[\Dd^b(\Lambda)\iso \Dd^b(\cov{H}{\partial})/\SSS\langle1\rangle_\partial.\]

The quiver of the subcategory $\cov{H}{\partial}=\add\{H\langle p\rangle_{\partial}, p\in\mathbb{Z}\}\subset \gr H$ consists of $|w|$ connected components of type $A_\infty^\infty$ with the following orientation.
 \[\scalebox{.5}{
\begin{tikzpicture}[scale=.7,>=stealth]
\node (P1) at (0,0) {$1$};
\node (P2) at (2,1) {$2$};
\node (P3) at (4,2) {$3$};
\node (P4) at (6,3) {};
\node (P5) at (8,4) {};
\node (P6) at (10,5) {$p$};
\node (P7) at (12,6) {$p+1$};
\node (P8) at (10,7) {}; \node (P8') at (8,8) {};
\node (P9) at (2,-1) {$p+q$};
\node (P10) at (4,-2) {};
\node (P11) at (6,-3) {};
\node (P12) at (8,-4) {$p+1$};
\node (Q1) at (-4,-10) {$1$};
\node (Q2) at (-2,-9) {$2$};
\node (Q3) at (0,-8) {$3$};
\node (Q4) at (2,-7) {};
\node (Q5) at (4,-6) {};
\node (Q6) at (6,-5) {$p$};
\node (Q7) at (-2,-11) {};\node (Q7') at (0,-12) {};
\draw [->] (P1)--(P2);
\draw [->] (P2)--(P3);
\draw [->] (P3)--(P4);
\draw [loosely dotted, thick] (P4)--(P5);
\draw [->] (P5)--(P6);
\draw [->] (P6)--(P7);
\draw [->] (P8)--(P7);
\draw [->] (P1)--(P9);
\draw [->] (P9)--(P10);
\draw [loosely dotted, thick] (P10)--(P11);
\draw [->] (P11)--(P12);
\draw [->] (Q6)--(P12);
\draw [->] (Q1)--(Q2);
\draw [->] (Q2)--(Q3);
\draw [->] (Q3)--(Q4);
\draw [loosely dotted, thick] (Q4)--(Q5);
\draw [->] (Q5)--(Q6);
\draw [->] (Q1)--(Q7);
\draw [loosely dotted, thick](Q7')--(Q7);
\draw [loosely dotted, thick] (P8)--(P8');
\end{tikzpicture}}
\]
Then it is not hard  to compute the Auslander-Reiten quiver of the module category $\mod\cov{H}{\partial}$. It has $4|w|$ connected components:
 $\mathcal{P}_{i}$,  $\mathcal{R}^p_{i}$ , $\mathcal{R}^q_{i}$, and  $\mathcal{I}_i$ with  $i\in\ZZ/w\ZZ$
satisfying  $\mathcal{P}_{i}=\mathcal{P}_{0}\langle i\rangle$,  $\mathcal{R}^p_{i}=\mathcal{R}^p_{0}\langle i\rangle$,  
$\mathcal{R}^q_{i}=\mathcal{R}^q_{0}\langle i\rangle$ and $\mathcal{I}_i=\mathcal{I}_0\langle i \rangle$ for any $i\in\mathbb{Z}/w\mathbb{Z}.$

The component $\Pp_0$  is of type $\mathbb{N}A_\infty^\infty$ and contains the indecomposable projective modules  $P_j\langle 0 \rangle$ for $j=1,\ldots, p+q$. Its shape is described in Figure~\ref{figureP_0}.
\begin{figure}

\[ \scalebox{.5}{
\begin{tikzpicture}
\node (P1+0) at (0,0) {$P_1\langle 0 \rangle$};
 \foreach \i in {1,...,4} \node (P1+\i) at (4 * \i, 0) {};
\node (P2+0) at (2,1) {$P_2\langle 0 \rangle$};
 \foreach \i in {1,2,3} \node (P2+\i) at (2 + 4 * \i, 1) {};
\node (P3+0) at (4,2) {$P_3\langle 0 \rangle$};
 \foreach \i in {1,2,3} \node (P3+\i) at (4 + 4 * \i, 2) {};
\node (P4+0) at (6,3) {};
 \foreach \i in {1,2} \node (P4+\i) at (6 + 4 * \i, 3) {};
\node (P5+0) at (8,4) {};
 \foreach \i in {1,2} \node (P5+\i) at (8 + 4 * \i, 4) {};
\node (P6+0) at (10,5) {$P_p\langle 0 \rangle$};
 \node (P6+1) at (14, 5) {};
\node (P7+0) at (12,6) {$P_{p+1}\langle w\rangle$};
 \node (P7+1) at (16, 6) {};
\node (P8+0) at (10,7) {};
 \node (P8+1) at (14, 7) {};
\node (P9+0) at (2,-1) {$P_{p+q}\langle 0 \rangle$};
  \foreach \i in {1,2,3} \node (P9+\i) at (2 + 4 * \i, -1) {};
\node (P10+0) at (4,-2) {};
  \foreach \i in {1,2,3} \node (P10+\i) at (4 + 4 * \i, -2) {};
\node (P11+0) at (6,-3) {};
 \foreach \i in {1,2} \node (P11+\i) at (6 + 4 * \i, -3) {};
\node (P12+0) at (8,-4) {$P_{p+1}\langle 0 \rangle$};
 \foreach \i in {1,2} \node (P12+\i) at (8 + 4 * \i, -4) {};
\node (Q6+0) at (6,-5) {$P_p\langle -w\rangle$};
 \foreach \i in {1,2} \node (Q6+\i) at (6 + 4 * \i, -5) {};

\foreach \i in {0,1} \draw [ultra thick, loosely dotted] (9 + 4 * \i, 7.5) -- (P8+\i);
\foreach \i in {0,1} \draw [ultra thick, loosely dotted] (P8+\i) -- (11 + 4 * \i, 7.5);

\foreach \i in {0,1} \draw [->] (P8+\i) -- (P7+\i);
\draw [->] (P7+0) -- (P8+1); \draw [ultra thick, loosely dotted] (P7+1) -- (17,6.5);

\foreach \i in {0,1} \draw [->] (P6+\i) -- (P7+\i);
\draw [->] (P7+0) -- (P6+1); \draw [ultra thick, loosely dotted] (P7+1) -- (17,5.5);

\foreach \i in {0,1} \draw [->] (P5+\i) -- (P6+\i);
\foreach \i/\j in {0/1,1/2} \draw [->] (P6+\i) -- (P5+\j);
\draw [ultra thick, loosely dotted] (P5+2) -- (17,4.5);

\foreach \i in {0,1,2} \draw [ultra thick, loosely dotted] (P4+\i) -- (P5+\i);

\foreach \i in {0,1,2} \draw [->] (P3+\i) -- (P4+\i);
\foreach \i/\j in {0/1,1/2,2/3} \draw [->] (P4+\i) -- (P3+\j);
\draw [ultra thick, loosely dotted] (P3+3) -- (17,2.5);

\foreach \i in {0,...,3} \draw [->] (P2+\i) -- (P3+\i);
\foreach \i/\j in {0/1,1/2,2/3} \draw [->] (P3+\i) -- (P2+\j);
\draw [ultra thick, loosely dotted] (P3+3) -- (17,1.5);

\foreach \i in {0,...,3} \draw [->] (P1+\i) -- (P2+\i);
\foreach \i/\j in {0/1,1/2,2/3,3/4} \draw [->] (P2+\i) -- (P1+\j);
\draw [ultra thick, loosely dotted] (P1+4) -- (17,.5);

\foreach \i in {0,...,3} \draw [->] (P1+\i) -- (P9+\i);
\foreach \i/\j in {0/1,1/2,2/3,3/4} \draw [->] (P9+\i) -- (P1+\j);
\draw [ultra thick, loosely dotted] (P1+4) -- (17,-.5);

\foreach \i in {0,...,3} \draw [->] (P9+\i) -- (P10+\i);
\foreach \i/\j in {0/1,1/2,2/3} \draw [->] (P10+\i) -- (P9+\j);
\draw [ultra thick, loosely dotted] (P10+3) -- (17,-1.5);

\foreach \i in {0,1,2} \draw [ultra thick, loosely dotted] (P10+\i) -- (P11+\i);
\draw [ultra thick, loosely dotted] (P10+3) -- (17,-2.5);

\foreach \i in {0,1,2} \draw [->] (P11+\i) -- (P12+\i);
\foreach \i/\j in {0/1,1/2} \draw [->] (P12+\i) -- (P11+\j);
\draw [ultra thick, loosely dotted] (P12+2) -- (17,-3.5);

\foreach \i in {0,1,2} \draw [->] (Q6+\i) -- (P12+\i);
\foreach \i/\j in {0/1,1/2} \draw [->] (P12+\i) -- (Q6+\j);
\draw [ultra thick, loosely dotted] (P12+2) -- (17,-4.5);

\foreach \i in {0,1,2} \draw [ultra thick, loosely dotted] (5 + 4 * \i, -5.5) -- (Q6+\i);
\foreach \i in {0,1,2} \draw [ultra thick, loosely dotted] (Q6+\i) -- (7 + 4 * \i, -5.5);
\end{tikzpicture} } \]
\caption{\label{figureP_0}Shape of the component $\Pp_{0}$ of the Auslander-Reiten quiver of $\mod(\cov{H}{\partial})$. } \end{figure}

The component $\Ii_0$ is of type $(-\mathbb{N})A^\infty_\infty$ and contains the indecomposable injective modules $I_j\langle 0\rangle$ for $j=1,\ldots,p+q$. Its shape is described in Figure~\ref{figureI_0}.
\begin{figure}
\[
\scalebox{.5}{
\begin{tikzpicture}[scale=.9,>=stealth]
\node (P0-0) at (4,0) {$I_1\langle 0 \rangle$};
\node (P0-1) at (0,0){};
\node (P1-0) at (6,1) {$I_2\langle 0 \rangle$};
\node (P1-1) at (2,1){};
\node (P2-0) at (8,2) {$I_3\langle 0 \rangle$};
 \foreach \i in {1,2} \node (P2-\i) at (8-4 * \i, 2) {};
 \foreach \i in {0,1,2} \node (P3-\i) at (10-4 * \i, 3) {};
 \foreach \i in {0,...,3} \node (P4-\i) at (12-4 * \i, 4) {};
\node (P5-0) at (14,5) {$I_p\langle 0 \rangle$};
 \foreach \i in {1,2,3} \node (P5-\i) at (14-4 * \i, 5) {};
\node (P6-0) at (16,6) {$I_{p+1}\langle w\rangle$};
 \foreach \i in {1,...,4} \node (P6-\i) at (16-4 * \i, 6) {};
  \foreach \i in {0,...,3} \node (P7-\i) at (14-4 * \i, 7) {};
\node (P-1-0) at (6,-1) {$I_{p+q}\langle 0 \rangle$};
\node (P-1-1) at (2,-1){};
\foreach \i in {0,1,2} \node (P-2-\i) at (8-4*\i,-2) {};
\foreach \i in {0,1,2} \node (P-3-\i) at (10-4*\i,-3) {};
\node (P-4-0) at (12,-4) {$I_{p+1}\langle 0 \rangle$};
\foreach\i in {1,2,3} \node (P-4-\i) at (12-4*\i,-4) {};
\node (P-5-0) at (10,-5) {$I_p\langle -w\rangle$};
\foreach \i in {1,2} \node (P-5-\i) at (10-4*\i,-5) {};

\foreach \i in {0,1} \draw[->] (P0-\i)--(P1-\i);
\draw[->](P1-1)--(P0-0);
\foreach \i in {0,1} \draw[->] (P1-\i)--(P2-\i);
\foreach \i/\j in {2/1,1/0} \draw [->] (P2-\i) -- (P1-\j);
\foreach \i in {0,1,2} \draw[->] (P2-\i)--(P3-\i);
\foreach \i/\j in {2/1,1/0} \draw [->] (P3-\i) -- (P2-\j);
\foreach \i in {0,1,2} \draw[ultra thick, loosely dotted] (P3-\i)--(P4-\i);
\foreach \i in {0,1,2,3} \draw[->] (P4-\i)--(P5-\i);
\foreach \i/\j in {3/2,2/1,1/0} \draw[->](P5-\i)--(P4-\j);
\foreach \i in {0,...,3} \draw[->] (P5-\i)--(P6-\i);
\foreach \i/\j in {4/3,3/2,2/1,1/0} \draw [->] (P6-\i) -- (P5-\j);
\foreach \i in {0,...,3} \draw[<-] (P6-\i)--(P7-\i);
\foreach \i/\j in {4/3,3/2,2/1,1/0} \draw [<-] (P7-\j) -- (P6-\i);

\foreach \i in {0,1} \draw [->] (P0-\i)--(P-1-\i);
\draw[->] (P-1-1)--(P0-0);
\foreach \i/\j in {1/0,2/1} \draw[->] (P-2-\i)--(P-1-\j);
\foreach \i in {0,1} \draw [->] (P-1-\i)--(P-2-\i);
\foreach \i in {0,1,2} \draw[ultra thick, loosely dotted] (P-2-\i)--(P-3-\i);
\foreach \i in {0,1,2} \draw[->] (P-3-\i)--(P-4-\i);
\foreach \i/\j in {1/0,2/1,3/2} \draw[->] (P-4-\i)--(P-3-\j);
\foreach \i/\j in {1/0,2/1,3/2} \draw[->] (P-4-\i)--(P-5-\j);
\foreach \i in {0,1,2} \draw[<-] (P-4-\i)--(P-5-\i);

\foreach \i in {1,...,3} \draw[ultra thick, loosely dotted] (P7-\i)--(15-4*\i,7.5);
\foreach \i in {0,...,3} \draw[ultra thick, loosely dotted] (P7-\i)--(13-4*\i,7.5);
\foreach \i in {1,2} \draw[ultra thick, loosely dotted] (P-5-\i)--(11-4*\i,-5.5);
\foreach \i in {0,...,2} \draw[ultra thick, loosely dotted] (P-5-\i)--(9-4*\i,-5.5);

\foreach \i in {-4,0,2,4,6} \draw[ultra thick, loosely dotted] (0,\i)--(-1,\i-0.5);
\foreach \i in {-4,-2,0,2,4,6} \draw[ultra thick, loosely dotted] (0,\i)--(-1,\i+0.5);

\end{tikzpicture}}\]
\caption{\label{figureI_0}Shape of the component $\mathcal{I}_{0}$ of the Auslander-Reiten quiver of $\mod(\cov{H}{\partial})$. }

\end{figure}
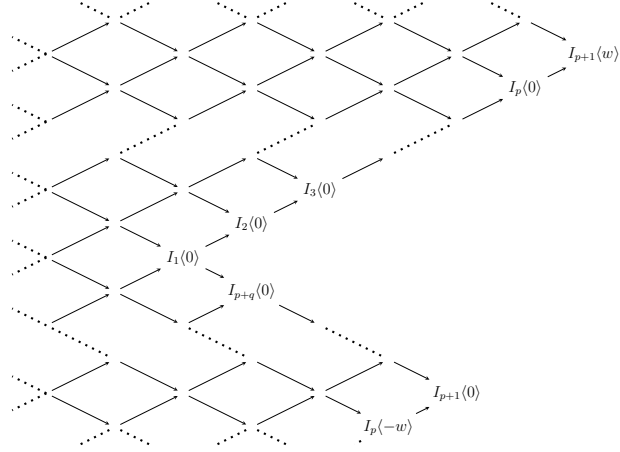

The components $\Rr^p_0$ and $\Rr^q_0$ are of type $\mathbb{Z}A_\infty$ and contain the regular modules. The shape of the connected components $\mathcal{R}^p_{0}$ and $\mathcal{R}_{0}^q$ are described in Figures~\ref{figureRp_0} and~\ref{figureRq_0}. In these figures, $S_i\langle m\rangle$ is the $m$-degree-shift of the simple module associated with the vertex $i$, the object $M\langle 0 \rangle$ is defined by the exact sequence \[\xymatrix{P_{p}\langle -w\rangle\ar[r]^{a_p} & P_{p+1}\langle 0 \rangle\ar[r]& M\langle 0 \rangle\ar[r] & 0},\] and the object $N\langle 0 \rangle $ is defined by the exact sequence   
\[\xymatrix{P_{p+2}\langle 0\rangle\ar[r]^{b_{q}} & P_{p+1}\langle 0 \rangle\ar[r]& N\langle 0 \rangle\ar[r] & 0}.\] 

\begin{figure}
\[\scalebox{.5}{
\begin{tikzpicture}[scale=1,>=stealth]

\node (P1) at (0,0) {$S_2\langle 0\rangle$};
\node (P2) at (2,0) {$S_3\langle 0 \rangle$};
\node (P3) at (4,0) {};
\node (P4) at (8,0) {};
\node (P5) at (10,0) {$S_p\langle 0\rangle$};
\node (P6) at (12,0) {$M\langle w\rangle$};
\node (P7) at (14,0) {$S_2\langle w\rangle$};

\node (Q1) at (0,2) {};
\node (Q2) at (2,2) {};
\node (Q3) at (4,2) {};
\node (Q4) at (8,2) {};
\node (Q5) at (10,2) {};
\node (Q6) at (12,2) {};
\node (Q7) at (14,2) {};

\node (R1) at (-1,1) {};
\node (R2) at (1,1) {};
\node (R3) at (3,1) {};
\node (R4) at (9,1) {};
\node (R5) at (11,1) {};
\node (R6) at (13,1) {};
\node (R7) at (15,1) {};

\draw [->] (R1)--(Q1);
\draw [->] (R1)--(P1);
\draw [->] (R2)--(Q2);
\draw [->] (R2)--(P2);
\draw [->] (R3)--(Q3);
\draw [->] (R3)--(P3);
\draw [->] (R4)--(Q5);
\draw [->] (R4)--(P5);
\draw [->] (R5)--(Q6);
\draw [->] (R5)--(P6);
\draw [->] (R6)--(P7);
\draw [->] (R6)--(Q7);
\draw [->] (P1)--(R2);
\draw [->] (Q1)--(R2);
\draw [->] (P2)--(R3);
\draw [->] (Q2)--(R3);
\draw [->] (P4)--(R4);
\draw [->] (P5)--(R5);
\draw [->] (P6)--(R6);
\draw [->] (P7)--(R7);
\draw [->] (Q4)--(R4);
\draw [->] (Q5)--(R5);
\draw [->] (Q6)--(R6);
\draw [->] (Q7)--(R7);

\draw [loosely dotted, ultra thick] (P3)--(P4);
\draw [loosely dotted, ultra thick] (Q3)--(Q4);
\draw [loosely dotted, ultra thick] (-3,1)--(R1);
\draw [loosely dotted, ultra thick] (0,4)--(Q1);
\draw [loosely dotted, ultra thick] (17,1)--(R7);
\draw [loosely dotted, ultra thick] (14,4)--(Q7);
\end{tikzpicture}}
\]\caption{\label{figureRp_0} Shape of the connected component $\mathcal{R}^p_{0}$.}
 \end{figure}

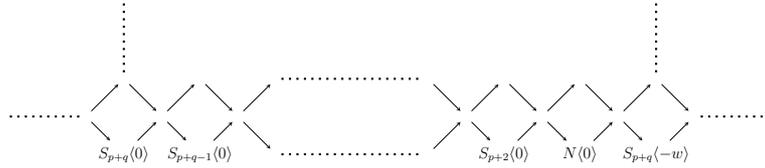
\begin{figure}
\[\scalebox{.5}{
\begin{tikzpicture}[scale=1,>=stealth]

\node (P1) at (0,0) {$S_{p+q}\langle 0\rangle$};
\node (P2) at (2,0) {$S_{p+q-1}\langle 0 \rangle$};
\node (P3) at (4,0) {};
\node (P4) at (8,0) {};
\node (P5) at (10,0) {$S_{p+2}\langle 0\rangle$};
\node (P6) at (12,0) {$N\langle 0 \rangle$};
\node (P7) at (14,0) {$S_{p+q}\langle -w\rangle$};

\node (Q1) at (0,2) {};
\node (Q2) at (2,2) {};
\node (Q3) at (4,2) {};
\node (Q4) at (8,2) {};
\node (Q5) at (10,2) {};
\node (Q6) at (12,2) {};
\node (Q7) at (14,2) {};

\node (R1) at (-1,1) {};
\node (R2) at (1,1) {};
\node (R3) at (3,1) {};
\node (R4) at (9,1) {};
\node (R5) at (11,1) {};
\node (R6) at (13,1) {};
\node (R7) at (15,1) {};

\draw [->] (R1)--(Q1);
\draw [->] (R1)--(P1);
\draw [->] (R2)--(Q2);
\draw [->] (R2)--(P2);
\draw [->] (R3)--(Q3);
\draw [->] (R3)--(P3);
\draw [->] (R4)--(Q5);
\draw [->] (R4)--(P5);
\draw [->] (R5)--(Q6);
\draw [->] (R5)--(P6);
\draw [->] (R6)--(P7);
\draw [->] (R6)--(Q7);
\draw [->] (P1)--(R2);
\draw [->] (Q1)--(R2);
\draw [->] (P2)--(R3);
\draw [->] (Q2)--(R3);
\draw [->] (P4)--(R4);
\draw [->] (P5)--(R5);
\draw [->] (P6)--(R6);
\draw [->] (P7)--(R7);
\draw [->] (Q4)--(R4);
\draw [->] (Q5)--(R5);
\draw [->] (Q6)--(R6);
\draw [->] (Q7)--(R7);

\draw [loosely dotted, ultra thick] (P3)--(P4);
\draw [loosely dotted, ultra thick] (Q3)--(Q4);
\draw [loosely dotted, ultra thick] (-3,1)--(R1);
\draw [loosely dotted, ultra thick] (0,4)--(Q1);
\draw [loosely dotted, ultra thick] (17,1)--(R7);
\draw [loosely dotted, ultra thick] (14,4)--(Q7);
\end{tikzpicture}}
\]\caption{\label{figureRq_0}Shape of the connected component $\mathcal{R}^q_{0}$.}
 \end{figure}
Under the forgetful functor $\mod\cov{H}{\partial}\rightarrow\mod H$, the components $\Pp_i$ are sent to $\Pp$ the preprojective component of $\mod H$, the components $\Ii_i$ are sent to the preinjective component $\Ii$, the components $\mathcal{R}^p_j$ are sent to the exceptional tube of rank $p$ and the components $\mathcal{R}^q_j$ are sent to the exceptional tube of rank $q$. The indecomposable $H$-modules lying in the homogeneous tubes are not gradable if $w\neq 0$, therefore the description above is complete.

\medskip

Since $\mod\cov{H}{\partial}$ is a hereditary category, one can easily deduce the shape of the Auslander-Reiten quiver of $\Dd^b(\cov{H}{\partial})$. It has three kinds of components 
$\mathcal{P}_{(i,n)}$,  $\mathcal{R}^p_{(i,n)}$, and $\mathcal{R}^q_{(i,n)}$, with  $i\in\ZZ/w\ZZ$ and $n\in\ZZ$. The component $\Pp_{(0,0)}$ contains $\Pp_0$ and $\Ii_0[-1]$ and is described in Figure~\ref{figureP_00}. The component $\mathcal{R}^p_{(0,0)}$ (resp.\ $\mathcal{R}^q_{(0,0)}$) is $\mathcal{R}^p_{0}$ (resp.\ $\mathcal{R}^q_{0}$). Moreover we have $\mathcal{P}_{(i,n)}=\mathcal{P}_{(0,0)}\langle i\rangle [n]$, $\mathcal{R}^p_{(i,n)}=\mathcal{R}^p_{(0,0)}\langle i \rangle [n]$, and $\mathcal{R}^q_{(i,n)}=\mathcal{R}^q_{(0,0)}\langle i\rangle [n]$ for $i\in\mathbb{Z}/w\ZZ$ and $n\in\ZZ$.

 \begin{figure}
\[
\scalebox{.5}{
\begin{tikzpicture}[scale=.9,>=stealth]
\foreach \i in {-5,-3,-1,1,3,5,7} \node (-3-\i) at (-6,\i){};
\node (-2-0) at (-4,0) {$I_1\langle 0 \rangle[-1]$};
\foreach \i in {-4,-2,2,4,6} \node (-2-\i) at (-4,\i){};
\node (-1-1) at (-2,1) {$I_2\langle 0 \rangle[-1]$};\node (-1--1) at (-2,-1) {$I_{p+q}\langle 0 \rangle[-1]$};
\foreach \i in {-5,-3,3,5,7} \node (-1-\i) at (-2,\i){};
\node (0-0) at (0,0) {$P_1\langle 0 \rangle$};\node (0-2) at (0,2) {$I_3\langle 0 \rangle[-1]$};
\foreach \i in {-4,-2,4,6} \node (0-\i) at (0,\i){};
\node (1-1) at (2,1) {$P_2\langle 0 \rangle$};\node (1--5) at (2,-5) {$I_p\langle -w\rangle[-1]$};\node (1--1) at (2,-1) {$P_{p+q}\langle 0 \rangle$};
\foreach \i in {-3,3,5,7} \node (1-\i) at (2,\i){};
\node (2-2) at (4,2) {$P_3\langle 0 \rangle$};\node (2--4) at (4,-4) {$I_{p+1}\langle 0 \rangle[-1]$};
\foreach \i in {-2,0,4,6} \node (2-\i) at (4,\i){};
\node (3-5) at (6,5) {$I_p\langle 0 \rangle[-1]$};\node (3--5) at (6,-5) {$P_p\langle -w\rangle$};
\foreach \i in {-3,-1,1,3,7} \node (3-\i) at (6,\i){};
\node (4-6) at (8,6) {$I_{p+1}\langle w\rangle[-1]$};\node (4--4) at (8,-4) {$P_{p+1}\langle 0 \rangle$};
\foreach \i in {-2,0,2,4} \node (4-\i) at (8,\i){};
\node (5-5) at (10,5) {$P_p\langle 0 \rangle$};
\foreach \i in {-5,-3,-1,1,3,7} \node (5-\i) at (10,\i){};
\node (6-6) at (12,6) {$P_{p+1}\langle w\rangle$};
\foreach \i in {-4,-2,0,2,4} \node (6-\i) at (12,\i){};
\foreach \i in {-5,-3,-1,1,3,5,7} \node (7-\i) at (14,\i){};
\foreach \i in {-5,-3,-1,1,3,5,7} \node (\i--7) at (2*\i,-7){};
\foreach \i in {-4,-2,0,2,4,6}\node (\i--6) at (2*\i, -6){};

\foreach \k/\l in {-2/-1,0/1,2/3,4/5,6/7} \foreach \i/\j in {-6/-5,-4/-3,-2/-1,0/1,2/3,4/5,6/7} \draw[->] (\k-\i)--(\l-\j);
\foreach \k/\l in {-2/-1,0/1,2/3,4/5,6/7} \foreach \i/\j in {-6/-7,-4/-5,0/-1,2/1,6/5} \draw[->] (\k-\i)--(\l-\j);
\foreach \k/\l in {-2/-3,0/-1,2/1,4/3,6/5} \foreach \i/\j in {-6/-5,-4/-3,-2/-1,0/1,2/3,4/5,6/7} \draw[<-] (\k-\i)--(\l-\j);
\foreach \k/\l in {-2/-3,0/-1,2/1,4/3,6/5} \foreach \i/\j in {-6/-7,-4/-5,0/-1,2/1, 6/5} \draw[<-] (\k-\i)--(\l-\j);

\foreach \k/\l in {-2/-3,0/-1,2/1,4/3,6/5} \draw[ultra thick, loosely dotted] (\k-4)--(\l-3);
\foreach \k/\l in {-2/-1,0/1,2/3,4/5,6/7}\draw[ultra thick, loosely dotted] (\k--2)--(\l--3);

\foreach \i in {-7,-5,-1,1,3,5,7} \draw[ultra thick, loosely dotted] (14,\i)--(15,\i+0.5);
\foreach \i in {-7,-5,-3,-1,1,3,5,7} \draw[ultra thick, loosely dotted] (14,\i)--(15,\i-0.5);
\foreach \i in {-7,-5,-3,-1,1,5,7} \draw[ultra thick, loosely dotted] (-6,\i)--(-7,\i+0.5);
\foreach \i in {-7,-5,-3,-1,1,3,5,7} \draw[ultra thick, loosely dotted] (-6,\i)--(-7,\i-0.5);
\foreach \i in {-3,-1,1,3,5,7} \draw[ultra thick, loosely dotted] (2*\i,7)--(2*\i+1,7.5);
 \foreach \i in {-3,-1,1,3,5,7} \draw[ultra thick, loosely dotted] (2*\i,7)--(2*\i-1,7.5);
 \foreach \i in {-3,-1,1,3,5,7} \draw[ultra thick, loosely dotted] (2*\i,-7)--(2*\i+1,-7.5);
 \foreach \i in {-3,-1,1,3,5,7} \draw[ultra thick, loosely dotted] (2*\i,-7)--(2*\i-1,-7.5);
\end{tikzpicture}}\]
\caption{\label{figureP_00}Shape of the component $\Pp_{(0,0)}$ of the Auslander-Reiten quiver of $\Dd^b(\cov{H}{\partial})$. }\end{figure}

 The morphisms satisfy the following:
\[\begin{array}{lll}\Hom(\Pp_{(i,n)},\Pp_{(j,m)})\neq 0&\textrm{ if and only if} & i=j \textrm{ and } m \in \{n,n+1\} \\
\Hom(\Pp_{(i,n)},\Rr^{p,q}_{(j,m)})\neq 0 &\textrm{ if and only if} & i=j \textrm{ and } m=n\\
\Hom(\Rr^{p,q}_{(i,n)},\Pp_{(j,m)})\neq 0 &\textrm{ if and only if} & i=j \textrm{ and } m=n+1\\
\Hom(\Rr^{p,q}_{(i,n)},\Rr^{p,q}_{(j,m)})\neq 0 &\textrm{ if and only if} & i=j \textrm{ and } m \in \{n,n+1\}
\end{array}.\]

\begin{figure}
\[\scalebox{1}{\begin{tikzpicture}[scale=.3,>=stealth]
\draw [loosely dotted, thick] (0,0) -- (5,0) -- (5,6) -- (0,6) -- cycle;
\draw [loosely dotted, thick] (0,5)--(-1,5)--(-1,-1)--(4,-1)--(4,0);
\draw [loosely dotted, thick] (-1,3)--(-3,3)--(-3,-3)--(2,-3)--(2,-1);
\node (P0) at (0,-5) {$\mathcal{P}_{(i,0)}$};
\draw [loosely dotted, thick] (15,0)--(15,6)--(10,6)--(10,0);
\draw (10,0)--(15,0);
\draw [loosely dotted, thick] (10,5)--(9,5)--(9,-1);
\draw [loosely dotted, thick] (14,-1)--(14,0);
\draw (9,-1)--(14,-1);
\draw [loosely dotted, thick] (9,3)--(7,3)--(7,-3);
\draw [loosely dotted, thick] (12,-3)--(12,-1);
\draw (7,-3)--(12,-3);
\node (P2) at (10,-5) {$\mathcal{R}_{(i,0)}^{p}$};
\draw [loosely dotted, thick] (24,0)--(24,6)--(19,6)--(19,0);
\draw (19,0)--(24,0);
\draw [loosely dotted, thick] (19,5)--(18,5)--(18,-1);
\draw [loosely dotted, thick] (23,-1)--(23,0);
\draw (18,-1)--(23,-1);
\draw [loosely dotted, thick] (18,3)--(16,3)--(16,-3);
\draw [loosely dotted, thick] (21,-3)--(21,-1);
\draw (16,-3)--(21,-3);
\node (P1) at (19,-5) {$\mathcal{R}_{(i,0)}^q$};

\draw [loosely dotted, thick] (29,0) --(34,0)--(34,6)--(29,6)--(29,0);
\draw [loosely dotted, thick] (29,5)--(28,5)--(28,-1)--(33,-1)--(33,0);
\draw [loosely dotted, thick] (28,3)--(26,3)--(26,-3)--(31,-3)--(31,-1);
\node (P4) at (29,-5) {$\mathcal{P}_{(i,1)}$};

\draw [loosely dotted, thick] (44,0)--(44,6)--(39,6)--(39,0);
\draw (39,0)--(44,0);
\draw [loosely dotted, thick] (39,5)--(38,5)--(38,-1);
\draw [loosely dotted, thick] (43,-1)--(43,0);
\draw (38,-1)--(43,-1);
\draw [loosely dotted, thick] (38,3)--(36,3)--(36,-3);
\draw [loosely dotted, thick] (41,-3)--(41,-1);
\draw (36,-3)--(41,-3);
\node at (39,-5) {$\mathcal{R}_{(i,1)}^p$};

\draw [loosely dotted, thick] (-5,2)--(-8,2);
\draw [loosely dotted, thick] (45,2)--(48,2);
\end{tikzpicture}}\]\caption{\label{figureARquiver}Auslander-Reiten quiver of $\Dd^b(\cov{H}{\partial})$ } \end{figure}
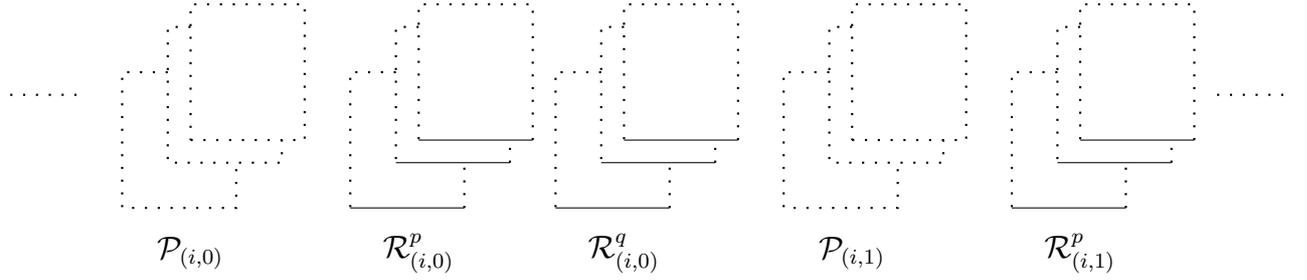

\bigskip
Now we can compute the Auslander-Reiten quiver of the orbit category $\Dd^b(\cov{H}{\partial})/\SSS\langle 1 \rangle$. We look at the action of the functor $\SSS\langle 1\rangle$ on the connected components of the AR-quiver of  $\Dd^b(\cov{H}{\partial})$. The functor $\SSS\langle 1 \rangle$ acts on the set $\{ \Pp_{(i,n)}, i\in\ZZ/w\ZZ, n\in\ZZ\}$ by  \[ \SSS(\mathcal{P}_{(i,n)})\langle 1\rangle = \tau ( \mathcal{P}_{(i,n)})[-1]\langle 1\rangle =\mathcal{P}_{(i,n)}[-1]\langle 1\rangle = \mathcal{P}_{(i+1,n-1)} \]
Then there are exactly $|w|$ orbits which are $(\SSS\langle 1 \rangle)^\ZZ \cdot \Pp_{(i,0)}$ for $i\in\ZZ/w\ZZ$.  
Similarly $\SSS\langle 1 \rangle$ acts on the sets $\{ \mathcal{R}^p_{(i,n)}, i\in\ZZ/w\ZZ, n\in\ZZ\}$ and $\{ \mathcal{R}^q_{(i,n)}, i\in\ZZ/w\ZZ, n\in\ZZ\}$ by  \[ \SSS(\mathcal{R}^p_{(i,n)})\langle 1\rangle = \mathcal{R}^p_{(i+1,n-1)}\ \textrm{and}\ \SSS(\mathcal{R}^q_{(i,n)})\langle 1\rangle = \mathcal{R}^q_{(i+1,n-1)}\]
Therefore we get $2|w|$ orbits which are $(\SSS\langle 1 \rangle)^\ZZ\cdot\mathcal{R}^p_{(i,0)}$ and  $(\SSS\langle 1 \rangle)^\ZZ\cdot\mathcal{R}^q_{(i,0)}$ for $i\in\ZZ/w\ZZ$.

Finally the shape of the AR-quiver of $\Dd^b(\cov{H}{\partial})/\SSS\langle 1 \rangle$ is described  in Figure~\ref{figureARquiver2}
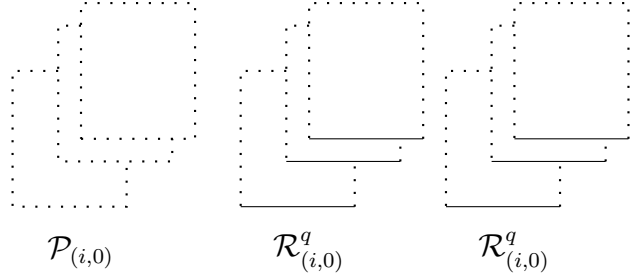
\begin{figure}
\[ \begin{tikzpicture}[>=stealth,scale=.3]
\draw [loosely dotted, thick] (0,0) --(5,0)--(5,6)--(0,6)--(0,0);
\draw [loosely dotted, thick] (0,5)--(-1,5)--(-1,-1)--(4,-1)--(4,0);
\draw [loosely dotted, thick] (-1,3)--(-3,3)--(-3,-3)--(2,-3)--(2,-1);
\node (P0) at (0,-5) {$\mathcal{P}_{(i,0)}$};
\draw [loosely dotted, thick] (15,0)--(15,6)--(10,6)--(10,0);
\draw (10,0)--(15,0);
\draw [loosely dotted, thick] (10,5)--(9,5)--(9,-1);
\draw [loosely dotted, thick] (14,-1) -- (14,0);
\draw (9,-1)--(14,-1);
\draw [loosely dotted, thick] (9,3)--(7,3)--(7,-3);
\draw [loosely dotted, thick] (12,-3)--(12,-1);
\draw (7,-3)--(12,-3);
\node (P2) at (10,-5) {$\mathcal{R}_{(i,0)}^q$};
\draw [loosely dotted, thick] (24,0)--(24,6)--(19,6)--(19,0);
\draw (19,0)--(24,0);
\draw [loosely dotted, thick] (19,5)--(18,5)--(18,-1);
\draw [loosely dotted, thick] (23,-1)--(23,0);
\draw (18,-1)--(23,-1);
\draw [loosely dotted, thick] (18,3)--(16,3)--(16,-3);
\draw [loosely dotted, thick] (21,-3)--(21,-1);
\draw (16,-3)--(21,-3);
\node (P1) at (19,-5) {$\mathcal{R}_{(i,0)}^q$};
\end{tikzpicture} \]
\caption{Auslander-Reiten quiver of the category $\Dd^b\Lambda$, where $\Lambda$ is of cluster type $\A_{p,q}$ and of weight $w>0$.} \label{figureARquiver2}
\end{figure}
This description can be summarized in the following.
\begin{cora}\label{shapeARquiver}
Let $\Lambda$ be an algebra of cluster type $\A_{p,q}$ and of weight  $w\neq 0$. Then the Auslander-Reiten quiver of $\Dd^b(\Lambda)$ has exactly $3|w|$ connected components:
\begin{itemize}
\item $|w|$ components of type $\ZZ A_\infty^\infty$;
\item $2|w|$ components of type $\ZZ A_\infty$.
\end{itemize}
\end{cora}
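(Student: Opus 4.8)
The plan is to deduce the statement directly from the explicit description of the Auslander--Reiten quiver of the orbit category obtained in the preceding paragraphs, transporting it to $\Dd^b(\Lambda)$ via Corollary~\ref{k-equivalence}. Since that corollary provides a $k$-linear (in fact triangle, by the argument given there with the functor $B$) equivalence $\Dd^b(\Lambda)\iso \Dd^b(\cov{H}{\partial})/\SSS\langle 1\rangle_\partial$, the two categories have isomorphic Auslander--Reiten quivers, so it suffices to determine the Auslander--Reiten quiver of the orbit category. Both are Krull--Schmidt triangulated categories admitting Auslander--Reiten triangles (for $\Dd^b(\Lambda)$ because $\gldim \Lambda \le 2$), so the counting question is well posed.

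First I would record the structure of the Auslander--Reiten quiver of $\Dd^b(\cov{H}{\partial})$ established above: its connected components are the $\Pp_{(i,n)}$ of type $\ZZ A_\infty^\infty$ (Figure~\ref{figureP_00}) and the $\Rr^p_{(i,n)}$, $\Rr^q_{(i,n)}$ of type $\ZZ A_\infty$, with $(i,n)$ ranging over $\ZZ/w\ZZ\times\ZZ$, together with the relations $\Pp_{(i,n)}=\Pp_{(0,0)}\langle i\rangle[n]$ and their analogues for $\Rr^p$ and $\Rr^q$.

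Next I would argue that the Auslander--Reiten quiver of the orbit category is the quotient of that of $\Dd^b(\cov{H}{\partial})$ by the induced action of $G:=\SSS\langle 1\rangle_\partial$. The projection functor is a covering functor, and the formula $\SSS(\Pp_{(i,n)})\langle 1\rangle=\Pp_{(i+1,n-1)}$ computed above (and its analogues for $\Rr^p$, $\Rr^q$) shows that $G$ acts on each index set $\ZZ/w\ZZ\times\ZZ$ by $(i,n)\mapsto(i+1,n-1)$. In particular $G$ never fixes a component, so it acts freely on the vertices of the Auslander--Reiten quiver; this is exactly what is needed for Auslander--Reiten triangles to descend and for the quotient translation quiver to compute the components of the orbit category. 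Counting $G$-orbits of components is then purely combinatorial: the quantity $i+n\in\ZZ/w\ZZ$ is invariant under $(i,n)\mapsto(i+1,n-1)$ and is a complete invariant of the orbits, so each of the three families of components breaks into exactly $|w|$ orbits.

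Finally, since $G$ permutes the copies within each orbit freely and transitively along the $\ZZ$-direction, the image of an orbit in the quotient is a single connected component, isomorphic as a translation quiver to any one representative: a $\ZZ A_\infty^\infty$ for the $\Pp$-orbits and a $\ZZ A_\infty$ for the $\Rr^p$- and $\Rr^q$-orbits. Summing yields $|w|$ components of type $\ZZ A_\infty^\infty$ and $2|w|$ of type $\ZZ A_\infty$, hence $3|w|$ in total, which is precisely the content of Figure~\ref{figureARquiver2}. I expect the only genuinely delicate point to be the identification of the Auslander--Reiten quiver of the orbit category with the quotient quiver, i.e.\ checking that Auslander--Reiten theory descends cleanly along the covering; once freeness of the $G$-action on components is in hand (guaranteed by the formula $(i,n)\mapsto(i+1,n-1)$), this is standard and the orbit count follows immediately.
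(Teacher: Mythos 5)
Your proof is correct and is essentially identical to the paper's: the paper likewise invokes Corollary~\ref{k-equivalence}, reads off the components $\Pp_{(i,n)}$, $\Rr^p_{(i,n)}$, $\Rr^q_{(i,n)}$ of $\Dd^b(\cov{H}{\partial})$, computes $\SSS\langle 1\rangle \colon (i,n)\mapsto (i+1,n-1)$ on each family, and counts the $|w|$ orbits in each of the three families to obtain the $3|w|$ components of the orbit category. Your explicit observation that $i+n\in\ZZ/w\ZZ$ is a complete orbit invariant, and your attention to the freeness of the action on components, are just slightly more detailed formulations of what the paper does implicitly.
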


\subsection{Consequences of the description of the AR-quiver}

 From this description we also get the following consequences.
\begin{cora}
Let $\Lambda$ be an algebra of cluster type $\A_{p,q}$ and of weight $w\neq 0$. Then $\Lambda$ is representation-finite.
\end{cora}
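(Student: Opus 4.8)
The plan is to read off representation-finiteness directly from the explicit description of the Auslander--Reiten quiver of $\Dd^b(\Lambda)$ obtained above. Since $\Lambda$ has global dimension at most $2$, the subcategory $\mod\Lambda\subset\Dd^b(\Lambda)$ is the heart of the standard bounded $t$-structure, and the indecomposable $\Lambda$-modules are precisely the indecomposable objects of $\Dd^b(\Lambda)$ whose cohomology is concentrated in degree $0$. Thus $\Lambda$ is representation-finite if and only if only finitely many indecomposable objects of $\Dd^b(\Lambda)$ are stalk complexes in degree $0$. By Corollary~\ref{shapeARquiver} every such object lies in one of the $3|w|$ connected components, so it suffices to prove that each component contains only finitely many degree-$0$ objects.

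First I would transport the question through the $k$-linear equivalence $\Dd^b(\Lambda)\iso\Dd^b(\cov{H}{\partial})/\SSS\langle1\rangle_\partial$ of Corollary~\ref{k-equivalence}. As $\cov{H}{\partial}$ is hereditary, every indecomposable of $\Dd^b(\cov{H}{\partial})$ is a shift $M[n]$ of an indecomposable module $M$, and the module category of the cover consists only of the $4|w|$ discrete components $\Pp_i,\Rr^p_i,\Rr^q_i,\Ii_i$: the crucial input, already recorded above, is that for $w\neq 0$ the modules of the homogeneous tubes of $H$ are not gradable, so no $1$-parameter family of indecomposables survives in the cover. Consequently the only possible source of infinitely many $\Lambda$-modules would be an infinite module region inside one of the finitely many transjective or exceptional-tube components.

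To rule this out I would use that, $H$ being hereditary, $\SSS=\tau[-1]$, whence $\SSS\langle1\rangle_\partial=\tau[-1]\langle1\rangle_\partial$ acts on components by $\Pp_{(i,n)}\mapsto\Pp_{(i+1,n-1)}$ (and likewise for $\Rr^p,\Rr^q$), exactly as computed before Figure~\ref{figureARquiver2}. A single component of $\Dd^b(\Lambda)$ is therefore isomorphic, as a translation quiver, to one component $\Pp_{(0,0)}$ of the cover, and under this identification the standard $t$-structure of $\Dd^b(\Lambda)$ pulls back to a $t$-structure that is \emph{transverse} to the grading shift: because each application of $\SSS\langle1\rangle_\partial$ lowers the cohomological degree by one, the objects of a fixed orbit that are concentrated in degree $0$ form a single fundamental domain of the $\ZZ$-action intersected with the bounded set of actual modules of $\cov{H}{\partial}$ (the part of $\Pp_0,\Ii_0[-1],\dots$ meeting degree $0$). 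This intersection is finite in each component, and since there are $3|w|$ components we conclude that $\Lambda$ has only finitely many indecomposable modules.

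The main obstacle is precisely this last step: one must identify the heart $\mod\Lambda\subset\Dd^b(\Lambda)$ with an explicit, bounded region of the Auslander--Reiten quiver drawn in Figures~\ref{figureP_00}--\ref{figureARquiver2}. Equivalently, one has to control how the standard $t$-structure interacts with the orbit equivalence $\Dd^b(\cov{H}{\partial})/\SSS\langle1\rangle_\partial$, and check that the degree-$0$ condition, combined with the fact that $\SSS\langle1\rangle_\partial$ shifts cohomological degree, cuts out only finitely many vertices of each $\ZZ A_\infty^\infty$ or $\ZZ A_\infty$ component. The vanishing of the homogeneous tubes guarantees finitely many components; the genuine work is the finiteness of the module region \emph{within} each single infinite component.
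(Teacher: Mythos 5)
Your proposal has a genuine gap, and you name it yourself: the entire argument hinges on showing that only finitely many vertices of each of the $3|w|$ components lie in the heart $\mod\Lambda$, and this step is never carried out --- it is described as ``the main obstacle'' and ``the genuine work''. Worse, the sketch you offer for it rests on a false premise. The $k$-linear equivalence $\Dd^b(\Lambda)\iso\Dd^b(\cov{H}{\partial})/\SSS\langle1\rangle_\partial$ of Corollary~\ref{k-equivalence} is induced by the equivalence $F\colon\Dd^b(\cov{\Lambda}{\delta})\to\Dd^b(\cov{H}{\partial})$ of Theorem~\ref{AO_gradedderivedeq}, which is a genuine derived equivalence between two \emph{different} locally bounded categories: it is not $t$-exact, so objects of $\mod\cov{\Lambda}{\delta}$ (hence $\Lambda$-modules) are sent to \emph{complexes} of $\cov{H}{\partial}$-modules, not to modules. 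Consequently, ``cohomology concentrated in degree $0$ in $\Dd^b(\Lambda)$'' cannot be detected as ``lying in the degree-$0$/module part of the cover of $H$'', which is exactly what your fundamental-domain picture assumes. Note also that even if one could make such an identification, intersecting an orbit with ``the actual modules of $\cov{H}{\partial}$'' does not yield finiteness for free: each component $\Pp_0$, $\Ii_0$, $\Rr^p_0$, $\Rr^q_0$ of $\mod\cov{H}{\partial}$ contains \emph{infinitely} many indecomposable modules, so the finiteness must come from controlling how the tilted heart $F(\mod\cov{\Lambda}{\delta})$ meets the $\SSS\langle1\rangle_\partial$-orbits --- precisely the analysis that is missing.

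The paper avoids all of this with a one-line argument of a different nature: the computation of Corollary~\ref{shapeARquiver} shows that $\Dd^b(\Lambda)$ has finitely many Auslander--Reiten components, all of the discrete types $\ZZ A_\infty^\infty$ and $\ZZ A_\infty$ (no tubes, no one-parameter families, since the homogeneous-tube modules are not gradable); hence $\Lambda$ is derived discrete in the sense of \cite{Vos01}, and derived discrete algebras are representation-finite. In other words, instead of trying to locate $\mod\Lambda$ inside the AR quiver (which requires tracking a non-$t$-exact equivalence), one quotes the classification/properties of algebras with discrete derived category. If you want to salvage your direct approach, you would essentially have to redo that theory: relate representation-finiteness of $\Lambda$ to local representation-finiteness of $\cov{\Lambda}{\delta}$ via Galois covering theory and then control $F(\mod\cov{\Lambda}{\delta})$ explicitly --- substantially more work than the citation the paper uses.
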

\begin{proof}
The algebra $\Lambda$ is derived discrete in the sense of \cite{Vos01}. Hence it is representation-finite.
\end{proof}

\begin{rema}
Let $\Lambda$ be an algebra of cluster type $\A_{p,q}$ and of weight $w\neq 0$. Then the category $\Dd^b(\Lambda)$ is \emph{locally fractionally Calabi-Yau} of dimensions $\frac{p-2w}{p-w}$ and $\frac{q+2w}{q+w}$ in the sense that there exists objects $X$ and $Y$ such that there are isomorphisms $\mathbb{S}^{p-w}X\iso X[p-2w]$ and $\mathbb{S}^{q+w}Y\iso Y[q+2w]$. Such algebras are studied in \cite[Section~6]{AO}. 
\end{rema}

From the previous section, we also deduce a result for the image of the derived category in the cluster category.
\begin{cora}
Let $\Lambda$ be an algebra of cluster type $\A_{p,q}$ and of weight $w\neq 0$. Then the quiver of the orbit category $\Dd^b(\Lambda)/\SSS$ has 3 connected components which are of the forms $\ZZ\A_{p,q}$, $\ZZ A_\infty/(\tau^p)$ and $\ZZ A_\infty/(\tau^q)$.
\end{cora}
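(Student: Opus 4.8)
The plan is to realise $\Dd^b(\Lambda)/\SSS$ as the image of the functor $\pi_\Lambda$ inside the cluster category $\Cc_\Lambda\iso\Cc_{\A_{p,q}}$, and to determine which components of the classically known Auslander--Reiten quiver of $\Cc_{\A_{p,q}}$ are hit. Recall that this quiver has one transjective component of type $\ZZ\A_{p,q}$ (containing the images of the preprojective and preinjective modules), two exceptional tubes of ranks $p$ and $q$, namely $\ZZ A_\infty/(\tau^p)$ and $\ZZ A_\infty/(\tau^q)$, and a $\mathbb{P}^1$-family of homogeneous tubes. Since $\pi_\Lambda$ fits into $\Dd^b(\Lambda)\twoheadrightarrow\Dd^b(\Lambda)/\SSS\hookrightarrow\Cc_\Lambda$, identifying the three asserted components amounts to describing the objects of $\Cc_\Lambda$ lying in the image of $\pi_\Lambda$.

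The key observation I would use is that, through the $k$-linear equivalence of Corollary~\ref{k-equivalence} and the pushdown $\Dd^b(\cov{H}{\partial})\to\Dd^b(H)$, the objects coming from $\Dd^b(\Lambda)$ are precisely the $\partial$-gradable ones. By the analysis of the previous subsection the transjective component and the two exceptional tubes are gradable (they are the images of the components $\Pp_i$, $\Ii_i$, $\Rr^p_j$, $\Rr^q_j$), whereas, because $w\neq0$, the indecomposables in the homogeneous tubes are not gradable. Hence the image of $\pi_\Lambda$ --- that is, $\Dd^b(\Lambda)/\SSS$ --- consists of exactly the transjective component $\ZZ\A_{p,q}$ together with the exceptional tubes $\ZZ A_\infty/(\tau^p)$ and $\ZZ A_\infty/(\tau^q)$, giving the three asserted components.

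As an independent cross-check I would argue directly from the Auslander--Reiten quiver of $\Dd^b(\Lambda)$ computed above: on $\Dd^b(\Lambda)\iso\Dd^b(\cov{H}{\partial})/\SSS\langle1\rangle_\partial$ the functor $\SSS$ is induced by $\langle-1\rangle_\partial$, and using $\SSS=\tau[-1]$ one sees that $\SSS$ cyclically permutes, with period $|w|$, each of the three families of $|w|$ components $\Pp_{(i,0)}$, $\Rr^p_{(i,0)}$, $\Rr^q_{(i,0)}$; thus the orbit quotient has one component of each type, each the quotient of a single component of $\Dd^b(\cov{H}{\partial})$ by the grading shift $\langle w\rangle_\partial$. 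A count on the rims of Figures~\ref{figureRp_0} and~\ref{figureRq_0} shows that $\langle w\rangle_\partial$ shifts them by $p$ and $q$ steps respectively, so that $\Rr^p_0/\langle w\rangle_\partial\iso\ZZ A_\infty/(\tau^p)$ and $\Rr^q_0/\langle w\rangle_\partial\iso\ZZ A_\infty/(\tau^q)$. The point requiring the most care is the transjective component: one must recognise $\Pp_{(0,0)}/\langle w\rangle_\partial$, a quotient of $\ZZ A_\infty^\infty$ by a grading shift, as $\ZZ\A_{p,q}$, i.e.\ check that $\langle w\rangle_\partial$ generates the group of covering transformations of $\ZZ A_\infty^\infty\to\ZZ\A_{p,q}$. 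This is exactly what the embedding into $\Cc_{\A_{p,q}}$ above takes care of, which is why I would ultimately rely on the gradability argument rather than on a hands-on verification on Figure~\ref{figureP_00}.
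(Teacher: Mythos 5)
Your proposal is correct, but your primary argument takes a genuinely different route from the paper's. The paper works entirely on the side of $\Dd^b(\Lambda)$: starting from the $k$-equivalence $\Dd^b(\Lambda)\iso \Dd^b(\cov{H}{\partial})/\SSS\langle 1\rangle_\partial$, it checks that this equivalence commutes with $\SSS$, observes that $\SSS\iso\langle -1\rangle$ in the orbit category, and then computes the quotient of the $3|w|$ components by the cyclic $\langle 1\rangle$-action directly, identifying each glued component by hand (asserting that $X\langle w\rangle\iso\tau^{-p}X$ on $\Rr^p_{(i,0)}$, similarly for $\Rr^q$, and that $\Pp_{(0,0)}/\langle w\rangle\iso\ZZ\A_{p,q}$) --- i.e.\ exactly your ``cross-check'', including the delicate transjective identification that you flag and the paper simply asserts. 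Your primary argument instead embeds $\Dd^b(\Lambda)/\SSS$ into $\Cc_\Lambda\iso\Cc_{\A_{p,q}}$ and identifies its image with the $\pi_H$-images of the $\partial$-gradable objects, so that the three components can be read off from the classically known AR-quiver of the acyclic cluster category (transjective component plus two exceptional tubes, the homogeneous tubes being excluded by non-gradability, which the paper establishes for $w\neq 0$). What this buys is precisely the avoidance of the hands-on identification of quotient translation quivers, in particular of $\Pp_{(0,0)}/\langle w\rangle\iso\ZZ\A_{p,q}$. The cost is the compatibility statement you invoke somewhat briskly: that the image of $\pi_\Lambda$ really equals the image of the gradable objects under $\pi_H$. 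This should be made precise, and it can be, by the same diagram the paper uses: since $\SSS\langle 1\rangle$ and $\SSS$ generate the same group of autoequivalences of $\Dd^b(\cov{H}{\partial})$ as $\langle 1\rangle$ and $\SSS$, one has $k$-linear equivalences
\[
\Dd^b(\Lambda)/\SSS\iso\bigl(\Dd^b(\cov{H}{\partial})/\SSS\langle 1\rangle\bigr)/\SSS\iso\bigl(\Dd^b(\cov{H}{\partial})/\langle 1\rangle\bigr)/\SSS,
\]
and the right-hand side is the full subcategory of $\Cc_H$ on $\pi_H$ of the pushdowns, i.e.\ of the gradable objects. With that step spelled out, your argument is complete; note that both routes ultimately rest on Corollary~\ref{k-equivalence} and on the component analysis of $\mod\cov{H}{\partial}$ (including non-gradability of the homogeneous tubes), so the difference is in how the quotient/image is identified rather than in the underlying covering theory.
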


\begin{proof}
From Corollary~\ref{k-equivalence} we have the following diagram
\[\xymatrix{\Dd^b(\cov{\Lambda}{\delta}) \ar[rr]^\sim \ar[d] &&  \Dd^b(\cov{H}{\partial})\ar[d] \\ \Dd^b(\Lambda)\ar[d]\ar[rr]^-\sim && \Dd^b(\cov{H}{\partial})/\SSS\langle 1\rangle\ar[d] \\ \Dd^b(\Lambda)/\SSS\ar@{.>}[rr] && (\Dd^b(\cov{H}{\partial})/\SSS \langle 1 \rangle)/\SSS }\]
The upper functor is a triangle functor, hence it commutes with $\SSS$. Therefore, the $k$-equivalence of Corollary~\ref{k-equivalence} commutes with $\SSS$. Using this we deduce that we have a  $k$-equivalence 
\[\xymatrix{ \Dd^b(\Lambda)/\SSS\ar[rr]^-\sim && (\Dd^b(\cov{H}{\partial})/\SSS \langle 1 \rangle)/\SSS }\]
Therefore the AR quiver of $ \Dd^b(\Lambda)/\SSS$ is the same as the AR quiver of the orbit category $(\Dd^b(\cov{H}{\partial})/\SSS \langle 1 \rangle)/\SSS $. Note that since $\SSS\iso \langle -1 \rangle$ in $(\Dd^b(\cov{H}{\partial})/\SSS \langle 1 \rangle)$, we just have to understand the action of $\langle 1\rangle$ in the category $(\Dd^b(\cov{H}{\partial})/\SSS \langle 1 \rangle)$.
We know that the category $(\Dd^b(\cov{H}{\partial})/\SSS \langle 1 \rangle)$ has $3|w|$ components which are \[\Pp_{(i,0)}, \mathcal{R}^p_{(i,0)}\ \textrm{and } \mathcal{R}_{(i,0)}^q, \textrm{for } i\in\ZZ/w\ZZ\]
Moreover we have the equality \[ \Pp_{(i,0)}\langle 1 \rangle = \Pp_{(i+1,0)}, \mathcal{R}^p_{(i,0)}\langle 1 \rangle = \mathcal{R}^p_{(i+1,0)},\ \textrm{and } \mathcal{R}^q_{(i,0)}\langle 1 \rangle = \mathcal{R}^q_{(i+1,0)}\] Thus for each connected component $\Gamma$ of the AR quiver of $(\Dd^b(\cov{H}{\partial})/\SSS \langle 1 \rangle)$ we have $ \Gamma\langle w \rangle = \Gamma$. Now it is easy to check that for $X\in \mathcal{R}_{(i,0)}$ we have $X\langle w\rangle\iso \tau ^{-p}X$. 
The action of $\langle w\rangle$ on the component $\Pp_{(0,0)}$ makes $\Pp_{(0,0)}/\langle w \rangle$ isomorphic to $\ZZ \A_{p,q}$. Therefore we get the result.
\end{proof}

\section{Explicit description of algebras of cluster type  $\A_{p,q}$}\label{section explicit}

In this section, we explicitly describe (in terms of quivers with relations) the algebras of cluster type $\A_{p,q}$, and give an easy method for computing the weight of such an algebra. The strategy consists in describing first the cluster-tilted algebras of type $\A_{p,q}$, following \cite{Bas09}, and then showing that all algebras of cluster type $\A_{p,q}$ can been seen as the degree zero part of a cluster-tilted algebra of type $\A_{p,q}$ with an appropriate grading. We first start by describing the algebras of cluster type $A_n$ using a similar approach.

\subsection{Algebras of cluster type $A_n$}
The aim of this section is to describe all algebras of global dimension~$\leq 2$  which are of cluster type $A_n$. By Corollary~\ref{cor_tree}, we already know that they are the algebras of global dimension~$\leq 2$ derived equivalent to $kQ$, where $Q$ is a quiver of type $A_n$. A description of iterated-tilted algebras of type $A_n$ was done by Assem \cite{Ass82}. Here we use other techniques, based on further developments in cluster-tilting theory. 

We start with some definitions.
\begin{dfa}
Let $Q$ be a quiver. A cycle $a_1\ldots a_r$ in $Q$ is called \emph{irreducible} if for all $1\leq i\neq j\leq r$ we have $s(a_i)\neq s(a_j)$. All cycles which are not irreducible are called \emph{reducible}.
\end{dfa}

\begin{dfa}
For $n\geq 1$, we define the class $\Mm^A_n$ of quivers $Q$ that satisfy the following:
\begin{itemize}
\item they have $n$ vertices;
\item all non-trivial irreducible cycles are oriented and of length 3;
\item a vertex has valency at most 4;
\item if a vertex has valency 4, then two of its adjacent arrows belong to one 3-cycle, and the other two belong to another 3-cycle;
\item if a vertex has valency 3, then two of its adjacent arrows belong to a 3-cycle, and the third arrow does not belong to any 3-cycle.
\end{itemize}
The set $\Mm^A$ denotes the union of all $\Mm^A_n$.
For a quiver $Q$ in $\Mm_A$ we denote by $W_Q$ the sum of all oriented 3-cycles. This is a potential in the sense of \cite{DWZ}.
\end{dfa}
\begin{thma}[\cite{BV08}] \label{dagfinn}
Let $\Gamma$ be a basic finite dimensional algebra. Then $\Gamma$ is a cluster-tilted algebra of type $A_n$ if and only if there exists a quiver $Q$ in $\Mm^A_n$ such that we have an isomorphism $\Gamma\iso \Jac(Q,W_Q).$
\end{thma}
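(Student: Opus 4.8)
The plan is to identify the quivers with potential in the mutation class of $(\Delta,0)$, where $\Delta$ is a fixed orientation of $A_n$ with zero potential, with the pairs $(Q,W_Q)$ for $Q\in\Mm^A_n$. Recall that $\Gamma$ is cluster-tilted of type $A_n$ exactly when $\Gamma\iso\End_{\Cc_\Delta}(T)$ for some basic cluster-tilting object $T\in\Cc_\Delta$. By Theorem~\ref{connectivity} every such $T$ is $\mu_s(\pi_\Delta(k\Delta))$ for some sequence $s$, and the consequence of Theorem~\ref{birs} recorded after it gives $\End_{\Cc_\Delta}(T)\iso\Jac(\mu_s(\Delta,0))$. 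Thus the whole statement reduces to the purely combinatorial claim that the reduced mutation class of $(\Delta,0)$ is exactly $\{(Q,W_Q)\mid Q\in\Mm^A_n\}$.

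For the ``only if'' direction I would argue by induction on the length of $s$ that the reduced quiver with potential $\mu_s(\Delta,0)$ has underlying quiver in $\Mm^A_n$ and potential equal, after reduction, to the sum $W_Q$ of oriented $3$-cycles. The base case is clear, since $\Delta$ is a tree (so lies in $\Mm^A_n$ with no cycles) and $W_\Delta=0$. For the inductive step it suffices to show that $\Mm^A_n$ is stable under Fomin--Zelevinsky quiver mutation and that mutating a sum of $3$-cycles and reducing again yields a sum of $3$-cycles. Both are local at the mutated vertex $i$: one enumerates the admissible neighbourhoods allowed by the five defining conditions (by the valency of $i$ and by which incident arrows lie on $3$-cycles), applies the \cite{DWZ} rule — adding $[ba]$ for each path $u\to i\to v$, reversing the arrows at $i$, and cancelling the resulting $2$-cycles — and checks that each condition of $\Mm^A_n$ is restored. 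The same bookkeeping shows that each quiver $2$-cycle cancelled corresponds to a quadratic term removed from the mutated potential, so that what survives is again the sum of the new oriented $3$-cycles; rigidity is preserved by \cite{DWZ}.

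For the ``if'' direction I would show conversely that every $Q\in\Mm^A_n$ lies in the mutation class of $\Delta$. The key observation is that an \emph{acyclic} quiver in $\Mm^A_n$ must be an orientation of $A_n$: with no oriented $3$-cycle present, conditions four and five forbid vertices of valency $3$ or $4$, so every vertex has valency at most $2$ and connectivity forces the underlying graph to be $A_n$. It therefore suffices to exhibit, for any non-acyclic $Q\in\Mm^A_n$, a vertex at which mutation strictly decreases the number of oriented $3$-cycles while remaining inside $\Mm^A_n$; iterating reaches an acyclic member of $\Mm^A_n$. Combined with the stability under mutation already established, this shows $(Q,W_Q)$ is mutation-equivalent to $(\Delta,0)$, so by Theorem~\ref{birs} (equivalently Theorem~\ref{kelleryang}) along the connecting sequence, $\Jac(Q,W_Q)$ is the endomorphism algebra of a cluster-tilting object of $\Cc_\Delta$, hence cluster-tilted of type $A_n$.

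The main obstacle is the combinatorial heart shared by both directions: the explicit verification that $\Mm^A_n$ is closed under mutation, together with the termination argument producing an acyclic representative, and the parallel check that the reduction of the potential never creates cycles other than $3$-cycles. This is a finite but delicate case analysis of the neighbourhoods permitted by the valency and $3$-cycle conditions. Conceptually it is most transparent through the bijection between cluster-tilting objects of $\Cc_\Delta$ and triangulations of the $(n+3)$-gon, under which vertices of $Q$ are diagonals, arrows come from shared triangles, and the oriented $3$-cycles are exactly the internal triangles; the five conditions then read off from the fact that each diagonal lies on precisely two triangles, and flips of diagonals realize the mutations.
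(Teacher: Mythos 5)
A preliminary remark: the paper contains no proof of Theorem~\ref{dagfinn} at all --- it is imported verbatim from Buan--Vatne [BV08] --- so your proposal cannot be measured against an internal argument, only on its own merits. Your reduction is sound: Theorem~\ref{connectivity} together with Theorem~\ref{birs} (and the fact that rigidity is preserved under mutation, with $(\Delta,0)$ rigid since $\Delta$ is acyclic) shows that endomorphism algebras of cluster-tilting objects in $\Cc_{A_n}$ are exactly the algebras $\Jac(\mu_s(\Delta,0))$, and Theorem~\ref{kelleryang} handles the converse once $(Q,W_Q)$ is known to lie in the mutation class of $(\Delta,0)$. The closure of $\Mm^A_n$ under QP-mutation, which you rightly identify as the combinatorial heart, is also true.

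The genuine gap is your termination argument. It is \emph{false} that every non-acyclic $Q\in\Mm^A_n$ admits a vertex at which mutation strictly decreases the number of oriented $3$-cycles. Take $n=6$ and let $Q$ be the oriented $3$-cycle $1\to2\to3\to1$ with one pendant arrow attached to each of the vertices $1,2,3$; this quiver lies in $\Mm^A_6$ (vertices $1,2,3$ have valency $3$, the pendant vertices valency $1$). Mutating at a pendant vertex merely reverses its arrow. Mutating at, say, vertex $1$ cancels the $2$-cycle formed by the new arrow $3\to2$ and the old arrow $2\to3$, destroying the old $3$-cycle, but the pendant arrow $4\to1$ produces a new arrow $4\to2$ and hence a new oriented $3$-cycle $1\to4\to2\to1$: the count is unchanged. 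One checks in general that a single mutation lowers the number of $3$-cycles only at a valency-$2$ vertex lying on a $3$-cycle (at valency-$3$ and valency-$4$ vertices of a $3$-cycle the count is preserved --- in the polygon picture, flipping a side of an internal triangle that borders another triangle with a diagonal side creates a new internal triangle), and in the example above no such vertex exists. So your induction never gets started on this quiver.

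The statement you need is nevertheless true, but it requires a finer measure or a different vehicle. In the example, after mutating at vertex $1$ the new $3$-cycle $1\to4\to2\to1$ \emph{does} have a valency-$2$ vertex, and one further mutation kills it; in general, mutating at a valency-$3$ vertex pushes the $3$-cycle onto its pendant branch, which strictly shortens, so a lexicographic induction (number of $3$-cycles, then total distance of $3$-cycles to leaves) can be made to work. Cleaner still is to promote your closing ``conceptual'' remark to the actual proof: under the bijection between cluster-tilting objects of $\Cc_{A_n}$ and triangulations of the $(n+3)$-gon, quivers in $\Mm^A_n$ are exactly the quivers of triangulations, mutation is the diagonal flip, and every triangulation can be flipped to a fan (whose quiver is the linear orientation of $A_n$) by the classical argument that increases the number of diagonals incident to a fixed corner. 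With that replacement, your outline becomes a complete proof; as written, the key combinatorial step fails.
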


The main result of this subsection is the following.
\begin{thma}\label{algebraclustertypeA}
Let $Q$ be a quiver in $\Mm^A_n$, and let $d$ be a $W_Q$-grading (cf Definition~\ref{defWgrading}). It yields a grading on the Jacobian algebra $B:=\Jac(Q,W)$. Denote by $\Lambda:=B_0$ its degree zero part. Then $\Lambda$ is an algebra of global dimension $\leq 2$ and of cluster type $A_n$. Moreover each basic algebra of global dimension $\leq 2$ and of cluster type $A_n$ is isomorphic to such a $\Lambda$.
\end{thma}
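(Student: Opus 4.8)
The plan is to read the theorem as the type-$A_n$ instance of the dictionary of Proposition~\ref{propWgrading}: an algebra of global dimension $\leq 2$ of cluster type $\Delta$ is the same datum as a $W$-grading on the rigid quiver with potential whose Jacobian algebra is a cluster-tilted algebra of type $\Delta$, the algebra being recovered as the degree zero part. For $\Delta=A_n$ the rigid quivers with potential are classified by Theorem~\ref{dagfinn}, so the statement should follow by combining Proposition~\ref{propWgrading}, Theorem~\ref{keller} and Theorem~\ref{dagfinn}, once the degree zero part is checked to have global dimension $\leq 2$.

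For the surjectivity (``Moreover'') part, let $\Lambda$ be a basic algebra of global dimension $\leq 2$ of cluster type $A_n$. By Proposition~\ref{propWgrading} there is a rigid quiver with potential $(\overline{Q},W)$ and a $W$-grading $d$ with $\overline{\Lambda}\underset{\ZZ}{\iso}\Jac(\overline{Q},W,d)$, and the discussion preceding Theorem~\ref{AO_gradedderivedeq} identifies $\Lambda$ with the degree zero part of $\overline{\Lambda}$. Now $\overline{\Lambda}=\End_{\Cc_\Lambda}(\pi_\Lambda\Lambda)$ is, as an ungraded algebra, the endomorphism algebra of a cluster-tilting object of $\Cc_\Lambda\iso\Cc_{A_n}$, hence a cluster-tilted algebra of type $A_n$; by Theorem~\ref{dagfinn} it is isomorphic to $\Jac(Q,W_Q)$ for some $Q\in\Mm^A_n$. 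Using that an isomorphism between Jacobian algebras of reduced rigid quivers with potential lifts to a right-equivalence \cite{DWZ}, I transport $d$ along this isomorphism to a $W_Q$-grading $d'$ on $Q$, and obtain $\Lambda\iso(\Jac(Q,W_Q,d'))_0=B_0$.

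For the converse, start from $Q\in\Mm^A_n$, a $W_Q$-grading $d$, $B:=\Jac(Q,W_Q)$ and $\Lambda:=B_0$. By Theorem~\ref{dagfinn} the algebra $B$ is cluster-tilted of type $A_n$ and $(Q,W_Q)$ is rigid (see the remark after Theorem~\ref{connectivity}). The crucial combinatorial observation is that, $W_Q$ being a sum of oriented $3$-cycles homogeneous of degree $1$, each $3$-cycle contains exactly one arrow of degree $1$; moreover the valency conditions defining $\Mm^A_n$ force every arrow to lie in at most one $3$-cycle. Consequently the relations $\partial_aW_Q$, for $a$ of degree $1$, are pairwise distinct length-two monomials sharing no arrow, and $\sum_{d(a)=1}a\,\partial_aW_Q=W_Q$. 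Writing $Q_\Lambda$ for the degree zero subquiver, this says precisely that the quiver with potential associated to $\Lambda=kQ_\Lambda/\langle\partial_aW_Q\mid d(a)=1\rangle$ by Keller's recipe (Theorem~\ref{keller}) is $(Q,W_Q)$, with the degree-one arrows playing the role of the relation arrows $a_i$. Hence $\overline{\Lambda}\underset{\ZZ}{\iso}\Jac(Q,W_Q,d)=B$, so $\Cc_\Lambda\iso\Cc(Q,W_Q)\iso\Cc_{A_n}$ and $\Lambda$ is of cluster type $A_n$; finite dimensionality of $B$ gives $\tau_2$-finiteness.

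It remains to verify that $\Lambda$ genuinely has global dimension $\leq 2$, which I regard as the main point. Since each arrow lies in at most one $3$-cycle, no two of the length-two relations $\partial_aW_Q$ overlap (there is no word $xyz$ with $xy$ and $yz$ both relations). Thus $\Lambda$ is a monomial algebra with non-overlapping quadratic relations, and the overlap resolution of each simple module $S_i$ terminates after the term indexed by the relations starting at $i$; equivalently $\Ext^{\geq 3}_\Lambda(S_i,S_j)=0$ for all $i,j$, so $\gldim\Lambda\leq 2$. The delicate bookkeeping is exactly this matching of the third $W$-grading condition (linear independence of the $\partial_aW_Q$ with $d(a)=1$) and the $\Mm^A_n$ geometry with the minimality and non-overlapping of the relations; once this is in place, both the computation of $\overline{\Lambda}$ and the identification $\Lambda=B_0$ are formal, and the two constructions are seen to be mutually inverse.
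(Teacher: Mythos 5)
Your forward direction (from $(Q,W_Q,d)$ to $\Lambda$) is sound and close to the paper's Steps 1--3: the key combinatorial fact that in a quiver of $\Mm^A_n$ every arrow lies in at most one $3$-cycle, so that the quadratic monomial relations $\partial_aW_Q$ with $d(a)=1$ never overlap, is exactly what the paper exploits. Where the paper writes down the projective resolution of each simple explicitly and checks injectivity of the last map, you invoke the overlap (Bardzell) resolution of a monomial algebra; that is a legitimate, slightly more machinery-based substitute. Your identification of Keller's QP of $\Lambda$ with $(Q,W_Q)$ via $\sum_{d(a)=1}a\,\partial_aW_Q=W_Q$, followed by Theorem~\ref{dagfinn}, Theorem~\ref{connectivity}, Theorem~\ref{birs} and Corollary~\ref{recognitioncor}, reproduces the paper's Steps 2 and 3.

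The gap is in the ``Moreover'' direction, at the sentence where you ``transport $d$ along this isomorphism to a $W_Q$-grading $d'$''. What \cite{DWZ} (or the mutation argument the paper recalls in the remark after Lemma~\ref{lemma_compatibility}) provides is an algebra automorphism $\varphi\colon k\hat{Q}\rightarrow k\hat{Q}$, fixing the vertices, with $\varphi(W_Q)$ cyclically equivalent to $W$ --- but $\varphi$ is an \emph{ungraded} map. There is no reason for $\varphi$ to send arrows to $d$-homogeneous elements, so pulling the grading of $\Jac(Q,W,d)$ back along the induced isomorphism gives only a vector-space grading of $\Jac(Q,W_Q)$; it is not clear that this grading is induced by a degree function on the arrows of $Q$, and still less that it is a $W_Q$-grading (values in $\{0,1\}$, $W_Q$ homogeneous of degree $1$, relations linearly independent), which is what the statement of the theorem requires. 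This is precisely the point where the paper inserts Lemma~\ref{uniqueness_rigid_potential_A}: since the irreducible cycles of $Q\in\Mm^A_n$ are the $3$-cycles $C_i$, rigidity of $W$ forces each $C_i$ to appear in $W$ with nonzero coefficient $\lambda_i$, and homogeneity of $W$ of degree $1$ then kills all reducible-cycle terms (those have degree at least $2$); hence $W=\sum_i\lambda_iC_i$ with $\lambda_i\neq 0$, the right-equivalence is a mere rescaling of one arrow per $3$-cycle, which is automatically degree-preserving, and one can take $d'=d$. Without an argument of this kind your ``transport'' step is an assertion rather than a proof; note that the paper's remark after Lemma~\ref{lemma_compatibility} makes exactly this point in the $\widetilde{A}$ case, where the ungraded right-equivalence follows from \cite{DWZ} but the graded compatibility has to be established by constructing $\varphi$ explicitly.
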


\begin{proof}
We divide the proof into several steps for the convenience of the reader.

\medskip
\textit{Step 1: The global dimension of the algebra $\Lambda$ is at most 2.}

\smallskip
Denote by $Q^{(0)}$ the subquiver of $Q$ which is defined by $Q^{(0)}_0:=Q_0$ and $Q^{(0)}_1:=\{a\in Q_1, d(a)=0\}$. Then we have $$\Lambda\iso kQ^{(0)}/\langle \partial_aW_Q, d(a)=1\rangle.$$
Note that since $W$ is a sum of 3-cycles, the ideal of relations $\langle \partial_aW_Q, d(a)=1\rangle$ is contained in $kQ^{(0)}_2$, that is every relation is of length 2. 
Let $i\in Q_0$ be a vertex and $e_i$ be the associated primitive idempotent of $\Lambda$. Then the projective resolution of the simple $\Lambda$-module associated to $i$ is given by 
$$\xymatrix{\bigoplus_{b,s(b)=i,d(b)=1} e_{t(b)}\Lambda\ar[rr]^{(f_{a,b})} && \bigoplus_{a,t(a)=i, d(a)=0}e_{s(a)}\Lambda \ar[r] & e_i\Lambda\ar[r] & S_i\ar[r] & 0}$$
Let $b\colon i\rightarrow l$ be an arrow of degree 1, and $a\colon j\rightarrow i$ such that $f_{a,b}\neq 0$. Then there exists a 3-cycle:
$$\xymatrix@-.5cm{ & j\ar[dr]^a & \\ l\ar[ur]^c & & i\ar[ll]^b}$$ and the map $f_{a,b}\colon e_l\Lambda\rightarrow e_j\Lambda$ is induced by $c$. The arrow $c$ does not belong to another 3-cycle, therefore there is no relation between some predecessor of $l$ and  $j$. Hence the map $f_{a,b}\colon e_l\Lambda\rightarrow e_j\Lambda$  is a monomorphism. 

Let $a$ be an arrow with $t(a)=i$ and of degree $0$. Then $a$ belongs to at most one cycle. Therefore there exists at most one arrow $b$ of degree 1 with $s(b)=i$ such that $f_{a,b}$ does not vanish. Therefore the map $$\xymatrix{\bigoplus_{b,s(b)=i,d(b)=1} e_{t(b)}\Lambda\ar[rr]^{(f_{a,b})} && \bigoplus_{a,t(a)=i, d(a)=0}e_{s(a)}\Lambda}$$ is injective, and the projective dimension of $S_i$ is at most 2.

\medskip
\textit{Step 2: We have an isomorphism $\Jac(Q,W_Q)\iso \End_{\Cc_\Lambda}(\pi\Lambda)$.}

\smallskip 
Since $d$ is a $W_Q$-grading the set $\{\partial_aW_Q, d(a)=1\}$ is a set of minimal relations for $\Lambda$. Therefore by Theorem  \ref{keller}, we have $\overline{Q}^{(0)}=Q$ and $W_\Lambda=W_Q$ and hence Step 2.

\medskip
\textit{Step 3: There is a triangle equivalence $\Cc_\Lambda\iso \Cc_{A_n}$}

\smallskip
By Theorems~\ref{dagfinn}, \ref{connectivity}, and \ref{birs} there exists a sequence of mutation $s$ such that $\mu_s(Q,W_Q)=(Q_H,0)$. Therefore we are done by Corollary~\ref{recognitioncor}.

\medskip
\textit{Step 4: Each algebra of cluster type $A_n$ and of global dimension $\leq 2$ is isomorphic to the degree zero part of a graded Jacobian algebra $\Jac(Q,W_Q,d)$ where $Q\in\Mm_n^A$ and $d$ is a $W_Q$-grading.}

\smallskip
Let $\Lambda\iso kQ_\Lambda/I$ be an algebra of global dimension $\leq 2$ and of cluster type $A_n$. Denote by $f\colon \Cc_\Lambda\rightarrow \Cc_Q$ the triangle equivalence. By Proposition~\ref{propWgrading} there exists a graded QP $(Q_{\overline{\Lambda}},W,d)$, where $W$ is rigid and $d$ is a $W$-grading, such that we have \[\overline{\Lambda}\underset{\ZZ}{\iso} \Jac(Q_{\overline{\Lambda}},W,d).\] Now  the object $f(\pi_\Lambda(\Lambda))$ is a cluster-tilting object in $\Cc_{A_n}$. Thus by Theorem~\ref{dagfinn}, there exists $Q\in\Mm^A_n$ such that we have an isomorphism \[\overline{\Lambda}\iso \Jac(Q,W_Q).\]
It is clear that we have $Q=Q_{\overline{\Lambda}}$. We then conclude using the following lemma.
\end{proof}

\begin{lema}\label{uniqueness_rigid_potential_A}
Let $(Q,W,d)$ be a graded QP where $Q\in\Mm^A_n$ and $W$ is rigid. Then $d$ makes $W_Q$ homogeneous of degree $1$ and we have an isomorphism of $\ZZ$-graded algebras
$$\Jac(Q,W_Q,d)\underset{\ZZ}{\iso}\Jac(Q,W,d).$$ 
\end{lema}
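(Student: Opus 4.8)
The plan is to prove that $W$ is graded right-equivalent to $W_Q$; both assertions of the lemma then follow at once, since a graded right-equivalence induces an isomorphism of $\ZZ$-graded Jacobian algebras, and since under such an equivalence $W_Q$ inherits homogeneity of degree $1$ from $W$.

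First I would record the combinatorial structure of $Q\in\Mm^A_n$. By definition $Q$ has no loops and no $2$-cycles, so every cyclic word on $Q$ has length $\geq 3$, and the cyclic words of length exactly $3$ are precisely the oriented $3$-cycles $c_1,\dots,c_m$ whose sum is $W_Q$. The valency conditions force these $3$-cycles to be pairwise \emph{arrow-disjoint}: at a valency-$4$ vertex the four incident arrows split $2+2$ into two distinct $3$-cycles, and at a valency-$3$ vertex only one $3$-cycle is present, so no arrow can lie in two of the $c_i$. Grouping the terms of $W$ by path length, I then write $W=\sum_{i=1}^m\lambda_i c_i+W^{\geq 4}$, where $W^{\geq 4}$ is a (possibly infinite) combination of cyclic words of length $\geq 4$ and $\lambda_i\in k$.

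The key step is to use rigidity of $W$ to show $\lambda_i\neq 0$ for every $i$. I would argue this on the length-$3$ component of the Jacobian ideal $J=\langle\partial_a W\mid a\in Q_1\rangle$: the length-$2$ terms of the cyclic derivatives $\partial_a W$ arise only from the cubic part $\sum_i\lambda_i c_i$, so if $\lambda_{i_0}=0$ then, since the three arrows of $c_{i_0}$ occur in no other $3$-cycle, none of the three length-$2$ paths complementary to $c_{i_0}$ appears among these terms; tracking path lengths one sees that $c_{i_0}$ cannot be cyclically equivalent to any element of $J$, contradicting rigidity. Granting this, every $c_i$ occurs in $W$ with nonzero coefficient; as $W$ is homogeneous of degree $1$ for $d$, each $c_i$ has $d$-degree $1$, and hence $W_Q=\sum_i c_i$ is homogeneous of degree $1$. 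This proves the first assertion.

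It remains to upgrade this to a graded right-equivalence. Because the $c_i$ are arrow-disjoint, I choose one arrow in each $c_i$ and rescale it by $\lambda_i^{-1}$; this diagonal automorphism of $k\hat Q$ preserves the degree of every arrow, hence is graded, and it normalizes the cubic part to $\sum_i c_i=W_Q$. Finally I would remove the higher-length part by an automorphism $\chi=\mathrm{id}+(\text{higher order})$ cancelling $W^{\geq 4}$ against $W_Q$, using that $\Jac(Q,W_Q)$ is the finite-dimensional cluster-tilted algebra of type $A_n$ (Theorem~\ref{dagfinn}) so that the cancellation is controlled, and that the graded reduction machinery of \cite{AO10} keeps $\chi$ homogeneous — the correction added to each arrow $a$ consists of parallel paths forced to have $d$-degree $d(a)$ because both $W$ and $W_Q$ are homogeneous of degree $1$. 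The composite graded automorphism carries $W$ to $W_Q$, yielding $\Jac(Q,W_Q,d)\underset{\ZZ}{\iso}\Jac(Q,W,d)$. The main obstacle I anticipate is exactly this last point: showing that the higher-length terms of a rigid potential on a type-$A$ quiver can be eliminated by a right-equivalence (uniqueness of the rigid potential up to right-equivalence) and that the eliminating automorphism can be chosen graded; the nonvanishing of the $\lambda_i$ is delicate but self-contained, and the graded bookkeeping is routine once the ungraded right-equivalence is in hand.
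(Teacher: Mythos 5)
Your first three steps are exactly the paper's argument: decompose $W$ into the irreducible $3$-cycles plus longer (equivalently, reducible) cycles, use rigidity and arrow-disjointness to force $\lambda_i\neq 0$, and deduce that each $c_i$, hence $W_Q$, is homogeneous of degree $1$. Up to that point the proposal is sound.

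The genuine gap is in your final step, and it is a gap you yourself flag as the ``main obstacle'': you never actually eliminate $W^{\geq 4}$, you only appeal to an unproven graded version of the Derksen--Weyman--Zelevinsky elimination of higher-order terms. The paper never needs any such machinery, because the grading kills $W^{\geq 4}$ outright, using only facts you have already established. Indeed, by the definition of $\Mm^A_n$ every nontrivial irreducible cycle of $Q$ is one of the $c_i$, so every cycle of length $\geq 4$ is reducible and therefore decomposes (cyclically) into at least two irreducible cycles; since you have shown $d(c_i)=1$ for all $i$, and degree is additive and invariant under rotation, every such cycle has $d$-degree $\geq 2$. As $W$ is homogeneous of degree $1$, this forces $W^{\geq 4}=0$, i.e.\ $W=\sum_i\lambda_i c_i$ with all $\lambda_i\neq 0$. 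The isomorphism of graded Jacobian algebras is then immediate: because the $c_i$ are arrow-disjoint, $\partial_a W=\lambda_i\,\partial_a W_Q$ for every arrow $a$ lying on $c_i$ and $\partial_a W=0=\partial_a W_Q$ otherwise, so the two graded Jacobian ideals coincide (alternatively, your degree-preserving rescaling of one arrow per cycle already does the job, with nothing left to correct). So the fix is not to develop a graded splitting theorem, but to notice that the homogeneity argument you used for the first assertion also disposes of all higher-length terms; as written, your proof of the second assertion is incomplete, and the heuristic that the correcting automorphism ``is forced to be graded'' would itself require a nontrivial inductive argument of the kind the paper only carries out, case by case, in the $\widetilde{A}$ setting (Lemma~\ref{lemma_compatibility}).
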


\begin{proof}
Denote by $C_1,\ldots,C_l$ the oriented 3-cycles of $Q$, so that we have $W_Q=\sum_{i=1}^r C_i$. One easily checks that the irreducible oriented cycles of the quiver $Q$ are exactly the $C_i$'s (up to cyclic equivalence). Therefore we can assume that \[W=\sum_{i=1}^r\lambda_i C_i + \textrm{extra terms}\] where the extra terms are reducible. If there exists $i$ with $\lambda_i= 0$ then the cycle $C_i$ and all the cycles cyclically equivalent to it are not in the Jacobian ideal since there are no cycle of length $\leq 2$ in the quiver $Q$. Therefore since $W$ is rigid, we have $\lambda_i\neq 0$ for all $i=1,\ldots,r$. Now the existence of the grading $d$ implies that $d(C_i)=1$ for all $i=1,\ldots,r$, so any reducible cycle is of degree at least 2. Hence the extra terms in the potential have to be zero and we have $W=\sum_{i=1}^r\lambda_i C_i$ with $\lambda_i\neq 0$.
\end{proof}

\begin{rema}
Note that Theorem~\ref{algebraclustertypeA} is not true for the other Dynkin types. Let $(Q,d)$ be the following graded quiver 
\[\xymatrix@-.2cm{& 2\ar[dr]|1^(.6)b& \\ 1\ar[ur]|0^(.6)a\ar[dr]|1_(.4){a'}&& 4\ar[ll]|0^(.6)c\\ &3\ar[ur]|0_(.4){b'}},\] and $W:=cba+cb'a'$. The Jacobian algebra $\Jac(Q,W)$ is a cluster-tilted algebra of type $D_4$. The grading $d$ is a $W$-grading but the degree zero part of the graded algebra $\Jac(Q,W,d)$ is an iterated-tilted algebra of type $A_4$ and of global dimension 3, so it cannot be of cluster type $D_4$. 
\end{rema}

\subsection{Cluster-tilted algebras of type $\widetilde{A}_{p,q}$}
\begin{dfa}[\cite{Bas09}]
For $p\geq q\geq 1$, we define the class $\Mm^{\widetilde{A}}_{p,q}$ of quivers $Q$ that satisfy the following conditions:
\begin{itemize}
\item $Q$ has $p+q$ vertices;
\item there exist integers $1\leq p_1<p_2< \ldots < p_r\leq p$ and $1\leq q_1<q_2<\ldots<q_r\leq q$ such that $Q$ contains precisely one full subquiver $C$ which is a non-oriented cycle of type $(p_1,q_1,p_2-p_1,q_2-q_1,\dots, p_r-p_{r-1},q_r-q_{r-1})$ (that is $C$ is the composition of $p_1$ arrows going in one direction with $q_1$ arrows going in the other direction, with $p_2-p_1$ arrows going the first direction, etc...). We denote by $a_1,\ldots, a_{p_r}$ the arrows of $C$ going in one direction and we call them the \emph{$p$-arrows}. We denote by $b_1,\ldots, b_{q_r}$ the arrows going in the opposite direction and we call them the \emph{$q$-arrows}.

\item each arrow connecting $C$ to a vertex not in $C$ is in exactly one 3-cycle of $Q$ of the form \[\scalebox{.7}{\xymatrix{&u(\alpha)\ar[dr]^{\alpha''}&\\ t(\alpha)\ar[ur]^{\alpha'} && s(\alpha)\ar[ll]_{\alpha}}}\] where $\alpha$ is in $C$. We denote by $u(\alpha$) the connecting vertex. It has valency at most 4. When its valency is 4, the adjacent arrows which are not $\alpha'$ and $\alpha''$ belong to exactly one 3-cycle, and when it has valency 3, the third arrow does not belong to any 3-cycle. Moreover, the subquiver containing $C$, $\alpha'$ and $\alpha''$ is a full subquiver of $Q$. Hence we cannot have $u(\alpha)=u(\beta)$ for $\alpha\neq \beta$.

\item the full subquiver of $Q$ whose vertices are not in $C$ is a disjoint union of quivers $Q^\alpha\in\Mm^A$ where $\alpha$ is an arrow of the non-oriented cycle. The quiver $Q^\alpha$ is empty if there is no $3$-cycle containing $\alpha$, and the quiver $Q^\alpha$ contains the vertex $u(\alpha)$ if there is a 3-cycle 
\[\scalebox{.7}{\xymatrix{&u(\alpha)\ar[dr]^{\alpha''}&\\ t(\alpha)\ar[ur]^{\alpha'} && s(\alpha)\ar[ll]_{\alpha}}}\] 
\begin{figure}

\[\scalebox{.7}{
\begin{tikzpicture}[>=stealth,scale=1]
\node (P1) at (-6,0){$.$};
\node (P2) at (-5.6,2){$.$};
\node (P3) at (-4.8,4){$.$};
\node (P4) at (-2,5.6){$.$};
\node (P5) at (0,6){$.$};
\node (P6) at (2,5.6){$.$};
\node (P7) at (4.8,4){$.$};
\node (P8) at (5.6,2){$.$};
\node (P9) at (6,0){$.$};
\node (P10) at (0,-6){$.$};
\node (P11) at (-2,-5.6){$.$};
\node (P12) at (-4,-4.8){$.$};
\node (P13) at (-5.6,-2){$.$};
\node (P14) at (-7.2,3.6)  {$u(a_2)$};
\node (P15) at (1.4,8)  {$u(b_2)$};
\node (P16) at (7,3.8)  {$u(b_{q_1})$};
\node (P17) at (-10,3.4)  {$Q^{a_2}$};
\node (P18) at (1.4,9.2) {$Q^{b_1}$};
\node (P19) at (9.6,3.8) {$Q^{b_{q_1}}$};

\draw [->] (P1) -- node [swap,xshift=3mm] {$a_1$}(P2);
\draw [->] (P2) -- node [swap,xshift=3mm, yshift=-1mm] {$a_2$}(P3);
\draw [->] (P4) -- node [swap,xshift=-1mm,yshift=-3mm] {$a_{p_1}$}(P5);
\draw [->] (P6) -- node [swap,yshift=-2mm] {$b_1$}(P5);
\draw [->] (P8) -- node [swap,xshift=-2mm] {$b_{q_1}$}(P7);
\draw [->] (P8) -- node [swap,xshift=5mm] {$a_{1_1+1}$}(P9);
\draw [->] (P10) -- node [swap,xshift=2mm,yshift=2mm] {$a_{p_r}$}(P11);
\draw [->] (P12) -- node [swap,xshift=-2mm,yshift=-2mm] {$b_{q_{r-1}+1}$}(P11);
\draw [->] (P1) -- node [swap,xshift=3mm] {$b_{q_r}$}(P13);

\draw [->] (P3) -- (P14);
\draw [->] (P14) -- (P2);
\draw [->] (P5) -- (P15);
\draw [->] (P15) -- (P6);
\draw [->] (P7) -- (P16);
\draw [->] (P16) -- (P8);
\draw (-9,3.4) [dotted] ellipse (3cm and 2cm);
\draw (1.4,9.2) [dotted] ellipse (3cm and 2cm);
\draw (9,3.8) [dotted] ellipse (3cm and 2cm);
 \draw [loosely dotted, thick] (P3) .. controls (-4,4.8) and (-3,5.4) .. (P4);
\draw [loosely dotted, thick] (P6) .. controls (3.4,5.2)  .. (P7);
\draw [loosely dotted, thick] (P9) .. controls (6,-2) and (3,-6)  .. (P10);
\draw [loosely dotted, thick] (P12) .. controls (-5.4,-3)  .. (P13);
\end{tikzpicture}}
\]\caption{\label{figuretildeA}Shape of a quiver in $\Mm^{\A}_{p,q}$. }
\end{figure}
\item We have the equalities $$p= \sum_{l=1}^{p_r} \sharp Q_0^{a_l} + p_r \quad \textrm{and}\quad q=\sum_{l=1}^{q_r} \sharp Q_0^{b_l} + q_r.$$
\end{itemize}
For a quiver $Q$ in $\Mm^{\widetilde{A}}_{p,q}$ we denote by $W_Q$ the sum of all oriented 3-cycles. More precisely we define \[ W_Q:=\sum_{\alpha \in C}(\alpha''\alpha'\alpha+W_{Q^\alpha})\] where $C$ is the non-oriented cycle of $Q$. This is a rigid potential in the sense of \cite{DWZ}. 
\end{dfa}

\begin{thma}[\cite{Bas09}] \label{bastian}
Let $\Gamma$ be a finite dimensional algebra. Then $\Gamma$ is a cluster-tilted algebra of type $\widetilde{A}_{p,q}$ if and only if there exists a quiver $Q$ in $\Mm^{\widetilde{A}}_{p,q}$ such that we have an isomorphism $$\Gamma\iso \Jac(Q,W_Q).$$
\end{thma}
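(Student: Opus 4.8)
The plan is to reduce the statement to a purely combinatorial description of the mutation class of the quiver with potential $(\widetilde{A}_{p,q},0)$. First I would recall why the cluster-tilted algebras of type $\widetilde{A}_{p,q}$ are exactly the Jacobian algebras of the quivers with potential obtained from $(\widetilde{A}_{p,q},0)$ by mutation. By definition such an algebra is $\End_{\Cc_{\widetilde{A}_{p,q}}}(T)$ for a cluster-tilting object $T$, and by the connectivity of the cluster-tilting graph (Theorem~\ref{connectivity}) one may write $T\iso \mu_s(\pi(k\widetilde{A}_{p,q}))$ for some sequence $s$. Theorem~\ref{birs} then yields $\End_{\Cc}(T)\iso \Jac(\mu_s(\widetilde{A}_{p,q},0))$, and this QP is rigid. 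Thus the theorem becomes equivalent to the assertion that the mutation class of $(\widetilde{A}_{p,q},0)$ coincides, up to right-equivalence, with the family $\{(Q,W_Q)\mid Q\in\Mm^{\widetilde{A}}_{p,q}\}$.

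To prove this equivalence I would establish three things. The \emph{base case}: $(\widetilde{A}_{p,q},0)$ itself lies in the family, being the quiver with $r=1$, $p_1=p$, $q_1=q$ and all appendages $Q^\alpha$ empty, so that $W_Q=0$. The \emph{closure}: if $Q\in\Mm^{\widetilde{A}}_{p,q}$ then the underlying quiver of $\mu_i(Q,W_Q)$ again lies in $\Mm^{\widetilde{A}}_{p,q}$. The \emph{connectivity}: every $Q\in\Mm^{\widetilde{A}}_{p,q}$ can be reduced to $\widetilde{A}_{p,q}$ by a sequence of mutations. Base case together with closure gives that the whole mutation class is contained in the family; base case together with connectivity gives the reverse inclusion, and the two together prove the theorem.

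The passage from quivers to quivers with potential is painless once the quiver-level statements are in hand. By \cite{DWZ} rigidity is preserved under mutation, so every member of the mutation class is a rigid QP, and a rigid potential on a fixed quiver is unique up to right-equivalence. Since $W_{Q'}$ is rigid by the very definition of $\Mm^{\widetilde{A}}_{p,q}$, whenever the underlying quiver of $\mu_i(Q,W_Q)$ equals some $Q'\in\Mm^{\widetilde{A}}_{p,q}$ we necessarily have $\mu_i(Q,W_Q)\iso (Q',W_{Q'})$; as Jacobian algebras are unchanged by right-equivalence, I only ever need to track underlying quivers.

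The combinatorial heart, and the main obstacle, is the closure step, where the delicate cases are the vertices of the distinguished non-oriented cycle $C$ and the connecting vertices $u(\alpha)$. One must check that mutating at a source or sink of $C$ merely reverses the two incident cycle-arrows, keeping $C$ non-oriented with the \emph{same} numbers $p$ and $q$ of arrows in each direction; that mutating at a connecting vertex $u(\alpha)$ or inside an appendage behaves like a mutation within $\Mm^A$ (so that Theorem~\ref{dagfinn} and the Buan--Vatne combinatorics of \cite{BV08} apply), possibly moving vertices between $C$ and an appendage but preserving the global winding invariant $(p,q)$; and that the valency and $3$-cycle conditions defining $\Mm^{\widetilde{A}}_{p,q}$ survive throughout. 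For connectivity I would induct on the total number $\sum_\alpha \sharp Q^\alpha_0$ of appendage vertices: using connectivity of the $A_n$ mutation class one collapses the appendages one at a time down to their connecting vertices, and then straightens the non-oriented cycle into the standard orientation of $\widetilde{A}_{p,q}$ by mutating at its sinks and sources. A more conceptual alternative is to use the realisation of $\widetilde{A}_{p,q}$ as the annulus with $p$ and $q$ marked points on its two boundary components, so that cluster-tilting objects correspond to triangulations, arc flips realise mutations, and the associated Gabriel quivers with potential are the $(Q,W_Q)$ with $Q\in\Mm^{\widetilde{A}}_{p,q}$; but verifying that the triangulation quivers are exactly $\Mm^{\widetilde{A}}_{p,q}$ reproduces essentially the same case analysis.
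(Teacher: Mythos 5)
There is an important structural point to make first: the paper contains \emph{no} proof of this statement. Theorem~\ref{bastian} is imported verbatim from \cite{Bas09} and used as a black box, just as the definition of the class $\Mm^{\A}_{p,q}$ immediately preceding it is imported. So your proposal cannot be compared to an internal argument; it has to stand on its own as a proof of Bastian's classification. Your reduction step does stand: by Theorem~\ref{connectivity} and Theorem~\ref{birs} (with rigidity preserved under mutation, by \cite{DWZ}), every cluster-tilted algebra of type $\A_{p,q}$ is isomorphic to $\Jac(\mu_s(\A_{p,q},0))$ for some sequence $s$, and conversely any QP mutation-equivalent to $(\A_{p,q},0)$ has Jacobian algebra realized as the endomorphism algebra of a cluster-tilting object (this is exactly the mechanism of Corollary~\ref{recognitioncor}). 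So the theorem is indeed equivalent to identifying the mutation class of $(\A_{p,q},0)$ with $\left\{(Q,W_Q)\mid Q\in\Mm^{\A}_{p,q}\right\}$ up to right-equivalence.

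The genuine gap is that this identification is not a lemma on the way to Bastian's theorem --- it \emph{is} Bastian's theorem, and your proposal defers it entirely. Both the closure step and the connectivity step are stated as ``one must check,'' with the delicate cases correctly named (sinks and sources of the non-oriented cycle $C$, the connecting vertices $u(\alpha)$, migration of vertices between $C$ and the appendages $Q^\alpha$) but with none of the case analysis performed. This analysis is substantial: mutation at $u(\alpha)$ absorbs that vertex into $C$ and splits the appendage, mutation at a sink of $C$ exchanges the roles of a $p$-arrow and a $q$-arrow, and after each move one must re-verify that the quiver still contains \emph{precisely one} full non-oriented cycle, the valency and $3$-cycle conditions, and the counting conditions $p=\sum_l\sharp Q_0^{a_l}+p_r$, $q=\sum_l\sharp Q_0^{b_l}+q_r$ (these last are what pin down the type as $\A_{p,q}$ rather than some $\A_{p',q'}$ with $p'+q'=p+q$). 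That such bookkeeping is nontrivial is visible in this very paper: Lemma~\ref{winvariantmutation}, which only tracks a single numerical invariant through the same four cases, already requires a page of verification. A secondary imprecision: your claim that ``a rigid potential on a fixed quiver is unique up to right-equivalence'' is not a general theorem of \cite{DWZ}; it holds for mutation-acyclic quivers by the argument recorded in the remark after Lemma~\ref{lemma_compatibility} (both potentials mutate to the zero potential on an acyclic quiver), so your three steps must be ordered with connectivity established \emph{before} the potentials are identified. That ordering is available in your scheme, but as written the fact is invoked as free-standing. In sum, what you have is a correct reduction plus a plan; completing it amounts to reproducing the combinatorial content of \cite{Bas09}.
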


\subsection{Algebras of cluster type $\A_{p,q}$}
We have the same kind of result as for the $A_n$ case.
\begin{thma}\label{algebraclustertypetildeA}
Let $Q$ be a quiver in $\Mm^{\A}_{p,q}$, and let $d$ be $W_Q$-grading. It yields a grading on the Jacobian algebra $B:=\Jac(Q,W_Q)$. Denote by $\Lambda:=B_0$ its degree zero part. Then $\Lambda$ is an algebra of global dimension $\leq 2$ and of cluster type $\A_{p,q}$. Moreover each basic algebra of global dimension $\leq 2$ and of cluster type $\A_{p,q}$ is isomorphic to such a $\Lambda$.
\end{thma}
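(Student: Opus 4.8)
The plan is to mirror the structure of the proof of Theorem~\ref{algebraclustertypeA} for type $A_n$, adapting each step to the $\widetilde{A}_{p,q}$ setting. The theorem has two directions: first that the degree-zero part $\Lambda = B_0$ of a graded Jacobian algebra $\Jac(Q,W_Q,d)$ with $Q \in \Mm^{\widetilde{A}}_{p,q}$ is an algebra of global dimension $\leq 2$ of cluster type $\widetilde{A}_{p,q}$, and second that every such algebra arises this way.

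For the forward direction I would proceed in three steps. First I would show $\gldim \Lambda \leq 2$. As in the $A_n$ case, since $W_Q$ is a sum of oriented $3$-cycles, every relation $\partial_a W_Q$ with $d(a)=1$ has length $2$, so $\Lambda \iso kQ^{(0)}/\langle \partial_a W_Q \mid d(a)=1\rangle$ where $Q^{(0)}$ is the degree-zero subquiver. I would then analyze the projective presentation of each simple $S_i$ exactly as before: a degree-$1$ arrow $b\colon i \to l$ that pairs nontrivially with a degree-$0$ arrow $a\colon j \to i$ forces a $3$-cycle through $i$, and the third arrow $c$ of that cycle induces the relevant map $e_l\Lambda \to e_j\Lambda$. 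The combinatorial constraints defining $\Mm^{\widetilde{A}}_{p,q}$ (valency bounds, and that an arrow lies in at most one $3$-cycle) guarantee injectivity of the second differential, giving projective dimension $\leq 2$. Second, I would invoke that $d$ being a $W_Q$-grading makes $\{\partial_a W_Q \mid d(a)=1\}$ a minimal set of relations, so Theorem~\ref{keller} yields $\overline{Q}^{(0)} = Q$ and $W_\Lambda = W_Q$, whence $\Jac(Q,W_Q) \iso \End_{\Cc_\Lambda}(\pi\Lambda)$. Third, using Theorem~\ref{bastian} together with Theorems~\ref{connectivity} and~\ref{birs}, there is a mutation sequence $s$ with $\mu_s(Q,W_Q) = (Q_H, 0)$, so Corollary~\ref{recognitioncor} gives the triangle equivalence $\Cc_\Lambda \iso \Cc_{\widetilde{A}_{p,q}}$, i.e.\ $\Lambda$ is of cluster type $\widetilde{A}_{p,q}$.

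For the converse, let $\Lambda = kQ_\Lambda/I$ be of global dimension $\leq 2$ and cluster type $\widetilde{A}_{p,q}$, with equivalence $f\colon \Cc_\Lambda \to \Cc_{\widetilde{A}_{p,q}}$. By Proposition~\ref{propWgrading} there is a rigid graded QP $(Q_{\overline{\Lambda}}, W, d)$ with $d$ a $W$-grading and $\overline{\Lambda} \underset{\ZZ}{\iso} \Jac(Q_{\overline{\Lambda}}, W, d)$. Since $f(\pi_\Lambda(\Lambda))$ is a cluster-tilting object in $\Cc_{\widetilde{A}_{p,q}}$, Theorem~\ref{bastian} supplies $Q \in \Mm^{\widetilde{A}}_{p,q}$ with $\overline{\Lambda} \iso \Jac(Q, W_Q)$, and comparing Gabriel quivers forces $Q = Q_{\overline{\Lambda}}$. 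The remaining task is to identify the potential $W$ with $W_Q$ up to graded isomorphism; here I expect the main obstacle to lie, since unlike the $A_n$ case the irreducible oriented cycles of $Q$ are no longer simply the $3$-cycles—the non-oriented cycle $C$ of type $\widetilde{A}$ complicates the classification of cycles.

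To overcome this I would establish the analogue of Lemma~\ref{uniqueness_rigid_potential_A}: for $Q \in \Mm^{\widetilde{A}}_{p,q}$ and any rigid graded QP $(Q,W,d)$, the grading $d$ makes $W_Q$ homogeneous of degree $1$ and $\Jac(Q,W_Q,d) \underset{\ZZ}{\iso} \Jac(Q,W,d)$. The argument should again hinge on identifying the irreducible oriented cycles of $Q$: apart from the $3$-cycles $\alpha''\alpha'\alpha$ and the irreducible $3$-cycles inside each $Q^\alpha \in \Mm^A$, the only other candidate is a long oriented cycle supported on $C$, which can occur only in the degenerate case where $C$ is itself oriented (one of $p_r, q_r$ being trivial); I would need to check that rigidity still pins down the coefficients and kills reducible extra terms. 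Writing $W = \sum_i \lambda_i C_i + (\text{reducible terms})$ over the irreducible oriented cycles $C_i$, rigidity forces every $\lambda_i \neq 0$ (else $C_i$ escapes the Jacobian ideal, there being no short cycles to reduce it), and the existence of the degree-$1$ grading forces $d(C_i)=1$, making reducible cycles have degree $\geq 2$ and hence vanish. Rescaling arrows then yields the isomorphism with $W_Q$. Combining this lemma with the converse reduction completes the proof.
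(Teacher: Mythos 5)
Your forward direction is fine and is exactly the paper's route: Steps 1--3 of the proof of Theorem~\ref{algebraclustertypeA} carry over verbatim, with Theorem~\ref{bastian} replacing Theorem~\ref{dagfinn} and Corollary~\ref{recognitioncor} giving the cluster equivalence. The genuine gap is in the converse, precisely at the point you yourself flag as the main obstacle, and your proposed resolution of it is wrong. You claim that, beyond the $3$-cycles, the only candidate irreducible oriented cycle is ``a long oriented cycle supported on $C$'', occurring ``only in the degenerate case where $C$ is itself oriented''. But $C$ is non-oriented by definition of $\Mm^{\A}_{p,q}$ (always $p_r\geq 1$ and $q_r\geq 1$), so your degenerate case never occurs, and your argument would conclude that the $3$-cycles are the only irreducible oriented cycles. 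That is false: whenever every $q$-arrow $b_j$ lies on a $3$-cycle (all $Q^{b_j}\neq\varnothing$), one gets an irreducible oriented cycle $C_a$ by traversing $C$ in the $p$-direction and replacing each wrong-way arrow $b_j$ by the detour $b''_jb'_j$; symmetrically one gets $C_b$ when all $Q^{a_i}\neq\varnothing$. These cycles are not supported on $C$, they exist in the generic (not degenerate) situation, and handling them is the entire content of the paper's Lemma~\ref{lemma_compatibility}.

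Once $C_a$ and $C_b$ are present, both of your key claims break down. Writing $W=\sum_i\lambda_iC_i+\alpha C_a+\beta C_b+(\text{reducible terms})$: first, rigidity does \emph{not} force all $\lambda_i\neq 0$; in the case $p_r=q_r=1$ it only forces $\lambda_1\lambda_2-\alpha\beta\neq 0$, so e.g.\ $W=\alpha C_a+\beta C_b$ with $\lambda_1=\lambda_2=0$ is rigid (each $C_i$ lies in the Jacobian ideal via $\partial_{b_1}W$ and $\partial_{a_1}W$). In that situation a $W$-grading $d$ need not make $W_Q$ homogeneous of degree $1$ at all, so your proposed lemma -- graded isomorphism $\Jac(Q,W_Q,d)\underset{\ZZ}{\iso}\Jac(Q,W,d)$ with the \emph{same} grading $d$, as in Lemma~\ref{uniqueness_rigid_potential_A} -- is false as stated; the paper's Lemma~\ref{lemma_compatibility} instead produces a possibly different $W_Q$-grading $d'$, obtained via an isomorphism exchanging $a_1$ and $b_1$. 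Second, ``reducible cycles have degree $\geq 2$, hence vanish'' fails when $d(C_a)=0$, which can happen since the grading only forces $d(C_a)+d(C_b)=p_r+q_r$: then every cycle $C_iC_a^n$ is homogeneous of degree $1$, so $W$ may contain infinitely many extra terms $\sum_iC_iP_i(C_a)$ with $P_i$ power series, and eliminating them requires the completed-path-algebra substitution $c_i\mapsto c_iP_i(C_a)$ rather than a rescaling of arrows. The paper's proof is a case analysis over the values of $d(C_a)$ and $d(C_b)$ precisely because of these two phenomena, neither of which your argument survives.
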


 The proof of the first assertion is exactly the same as in the proof of Theorem \ref{algebraclustertypeA} (Steps~1, 2, and 3). For the proof of  the second assertion, we will need the following.

\begin{lema} \label{lemma_compatibility}
Let $(Q,W,d)$ be a graded quiver with reduced potential such that:
\begin{itemize}
\item the quiver $Q$ is in $\Mm^{\A}_{p,q}$,
\item the potential $W$ is rigid,
\item the grading $d$ is a $W$-grading.
\end{itemize}
Then there exists an algebra isomorphism $\varphi\colon k\hat{Q}\rightarrow k\hat{Q}$ (where $k\hat{Q}$ is the completion of the path algebra $kQ$)  such that $\varphi$ is the identity on the vertices, and such that $\varphi(W_Q)$ is cyclically equivalent to $W$. Moreover there exists a $W_Q$-grading $d'$ on $Q$ such that $\varphi\colon (k\hat{Q},d')\rightarrow (k\hat{Q},d)$ is an isomorphism of graded algebras.
\end{lema}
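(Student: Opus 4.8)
The plan is to mirror the structure of the proof of Lemma~\ref{uniqueness_rigid_potential_A}, upgrading it to account for the fact that, unlike in the $A_n$ case, quivers in $\Mm^{\A}_{p,q}$ may carry \emph{longer} irreducible oriented cycles. First I would record the combinatorial input: the irreducible oriented cycles of $Q$ are the oriented $3$-cycles, together with the cycles winding once around the central cycle $C$ that replace each arrow of a maximal directed run by the two outer arrows of its attached $3$-cycle. Exactly as in Lemma~\ref{uniqueness_rigid_potential_A}, rigidity forces every oriented $3$-cycle of $Q$ to occur in $W$ with nonzero coefficient: a $3$-cycle not appearing in $W$ could not lie in the Jacobian ideal, since $Q$ has no oriented cycle of length $\leq 2$ and each length-$2$ subpath of a $3$-cycle occurs only in that $3$-cycle. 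Since $W$ is homogeneous of degree $1$, each $3$-cycle then has $d$-degree $1$; in particular $d$ already makes $W_Q$ homogeneous of degree~$1$.

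Next I would produce the algebra isomorphism $\varphi$ ignoring the grading. Both $W$ and $W_Q$ are rigid potentials on the same quiver $Q$, and by Theorems~\ref{bastian}, \ref{connectivity} and~\ref{birs} the quiver $Q$ is mutation equivalent to an acyclic quiver $\Delta$ of type $\widetilde A_{p,q}$; fix a quiver-mutation sequence $s$ with $\mu_s(Q)=\Delta$. Because $W$ and $W_Q$ are rigid, their mutations stay $2$-acyclic with underlying quiver governed by the (potential-independent) matrix mutation, so $\mu_s(Q,W)$ and $\mu_s(Q,W_Q)$ both have acyclic underlying quiver $\Delta$ and hence vanishing potential: $\mu_s(Q,W)=(\Delta,0)=\mu_s(Q,W_Q)$. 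Since mutation of quivers with potential is invertible up to right-equivalence \cite{DWZ}, applying $\mu_{s^-}$ to both shows that $(Q,W)$ and $(Q,W_Q)$ are right-equivalent, i.e.\ there is an algebra automorphism $\varphi$ of $k\hat{Q}$, fixing the vertices, with $\varphi(W_Q)$ cyclically equivalent to $W$.

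Finally I would upgrade $\varphi$ to a graded isomorphism. The key observation is that, by the first paragraph, $d$ makes \emph{both} $W$ and $W_Q$ homogeneous of degree $1$, so the discrepancy $W-\varphi(W_Q)$ and all the intermediate corrections arising in the iterative Derksen--Weyman--Zelevinsky construction of a right-equivalence are homogeneous of degree~$1$. Carrying out that construction one path-length layer at a time, while preserving the $d$-degree, therefore yields a right-equivalence that is homogeneous for $d$; equivalently, this homogeneity can be tracked through the graded mutation calculus of Subsection~\ref{subsection graded mutation}, mutating $(Q,W,d)$ and $(Q,W_Q,d)$ along $s$ by $\mu_s^{\rL}$ and invoking Proposition~\ref{prop_gradedeq_mutation} and Lemma~\ref{leftrightmutation} to compare the two resulting graded quivers with potential on $\Delta$. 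Defining $d'(a):=d(\varphi(a))$ (which equals $d(a)$ once $\varphi$ is homogeneous) then presents $\varphi\colon(k\hat{Q},d')\to(k\hat{Q},d)$ as an isomorphism of graded algebras, and it remains to check that $d'$ is a $W_Q$-grading: the conditions on arrow-degrees and homogeneity of $W_Q$ are already established, while linear independence of $\{\partial_aW_Q\mid d'(a)=1\}$ follows from that of $\{\partial_aW\mid d(a)=1\}$ through the graded isomorphism $\Jac(Q,W_Q,d')\underset{\ZZ}{\iso}\Jac(Q,W,d)$.

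The main obstacle is the last paragraph: controlling the \emph{long} oriented cycles. In type $A_n$ these do not exist, so $W$ equals $\sum_i\lambda_iC_i$ up to rescaling and no genuine change of variables is needed; in type $\widetilde A_{p,q}$ a winding cycle can have $d$-degree $1$ (a degree count shows its degree is controlled by the weight), so it may legitimately occur in $W$ and must be absorbed by a non-diagonal $\varphi$. Ensuring that this absorption proceeds through \emph{homogeneous} substitutions — so that the transported grading $d'$ is honestly a $W_Q$-grading — is the crux, and it is precisely here that the homogeneity of both potentials for the common grading $d$ is used.
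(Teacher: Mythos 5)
Your strategy---deducing the ungraded right-equivalence from \cite{DWZ} via mutation (which is exactly the paper's own remark after the lemma) and then upgrading it to a graded isomorphism using homogeneity of \emph{both} potentials for the common grading $d$---founders on the exceptional case $p_r=q_r=1$, which is precisely the case the paper treats with special care. When the central non-oriented cycle consists of two parallel arrows $a_1,b_1$, the winding cycles $C_a=a_1b_1'b_1''$ and $C_b=b_1a_1'a_1''$ are themselves oriented $3$-cycles, and the length-$2$ path $b_1'b_1''$ lies in both $C_2=b_1b_1'b_1''$ and $C_a$; so your justification that ``each length-2 subpath of a 3-cycle occurs only in that 3-cycle'' fails, and so does the conclusion you draw from it. Concretely, $W=C_a+C_b$ is rigid (in this case rigidity only forces $\lambda_1\lambda_2-\alpha\beta\neq 0$, as the paper's proof shows), yet neither summand of $W_Q=C_1+C_2$ occurs in $W$. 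Taking the $W$-grading $d$ with $d(a_1)=d(a_1')=1$ and all other arrows of degree $0$, one computes $d(C_1)=2$ and $d(C_2)=0$, so $d$ does \emph{not} make $W_Q$ homogeneous of degree $1$. This destroys the premise on which your entire third paragraph rests: no $d$-homogeneous substitution can carry $W_Q$ to $W$, and indeed the required grading $d'$ must here differ from $d$ (it interchanges the degrees of $a_1$ and $b_1$, and $\varphi$ must swap these arrows)---exactly the content of the paper's remark that this is the only case where $d'\neq d$.

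Even in the cases where your homogeneity premise does hold, the key step is only asserted, not proved: you claim the Derksen--Weyman--Zelevinsky construction can be carried out ``one path-length layer at a time, while preserving the $d$-degree,'' but give no argument, and the alternative appeal to Proposition~\ref{prop_gradedeq_mutation} and Lemma~\ref{leftrightmutation} only yields graded \emph{equivalence} of Jacobian algebras, which is strictly weaker than the graded isomorphism of completed path algebras the lemma demands. The paper avoids both problems by an explicit construction: it lists the irreducible cycles of $Q$ (the $3$-cycles plus the winding cycles $C_a$, $C_b$), writes $W=\sum_i\lambda_iC_i+\alpha C_a+\beta C_b+\textrm{reducible terms}$, and builds $\varphi$ case by case according to the values of $d(C_a)$ and $d(C_b)$, which are constrained by $d(C_a)+d(C_b)=p_r+q_r$. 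Note in particular the case $d(C_a)=0$, where $W$ may contain power series $C_iP_i(C_a)$ and $\varphi$ must be the substitution $c_i\mapsto c_iP_i(C_a)$; your layer-by-layer sketch would have to produce such substitutions and verify they are graded. To repair your proof you would need to (i) handle $p_r=q_r=1$ separately, allowing $d'\neq d$ via the quiver automorphism exchanging $a_1$ and $b_1$, and (ii) actually carry out the homogeneous-substitution argument in the remaining cases, where the possible occurrence of $C_a$ or $C_b$ in $W$ is governed by their $d$-degrees.
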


\begin{rema}
This means that the graded QP $(Q,W,d)$ and $(Q,W_Q,d')$ are graded right equivalent in the sense of~\cite[Def. 6.3]{AO10}.
\end{rema}

\begin{proof}
We denote by $C_1,\ldots, C_l$ the oriented cycles such that $W_Q=\sum_{i=1}^lC_i$, by $a_1,\ldots, a_{p_r}$ the $p$-arrows, and by $b_1,\ldots,b_{q_r}$ the $q$-arrows of $Q$. The $C_i$ are irreducible cycles of $Q$, but contrary to the $A_n$-case, there might be other irreducible cycles in the quiver $Q$. We treat here the most complicated case.

\medskip \noindent
\emph{Assume that for all $i=1, \ldots, p_r$, and all $j=1,\ldots, q_r$ we have $Q^{a_i}\neq \varnothing$ and $Q^{b_j}\neq \varnothing$.}

\smallskip
 Denote by $C_a$ a cycle containing exactly once the arrows $a_i, i=1,\ldots, p_r$ and $b'_j,b''_j, j=1,\ldots,q_r$ and by $C_b$ a cycle containing exactly once the arrows $b_j, j=1,\ldots, q_r$ and $a'_i,a''_i, i=1,\ldots,p_r$. Then one can check that the irreducible cycles of $Q$ are $C_a$, $C_b$ and the $C_i$'s up to cyclic equivalence. Therefore we can write \[
W=\sum_{i=1}^{l}\lambda_i C_i +\alpha C_a+ \beta C_b +\textrm{ extra terms},\]
where $\lambda_i,\alpha,\beta\in k$ and the extra terms are linear combinations of reducible cycles.

\medskip\noindent
First we show that we can assume $\lambda_i\neq 0$ for all $i=1,\ldots, l$. 

If $p_r+q_r\geq 3$, then we have immediately $\lambda_i\neq 0$ for $i=1,\ldots, l$ by the rigidity of $W$.
Assume $p_r=q_r=1$. Then $Q$ is of this form:

\[ \scalebox{.6}{
\begin{tikzpicture}[scale=1,>=stealth]
\node (P1) at (0,0)  {$.$};
\node (P2) at (4,0)  {$.$};
\node (P3) at (2,2)  {$.$};
\draw [->] (0.2,0.1) -- node [swap,yshift=2mm] {$a_1$} (3.8,0.1);
\draw [->] (P2) -- node [swap,xshift=3mm] {$a'_1$} (P3);
\draw [->] (P3) -- node [swap, xshift=-3mm] {$a''_1$} (P1);
\node (P4) at (2,3) {$Q^{a_1}$};
\draw [dotted] (2,3) ellipse (2.8cm and 1.6cm);

\node (P5) at (2,-2){$.$};
\node (P6) at (2,-3) {$Q^{b_1}$};
\draw [->] (0.2,-0.1) -- node [swap,yshift=-2mm] {$b_1$} (3.8,-0.1);
\draw [->] (P2) -- node [swap,xshift=3mm] {$b'_1$} (P5);
\draw [->] (P5) -- node [swap, xshift=-3mm] {$b''_1$} (P1);
\draw [dotted] (2,-3) ellipse (2.8cm and 1.6cm);
\end{tikzpicture}}
\]

We can assume (up to renumbering) that $C_1$ is cyclically equivalent to $a_1a_1'a_1''$ and $C_2$ is cyclically equivalent to $b_1b_1'b_1''$, and we have $C_a=a_1b_1'b_1''$ and $C_b=b_1a'_1a''_1$. Then it is easy to see that the rigidity of $W$ implies that $\lambda_i\neq 0$ for $i=3,\ldots,l$ and that  $\lambda_1\lambda_2-\alpha\beta\neq 0$. Therefore, up to the automorphism of $Q$ exchanging $a_1$ and $b_1$ we can assume that $\lambda_1\lambda_2\neq 0$.

\medskip
Since the potential $W$ is homogeneous of degree 1 we have \begin{equation}\label{eqdeg1} 
d(a_i)+d(a'_i)+d(a''_i)=d(b_j)+d(b_j')+d(b_j'')= 1 \quad \textrm{ for all } i=1,\ldots, p_r,\ j=1,\ldots, q_r\end{equation}

By definition we have \begin{equation} \label{eqdeg2} d(C_a)=\sum_{i=1}^{p_r}d(a_i)+ \sum_{j=1}^{q_r} (d(b'_j)+d(b''_j))\quad\textrm{and}\quad d(C_b)=\sum_{j=1}^{q_r}d(b_j)+ \sum_{i=1}^{p_r} (d(a'_i)+d(a''_i))\end{equation}

Hence combining \eqref{eqdeg1} and \eqref{eqdeg2} we get \begin{equation}\label{eqdeg3} d(C_a)+d(C_b)=p_r+q_r.\end{equation} Using \eqref{eqdeg3} and the fact that $d(C_a)$ and $d(C_b)$ are non-negative (since $d$ is a map $Q_1\rightarrow\{0,1\}$), we divide the proof into 4 subcases.

\medskip

\noindent
\emph{Case 1: $d(C_a)\geq 2$ and $d(C_b)\geq 2$.}

\smallskip
In this case, since $W$ is homogeneous of degree $1$, we have $\alpha=\beta=0$, and there is no extra term in the potential $W$. For $i=1,\ldots, l$ we denote by $c_i$ the arrow such that $C_i=c_i''c'_ic_i$. Then we define $\varphi$ on $Q_1$ by 
\[ \varphi(x)=\left\{\begin{array}{cc} 
\lambda_i c_i & \ \textrm{if } x=c_i \\
x & \textrm{otherwise}
\end{array}\right. \] 
 It is then clear that $\varphi$ is an isomorphism of the graded algebra $(kQ,d)$ sending $W_Q$ onto $W$.

\medskip

\noindent
\emph{Case 2: $d(C_a)=1$ and $d(C_b)\geq 2$.}

\smallskip
In this case, since $W$ is homogeneous of degree 1, we have $\beta=0$ and there is no extra term in the potential $W$. Up to cyclic equivalence and renumbering we can assume that  $C_1=b''_1b'_1b_1$ and $C_a=b_1''b_1'C'_a$. For $i=2,\ldots, l$ we denote by $c_i$ the arrow such that $C_i=c_i''c'_ic_i$. Now we define $\varphi$ on $Q_1$ by 
\[ \varphi(x)=\left\{\begin{array}{cl} \lambda_1 b_1 +\alpha C'_a & \ \textrm{if } x=b_1\\
\lambda_i c_i & \ \textrm{if } x=c_i , \textrm{ and }i\geq 2\\
x & \textrm{otherwise}
\end{array}\right. \] 
Since $C_a=b_1''b_1'C'_a$ and $C_1=b''_1b'_1b_1$ are oriented cycles, the path $C'_a$ has the same source and the same target as $b_1$, thus $\varphi$ is an algebra morphism. Moreover, since $\lambda_i\neq 0$, $\varphi$ is an isomorphism of the completion $k\hat{Q}$. 
Now, we have 
\[ d(C'_a)=d(C_a)-d(b'_1)-d(b''_1)= 1  -d(b'_1)-d(b''_1)=d(b_1).\]
Therefore $\varphi$ is  an isomorphism of the graded algebra $(k\hat{Q},d)$ sending $W_Q$ onto $W$.
\medskip

\noindent
\emph{Case 3: $d(C_a)=1$ and $d(C_b)=1$.}

\smallskip
From \eqref{eqdeg3} we automatically have $p_r=q_r=1$. All the cycles $C_1=a''_1a'_1a_1$, $C_2=b''_1b'_1b_1$, $C_b=a''_1a'_1b_1$, $C_a=b_1''b'_1a_1$, $C_3,\ldots,C_l$ are homogeneous of degree 1. For $i=3,\ldots, l$ we denote by $c_i$ the arrow such that $C_i=c_i''c'_ic_i$. Since $W$ is rigid we have $\lambda_1\lambda_2-\alpha\beta\neq 0$. And since the grading $d$ makes $W$ homogeneous of degree 1, there is no extra term in the potential $W$ and we have $d(a_1)=d(b_1)$. Then we can define: 
\[ \varphi(x)=\left\{\begin{array}{cc} 
 \lambda_1 a_1+\beta b_1 & \ \textrm{if } x=a_1 \\
 \lambda_2 b_1+ \alpha a_1 &\ \textrm{if } x=b_1 \\
 \lambda_i c_i & \ \textrm{if } x=c_i , \textrm{ and }i\geq 3\\
x & \textrm{otherwise}
\end{array}\right. \]
Since $\lambda_1\lambda_2-\alpha\beta=0$, this algebra morphism is an isomorphism. Moreover, since $d(a_1)=d(b_1)$, $\varphi\colon (kQ,d)\rightarrow (kQ,d)$ is an isomorphism of graded algebras. By construction it sends $W_Q$ onto $W$.

\medskip

\noindent
\emph{Case 4: $d(C_a)=0$.}

\smallskip
From \eqref{eqdeg3}, we have $d(C_b)\geq 2$. Hence, since $W$ is homogeneous of degree 1, we have $\alpha=\beta=0$.  In this case, since the degree of $C_a$ is 0, the oriented cycles of the quiver $Q$ which are homogeneous of degree 1 are cyclically equivalent to something of the form $C_iC_a^n$ where $n\in\mathbb{N}$. Thus we can write up to cyclic equivalence:
\[ W=\sum_{i=1}^lC_iP_i(C_a),\]
where $P_i\in k[\![X]\!]$ is a power series with constant term $\lambda_i\neq 0$. For each $i=1,\ldots, l$ we write $C_i=c''_ic'_ic_i$ and we define
\[ \varphi(x)=\left\{\begin{array}{cc} 
c_iP_i(C_a)& \ \textrm{if } x=c_i \\
x & \textrm{otherwise}
\end{array}\right. \]
Then $\varphi$ is an automorphism of the completion $k\hat{Q}$ since $\lambda_i\neq 0$ for all $i=1,\ldots, l$. Since $d(C_a)=0$, this automorphism is an automorphism of the graded algebra $(k\hat{Q},d)$.

\medskip
The other cases, namely when \emph{there exists $1\leq i \leq p_r$ and $1\leq j\leq q_r$ such that $Q^{a_i}= \varnothing$ and $Q^{b_j}=\varnothing$}, and \emph{for all $i=1,\ldots p_r$,  we have $Q^{a_i}\neq \varnothing$ and there exists $j$ such that $Q^{b_j}= \varnothing$} are simpler since in these cases there are less irreducible cycles. The proof is left to the reader.
\end{proof}

\begin{rema}
\begin{enumerate}
\item
Note that the first part of this lemma can be deduced directly from \cite{DWZ}. Indeed since $W$ is rigid the quiver with potential $(Q,W)$ is right equivalent in the sense of \cite{DWZ} to $(Q,W_Q)$ (there exists a sequence $s$ such that $\mu_s(Q)$ is acyclic, therefore $\mu_s(Q,W)$ is right equivalent to $\mu_s(Q,W_Q)$). By definition, this implies that there exists an automorphism of completed path algebras $\varphi\colon  k\hat{Q} \rightarrow k\hat{Q}$ which is the identity on the vertices, and such that $\varphi(W_Q)$ is cyclically equivalent to $W$.  However, we have proved this lemma constructing explicitly the isomorphism $\varphi$.
\item In the case $p_r=q_r=1$, it might happen that $\lambda_1\lambda_2=0$. Then the automorphism $\varphi$ constructed above will exchange the arrows $a_1$ and $b_1$. The degree map $d'$ will satisfy $d'(a_1)=d(b_1)$ and $d'(b_1)=d(a_1)$. This is the only case where $d$ and $d'$ are not the same.
\end{enumerate}
\end{rema}

From Lemma~\ref{lemma_compatibility} we deduce the following result, which is a restatement of the second part of Theorem~\ref{algebraclustertypetildeA} and finishes the proof of Theorem~\ref{algebraclustertypetildeA}.
\begin{cora}
Let $\Lambda$ be an algebra of global dimension at most $2$ and of cluster type $\A_{p,q}$. Then there exists a quiver $Q\in \Mm_{p,q}^{\A}$ and a $W_Q$-grading $d'$ such that $\Lambda$ is isomorphic to the degree zero part of $\Jac(Q,W_Q,d)$.
\end{cora}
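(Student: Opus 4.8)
The plan is to follow Step~4 of the proof of Theorem~\ref{algebraclustertypeA}, replacing Theorem~\ref{dagfinn} by Theorem~\ref{bastian}, and then to straighten the resulting potential by means of Lemma~\ref{lemma_compatibility}. First I would apply Proposition~\ref{propWgrading} to $\Lambda$: since $\Lambda$ has global dimension $\leq 2$, is $\tau_2$-finite and of acyclic cluster type $\A_{p,q}$, this yields a rigid quiver with potential $(Q_{\overline{\Lambda}}, W)$, a $W$-grading $d$, and an isomorphism of $\ZZ$-graded algebras $\overline{\Lambda} \underset{\ZZ}{\iso} \Jac(Q_{\overline{\Lambda}}, W, d)$. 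By the construction in Theorem~\ref{keller} the arrows of $Q_\Lambda$ carry degree $0$ and the relation arrows degree $1$, so the degree-zero part of this graded algebra is precisely $\Lambda$.

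Next I would pin down the quiver. Writing $f\colon \Cc_\Lambda \to \Cc_{\A_{p,q}}$ for the cluster equivalence, the object $f(\pi_\Lambda(\Lambda))$ is cluster-tilting in $\Cc_{\A_{p,q}}$, so $\overline{\Lambda} \iso \End_{\Cc_{\A_{p,q}}}(f\pi_\Lambda \Lambda)$ is a cluster-tilted algebra of type $\A_{p,q}$. By Theorem~\ref{bastian} there is a quiver $Q \in \Mm^{\A}_{p,q}$ with $\overline{\Lambda} \iso \Jac(Q, W_Q)$; as both $W$ and $W_Q$ are rigid, their Gabriel quivers coincide with the quivers themselves, which forces $Q = Q_{\overline{\Lambda}}$. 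Thus the triple $(Q, W, d)$ satisfies exactly the three hypotheses of Lemma~\ref{lemma_compatibility}.

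I would then invoke Lemma~\ref{lemma_compatibility}, obtaining an algebra isomorphism $\varphi\colon k\hat{Q} \to k\hat{Q}$ fixing the vertices together with a $W_Q$-grading $d'$ such that $\varphi\colon (k\hat{Q}, d') \to (k\hat{Q}, d)$ is an isomorphism of graded algebras sending $W_Q$ to a potential cyclically equivalent to $W$. Passing to Jacobian algebras, $\varphi$ descends to an isomorphism of $\ZZ$-graded algebras $\Jac(Q, W_Q, d') \underset{\ZZ}{\iso} \Jac(Q, W, d) \underset{\ZZ}{\iso} \overline{\Lambda}$. Since a graded isomorphism restricts to an isomorphism of degree-zero parts, the degree-zero part of $\Jac(Q, W_Q, d')$ is isomorphic to the degree-zero part of $\overline{\Lambda}$, namely $\Lambda$, which is exactly the assertion.

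The genuine content sits entirely in Lemma~\ref{lemma_compatibility}: the potential $W$ produced by Proposition~\ref{propWgrading} is an \emph{arbitrary} rigid potential on $Q$, and the real work is to normalize it to the canonical $W_Q$ while \emph{simultaneously} keeping the grading under control (this is what produces the case analysis in that lemma). Given the lemma, the corollary is pure assembly; the only step deserving a moment's care is the identification of the degree-zero part of $\overline{\Lambda}$ with $\Lambda$, which relies on the explicit $W$-grading convention and on the description of $\overline{Q}$ coming from Theorem~\ref{keller}.
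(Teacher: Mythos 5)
Your proof is correct and follows essentially the same route as the paper: apply Proposition~\ref{propWgrading} to get a rigid graded QP $(Q,W,d)$ presenting $\overline{\Lambda}$, identify $Q\in\Mm^{\A}_{p,q}$ via Theorem~\ref{bastian}, and then normalize the potential and grading with Lemma~\ref{lemma_compatibility}. The only difference is that you spell out two steps the paper leaves implicit (why $Q=Q_{\overline{\Lambda}}$, using that both QPs are reduced, and why the degree-zero part of $\overline{\Lambda}$ is $\Lambda$), which is a faithful elaboration rather than a different argument.
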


\begin{proof}
By Proposition~\ref{propWgrading}, there exists a reduced graded quiver with potential $(Q,W,d)$, such that $\overline{\Lambda}\underset{\ZZ}{\sim} \Jac(Q,W,d)$. Moreover, $W$ is rigid and $d$ is a $W$-grading. By Theorem~\ref{bastian}, the quiver $Q$ is in $\Mm_{p,q}^{\A}$. By Lemma~\ref{lemma_compatibility}, there exists a $W_Q$-grading $d'$ such that we have $\Jac(Q,W,d)\underset{\ZZ}{\sim}\Jac(Q,W_Q,d')$. Therefore $\Lambda$ is isomorphic to the degree zero part of $\Jac(Q,W_Q,d')$.
\end{proof}

\begin{rema}
\begin{itemize}
\item[(1)] This corollary implies that an algebra $\Lambda$ of cluster type $\A_{p,q}$ is always isomorphic to an algebra of the form $kQ_\Lambda/I$, where the relations are paths of length~$2$. 

\item[(2)] This corollary gives a description of the iterated tilted algebras of global dimension $\leq 2$ of type $\A_{p,q}$. A description of all iterated tilted algebras (not distinguishing with respect to their global dimension) of type $\A_{p,q}$ has been given in \cite{AS87}.
\end{itemize} 
\end{rema}
 
\begin{cora}
There are only finitely many algebras (up to Morita equivalence) of global dimension $\leq 2$ and of cluster type $\A_{p,q}$.
\end{cora}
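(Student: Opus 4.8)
The plan is to combine the explicit description furnished by the preceding corollary with two elementary finiteness observations. Since all algebras under consideration are basic, two of them are Morita equivalent precisely when they are isomorphic, so it suffices to bound the number of isomorphism classes. By the preceding corollary, every algebra $\Lambda$ of global dimension $\leq 2$ and of cluster type $\A_{p,q}$ is isomorphic to the degree zero part of $\Jac(Q,W_Q,d)$ for some quiver $Q\in\Mm^{\A}_{p,q}$ and some $W_Q$-grading $d$. Hence it is enough to show that the pairs $(Q,d)$, considered up to isomorphism, form a finite set.

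First I would check that there are only finitely many quivers in $\Mm^{\A}_{p,q}$ up to isomorphism. Every such quiver has exactly $p+q$ vertices, and by definition each vertex has valency at most $4$; moreover the rigidity of $W_Q$ forbids loops. Counting arrow-endpoints therefore gives $2\,|Q_1|\leq 4(p+q)$, so $|Q_1|\leq 2(p+q)$. A quiver with at most $p+q$ vertices and at most $2(p+q)$ arrows admits only finitely many isomorphism types, since each arrow amounts to a choice of an ordered pair of vertices. Thus $\Mm^{\A}_{p,q}$ is a finite set up to isomorphism.

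Next, for a fixed quiver $Q$, any $W_Q$-grading assigns to each arrow a value in $\{0,1\}$ by Definition~\ref{defWgrading}. There are consequently at most $2^{|Q_1|}$ such gradings on $Q$, which is again finite. Combining the two bounds, the admissible pairs $(Q,d)$ range over a finite set, whence the algebras $\Jac(Q,W_Q,d)_0$ fall into finitely many isomorphism classes, and the claim follows.

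The only point requiring care is the reduction itself: one must invoke the preceding corollary to guarantee that every algebra of cluster type $\A_{p,q}$ genuinely arises as such a degree zero part, and one must use that basicness turns Morita equivalence into plain isomorphism. Once these are in place the argument is a pure counting matter, the essential input being the valency bound built into the definition of $\Mm^{\A}_{p,q}$, which is exactly what keeps the number of admissible quivers finite.
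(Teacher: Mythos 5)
Your proof is correct and follows exactly the paper's argument: every such algebra arises as the degree zero part of $\Jac(Q,W_Q,d)$ with $Q\in\Mm^{\A}_{p,q}$ and $d$ a $W_Q$-grading, and both the quivers and the gradings form finite sets. The paper simply asserts these two finiteness facts, whereas you additionally justify them (via the valency bound on $|Q_1|$ and the $\{0,1\}$-valuedness of $W_Q$-gradings), which is a harmless elaboration of the same proof.
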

\begin{proof}
There are only finitely many quivers in the set $\Mm^{\A}_{p,q}$. And given $Q\in\Mm^{\A}_{p,q}$ there are finitely many $W_Q$-gradings.
\end{proof}

\subsection{An alternative description of the weight}
In this subsection, we give an explicit way to compute the weight of an algebra of cluster type $\A_{p,q}$.
\begin{dfa}
Let $Q$ be a quiver in $\Mm_{p,q}^{\A}$ and $d$ be a $\mathbb{Z}$-grading on $Q$ such that the potential $W_Q$ is homogeneous of degree 1. Then let $a_1,\ldots,a_{p_r}$ be the $p$-arrows and $b_1,\ldots,b_{q_r}$ be the $q$-arrows of $Q$. The \emph{weight} of the graded quiver $(Q,d)$ is defined to be 
$$w(Q,W_Q,d):=\sum_{l=1}^{p_r}d(a_l)-\sum_{l=1}^{q_r}d(b_l).$$
\end{dfa}

The aim of the subsection is to show the following.

\begin{prop}\label{prop alternative description weight}
Let $Q\in\Mm_{p,q}^{\A}$ and $d$ be a $W_Q$ grading. Let $\Lambda$ be the degree zero part of the graded algebra $\Jac(Q,W_Q,d)$. Then $\Lambda$ is an algebra of global dimension at most 2 and of cluster type $\A_{p,q}$ and we have 
$$w(\Lambda)=w(Q,W_Q,d).$$
\end{prop}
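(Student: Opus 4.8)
The plan is to reduce the statement to the invariance of the combinatorial weight under graded mutation, and then to match it with the weight of $\Lambda$ at the hereditary end of a mutation sequence. The first assertion — that $\Lambda$ has global dimension $\le 2$ and is of cluster type $\A_{p,q}$ — is exactly Theorem~\ref{algebraclustertypetildeA}. For the equality of weights I would first record that, by the construction in Theorem~\ref{algebraclustertypetildeA} (following Step~2 of the proof of Theorem~\ref{algebraclustertypeA}) together with Proposition~\ref{propWgrading}, the graded Jacobian algebra $\overline{\Lambda}$ satisfies $\overline{\Lambda}\underset{\ZZ}{\iso}\Jac(Q,W_Q,d)$, so that $(Q,W_Q,d)$ is the graded QP attached to $\Lambda$. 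By Corollary~\ref{recognitioncor} there is a sequence of mutations $s$ with $\mu_s(Q,W_Q)=(Q_H,0)$, and the induced grading $d_s$ on $Q_H$ is defined by $\mu_s^{\rL}(Q,W_Q,d)=(Q_H,0,d_s)$; by definition $w(\Lambda)=w(d_s)$. Since $Q_H\in\Mm^{\A}_{p,q}$ with its $a_i$ as $p$-arrows and its $b_j$ as $q$-arrows, the two numbers $w(d_s)$ and $w(Q,W_Q,d)$ are computed by the \emph{same} formula $\sum d(\text{$p$-arrows})-\sum d(\text{$q$-arrows})$. Hence it suffices to show that this quantity is preserved along the sequence $\mu_s^{\rL}$.

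The core step is therefore the claim: if $i$ is a vertex of $Q\in\Mm^{\A}_{p,q}$ and $(\mu_iQ,W_{\mu_iQ},d')$ denotes the reduced graded QP obtained from $\mu_i^{\rL}(Q,W_Q,d)$ — which lies again in $\Mm^{\A}_{p,q}$ after reduction, by Theorem~\ref{bastian} and mutation-invariance of rigidity, and which we put in standard form by Lemma~\ref{lemma_compatibility} — then $w(\mu_iQ,W_{\mu_iQ},d')=w(Q,W_Q,d)$. I would prove this by a case analysis on the position of $i$ relative to the fundamental cycle $C$. If $i$ does not lie on $C$ and the mutation leaves the arrows of $C$ untouched, the $p$- and $q$-arrows together with their degrees are unchanged and there is nothing to prove. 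If $i$ lies on $C$, the mutation is a reversal of the type already analysed in the proof of Lemma~\ref{lem.w_well-def}, and the same bookkeeping shows the weight is preserved.

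The essential case is the mutation at a connecting vertex $i=u(\alpha)$ on the $3$-cycle $t(\alpha)\xrightarrow{\alpha'}u(\alpha)\xrightarrow{\alpha''}s(\alpha)\xrightarrow{\alpha}t(\alpha)$, say with $\alpha$ a $p$-arrow (the opposite orientation, and the inverse mutation spinning a cycle vertex off into a $3$-cycle, are treated identically after exchanging the roles of $p$ and $q$). Homogeneity of $W_Q$ gives $d(\alpha)+d(\alpha')+d(\alpha'')=1$. The left mutation reverses $\alpha',\alpha''$ to $(\alpha')^*,(\alpha'')^*$ with, by the degree rules, $d'((\alpha')^*)=1-d(\alpha')$ and $d'((\alpha'')^*)=-d(\alpha'')$, while the new shortcut $[\alpha''\alpha']$ cancels $\alpha$ in the reduction. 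Thus in $\mu_iQ$ the arrow $\alpha$ is replaced inside the fundamental cycle by the $p$-directed path $s(\alpha)\xrightarrow{(\alpha'')^*}u(\alpha)\xrightarrow{(\alpha')^*}t(\alpha)$, whose total contribution to the weight is
\[ d'((\alpha'')^*)+d'((\alpha')^*)=-d(\alpha'')+1-d(\alpha')=d(\alpha), \]
exactly the contribution of $\alpha$ before the mutation, while the $q$-arrows are untouched. Here the vertex $u(\alpha)$ leaves $Q^{\alpha}$ and joins $C$, so $p_r$ grows by one and $\#Q_0^{\alpha}$ drops by one, keeping $p$ and the quiver in $\Mm^{\A}_{p,q}$. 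Iterating along $s$ yields $w(Q,W_Q,d)=w(Q_H,0,d_s)=w(\Lambda)$.

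The main obstacle is organising this case analysis cleanly: one must check that after each graded mutation the reduced quiver is again of the standard form in $\Mm^{\A}_{p,q}$ and that the normalisation of Lemma~\ref{lemma_compatibility} does not affect the weight. The one genuine delicacy is that, when $p=q$, the normalising automorphism may interchange the distinguished arrows $a_1$ and $b_1$ and thereby reverse the sign of the weight; this is harmless and consistent with Theorem~\ref{theorem case p=q}, since for $p=q$ the weight is only defined up to sign, whereas for $p\ne q$ there is no ambiguity and $w(\Lambda)=w(Q,W_Q,d)$ holds on the nose. Throughout, the single input that makes every contribution balance is the degree-$1$ homogeneity of each oriented $3$-cycle of $W_Q$.
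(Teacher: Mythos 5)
Your proposal is correct and follows essentially the same route as the paper: the paper likewise reduces the statement to the invariance of the combinatorial weight under a single left graded mutation (its Lemma~\ref{winvariantmutation}), proved by exactly your case analysis on the position of the mutated vertex relative to the non-oriented cycle $C$, with the key computation at a connecting vertex $u(\alpha)$ resting on the homogeneity relation $d(\alpha)+d(\alpha')+d(\alpha'')=1$. The only cosmetic difference is that the paper writes out the on-cycle vertices (between two $p$-arrows, and at a sink of $C$) as explicit separate cases instead of appealing to the bookkeeping of Lemma~\ref{lem.w_well-def}, but the computations are the same.
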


\begin{proof}
The first part of the statement follows from Theorem~\ref{algebraclustertypetildeA}.

 Let $s$ be a a sequence of mutation such that $\mu_s(Q,W_Q)=(H,0)$ where $Q_H$ is an acyclic quiver of type $\A_{p,q}$. Define a grading $\partial$ on $Q_H$ by $\mu_s^{\rL}(Q,W_Q,d)=(H,0,\partial)$. By definition we have $w(\Lambda)=w(H,0,\partial)$. Hence the proof of the proposition comes directly from the following technical lemma. 
\end{proof}

\begin{lema}\label{winvariantmutation}
Let $Q$ be a quiver in $\Mm_{p,q}^{\A}$ and $d$ be a $\mathbb{Z}$-grading on $Q$ such that $W_Q$ is homogeneous of degree 1. Let $i$ be a vertex of $Q$. Then we have
$$\xymatrix{w(\mu_i^{\rL}(Q,W_Q,d))=w(Q,W_Q,d)}.$$
\end{lema}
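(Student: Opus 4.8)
The plan is to compute $\mu_i^{\rL}(Q,W_Q,d)$ explicitly in a neighbourhood of the mutated vertex $i$ and to track what happens to the arrows of the unique non-oriented cycle $C$ of $Q$, since the weight only sees these. First I would record that, because $W_Q$ is rigid and graded mutation refines the Derksen--Weyman--Zelevinsky mutation, $\mu_i^{\rL}(Q,W_Q,d)$ is again a reduced rigid graded quiver with potential whose potential is homogeneous of degree $1$; by Theorem~\ref{bastian} together with Lemma~\ref{lemma_compatibility} its underlying quiver again lies in $\Mm^{\A}_{p,q}$, so that $w(\mu_i^{\rL}(Q,W_Q,d))$ is defined and depends only on the degrees of the $p$- and $q$-arrows. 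It is convenient to read the weight as the \emph{holonomy} of $d$: fixing a cyclic orientation of the underlying cycle, $w$ is the sum of $d(\gamma)$ over the edges $\gamma$ of $C$ traversed forward minus the sum over the edges traversed backward, which is precisely $\sum_l d(a_l)-\sum_l d(b_l)$.

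I would then split into cases according to the position of $i$. If $i$ is an interior vertex of some $Q^\alpha$ (neither on $C$ nor a connecting vertex $u(\alpha)$), then no arrow of $C$ is incident to $i$ and no composition through $i$ joins two vertices of $C$; hence all $p$- and $q$-arrows survive with unchanged degree and, by uniqueness of the cycle in $\Mm^{\A}_{p,q}$, still form $C$, so the weight is unchanged. The substantive cases are when $i$ is a connecting vertex $u(\alpha)$ or a vertex of $C$.

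The remaining computations all rest on the graded mutation rules $d'([ba])=d(a)+d(b)$, $d'(a^*)=-d(a)+1$ for $a$ ending at $i$, and $d'(b^*)=-d(b)$ for $b$ starting at $i$, together with the homogeneity relation $d(\alpha)+d(\alpha')+d(\alpha'')=1$ on each attached $3$-cycle. Concretely: (a) if $i$ is a through-vertex of $C$, say $u\xrightarrow{e}i\xrightarrow{f}v$ with $e,f$ of the same type, then $i$ leaves the cycle, $e$ and $f$ are replaced by the single arrow $[fe]$ of that same type, and $d'([fe])=d(e)+d(f)$ preserves the corresponding holonomy term; (b) if $i$ is a corner (source or sink) of $C$, the two incident cycle arrows are merely reversed, and the two sign changes---one from the reversal rule $d'(\cdot)=-d(\cdot)$ and one from the forward/backward traversal being swapped---cancel, leaving the contribution $+d(e)-d(f)$ intact; (c) if $i=u(\alpha)$ is a connecting vertex, then after mutation the new composite $[\alpha''\alpha']$ forms a $2$-cycle with $\alpha$, and reduction puts $u(\alpha)$ onto the cycle, replacing $\alpha$ by the pair $\alpha''^*,\alpha'^*$ of the same type, whose degree sum is $d'(\alpha''^*)+d'(\alpha'^*)=1-d(\alpha')-d(\alpha'')=d(\alpha)$; so again the holonomy is preserved. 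In every case the attached $3$-cycles are simply carried over to the mutated arrows and never contribute to $w$.

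The main obstacle is the bookkeeping of the reduction step and the verification that the reduced graded quiver with potential again lies in $\Mm^{\A}_{p,q}$ with the cycle and degrees described above. This is most delicate when $i$ is a cycle vertex carrying one or two attached $3$-cycles (valency $4$) and in the degenerate configuration $p_r=q_r=1$: one must check that the $2$-cycles created between each new composite $[\gamma'\gamma]$ (or $[\gamma\gamma'']$) and the surviving connecting arrow cancel correctly, and that the substitutions prescribed by reduction move the extra vertices into the appropriate $Q^\beta$. Once this is settled, the degree identities above show that each local move preserves the signed sum $\sum_l d(a_l)-\sum_l d(b_l)$, which gives $w(\mu_i^{\rL}(Q,W_Q,d))=w(Q,W_Q,d)$.
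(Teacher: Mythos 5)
Your proposal is correct and follows essentially the same route as the paper's proof: the identical case division ($i$ interior to some $Q^\alpha$, $i=u(\alpha)$ a connecting vertex, $i$ a through-vertex of $C$ between two arrows of the same type, $i$ a corner of $C$), with the same degree computations combining the graded mutation rules with the homogeneity relation $d(\alpha)+d(\alpha')+d(\alpha'')=1$ on attached $3$-cycles. The paper handles the reduction bookkeeping at the same informal level (before/after pictures of the local quiver), so your proposal matches it in both substance and detail.
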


\begin{proof}
We define the grading $d'$ on the quiver $\mu_i(Q)$ by $\mu_i^{\rL}(Q,W,d)=(\mu_i(Q,W),d')$.
We distinguish the following different cases.
\medskip

\textit{Case 1: There exists $\alpha$ in the non-oriented cycle $C$ of $Q$ such that $i\in Q^\alpha$ and $i\neq u(\alpha)$.}

\smallskip
\noindent
Then the vertices adjacent to $i$ are not in the non-oriented cycle. Therefore the mutation of $Q$ at $i$ does not affect the non-oriented cycle, and so the weight clearly remains the same.

\medskip

\textit{Case 2: $i=u(\alpha)$ for some arrow $\alpha$ which is on the non-oriented cycle $C$.}

\smallskip
\noindent
Assume that $\alpha$ is a $p$-arrow.
There exists a 3-cycle $\alpha''\alpha'\alpha$  which is a summand of $W$. 
\[\scalebox{.7}{
\begin{tikzpicture}[scale=1]
\node (P1) at (0,0)  {$s(\alpha)$};
\node (P2) at (4,0)  {$t(\alpha)$};
\node (P3) at (2,2)  {$i=u(\alpha)$};
\draw [->] (P1) -- node [swap,yshift=-2mm] {$\alpha$} (P2);
\draw [->] (P2) -- node [swap,xshift=4mm] {$\alpha'$} (P3);
\draw [->] (P3) -- node [swap, xshift=-3mm] {$\alpha''$} (P1);
\node (P4) at (2,3) {$Q^\alpha$};
\draw [dotted] (2,3) ellipse (2.8cm and 1.6cm);
\node (P5) at (8,0)  {$s(\alpha)$};
\node (P6) at (12,0)  {$t(\alpha)$};
\node (P7) at (10,2)  {$i=u(\alpha)$};
\draw [->] (P7) -- node [xshift=-3.5mm,yshift=-2.5mm] {$(\alpha')^*$} (P6);
\draw [->] (P5) -- node [xshift=+3.5mm,yshift=-2.5mm] {$(\alpha'')^*$} (P7);
\node (P8) at (8,2){$.$};
\node (P9) at (12,2){$.$};
\draw [dotted] [->] (P8) -- (P5);
\draw [dotted] [->] (P7) -- (P8);
\draw [dotted] [->] (P6) -- (P9);
\draw [dotted] [->] (P9) -- (P7);
\draw [dotted] (7.5,2.5) ellipse (1.5cm and 1cm);
\draw [dotted] (12.5,2.5) ellipse (1.5cm and 1cm);

\end{tikzpicture}}
\]

The new arrows $(\alpha')^*$ and $(\alpha'')^*$ become $p$-arrows in the non-oriented cycle. Therefore we have 
$$\begin{array}{rcl} w(\mu_i^{\rL}(Q,W,d)) &= &w(Q,W,d)-d(\alpha) +d'((\alpha')^*)+d'((\alpha'')^*)\\ &= & w(Q,W,d)-d(\alpha) -d(\alpha')-d(\alpha'')+1\\ &= & w(Q,W,d).\end{array}$$  The last equality holds since $\alpha''\alpha'\alpha$ is a summand in the potential $W$, and hence we have $d(\alpha)+d(\alpha')+d(\alpha'')=1$.

\medskip
\textit{Case 3: $i$ is on the non-oriented cycle $C$ between two $p$-arrows.}
\smallskip

\noindent
Then the quiver $Q$ locally looks like
\[\scalebox{.6}{
\begin{tikzpicture}[scale=1,>=stealth]
\node (P1) at (0,0)  {$s(a_t)$};
\node (P2) at (4,0)  {$i$};
\node (P3) at (2,2){$.$};
\node (P5) at (8,0)  {$t(a_t)$};
\node (P4) at (6,2){$.$};
\draw [->] (P1) -- node [swap,yshift=-2mm] {$a_t$} (P2);
\draw [->] (P2) -- node [swap,yshift=-2mm] {$a_{t+1}$} (P5);
\draw [->] [dotted] (P3)--(P1);
\draw [->] [dotted] (P2)--(P3);
\draw [->] [dotted] (P5)--(P4);
\draw [->] [dotted] (P4)--(P2);
\draw [dotted] (2,2.4) ellipse (1.8cm and 1.2cm);
\draw [dotted] (6,2.4) ellipse (1.8cm and 1.2cm);

\node (Q1) at (12,0)  {$s(a_t)$};
\node (Q2) at (16,1)  {$i$};
\node (Q3) at (14,3){$.$};
\node (Q5) at (20,0)  {$t(a_t)$};
\node (Q4) at (18,3){$.$};
\draw [->] (Q2) -- node [swap,yshift=2mm] {$a_t^*$} (Q1);
\draw [->] (Q5) -- node [swap,yshift=2mm] {$a_{t+1}^*$} (Q2);
\draw [->] [dotted] (Q3) -- (Q2);
\draw [->] [dotted] (Q5) --(Q2);
\draw [->] [dotted] (Q2)--(Q4);
\draw [->][dotted] (Q4)--(Q3);
\draw [dotted] (16,4) ellipse (4cm and 2cm);
\draw [->] (Q1)--node [swap, yshift=-3mm] {$[a_{t+1}a_t]$}(Q5) ;
\end{tikzpicture}}
\]
 
The arrows $a_t$ and $a_{t+1}$ are replaced by the new $p$-arrow $[a_{t+1}a_t]$ in the non-oriented cycle. Hence we have
$$\begin{array}{rcl} w(\mu_i^{\rL}(Q,W,d))&=& w(Q,W,d)-d(a_t)-d(a_{t+1})+ d'([a_{t+1}a_t])\\ & = & w(Q,W,d)\end{array}.$$
The case where $i$ is between two $q$ arrows is similar.

\medskip
\textit{Case 4: $i$ is a sink of the non-oriented cycle $C$.}
\smallskip

\noindent
Assume that $i$ is the target of the $p$-arrow $a_l$ and the target of the $q$-arrow $b_t$.  
\[\scalebox{.6}{
\begin{tikzpicture}[scale=1,>=stealth]
\node (P1) at (0,0)  {$s(a_l)$};
\node (P2) at (4,0)  {$i$};
\node (P3) at (2,2){$.$};
\node (P5) at (8,0)  {$s(b_t)$};
\node (P4) at (6,2){$.$};
\draw [->] (P1) -- node [swap,yshift=-2mm] {$a_l$} (P2);
\draw [<-] (P2) -- node [swap,yshift=-2mm] {$b_{t}$} (P5);
\draw [->] [dotted] (P3)--(P1);
\draw [->] [dotted] (P2)--(P3);
\draw [->] [dotted] (P2)--(P4);
\draw [->] [dotted] (P4)--(P5);
\draw [dotted] (2,2.4) ellipse (1.8cm and 1.2cm);
\draw [dotted] (6,2.4) ellipse (1.8cm and 1.2cm);

\node (Q1) at (12,0)  {$s(a_l)$};
\node (Q2) at (16,1)  {$i$};
\node (Q3) at (14,3){$.$};
\node (Q5) at (20,0)  {$s(b_t)$};
\node (Q4) at (18,3){$.$};
\draw [->] (Q2) -- node [swap,yshift=2mm] {$a_l^*$} (Q1);
\draw [->] (Q2) -- node [swap,yshift=2mm] {$b_t^*$} (Q5);
\draw [->] [dotted] (Q3) -- (Q2);
\draw [->] [dotted] (Q4) --(Q2);
\draw [->] [dotted] (Q1)--(Q4);
\draw [->][dotted] (Q5)--(Q3);
\draw [dotted] (14,3.4) ellipse (1.8cm and 1.2cm);
\draw [dotted] (18,3.4) ellipse (1.8cm and 1.2cm);
\end{tikzpicture}}
\]
 
Then the arrows $a_l$ and $b_t$ are replaced by the arrows $a_l^*$ and $b_t^*$ and we have

$$\begin{array}{rcl} w(\mu_i^{\rL}(Q,W,d)) & = & w(Q,W,d)-d(a_l)+d(b_t)+d'(b_t^*)-d'(a_l^*)\\ & = & w(Q,W,d)-d(a_l)+d(b_t)+(-d(b_t)+1)-(-d(a_l)+1)\\ &=&w(Q,W,d).\end{array}$$
The case where $i$ is the source of one $p$-arrow and of one $q$-arrow is similar.
\end{proof}

This alternative description of the weight gives us the following consequences.
\begin{cora}
The number of derived equivalence classes of algebras of cluster type $\A_{p,q}$ is $[\frac{p}{2}]+[\frac{q}{2}] +1$ if $p\neq q$ and $[\frac{p}{2}]+1$ if $p=q$.
\end{cora}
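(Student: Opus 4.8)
The plan is to reduce the count to the number of distinct \emph{weights} realized by algebras of cluster type $\A_{p,q}$, and then to read off this set of weights from the combinatorial description of Theorem~\ref{algebraclustertypetildeA}. Combining Theorem~\ref{derivedeqiffwegal} (for $p\neq q$, where we may assume $p>q$) and Theorem~\ref{theorem case p=q} (for $p=q$), the derived equivalence class of such an algebra $\Lambda$ is determined by $w(\Lambda)$ when $p\neq q$, and by $|w(\Lambda)|$ when $p=q$; moreover by Lemma~\ref{lem.w_well-def} the weight is a well-defined integer and every achievable weight comes from an actual algebra (the degree-zero part). So the map from classes to weights is a bijection onto the set of achievable weights, and I would first prove that this set is exactly $\{-\lfloor q/2\rfloor,\ldots,\lfloor p/2\rfloor\}$, and then count.

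To bound the weights I would use Proposition~\ref{prop alternative description weight}, which computes $w(\Lambda)=w(Q,W_Q,d)=\sum_{l=1}^{p_r}d(a_l)-\sum_{l=1}^{q_r}d(b_l)$ for a quiver $Q\in\Mm^{\A}_{p,q}$ and a $W_Q$-grading $d$. The key observation is that a $p$-arrow $a_l$ can satisfy $d(a_l)=1$ only if it lies in a $3$-cycle, i.e.\ only if $Q^{a_l}\neq\varnothing$: by Definition~\ref{defWgrading} a $W_Q$-grading requires $\partial_a W_Q\neq 0$ for every arrow $a$ with $d(a)=1$, and $\partial_{a_l}W_Q=0$ exactly when $Q^{a_l}=\varnothing$. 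Writing $k$ for the number of $p$-arrows with $Q^{a_l}\neq\varnothing$, the defining relation $p=\sum_{l=1}^{p_r}\sharp Q_0^{a_l}+p_r$ gives $p\geq k+p_r\geq 2k$ (since $\sum_l\sharp Q_0^{a_l}\geq k$ and $p_r\geq k$), hence $\sum_l d(a_l)\leq k\leq\lfloor p/2\rfloor$; symmetrically $\sum_l d(b_l)\leq\lfloor q/2\rfloor$. This yields $-\lfloor q/2\rfloor\leq w(\Lambda)\leq\lfloor p/2\rfloor$.

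For the converse I would exhibit explicit quivers realizing each intermediate value. For $0\leq m\leq\lfloor p/2\rfloor$, take the non-oriented cycle $C$ with $p_r=p-m$ $p$-arrows and $q_r=q$ $q$-arrows, attach a single $3$-cycle (one extra vertex) to $m$ of the $p$-arrows, and leave the remaining arrows of $C$ bare. This lies in $\Mm^{\A}_{p,q}$ because $p=m+(p-m)$ and $q=q$, and $p-m\geq m$ ensures there are enough $p$-arrows to carry the $m$ trees. Grading the $m$ distinguished $p$-arrows by $1$ and all other arrows by $0$ is a valid $W_Q$-grading: each $3$-cycle $\alpha''\alpha'a_l$ then has total degree $1$, and the resulting length-$2$ relations $\partial_{a_l}W_Q=\alpha''\alpha'$ are distinct monomials, hence linearly independent. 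This grading has weight $m$. The symmetric construction on the $q$-side realizes every $-\lfloor q/2\rfloor\leq w\leq 0$, and the acyclic quiver $Q_H$ with the trivial grading realizes $w=0$. Thus the achievable weights are precisely the integers of $[-\lfloor q/2\rfloor,\lfloor p/2\rfloor]$.

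Finally I would count. When $p\neq q$ this interval contains $\lfloor p/2\rfloor+\lfloor q/2\rfloor+1$ integers, each a distinct class. When $p=q$ the interval is symmetric about $0$, so the invariant $|w(\Lambda)|$ ranges over $\{0,1,\ldots,\lfloor p/2\rfloor\}$, giving $\lfloor p/2\rfloor+1$ classes, as claimed. I expect the main obstacle to be the achievability step rather than the bound: one must verify carefully that the explicitly constructed graded quivers genuinely belong to $\Mm^{\A}_{p,q}$ and genuinely define $W_Q$-gradings (in particular the linear-independence condition of Definition~\ref{defWgrading}), and that the degenerate small cases (few vertices, or $p_r=q_r=1$) are all covered by the construction.
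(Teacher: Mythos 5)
Your proposal is correct and follows essentially the same route as the paper: reduce to counting achievable weights via Theorems~\ref{derivedeqiffwegal} and~\ref{theorem case p=q}, bound the weight by $[\frac{p}{2}]$ and $-[\frac{q}{2}]$ using the fact that a $W_Q$-grading forces $Q^{a_l}\neq\varnothing$ whenever $d(a_l)=1$ together with the vertex-count constraint $p=\sum_l\sharp Q_0^{a_l}+p_r$, and then realize every intermediate value. Your explicit construction of quivers attaining each weight merely fills in what the paper dismisses with ``it is easy to see that all values can occur,'' so the two arguments coincide in substance.
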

\begin{proof}
Let $Q$ be a quiver in $\Mm_{p,q}^{\A}$. It is clear from the definition that $w$ is maximal when the $W_Q$-grading satisfies $d(a_l)=1$ for $l=1,\ldots,p_r$ and $d(b_j)=0$ for $j=1,\ldots, q_r$. In this case the weight is equal to $p_r=p-\sum_{l=1}^{p_r}\sharp Q^{a_l}_0$. But since $d$ is a $W_Q$-grading, $d(a_l)=1$ implies that the quiver $Q^{a_l}$ is non-empty. Then we have $w=p_r= p- \sum_{l=1}^{p_r}\sharp Q^{a_l}_0\leq p-p_r$. Thus we have $w\leq [\frac{p}{2}]$.
For the same reason, we have $w\geq -[\frac{q}{2}]$.  Now it is easy to see that all values $-[\frac{q}{2}],- [\frac{q}{2}]+1,\ldots ,[\frac{p}{2}]$ can occur.
\end{proof}

\begin{cora}\label{corollary cycle}
Let $\Lambda$ be an algebra of cluster type $\A_{p,q}$, which is not piecewise hereditary. Then there exists a tilting object $T$ in $\Dd^b(\Lambda)$ such that the quiver of $\End_{\Dd^b(\Lambda)}(T)$ has an oriented cycle.
\end{cora}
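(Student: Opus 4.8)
The plan is to reduce the statement to the construction of one conveniently chosen algebra in the derived equivalence class of $\Lambda$. Since $\Lambda$ is not piecewise hereditary, its weight $w:=w(\Lambda)$ is nonzero. By Theorem~\ref{derivedeqiffwegal} (if $p>q$) and Theorem~\ref{theorem case p=q} (if $p=q$), \emph{every} algebra of cluster type $\A_{p,q}$ whose weight equals $w$ (up to sign when $p=q$) is derived equivalent to $\Lambda$; and by Rickard's Morita theory for derived equivalences, a basic algebra $\Lambda'$ is derived equivalent to $\Lambda$ if and only if $\Lambda'\iso\End_{\Dd^b(\Lambda)}(T)$ for some tilting object $T\in\Dd^b(\Lambda)$. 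Hence it suffices to exhibit a single algebra $\Lambda'$ of cluster type $\A_{p,q}$ and weight $w$ whose Gabriel quiver contains an oriented cycle.

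By Theorem~\ref{algebraclustertypetildeA} together with Proposition~\ref{prop alternative description weight}, I may look for such a $\Lambda'$ among the degree zero parts $\Jac(Q,W_Q,d)_0$ with $Q\in\Mm^{\A}_{p,q}$ and $d$ a $W_Q$-grading; here the Gabriel quiver of $\Lambda'$ is the degree zero subquiver $Q^{(0)}$ (the arrows of the reduced quiver $Q$ have no loops or $2$-cycles and the relations are paths of length $2$), and $w(\Lambda')=\sum_l d(a_l)-\sum_l d(b_l)$. Assume first $w>0$; since $w\le[\tfrac p2]$, one has $p-w\ge w$, and I choose $Q$ to have a non-oriented cycle $C$ with exactly $p_r=w$ $p$-arrows $a_1,\dots,a_w$ and $q_r=q$ $q$-arrows $b_1,\dots,b_q$, where each $a_i$ carries a $3$-cycle $a_i''a_i'a_i$ and the $p-w$ vertices off $C$ are distributed as linear $A$-type tails hanging off the connecting vertices $u(a_i)$ (one vertex per tail at least, possible as $p-w\ge w$), while no $3$-cycle is attached to any $b_j$. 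I then set $d(a_i)=1$ and $d=0$ on all other arrows. This is a $W_Q$-grading: the only oriented $3$-cycles of $Q$ are the $a_i''a_i'a_i$ (the tails are acyclic and the $q$-side carries no $3$-cycle), each of degree $1$, and the relations $\partial_{a_i}W_Q=a_i''a_i'$ are length-two paths supported on pairwise distinct vertices, hence linearly independent. Its weight is $\sum_i d(a_i)-\sum_j d(b_j)=w$.

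It remains to locate the oriented cycle. The quiver $Q^{(0)}$ of $\Lambda'$ is obtained from $Q$ by deleting the degree one arrows $a_1,\dots,a_w$. Consider the ``long'' oriented cycle $C_b$ running once through every $q$-arrow $b_j$ and through every pair $a_i',a_i''$, i.e.\ replacing each deleted $a_i\colon s(a_i)\to t(a_i)$ by the detour $a_i''a_i'\colon t(a_i)\to u(a_i)\to s(a_i)$. Since all $p$-arrow positions now carry such a detour and all $q$-arrows are kept, $C_b$ is a genuine oriented cycle, and its degree is $\sum_j d(b_j)+\sum_i\bigl(d(a_i')+d(a_i'')\bigr)=0$; thus every arrow of $C_b$ survives in $Q^{(0)}$, so $Q^{(0)}$ contains an oriented cycle. (Note that the relations force $a_i''a_i'=0$ in $\Lambda'$, but this is irrelevant: the assertion concerns the quiver, whose arrows $a_i',a_i''$ are not in $\operatorname{rad}^2$.) For $w<0$ I argue symmetrically, interchanging the roles of the $p$- and $q$-arrows, using $-w\le[\tfrac q2]$, so that the degree zero part contains the mirror cycle $C_a$. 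This produces the desired $\Lambda'$ and finishes the proof.

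The conceptual ingredients (weight is a complete derived invariant, Rickard's theorem, and the dictionary of Theorem~\ref{algebraclustertypetildeA} and Proposition~\ref{prop alternative description weight}) are immediate from the results already established; the only real work, and the main point to verify carefully, is that the explicit $Q$ above genuinely lies in $\Mm^{\A}_{p,q}$—in particular the valency and full-subquiver conditions at each $u(a_i)$, where the tail arrow must not lie on any $3$-cycle—while simultaneously realizing the prescribed weight and keeping the cycle $C_b$ (resp.\ $C_a$) entirely in degree zero.
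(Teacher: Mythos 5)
Your proposal is correct and follows essentially the same route as the paper: the paper also reduces, via the weight classification (Theorem~\ref{derivedeqiffwegal}) and Rickard's theorem, to exhibiting one algebra of weight $w$ with an oriented cycle, and constructs it exactly as you do -- as the degree zero part of $\Jac(Q,W_Q,d)$ for a quiver $Q\in\Mm^{\A}_{p,q}$ with $w$ degree-one $p$-arrows each carrying a $3$-cycle, all other arrows in degree zero, so that the long oriented cycle through the $q$-arrows and the detours $a_i''a_i'$ survives in degree zero. The paper's $B$ is just the special instance of your family in which the $p-2w$ extra vertices form a single tail off the last connecting vertex.
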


\begin{proof}
Let $-\frac{q}{2}\leq w\leq \frac{p}{2}$ be a non-zero integer. We construct an algebra $B$ of cluster type $\A_{p,q}$ of weight $w$ such that $Q_B$ has an oriented cycle.  Without loss of generality we can assume $w>0$. We define $B$ as follows.
 \[\scalebox{.8}{
\begin{tikzpicture}[scale=1.3,>=stealth]
\node (P1) at (0,0)  {$1$};
\node (P2) at (1,1) {$2$};
\node(P3) at (2,0) {$3$};
\node(P4) at (3,1){$4$};
\node(P5) at (4,0){};
\node (P6) at (6,0){};
\node (P7) at (7,1) {$2w$};
\node (P8) at (8,0){$2w+1$};
\node (P9) at (8.5,1){};
\node (P10) at (10.5,1){};
\node (P11) at (12,1){$p+1$};
\node (P12) at (7,-1){$p+2$};
\node (P13) at (5.5,-1){};
\node(P14) at (1,-1){$p+q$};
\node(P15) at (2.5,-1){};

\draw [->] (P2)--(P1);
\draw[->] (P3)--(P2);
\draw[->] (P4)--(P3);
\draw[->] (P5)--(P4);
\draw[loosely dotted, thick] (P5)--(P6);
\draw[->] (P7)--(P6);
\draw[->] (P8)--(P7);
\draw[->] (P7)--(P9);
\draw[loosely dotted, thick] (P9)--(P10);
\draw[->] (P10)--(P11);
\draw[->] (P12)--(P8);
\draw[->] (P13)--(P12);
\draw[loosely dotted, thick] (P15)--(P13);
\draw[->] (P14)--(P15);

\draw[->](P1)--(P14);
 \draw [loosely dotted, thick] (P1) .. controls (1,.8) .. (P3);
 \draw [loosely dotted, thick] (P3) .. controls (3,.8) .. (P5);
 \draw [loosely dotted, thick] (P6) .. controls (7,.8) .. (P8);
\end{tikzpicture}}\] 
It is clear that $B$ is the degree 0 part of the Jacobian algebra $\Jac(Q,W_Q,d)$ with the graded quiver 
\[\scalebox{.8}{
\begin{tikzpicture}[scale=1.3,>=stealth]
\node (P1) at (0,0)  {$1$};
\node (P2) at (1,1) {$2$};
\node(P3) at (2,0) {$3$};
\node(P4) at (3,1){$4$};
\node(P5) at (4,0){};
\node (P6) at (6,0){};
\node (P7) at (7,1) {$2w$};
\node (P8) at (8,0){$2w+1$};
\node (P9) at (8.5,1){};
\node (P10) at (10.5,1){};
\node (P11) at (12,1){$p+1$};
\node (P12) at (7,-1){$p+2$};
\node (P13) at (5.5,-1){};
\node(P14) at (1,-1){$p+q$};
\node(P15) at (2.5,-1){};

\draw [->] (P2)-- node [fill=white,inner sep=.5mm]{\small{0}}(P1);
\draw[->] (P3)-- node [fill=white,inner sep=.5mm]{\small{0}}(P2);
\draw[->] (P4)-- node [fill=white,inner sep=.5mm]{\small{0}}(P3);
\draw[->] (P5)-- node [fill=white,inner sep=.5mm]{\small{0}}(P4);
\draw[loosely dotted, thick] (P5)--(P6);
\draw[->] (P7)-- node [fill=white,inner sep=.5mm]{\small{0}}(P6);
\draw[->] (P8)-- node [fill=white,inner sep=.5mm]{\small{0}}(P7);
\draw[->] (P7)-- node [fill=white,inner sep=.5mm]{\small{0}}(P9);
\draw[loosely dotted, thick] (P9)--(P10);
\draw[->] (P10)-- node [fill=white,inner sep=.5mm]{\small{0}}(P11);
\draw[->] (P12)-- node [fill=white,inner sep=.5mm]{\small{0}}(P8);
\draw[->] (P13)-- node [fill=white,inner sep=.5mm]{\small{0}}(P12);
\draw[loosely dotted, thick] (P15)--(P13);
\draw[->] (P14)-- node [fill=white,inner sep=.5mm]{\small{0}}(P15);

\draw[->](P1)-- node [fill=white,inner sep=.5mm]{\small{0}}(P14);
 \draw [->](P1) --  node [fill=white,inner sep=.5mm]{\small{1}}(P3);
 \draw [->](P3)--  node [fill=white,inner sep=.5mm]{\small{1}}(P5);
 \draw [->] (P6)-- node [fill=white,inner sep=.5mm]{\small{1}} (P8);
\end{tikzpicture}}\] 
Then by Theorem~\ref{derivedeqiffwegal} this algebra $B$ is the endomorphism algebra of some tilting complex $T\in\Dd^b(\Lambda)$.
\end{proof}

\begin{cora}
Let $\Lambda$ be an algebra of cluster type $\A_{p,q}$ and of weight $w$, then the Coxeter polynomial of $\Lambda$ is \[ X^{p+q}-(-1)^wX^{p-w}-(-1)^wX^{q+w}+1.\]
\end{cora}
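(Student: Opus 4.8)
The plan is to use that the Coxeter polynomial (the characteristic polynomial of the Coxeter transformation) is invariant under derived equivalence, combined with the derived classification already obtained. By Theorem~\ref{derivedeqiffwegal} (for $p\neq q$) and Theorem~\ref{theorem case p=q} (for $p=q$), an algebra of cluster type $\A_{p,q}$ is determined up to derived equivalence by its weight, up to the sign ambiguity $w\mapsto -w$ in the case $p=q$. Before computing anything I would check that the proposed polynomial respects this ambiguity: the substitution $(p,q,w)\mapsto(q,p,-w)$ fixes $X^{p+q}-(-1)^wX^{p-w}-(-1)^wX^{q+w}+1$, and for $p=q$ the substitution $w\mapsto -w$ merely interchanges the two middle monomials. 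Hence it suffices to compute the Coxeter polynomial for one convenient representative of each weight $w\geq 0$. For $w=0$ the representative is the hereditary algebra $H=kQ_H$, whose Coxeter polynomial is the classical $(X^p-1)(X^q-1)=X^{p+q}-X^p-X^q+1$, which is exactly the claimed expression at $w=0$. For $w>0$ I would take the algebra $B$ constructed in the proof of Corollary~\ref{corollary cycle}, which realises weight $w$ through an explicit quiver with length-two relations.

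Recall that for an algebra $\Lambda$ of finite global dimension the Cartan matrix $C$ is invertible over $\ZZ$ (so $\det C=\pm1$), the Coxeter matrix is $\Phi=-C^{-t}C$, and since $X\cdot\mathrm{Id}-\Phi=C^{-t}(XC^{t}+C)$ the Coxeter polynomial equals $\pm\det(XC^{t}+C)$. The computation thus splits into (i) writing down the Cartan matrix of the representative $B$, and (ii) evaluating the determinant. For step (i) I would read off the indecomposable projectives directly from the quiver of $B$ with relations: as the relations are all paths of length two, each entry $\dim_k e_iBe_j$ counts the paths $i\to j$ that avoid the relations, so $C$ is, after ordering the vertices along the two arcs of the underlying $\widetilde A_{p,q}$, an explicit sparse $0/1$ matrix in which the degree-one arrows introduce the only non-unitriangular entries.

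The main obstacle is step (ii): evaluating the $(p+q)\times(p+q)$ determinant $\det(XC^{t}+C)$ and showing it factors as $(X^{p-w}-(-1)^w)(X^{q+w}-(-1)^w)$, which expands exactly to the asserted polynomial. I would carry this out by exploiting the cyclic/block structure coming from the two arcs of the $\widetilde A_{p,q}$ cycle: expanding along the rows indexed by the non-oriented cycle and using that the two arcs contribute independent companion-type blocks of sizes $p-w$ and $q+w$, so the determinant reduces to a product of two circulant-type determinants equal to $X^{p-w}-(-1)^w$ and $X^{q+w}-(-1)^w$. The bookkeeping of signs and the contribution of the degree-one arrows is where the care is needed.

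As a conceptual explanation of (and independent check on) the factored form, I would note that $\Phi$ represents the Auslander--Reiten translation $\tau$ on $K_0(\Dd^b(\Lambda))$, and since $\mathbb S\cong\tau[1]$ the Serre functor acts on $K_0$ as $-\Phi$. The local fractional Calabi--Yau relations $\mathbb S^{p-w}X\iso X[p-2w]$ and $\mathbb S^{q+w}Y\iso Y[q+2w]$ (the fractional Calabi--Yau remark above) then translate into $\Phi^{p-w}=(-1)^w$ and $\Phi^{q+w}=(-1)^w$ on the two families of classes, of dimensions $p-w$ and $q+w$ respectively; provided these families span $K_0\otimes\mathbb Q$ (a point that the explicit AR-quiver description supplies, and which would have to be verified if one wished to use this route as the actual proof), the characteristic polynomial of $\Phi$ must be $(X^{p-w}-(-1)^w)(X^{q+w}-(-1)^w)$, in agreement with the direct computation.
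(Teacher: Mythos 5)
Your reduction steps are all sound and coincide with the paper's own: the Coxeter polynomial is a derived invariant, Theorems~\ref{derivedeqiffwegal} and~\ref{theorem case p=q} reduce the claim to one representative per weight, the proposed polynomial is symmetric under $(p,q,w)\mapsto(q,p,-w)$, the case $w=0$ is the classical computation for a hereditary algebra of type $\widetilde{A}_{p,q}$, and for $w>0$ the paper too works with exactly the algebra $B$ of Corollary~\ref{corollary cycle}. The problem is that everything after this point --- which is the entire content of the corollary --- is asserted rather than proved. Your step (ii) claims that $\det(XC^{t}+C)$ reduces to a product of two circulant-type determinants of degrees $p-w$ and $q+w$ coming from ``the two arcs'' of the $\widetilde{A}_{p,q}$ cycle, but this does not match the combinatorics of $B$: in $Q_B$ the unique cycle consists of a zigzag arc of length $2w$ (carrying the $w$ length-two relations) and a relation-free arc of length $q$, with a tail of length $p-2w$ attached at the vertex $2w$. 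None of these pieces has size $p-w$ or $q+w$, so there is no visible block or circulant decomposition of $XC^{t}+C$ of the shape you describe; the degrees $p-w$ and $q+w$ only emerge after the relations are taken into account (each relation collapses two steps of the zigzag into one $\tau$-step, which is why the arc of length $2w$ contributes only $w$). This is precisely where the paper spends its effort: it computes explicit projective resolutions in $B$, shows that $\tau$ acts on suitable families of classes ($[S_{2j+1}]$, $[S_j]$, $[P_{2j}]$, $[I_2]$, and for $w=1$ auxiliary classes $[M_j]$) as a signed cyclic permutation, and even then must split into cases: for $w$ odd these classes form a basis, for $w$ even they are linearly dependent and one needs an explicit eigenvector of eigenvalue $1$ together with a degree and leading-coefficient argument, and $w=1$ requires a separate construction. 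Your sketch gives no indication of how these difficulties would be met in the determinant formalism.

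The ``conceptual check'' at the end also cannot be promoted to a proof in the form you state it. Knowing that $\Phi^{p-w}=(-1)^w$ on a $\Phi$-invariant subspace of dimension $p-w$ (and similarly for $q+w$) only says that the minimal polynomial of $\Phi$ on that subspace divides $X^{p-w}-(-1)^w$; the characteristic polynomial on it could a priori be any product of $p-w$ linear factors with \emph{repeated} roots among the $(p-w)$-th roots of $(-1)^w$. To pin down the characteristic polynomial one needs, as the paper establishes, that $\tau$ permutes a basis of each subspace cyclically up to sign --- a companion-matrix structure, not merely a torsion relation on a spanning family. So the overall verdict: your normalization and reduction to $B$ are correct and identical to the paper's first step, but the proposal is missing its core computation, and the two routes you indicate for it are, respectively, unsupported by the actual structure of $B$ and logically insufficient.
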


\begin{proof}
By definition the Coxeter matrix is the matrix of the automorphism $\tau$ at the level of the Grothendieck group $K_0(\Dd^b(\Lambda))$ in the basis $\{[S_i] \mid 1 \leq i \leq p+q\}$ consisting of the representatives of the simples. The Coxeter polynomial $C(X)$ is its characteristic polynomial.

The result is already known for $w=0$. Without loss of generality we can assume that  $0< w\leq \frac{p}{2}$.  
 By the above results we can assume that $\Lambda$ is given by the following quiver with relations:
 \[\scalebox{.8}{
\begin{tikzpicture}[scale=1.3,>=stealth]
\node (P1) at (0,0)  {$1$};
\node (P2) at (1,1) {$2$};
\node(P3) at (2,0) {$3$};
\node(P4) at (3,1){$4$};
\node(P5) at (4,0){};
\node (P6) at (6,0){};
\node (P7) at (7,1) {$2w$};
\node (P8) at (8,0){$2w+1$};
\node (P9) at (8.5,1){$2w+2$};
\node (P10) at (10.5,1){};
\node (P11) at (12,1){$p+1$};
\node (P12) at (7,-1){$p+2$};
\node (P13) at (5.5,-1){};
\node(P14) at (1,-1){$p+q$};
\node(P15) at (2.5,-1){};

\draw [->] (P2)--(P1);
\draw[->] (P3)--(P2);
\draw[->] (P4)--(P3);
\draw[->] (P5)--(P4);
\draw[loosely dotted, thick] (P5)--(P6);
\draw[->] (P7)--(P6);
\draw[->] (P8)--(P7);
\draw[->] (P7)--(P9);
\draw[loosely dotted, thick,draw=red] (P9)--(P10);
\draw[->] (P10)--(P11);
\draw[->] (P12)--(P8);
\draw[->] (P13)--(P12);
\draw[loosely dotted, thick] (P15)--(P13);
\draw[->] (P14)--(P15);

\draw[->](P1)--(P14);
 \draw [loosely dotted, thick] (P1) .. controls (1,.8) .. (P3);
 \draw [loosely dotted, thick] (P3) .. controls (3,.8) .. (P5);
 \draw [loosely dotted, thick] (P6) .. controls (7,.8) .. (P8);

\node (P9+) at (10,1) {};
\draw [->,draw=blue] (P9) -- (P9+);
\draw [loosely dotted, thick,draw=blue] (P9+) -- (P10);
\end{tikzpicture}}\] 

Then for $0\leq j\leq w-1$, the projective resolution of $S_{2j+1}$ is given by 
\[\xymatrix{0\ar[r] & P_{2j+3}\ar[r] & P_{2j+2}\ar[r] & P_{2j+1}\ar[r] & S_{2j+1}\ar[r] & 0}.\] Thus one easily checks the following
\begin{equation}\label{eq1} \textrm{for } 0\leq j\leq w-1\quad \tau [S_{2j+1}]=[S_{2j+3}[1]]=-[S_{2j+3}] \quad\textrm{in } K_0(\Dd^b(\Lambda)).\end{equation}
We also have the projective resolutions
\[ \xymatrix{0\ar[r] & P_{p+2}\ar[r] & P_{2w+1}\ar[r] & S_{2w+1}\ar[r] & 0} \quad\textrm{and }\] \[\textrm{for }p+2\leq j\leq p+q \quad\xymatrix{0\ar[r] & P_{j+1}\ar[r] & P_{j}\ar[r] & S_{j}\ar[r] & 0},\]
where we use the convention $p+q+1=1$.
Hence we have 
\begin{equation}\label{eq2} \tau [S_{2w+1}]=[S_{p+2}] \quad\textrm{and for } p+2\leq j \leq p+q \quad \tau[S_{j}]=[S_{j+1}]\quad\textrm{in } K_0(\Dd^b(\Lambda)).\end{equation}
Combining \eqref{eq1} and \eqref{eq2} we get
\[ \tau^{q+w}[S_1]=(-1)^w\tau^q[S_{2w+1}]=(-1)^w[S_1]\]

Similarly we have
\begin{equation}\label{eq3} \textrm{for } 2w+3\leq j\leq p+1\quad \tau[S_j]=[S_{j-1}]\quad\textrm{in } K_0(\Dd^b(\Lambda)).\end{equation}
Now we have to separate the case where $w=1$. Assume $w\geq 2$ then we have
\begin{equation}\label{eq4} \tau[S_{2w+2}]=[P_{2w-2}]\quad\textrm{in } K_0(\Dd^b(\Lambda)).\end{equation}
\begin{equation}\label{eq5} \textrm{for }2\leq j\leq w-1 \quad \tau[P_{2j}]=-[I_{2j}]=-[P_{2j-2}] \quad\textrm{in } K_0(\Dd^b(\Lambda)).\end{equation}
Hence if $p+1\geq 2w+2$, then we have $I_2\iso P_{p+1}$ and we get the following equalities in $K_0(\Dd^b(\Lambda))$:
\begin{align*} \tau^{p-w}[P_2] & =-\tau^{p-w-1}[P_{p+1}]=\tau^{p-w+2}[S_{p+1}] & \\
& =\tau^{w-1}\tau^{p-2w-1}[S_{p+1}]=\tau^{w-1}[S_{2w+2}] & \textrm{by }\eqref{eq3} 
\\ & =\tau^{w-2}[P_{2w-2}] &\textrm{by }\eqref{eq4} \\ &=(-1)^{w}[P_2] &\textrm{by }\eqref{eq5} \end{align*}

If $p+1=2w+1$, we have $I_2\iso P_{2w}$, and we also get $\tau^{p-w}[P_2]=(-1)^w[P_2]$.

Assume that $w$ is odd. Then one can checks that the set
\[\{[S_{2j+1}], 0\leq j \leq w\}\cup\{[S_j], 2w+2\leq j\leq p+q\}\cup \{ [P_{2j}], 1\leq j\leq w-1\}\cup \{[I_2]\}\]
is a basis of $K_0(\Dd^b(\Lambda))$. Therefore the Coxeter matrix is diagonalizable and $C(X)=(X^{q+w}+1)(X^{p-w}+1)$.

Assume that $w$ is even. Then the set 
\[\{[S_{2j+1}], 0\leq j \leq w\}\cup\{[S_j], 2w+2\leq j\leq p+q\}\cup \{ [P_{2j}], 1\leq j\leq w-1\}\]
is linearly independent in $K_0(\Dd^b(\Lambda)$ and we have the relation:
\[ [I_2]- \sum_{j=2w+2}^{p+1}[S_j] -\sum_{j=1}^{w-1}(-1)^j[P_{2j}]=\sum_{j=p+2}^{p+q}[S_j]+\sum_{j=1}^{w}(-1)^j[S_{2j+1}]\]
This element is an eigenvector of the eigenvalue $1$.
Hence
\[ \frac{(X^{q+w}-1)(X^{p-w}-1)}{(X-1)} \]
divides the Coxeter polynomial $C(X)$. Since the degree of $C(X)$ is $p+q$, and since we know that both the leading coefficient and the absolut term of $C(X)$ are $1$, it follows that
\[ C(X) = \frac{(X^{q+w}-1)(X^{p-w}-1)}{(X-1)} \cdot (X-1) = (X^{q+w}-1)(X^{p-w}-1). \]

For the case $w=1$, we introduce the notations for $4\leq j\leq p$:
\[M_j:=\Ker (I_2\rightarrow I_{j+1}\oplus I_{j+1})\iso \Coker (P_2\rightarrow P_j\oplus P_j); \] \[ M_3:=\Ker (I_2\rightarrow I_{4}\oplus I_{4})\iso P_2;\quad \textrm{and }M_{p+1}:=\Coker  (P_2\rightarrow P_{p+1}\oplus P_{p+1})\iso I_2.\]
Then we have
\begin{equation}\label{eq6} \textrm{for } 4\leq j\leq p+1\quad \tau[M_j]=[M_{j-1}] \end{equation}
Therefore we get the equalities in $K_0(\Dd^b(\Lambda)):$
\[ \tau^{p-1}[P_2]=-\tau^{p-2}[I_2]=\tau^{p-2}[M_{p+1}]=-[M_3]=-[P_2]\]
Finally it is easy to see that the set 
\[ \{[M_j], 3\leq j\leq p+1\}\cup \{ [S_1], [S_3]\}\cup \{[S_j], p+2\leq j\leq p+q\}\]
is a basis of $\mathbb{Q}\ten_{\mathbb{Z}}K_0(\Dd^b(\Lambda))$. This finishes the proof.
\end{proof}

We end this subsection by giving a result linking the weight of an algebra of cluster type $\widetilde{A}_{p,q}$ with the set of cluster-tilting objects of $\Cc_{\widetilde{A}_{p,q}}$ coming from tilting complexes in $\Dd^b(\Lambda)$. We start with some notation.

Let $\Lambda$ be an algebra of global dimension at most 2 which is $\tau_2$-finite. We define a subset $\Tt_{\Lambda}$ of the set of tilting complexes of $\Dd^b(\Lambda)$ by \[ \Tt_\Lambda:=\{ T\in\Dd^b(\Lambda) \textrm{ tilting }\mid \ \textrm{gldim} (\End_{\Dd^b(\Lambda)}(T))\leq 2 \}\]
By Theorem~\ref{clustertilting}, the set $\pi_{\Lambda}(\Tt_{\Lambda}) \subset \Cc_\Lambda$ is a subset of the set of cluster-tilting objects of $\Cc_{\Lambda}$.  Moreover, if $\Lambda_1$ and $\Lambda_2$ are derived equivalent, they are cluster equivalent and we clearly have $\pi_{\Lambda_1}(\Tt_{\Lambda_1})=\pi_{\Lambda_2}(\Tt_{\Lambda_2})$.
We prove here the converse in the case of algebras of cluster type $\widetilde{A}_{p,q}$, and compare the sets $\pi_{\Lambda_1}(\Tt_{\Lambda_1})$ and $\pi_{\Lambda_2}(\Tt_{\Lambda_2})$ when $w(\Lambda_1)\neq w(\Lambda_2)$.
\begin{prop}
Let $\Lambda_1$ and $\Lambda_2$ be two algebras of global dimension 2 and of cluster type $\widetilde{A}_{p,q}$. Then we have 
\begin{itemize}
\item $0\leq w(\Lambda_1)<w(\Lambda_2) \Rightarrow \pi_2(\Tt_{\Lambda_2})\subsetneq \pi_1(\Tt_{\Lambda_1})$ 
\item $w(\Lambda_2)<w(\Lambda_1)\leq 0 \Rightarrow \pi_2(\Tt_{\Lambda_2})\subsetneq \pi_1(\Tt_{\Lambda_1})$
\item $w(\Lambda_1)w(\Lambda_2)<0 \Rightarrow \pi_1(\Tt_{\Lambda_1}) \setminus \pi_2(\Tt_{\Lambda_2}) \neq \emptyset \neq \pi_2(\Tt_{\Lambda_2}) \setminus \pi_1(\Tt_{\Lambda_1})$
\end{itemize}
\end{prop}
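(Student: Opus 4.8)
The plan is to transport the whole statement into the single cluster category $\Cc := \Cc_{\A_{p,q}}$, identifying $\Cc_{\Lambda_1}$ and $\Cc_{\Lambda_2}$ with it through the equivalences coming from their common cluster type. The first reduction is that $\pi_\Lambda(\Tt_\Lambda)\subseteq\Cc$ depends only on the weight $w:=w(\Lambda)$: indeed derived equivalent algebras $\Lambda,\Lambda'$ satisfy $\pi_\Lambda(\Tt_\Lambda)=\pi_{\Lambda'}(\Tt_{\Lambda'})$ (as recalled just above the statement), and by Theorem~\ref{derivedeqiffwegal} the derived equivalence class of an algebra of cluster type $\A_{p,q}$ is exactly detected by its weight. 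Writing $\mathcal S(w)$ for this subset, the proposition becomes a comparison of $\mathcal S(w_1)$ and $\mathcal S(w_2)$.

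The core of the argument is a quiver-level description of $\mathcal S(w)$. For a cluster-tilting object $U\in\Cc$, with cluster-tilted quiver $Q_U\in\Mm^{\A}_{p,q}$ (Theorem~\ref{bastian}), I claim
\[ U\in\mathcal S(w)\ \Longleftrightarrow\ Q_U \text{ admits a } W_{Q_U}\text{-grading of weight } w. \]
The implication ``$\Rightarrow$'' is immediate: if $U=f(\pi_\Lambda(T))$ with $T\in\Tt_\Lambda$, then $\End_\Cc(U)=\Jac(Q_U,W_{Q_U})$ carries, by Proposition~\ref{propWgrading}, a $W_{Q_U}$-grading whose degree-zero part is $\End_{\Dd^b(\Lambda)}(T)$, necessarily of weight $w$. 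For ``$\Leftarrow$'' I would use connectivity of the cluster-tilting graph (Theorem~\ref{connectivity}) to choose a mutation sequence $s$ with $\mu_s(f(\pi_\Lambda(\Lambda)))=U$, and set $T:=\mu_s^{\rL}(\Lambda)$, so that $f(\pi_\Lambda(T))=U$. By Theorem~\ref{Zgradedbirs} the induced grading $d_U$ on $Q_U$ makes the potential homogeneous of degree $1$, and by Lemma~\ref{winvariantmutation} it has weight $w$; it then remains to see that $d_U$ is, up to coboundary, a $W_{Q_U}$-grading, for that forces $\End(T)$ to have global dimension at most $2$ and hence $T\in\Tt_\Lambda$.

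The main obstacle is exactly the rigidity lemma needed for this last point, which I would isolate: \emph{for $Q\in\Mm^{\A}_{p,q}$, any two $\ZZ$-gradings making $W_Q$ homogeneous of degree $1$ and having equal weight differ by a coboundary.} The gradings making $W_Q$ homogeneous of degree $1$ form an affine space modelled on $V_0:=\{d\mid d(\gamma)=0 \text{ for every oriented } 3\text{-cycle }\gamma\}$, and the coboundaries $\operatorname{im}\phi$ (with $\phi$ as in Lemma~\ref{gradedeqandw}) lie in $V_0$. I would then compute $\operatorname{rank}(V_0/\operatorname{im}\phi)=\beta_1(Q)-N_3$, where $N_3$ is the number of oriented $3$-cycles and $\beta_1(Q)=|Q_1|-|Q_0|+1$; using the tree-of-triangles shape of the quivers in $\Mm^A$ and the single non-oriented cycle $C$ of $\Mm^{\A}_{p,q}$, one checks that the cycle space of $Q$ is spanned by the $N_3$ three-cycles together with $C$, so this rank equals $1$ and the weight identifies $V_0/\operatorname{im}\phi$ with $\ZZ$. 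Granting this, $d_U$ (weight $w$, potential homogeneous) is forced to be equivalent to the given $W_{Q_U}$-grading, which finishes ``$\Leftarrow$'' and the characterization. This step is where the real work lies; the combinatorial verification of the cycle space of quivers in $\Mm^{\A}_{p,q}$ is the delicate part.

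With the characterization established the rest is bookkeeping. A $W_Q$-grading can put degree $1$ on a $p$-arrow (resp.\ $q$-arrow) of $C$ only if that arrow lies in a $3$-cycle, and these choices are independent since distinct connecting $3$-cycles are arrow-disjoint; hence the weights realised by $W_Q$-gradings form exactly the interval $[-n_q(Q),n_p(Q)]\cap\ZZ$, where $n_p(Q),n_q(Q)$ count the $p$- and $q$-arrows of $C$ lying in a $3$-cycle. Thus $\mathcal S(w)=\{U\mid -n_q(Q_U)\le w\le n_p(Q_U)\}$. If $0\le w_1<w_2$, then $n_p(Q_U)\ge w_2$ gives $n_p(Q_U)\ge w_1$ while $-n_q(Q_U)\le 0\le w_1$ always, so $\mathcal S(w_2)\subseteq\mathcal S(w_1)$; strictness follows by taking, via Corollary~\ref{corollary cycle}, a quiver with $n_p=w_1$ and $n_q=0$ (or the hereditary quiver $Q_H$ when $w_1=0$), which realises weight $w_1$ but not $w_2$. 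The case $w_2<w_1\le 0$ is symmetric with the roles of $n_p$ and $n_q$ exchanged. Finally, if $w_1w_2<0$, say $w_1>0>w_2$, a quiver with $(n_p,n_q)=(w_1,0)$ lies in $\mathcal S(w_1)\setminus\mathcal S(w_2)$ and one with $(n_p,n_q)=(0,-w_2)$ lies in $\mathcal S(w_2)\setminus\mathcal S(w_1)$, both provided by Corollary~\ref{corollary cycle}. (For $p=q$ the signed weight is replaced by its absolute value, the first two cases merge, and the third does not occur since the sign of the weight is not an invariant.)
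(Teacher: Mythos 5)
Your plan rests on the characterization $U\in\mathcal{S}(w)\iff Q_U$ admits a $W_{Q_U}$-grading of weight $w$, and the ``$\Leftarrow$'' direction of that characterization contains a genuine gap, located exactly at the sentence you pass over quickly: that $d_U$ being, up to coboundary, a $W_{Q_U}$-grading ``forces $\End(T)$ to have global dimension at most $2$ and hence $T\in\Tt_\Lambda$''. Two things fail there. First, degree-zero parts are not invariant under adding a coboundary: $\End_{\Dd^b(\Lambda)}(T)$ is the degree-zero part of the graded Jacobian algebra for $d_U$, and if $d_U=d'+\phi(r)$ with $d'$ the given $W_{Q_U}$-grading, the degree-zero part for $d'$ is the endomorphism algebra of a \emph{different} representative $T''=\bigoplus_i\SSS^{r_i}T_i$ of the fiber over $U$, not of your $T=\mu^{\rL}_s(\Lambda)$ (already on $kQ_H$, the trivial grading and a nonzero coboundary have degree-zero parts of different dimensions). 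Second, and more seriously, even after replacing $T$ by the correct twist $T''$, so that $\End(T'')$ has global dimension at most $2$ by Step 1 of the proof of Theorem~\ref{algebraclustertypetildeA}, membership in $\Tt_\Lambda$ also requires $T''$ to be a \emph{tilting complex}. This is a real condition that does not follow from ``$\pi_\Lambda(T'')$ is cluster-tilting'' plus a bound on global dimension: generic $\SSS$-twists $\bigoplus_i\SSS^{r_i}P_i$ of the summands of $\Lambda$ have cluster-tilting image but have self-extensions in nonzero degrees and are not tilting. Your proposal never addresses tilting-ness, and I do not see how to obtain it from grading considerations on $\End_{\Cc}(U)$ alone.

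The repair is the mechanism the paper uses, which sidesteps any direct verification that an object of $\Dd^b(\Lambda)$ is tilting: given the $W_{Q_U}$-grading $d'$ of weight $w$, form the \emph{abstract} algebra $\Lambda_3$ equal to the degree-zero part of $\Jac(Q_U,W_{Q_U},d')$; by Theorem~\ref{algebraclustertypetildeA} it has global dimension at most $2$ and cluster type $\A_{p,q}$, by Proposition~\ref{prop alternative description weight} it has weight $w$, hence by Theorem~\ref{derivedeqiffwegal} it is derived equivalent to $\Lambda$, and the image of the module $\Lambda_3$ under this derived equivalence is automatically a tilting complex $X$ with $\End(X)\iso\Lambda_3$, so $X\in\Tt_\Lambda$ and (arranging identifications via Corollary~\ref{recognitioncor}) $\pi_\Lambda(X)\iso U$. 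With this substitution your characterization holds, and the remainder of your argument is correct and goes through: your rigidity lemma is true (the $3$-cycles together with $C$ do span the cycle space, since $\beta_1(Q)=N_3+1$ for $Q\in\Mm^{\A}_{p,q}$, and the weight is evaluation on $C$), the realizable weights do form the interval $[-n_q(Q),n_p(Q)]$, and the strictness and third-bullet constructions via Corollary~\ref{corollary cycle} coincide with the paper's. Note for comparison that the paper's proof of the inclusions is leaner: it never establishes the full characterization, but reduces to $\pi_2(\Lambda_2)\in\pi_1(\Tt_{\Lambda_1})$ with $w_2=w_1+1$ and performs a one-step weight reduction by moving the degree-$1$ label inside a single connecting $3$-cycle (from $a_i$ to $a_i'$), then invokes the same chain of Theorem~\ref{algebraclustertypetildeA}, Proposition~\ref{prop alternative description weight} and Theorem~\ref{derivedeqiffwegal}; so ironically the part you flagged as ``the real work'' (the rigidity lemma) is fine, while the step you treated as routine is where the argument breaks.
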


\begin{proof}
We first show the inclusion in the first claim.
Without loss of generality, we can assume that $w(\Lambda_2)=w(\Lambda_1)+1>0$.  It is enough to show that $\pi_2(\Lambda_2)\in\pi_1(\Tt_{\Lambda_1})$.
We denote by $H$ some hereditary algebra of type $\A_{p,q}$ and by $\pi_2$ a triangle functor $\pi_2 \colon \Dd^b(\Lambda_2)\rightarrow \Cc_H$.

Let $(Q,W_{Q},d)$ be a graded quiver with potential such that we have isomorphisms of $\ZZ$-graded algebras $\End_{\Cc_H}(\pi_2\Lambda_2)\iso \Jac(Q,W_{Q},d)$.  Since $w(\Lambda_2)\geq 1$, there exists a $p$-arrow $a_i\in Q$ such that $d(a_i)=1$. Since $d$ is a $W_{Q}$-grading,  the subquiver $Q^{a_i}$ is not empty. More precisely, there exists arrows $a_i'$ and $a_i''$ such that $a_ia_i'a_i''$ is an oriented triangle in $Q$.

Define a new degree $d'$ on $Q$ by:
\[ d'(x)=\left\{\begin{array}{cc} 
0 & \ \textrm{if } x=a_i \\
1& \textrm{if } x=a'_i\\
d(x) & \textrm{otherwise}
\end{array}\right. \]
It is immediate to see that $d'$ is a $W_{Q}$-grading.  Define the algebra $\Lambda_3$ as the degree 0 part of the graded Jacobian algebra $\Jac(Q,W_{Q},d')$. By Theorem~\ref{algebraclustertypetildeA}, it is an algebra of global dimension 2 which is of cluster type $\A_{p,q}$, and by Corollary~\ref{recognitioncor} we can assume $\pi_2(\Lambda_2)=\pi_3(\Lambda_3)$ where $\pi_3$ is a triangle functor $\pi_3 \colon \Dd^b(\Lambda_3)\rightarrow \Cc_{\Lambda_3}$.  Moreover by Proposition~\ref{prop alternative description weight}, we have $w(\Lambda_3)=w(\Lambda_2)-1=w(\Lambda_1)$. Therefore by Theorem~\ref{derivedeqiffwegal} the algebra $\Lambda_3$ is derived equivalent to $\Lambda_1$. The image of $\Lambda_3$ through this equivalence is clearly an object $X$ in $\Tt_{\Lambda_1}$ which satisfies $\pi_1(X)\iso \pi_2(\Lambda_2)$.

For any $0\leq w \leq [\frac{p}{2}]$, one can easily construct a quiver $Q$ such that $Q$ admits a $W_Q$-grading of weight $w$ but no $W_Q$-grading of weight $w+1$. Therefore the inclusion is strict. 

The second point holds by symmetry.

For the third point, it is enough to see that for any $w>0$ the quiver $Q$ constructed in the proof of Corollary~\ref{corollary cycle} satisfies the following: there exists a $W_Q$-grading of weight $w$ and for any $w'<0$ there is no $W_Q$-grading of weight $w'$. The argument for $w<0$ holds by symmetry.
\end{proof}

We end this subsection by asking the following intriguing questions:

\begin{Question}
\begin{itemize}
\item
Let $\Lambda_1$ and $\Lambda_2$ be $\tau_2$-finite algebras of global dimension $\leq 2$ which are cluster equivalent. Do we have the implication
\[ \pi_1(\Tt_{\Lambda_1})=\pi_2(\Tt_{\Lambda_2})\Rightarrow \Dd^b(\Lambda_1)\iso\Dd^b( \Lambda_2) \ ?\]
\item
Let $\Lambda$ be a $\tau_2$-finite algebra of global dimension $\leq 2$. Does the following implication hold?
\[ \pi_{\Lambda}(\Tt_\Lambda)=\{ X\in \Cc_{\Lambda} \mid X \textrm{ cluster-tilting }\}\Rightarrow \Lambda \textrm{ piecewise hereditary }\]
\end{itemize} 
\end{Question}

\subsection{Example}

In this subsection we compute explicitly all basic algebras (up to isomorphism) of global dimension $\leq 2$ and of cluster type $\A_{2,2}$ and organize them according to their derived equivalence classes.

The strategy consists in first  describing all quivers (up to isomorphism of quivers) which are in the set $\Mm^{\A}_{2,2}$. In our case an easy computation or \cite{Keller_Java} shows that there are only 4 different quivers in $\Mm^{\A}_{2,2}$ which are:
\[\scalebox{.8}{
\begin{tikzpicture}[scale=1.3,>=stealth]
\node (A1) at (0,0) {$.$};
\node (A2) at (1,1){$.$};
\node (A3) at (2,0){$.$};
\node (A4) at (1,-1){$.$};

\draw [->] (A1)--(A2);
\draw [->] (A2)--(A3);
\draw [->] (A1)--(A4);
\draw [->] (A4)--(A3);

\node (B1) at (4,0) {$.$};
\node (B2) at (5,1){$.$};
\node (B3) at (6,0){$.$};
\node (B4) at (5,-1){$.$};

\draw [->] (B1)--(B2);
\draw [->] (B3)--(B2);
\draw [->] (B1)--(B4);
\draw [->] (B3)--(B4);

\node (C1) at (8,0) {$.$};
\node (C2) at (9,1){$.$};
\node (C3) at (10,0){$.$};
\node (C4) at (9,-1){$.$};

\draw [->] (C1)--(C2);
\draw [->] (C2)--(C3);
\draw [->] (C3)--(C4);
\draw [->] (C4)--(C1);
\draw [->] (C1)--(C3);

\node (D1) at (12,0) {$.$};
\node (D2) at (13,1){$.$};
\node (D3) at (14,0){$.$};
\node (D4) at (13,-1){$.$};

\draw [->] (D2)--(D1);
\draw[->] (D3)--(D2);
\draw [->] (D3)--(D4);
\draw [->] (D4)--(D1);
\draw [->] (12.3,.05)--(13.7,.05);
\draw [->] (12.3,-.05)--(13.7,-.05);
\end{tikzpicture}}\]

Note that since $p=q$ there is an isomorphism between the two quivers corresponding to $p_r=2, q_r=1$ and $p_r=1,q_r=2$
\[\scalebox{.8}{
\begin{tikzpicture}[scale=1.3,>=stealth] \node (C1) at (8,0) {$.$};
\node (C2) at (9,1){$.$};
\node (C3) at (10,0){$.$};
\node (C4) at (9,-1){$.$};

\draw [->] (C1)--(C2);
\draw [->] (C2)--(C3);
\draw [->] (C3)--(C4);
\draw [->] (C4)--(C1);
\draw [->] (C1)--(C3);

\node (D1) at (12,0) {$.$};
\node (D2) at (13,1){$.$};
\node (D3) at (14,0){$.$};
\node (D4) at (13,-1){$.$};

\draw [<-] (D1)--(D2);
\draw [<-] (D2)--(D3);
\draw [<-] (D3)--(D4);
\draw [<-] (D4)--(D1);
\draw [<-] (D1)--(D3);
 \end{tikzpicture}}\]
Then one can easily check that there are 11 graded quivers $(Q,d)$ with $Q\in\Mm^{\A}_{p,q}$ and $d$ a $W_Q$-grading up to isomorphism of graded quiver. Therefore there are exactly 11 non-isomorphic algebras of global dimension 2 and of cluster type $\A_{2,2}$.

The only possible weights are $|w|=0$ or $|w|=1$. Therefore these eleven algebras are divided into two derived equivalence classes.

There are 8 algebras of global dimension $\leq 2$ which are derived equivalent to $\A_{2,2}$:
\[\scalebox{.8}{
\begin{tikzpicture}[scale=1.3,>=stealth]
\node (A1) at (0,0) {$.$};
\node (A2) at (1,1){$.$};
\node (A3) at (2,0){$.$};
\node (A4) at (1,-1){$.$};

\draw [->] (A1)--(A2);
\draw [->] (A2)--(A3);
\draw [->] (A1)--(A4);
\draw [->] (A4)--(A3);

\node (B1) at (4,0) {$.$};
\node (B2) at (5,1){$.$};
\node (B3) at (6,0){$.$};
\node (B4) at (5,-1){$.$};

\draw [->] (B1)--(B2);
\draw [->] (B3)--(B2);
\draw [->] (B1)--(B4);
\draw [->] (B3)--(B4);

\node (C1) at (8,0) {$.$};
\node (C2) at (9,1){$.$};
\node (C3) at (10,0){$.$};
\node (C4) at (9,-1){$.$};

\draw [->] (C1)--(C2);
\draw [->] (C2)--(C3);
\draw [->] (C3)--(C4); 
\draw [loosely dotted,thick] (C4)..controls (9.5,-0.25)..(C1);
\draw [->] (C1)--(C3);

\node (D1) at (12,0) {$.$};
\node (D2) at (13,1){$.$};
\node (D3) at (14,0){$.$};
\node (D4) at (13,-1){$.$};

\draw[->] (D1)--(D2);
\draw[->] (D2)--(D3);
\draw[->] (D1)--(D3);
\draw[->] (D4)--(D1);
\draw [loosely dotted, thick] (D3)..controls (12.5,-.25)..(D4);

\node (E1) at (0,-4) {$.$};
\node (E2) at (1,-3){$.$};
\node (E3) at (2,-4){$.$};
\node (E4) at (1,-5){$.$};

\draw[->] (0.2,-3.95)--(1.8,-3.95);
\draw[->] (0.2,-4.05)--(1.8,-4.05);
\draw[->] (E3)--(E2);
\draw[->] (E3)--(E4);
\draw [loosely dotted, thick] (E1)..controls(1.5,-3.75)..(E2);
\draw [loosely dotted, thick] (E1)..controls(1.5,-4.25)..(E4);

\node (F1) at (4,-4) {$.$};
\node (F2) at (5,-3){$.$};
\node (F3) at (6,-4){$.$};
\node (F4) at (5,-5){$.$};

\draw[->] (4.2,-3.95)--(5.8,-3.95);
\draw[->] (4.2,-4.05)--(5.8,-4.05);
\draw[->] (F3)--(F2);
\draw[->] (F4)--(F1);
\draw [loosely dotted, thick] (F1)..controls(5.5,-3.75)..(F2);
\draw [loosely dotted, thick] (F4)..controls(4.5,-4.25)..(F3);

\node (G1) at (8,-4) {$.$};
\node (G2) at (9,-3){$.$};
\node (G3) at (10,-4){$.$};
\node (G4) at (9,-5){$.$};

\draw[->] (8.2,-3.95)--(9.8,-3.95);
\draw[->] (8.2,-4.05)--(9.8,-4.05);
\draw[->] (G2)--(G1);
\draw[->] (G4)--(G1);
\draw [loosely dotted, thick] (G2)..controls (8.5,-3.75)..(G3);
\draw [loosely dotted, thick] (G4)..controls(8.5,-4.25)..(G3);

\node (H1) at (12,-4) {$.$};
\node (H2) at (13,-3){$.$};
\node (H3) at (14,-4){$.$};
\node (H4) at (13,-5){$.$};

\draw[->] (H2)--(H1);
\draw[->] (H3)--(H2);
\draw[->] (H4)--(H1);
\draw[->] (H3)--(H4);
\draw [loosely dotted, thick] (H1)..controls (13,-3.25)..(H3);
\draw [loosely dotted, thick] (H1)..controls (13,-4.75)..(H3);

\end{tikzpicture}}\]

There are 3 algebras of global dimension $\leq 2$ which are of cluster type $\A_{2,2}$ and not derived equivalent to $\A_{2,2}$. They are all derived equivalent to each other and not piecewise hereditary:

\[\scalebox{.8}{
\begin{tikzpicture}[scale=1.3,>=stealth]
\node (A1) at (0,0) {$.$};
\node (A2) at (1,1){$.$};
\node (A3) at (2,0){$.$};
\node (A4) at (1,-1){$.$};

\draw [->] (A1)--(A2);
\draw [->] (A2)--(A3);
\draw [->] (A3)--(A4);
\draw [->] (A4)--(A1);
\draw [loosely dotted, thick]  (A1)..controls (1,0.75)..(A3);

\node (B1) at (4,0) {$.$};
\node (B2) at (5,1){$.$};
\node (B3) at (6,0){$.$};
\node (B4) at (5,-1){$.$};

\draw [->] (B2)--(B1);
\draw [->] (B3)--(B2);
\draw [->] (B1)--(B3);
\draw [->] (B3)--(B4);
\draw [loosely dotted, thick] (B1)..controls (5,0.75)..(B3);
\draw[loosely dotted, thick] (B1)..controls (5.5,-.25)..(B4);

\node (C1) at (8,0) {$.$};
\node (C2) at (9,1){$.$};
\node (C3) at (10,0){$.$};
\node (C4) at (9,-1){$.$};

\draw [->] (C2)--(C1);
\draw [->] (C3)--(C2);
\draw [->] (C3)--(C1); 
\draw [loosely dotted,thick] (C4)..controls (8.5,-0.25)..(C3);
\draw [->] (C4)--(C1);
\draw[loosely dotted, thick] (C1)..controls (9,0.75)..(C3);
\end{tikzpicture}}\]

\def\cprime{$'$}
\newcommand{\etalchar}[1]{$^{#1}$}
\providecommand{\bysame}{\leavevmode\hbox to3em{\hrulefill}\thinspace}
\providecommand{\MR}{\relax\ifhmode\unskip\space\fi MR }
\providecommand{\MRhref}[2]{%
  \href{http://www.ams.org/mathscinet-getitem?mr=#1}{#2}
}
\providecommand{\href}[2]{#2}

\end{document}